\tikzset{every picture/.style={remember picture}}
\tikzstyle{materia}=[draw, fill=blue!20, text width=6.0em, text centered,
\tikzstyle{practica} = [materia, text width=8em, minimum width=10em,
\tikzstyle{appe} = [materia, text width=8em, minimum width=10em,
\tikzstyle{texto} = [above, text width=6em, text centered]
\tikzstyle{linepart} = [draw, thick, color=black!50, -latex', dashed]
\tikzstyle{line} = [draw, thick, color=black!50, -latex']
\tikzstyle{ur}=[draw, text centered, minimum height=0.01em]
\NewDocumentCommand{\diamondinclusion}{m >{\SplitArgument{1}{\\}}m m}
 {%
  \dodiamondinclusion{#1}#2{#3}%
 }
\NewDocumentCommand{\dodiamondinclusion}{mmmm}
 {%
  \begingroup
  \setlength{\arraycolsep}{0pt}%
  \renewcommand{\arraystretch}{-2}%
  \begin{matrix}
  && #2 \\
  & \rsubsetneq{45} && \rsubsetneq{-45} \\
  #1 &&&& #4 \\
  & \rsubsetneq{-45} && \rsubsetneq{45} \\
  && #3
  \end{matrix}%
  \endgroup
 }
\NewDocumentCommand{\rsubsetneq}{m}
 {%
  \rotatebox[origin=c]{#1}{$\subsetneq$}%
 }
\newcommand{\practica}[2]{node (p#1) [practica]
  {Chapter #1}}
\newcommand{\appe}[2]{node (ap#1) [appe]
  {Appendix #2  }}
\newcommand{\mycomment}[1]{%
}
\newcommand{\norm}[1]{\left\lVert#1\right\rVert}
\newcommand{\vertiii}[1]{{\left\vert\kern-0.25ex\left\vert\kern-0.25ex\left\vert #1          \right\vert\kern-0.25ex\right\vert\kern-0.25ex\right\vert}}
\newcommand\mathcircled[1]{%
  \mathpalette\@mathcircled{#1}%
}
\newcommand\@mathcircled[2]{%
  \tikz[baseline=(math.base)] \node[draw,circle,inner sep=1pt] (math) {$\m@th#1#2$};%
}
\DeclareMathAlphabet{\mathcal}{OMS}{cmsy}{m}{n}
\DeclareMathOperator{\Ker}{Ker}
\DeclareMathOperator{\Ima}{Im}
\DeclareMathOperator{\spn}{span}
\theoremstyle{plain}
\theoremstyle{definition}
\newtheorem{defin}{Definition}[section]
\newtheorem{prop}[defin]{Proposition}
\newtheorem{obs}[defin]{Observation}
\newtheorem{lem}[defin]{Lemma}
\newtheorem{ex}[defin]{Example}
\newtheorem{no}[defin]{Notation}
\newtheorem{theor}[defin]{Theorem}
\newtheorem{rec}[defin]{Recall}
\newtheorem{rem}[defin]{Remark}
\newtheorem{que}[defin]{Question}
\begin{document}
%


\begin{titlepage}
\begin{center}
{{\Large{\textsc{University of Helsinki}}}}\\
{\small{\textsc{ Department of Mathematics and Statistics}}}
\end{center}
\vspace{15mm}
\rule[0.1cm]{15.8cm}{0.1mm}
\rule[0.5cm]{15.8cm}{0.6mm}
\begin{center}
{\LARGE{Insight in the Rumin Cohomology and}}\\
\vspace{3mm}
{\LARGE{Orientability Properties of the}}\\
\vspace{3mm}
{\LARGE{Heisenberg Group}}\\
\end{center}
\rule[0.1cm]{15.8cm}{0.6mm}
\rule[0.5cm]{15.8cm}{0.1mm}
\begin{center}
\vspace{15mm}
\vspace{19mm} {\large{\textsc{ Giovanni Canarecci}}}
\end{center}
\vspace{36mm}
\vspace{20mm}
\begin{center}
{\large{  Licentiate Thesis\\    
November 2018}}
\end{center}
\end{titlepage}


\newpage
~\vfill
\thispagestyle{empty}

\noindent
\textbf{Insight in the Rumin Cohomology and Orientability Properties of the Heisenberg Group}\\
Licentiate Thesis\\\\

\noindent
\textbf{Abstract:}\\
\noindent
The purpose of this study is to analyse two related topics: the Rumin cohomology and the $\mathbb{H}$-orientability in the Heisenberg group $\mathbb{H}^n$. 
In the first three chapters we carefully describe the Rumin cohomology with particular emphasis at the second order differential operator $D$, giving examples in the cases $n=1$ and $n=2$. We also show the commutation between all Rumin differential operators and the pullback by a contact map and, more generally,  describe pushforward and pullback explicitly in different situations. 
Differential forms can be used to define the notion of orientability; indeed in the fourth chapter we define the $\mathbb{H}$-orientability for $\mathbb{H}$-regular surfaces and we prove that $\mathbb{H}$-orientability implies standard orientability, while the opposite is not always true. Finally we show that, up to one point, a Möbius strip in $\mathbb{H}^1$ is a $\mathbb{H}$-regular surface and we use this fact to prove that there exist $\mathbb{H}$-regular non-$\mathbb{H}$-orientable surfaces, at least in the case $n=1$. This opens the possibility for an analysis of Heisenberg currents mod $2$.\\\\

\noindent
\textbf{Contact information}\\\\ \indent
Giovanni Canarecci\\ \indent
email address: giovanni.canarecci@helsinki.fi\\\\ \indent
Office room: B418\\ \indent
Department of Mathematics and Statistics\\ \indent
P.O. Box 68 (Pietari Kalmin katu 5)\\ \indent
FI-00014 University of Helsinki\\ \indent
Finland\\\\




\newpage
\begin{flushright}
\null\vspace{\stretch{3}}
\emph{}\\                                                                     
``Beato colui che sa pensare al futuro senza farsi prendere dal panico"\\
\vspace{2cm}
\noindent
``Lucky  is he who can think about the future without panicking"
\emph{}
\vspace{\stretch{4}}\null
\end{flushright}


%
\tableofcontents 
%

%
%
\vspace{2cm}
\begin{tikzpicture}[scale=0.7,transform shape]
  \path \practica {1}{Diferencias en componentes electr\'onicos};
  \path (p1.south)+(0.0,-1.1) \practica{2}{Serie de Fourier};
  \path (p2.south)+(0.0,-1.1) \practica{3}{Antena para HF};
  \path (p3.south)+(0.0,-1.1) \practica{4}{Amplificador para HF};
  \path (p2.west)+(-5,0.0) \appe{1}{A, B and C};
  \path (p3.west)+(-5,0.0) \appe{2}{D};
  \path (p4.west)+(-5,0.0) \appe{3}{E};
  \path [line] (p1.south) -- node [above] {} (p2);
  \path [line] (p2.south) -- node [above] {} (p3);
  \path [line] (p3.south) -- node [above] {} (p4) ;
  \path [linepart] (ap1.east) -- node [left] {} (p2);
  \path [linepart] (ap2.east) -- node [left] {} (p3);
  \path [linepart] (ap3.east) -- node [left] {} (p4);
\end{tikzpicture}

\chapter*{Introduction}
\markboth{\MakeUppercase{Introduction}}{\MakeUppercase{Introduction}}
\addcontentsline{toc}{chapter}{Introduction}       

\noindent
The purpose of this study is to analyse two related topics: the Rumin cohomology and the orientability of a surface in the most classic example of Sub-Riemannian geometry, the Heisenberg group $\mathbb{H}^n$.\\
Our work begins with a quick definition of Lie groups, Carnot groups and left translation operators, moving then to define the Heisenberg group and its properties. There are many 
references for an introduction on the Heisenberg group; here we used, for example, parts of \cite{GCmaster}, \cite{CDPT}, \cite{FSSC} and \cite{TRIP}. The Heisenberg Group $\mathbb{H}^n$, $n \geq 1$, is a $(2n+1)$-dimensional manifold denoted $ ( \mathbb{R}^{2n+1}, * , d_{cc})$ where the group operation $*$ is given by 
$$
(x,y,t)*(x',y',t') := \left  (x+x',y+y', t+t'- \frac{1}{2} \langle J
 \begin{pmatrix} 
x \\
y
\end{pmatrix} 
, 
 \begin{pmatrix} 
x' \\
y' 
\end{pmatrix} \rangle_{\mathbb{R}^{2n}} \right  )
$$
with $x,y,x',y' \in \mathbb{R}^n$, $t,t' \in \mathbb{R}$ and $J=
  \begin{pmatrix} 
 0 &  I_n \\
-I_n & 0
\end{pmatrix} $. 
Additionaly, the Heisenberg Group 
is a Carnot group of step $2$ with algebra $\mathfrak{h} = \mathfrak{h}_1 \oplus \mathfrak{h}_2$. The first layer $\mathfrak{h}_1$ has a standard orthonormal basis of left invariant vector fields which are called \textit{horizontal}:
$$
\begin{cases}
X_j=\partial_{x_j} -\frac{1}{2} y_j \partial_t, \\
Y_j=\partial_{y_j} +\frac{1}{2} x_j \partial_t, \ \ \ \ j=1,\dots,n.
\end{cases}
$$
They hold the core property that $[X_j, Y_j] = \partial_t=:T $ for each $j$, where $T$ alone spans the second layer $\mathfrak{h}_2$ and is called the \textit{vertical} direction. By definition, the horizontal subbundle changes inclination at every point (see Figure \ref{fig:balls}), 
\begin{figure}[!ht]
\centering
{\includegraphics[width= 5 cm]{./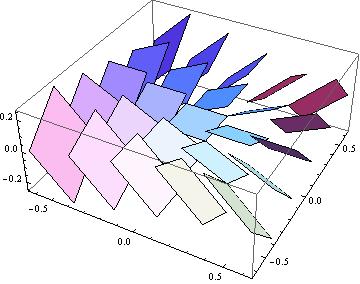}}
 \caption[Caption for LOF]{Horizontal subbundle in the first Heisenberg Group $\mathbb{H}^1$.\footnotemark}
\label{fig:balls}
\end{figure}
allowing movement from any point to any other point following only horizontal paths. The Carnot--Carathéodory distance $d_{cc}$, then, measures the distance between any two points along curves whose tangent vectors are horizontal.\\
The topological dimension of the Heisenberg group is $2n+1$, while its Hausdorff dimension with respect to the Carnot-Carathéodory distance is $2n+2$. Such dimensional difference leads to the existence of a natural cohomology 
called Rumin cohomology and introduced by M. Rumin in 1994 (see \cite{RUMIN}), whose behaviour is significantly different from the standard de Rham one. 
This is not the only effect of the dimensional difference: another is that there exist surfaces regular in the Heisenberg sense but fractal in the Euclidean sense (see \cite{KSC}).\\
\noindent
With a dual argument, one can associate at vector fields $X_j$'s,$Y_j$'s and $T$ the corresponding differential forms: $dx_j$'s and $dy_j$'s for $X_j$'s and $Y_j$'s respectively, and 
$$
\theta:=dt- \frac{1}{2} \sum_{j=1}^n (x_j dy_j - y_j dx_j )
$$
 for $T$. They also divide as \emph{horizontal} and \emph{vertical} in the same way as before.
These differential forms compose the complexes that, in the Heisenberg group, are described by the Rumin cohomology (see \cite{RUMIN} and 5.8 in \cite{FSSC}). 
Rumin forms 
are compactly supported on an open set $U$ and their sets are denoted by $\mathcal{D}_{\mathbb{H}}^{k} (U)$, where
\footnotetext{pictures shown with permission of the author Anton Lukyanenko.}
$$
\begin{cases}
\mathcal{D}_{\mathbb{H}}^{k} (U) :=      \frac{\Omega^k}{I^k}, \ \  \text{for } k=1,\dots,n            \\
\mathcal{D}_{\mathbb{H}}^{k} (U) :=      J^k,   \ \  \text{for } k=n+1,\dots,2n+1           ,
\end{cases}
$$
with $\Omega^k$ the space of $k$-differential forms, $I^k = \{ \alpha \wedge \theta + \beta \wedge d \theta \ / \  \alpha \in \Omega^{k-1}, \ \beta \in \Omega^{k-2}  \}$ and $J^k =\{ \alpha \in \Omega^{k} \ / \  \alpha \wedge \theta =0, \  \alpha \wedge d\theta=0   \}$.\\
The Rumin cohomology is the cohomology of this complex:
$$
0  \to \mathbb{R} \stackrel{ d_Q }{\to} \mathcal{D}_{\mathbb{H}}^{1} (U) \stackrel{ d_Q }{\to} \dots \stackrel{ d_Q }{\to} \mathcal{D}_{\mathbb{H}}^{n} (U)
 \stackrel{ D }{\to} \mathcal{D}_{\mathbb{H}}^{n+1} (U) \stackrel{ d_Q }{\to} \mathcal{D}_{\mathbb{H}}^{n+2} (U) \stackrel{ d_Q }{\to} \dots  \stackrel{ d_Q }{\to} \mathcal{D}_{\mathbb{H}}^{2n+1} (U) \to 0
$$
where $d$ is the standard differential operator and, for $k < n$, 
$
d_Q( [\alpha]_{I^k} ) :=  [d \alpha]_{I^{k+1}},
$ 
while, for $k \geq n +1$, 
$
d_Q := d_{\vert_{J^k}}.
$ 
The second order differential operator $D$ is defined as
$$
D( [\alpha]_{I^n} ) : = d \left ( \alpha +  L^{-1} \left (- (d \alpha)_{\vert_{ {\prescript{}{}\bigwedge}^{n+1} \mathfrak{h}_1 }} \right ) \wedge \theta \right )
$$
whose presence reflects the difference between the topological and Hausdorff dimensions of the space. In the definition above $L:  {\prescript{}{}\bigwedge}^{n-1} \mathfrak{h}_1  \to {\prescript{}{}\bigwedge}^{n+1} \mathfrak{h}_1 $, $L(\omega):=d\theta \wedge \omega$,  is a diffeomorphism among differential forms.\\
In Chapter \ref{DFaRC} we will carefully describe the cohomology and we show its complete behaviour in the cases $n=1$ and $n=2$. In particular we show how to compute the second order operator $D$ explicitly. In the appendices to this chapter we follow the presentation in \cite{TRIP} and explain how it is possible to write the Rumin differential operators as one operator $d_c$, reducing then the complex to $\left ( \mathcal{D}_{\mathbb{H}}^{k} (U), d_c \right )$ (Appendix \ref{A}). We also discuss the dimensions of the spaces in the Rumin complex (Appendix \ref{B}).\\
The differential operators $d_Q$ 
and $D$ look much more complicated than the standard operator $d$ and one could wonder whether they also hold the property of commuting with the pullback by a mapping. We show in Chapter \ref{PPHN} that this is true for contact maps, a map whose pushforward sends horizontal vectors to horizontal vectors. Namely one has that for a contact map $f: \mathbb{H}^n \to \mathbb{H}^n$ the following relations hold:
$$
\begin{cases}
f^* d_Q = d_Q f^* \ \ \ \ \text{ for } k \neq n,\\
f^* D = D f^* \ \ \ \ \text{ for } k = n.
\end{cases}
$$
We also show the behaviour of pushfoward and pullback in several situations in this setting, for which a useful starting point is \cite{KORR}.\\
Differential forms can be used to define the notion of orientability, so it is natural to ask whether the Rumin forms provide a different kind of orientability respect to the standard definition. In Chapter \ref{orient4} we show that this is indeed the case. 
First, we have to notice how in the Heisenberg group it is natural to give an ad hoc definition of regularity for surfaces, the $\mathbb{H}$-regularity (see \cite{FSSC2001} and \cite{FSSC}) which, roughly speaking, locally requires the surface to be a level set of a function with non-vanishing horizontal gradient. The points such gradient is null are called characteristic (see, for instance, \cite{BAL} and \cite{MAG}) and must usually be avoided. 
For such surfaces we give a new definition of orientability ($\mathbb{H}$-orientability) along with some properties. In particular we show that it behaves well with respect to the left-translations and the anisotropic dilation $\delta_r(x,y,t)=(rx,ry,r^2t)$. Furthermore, we prove that $\mathbb{H}$-orientability implies standard orientability, while the opposite is not always true. Finally we show that, up to one point, a Möbius Strip in $\mathbb{H}^1$ is a $\mathbb{H}$-regular surface and we use this fact to prove that there exist $\mathbb{H}$-regular non-$\mathbb{H}$-orientable surfaces, at least in the case $n=1$. \\
Apart from its connection with differential forms, another reason to study orientability is that it plays an important role in the theory of currents. Surfaces connected to currents are usually, but not always, orientable: in Riemanian settings there exists a notion of currents (currents mod $2$) made for surfaces that are not necessarily orientable (see \cite{MORGAN2}). The existence of $\mathbb{H}$-regular non-$\mathbb{H}$-orientable surfaces implies that the same kind of analysis would be meaningful in the Heisenberg group.





%
%

%
\chapter{Preliminaries}

In this chapter we will first provide definitions and notions about Lie and Carnot groups (Section \ref{liegroups}), as well as introduce the Heisenberg group and some basic properties and automorphisms associated to it. Second, we will present the standard basis of
vector fields in the Heisenberg group $\mathbb{H}^n$, including the behaviour of the Lie brackets and the left invariance, which will lead to the definition of dual differential forms. Next we will mention briefly different equivalent distances on $\mathbb{H}^n$: the \emph{Carnot-Carathéodory} distance $d_{cc}$ and  the \emph{Korányi} distance $d_{\mathbb{H}}$. Finally we will present the Heisenberg group's  topology and dimensions (Section \ref{defH}).
The Heisenberg group is maybe the most famous example of Sub-Riemannian geometry. we will commonly use the adjectives ``Riemannian" and ``Euclidean" as synonymous. As opposite to them, we will refer to ``Sub-Riemannian" and ``Heisenberg" for objects proper of the Sub-Riemannian structure of the Heisenberg group.\\
 There exists many good references for an introduction on the Heisenberg group; we will follow section 2.1 in \cite{GCmaster}, the introduction of \cite{TRIP}, sections 2.1 and 2.2 in \cite{FSSC} and section 2.1.3 and 2.2 in \cite{CDPT}.


\section{Lie Groups and Left Translation}\label{liegroups}

In this section we provide definitions and notions about Lie and Carnot groups. General references can be, for example, section 2.1 in \cite{GCmaster}, the introduction of \cite{TRIP}, section 2.1 in \cite{FSSC} and 2.1.3 in \cite{CDPT}.

\begin{defin}\label{lie}             
A group $\mathbb{G}$ with the  group operation $*$, $(\mathbb{G},*)$, is a \emph{Lie Group} if
\begin{itemize}
\item
$\mathbb{G}$ is a differentiable manifold,
\item
the map $\mathbb{G} \times \mathbb{G} \to \mathbb{G}, \ (p,q) \mapsto p*q=p q$ is differentiable,
\item
the map  $ \mathbb{G} \to \mathbb{G}, \ p \mapsto p^{-1}$ is differentiable.
\end{itemize}
\end{defin}

\begin{defin}\label{lefttr}             
Let $(\mathbb{G}, *)$ be a Lie group, with the following smooth operation
\begin{align*}
\mathbb{G} \times \mathbb{G} \to \mathbb{G}, \  (p, q) \mapsto p*q^{-1}.
\end{align*}
If $q \in \mathbb{G}$, then denote the \emph{left translation by $q$} as
\begin{align*}
L_q : \mathbb{G}  \to \mathbb{G}, \ p \mapsto q*p.
\end{align*}
In the literature, $L_q$ is often denoted also as $\tau_q$. For this reason we will write $\tau_q$ in Chapter \ref{orient4}, when talking specifically about the Heisenberg group.
\end{defin}

\begin{obs}             
It follows from the definition that
$$L_{q} L_{p}=L_{q p}.
$$
\end{obs}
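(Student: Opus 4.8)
The plan is to prove the identity $L_q L_p = L_{qp}$ by showing that the two maps agree when evaluated on an arbitrary point of $\mathbb{G}$, since two maps with the same domain and codomain are equal precisely when they coincide pointwise. So I would fix an arbitrary element $r \in \mathbb{G}$ and compute the image of $r$ under each side separately.

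For the left-hand side, I would unwind the composition using the defining formula from Definition \ref{lefttr}. Writing out $(L_q L_p)(r) = L_q\bigl(L_p(r)\bigr)$, the inner application gives $L_p(r) = p * r$, and applying $L_q$ to this yields $q * (p * r)$. For the right-hand side, the definition gives directly $L_{qp}(r) = (q * p) * r$. Thus the whole claim reduces to the equality
$$
q * (p * r) = (q * p) * r,
$$
which is exactly the associativity of the group operation $*$. Since $r$ was arbitrary, the two maps coincide everywhere and hence are equal.

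There is essentially no obstacle here: the statement is an immediate consequence of the group axioms, and the only ``ingredient'' beyond bookkeeping is associativity, which holds in any group and in particular in the Lie group $(\mathbb{G}, *)$. The one point worth stating carefully is the convention that the composite $L_q L_p$ means ``first apply $L_p$, then apply $L_q$'', so that the order of the factors on the left matches the order $q * p$ on the right; getting this convention right is what makes the identity come out as $L_q L_p = L_{qp}$ rather than with the factors reversed. No smoothness or manifold structure is needed for this particular identity, so the proof is purely algebraic and does not invoke any of the differentiability conditions from Definition \ref{lie}.
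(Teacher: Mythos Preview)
Your proof is correct and matches the paper's approach: the paper simply states that the identity ``follows from the definition'' without writing out a proof, and your pointwise verification via associativity is exactly what that phrase means. There is nothing to add.
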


\begin{defin}\label{leftinv}             
A vector field $V$ on a Lie group $\mathbb{G}$ is \emph{left-invariant} if $V$ commutes with $L_g$, for every $g \in \mathbb{G}$. Specifically, $V$  is \emph{ left-invariant } if 
$$
(L_q)_* V_p = V_{L_q (p)} =V_{q *p} \ \in  T_{q *p}  \mathbb{G}, 
$$
for every $p,q \in \mathbb{G}$, 
where $(\cdot)_*$ expresses the standard pushforward. Equivalently, one can express the definitions as
$$
V_p (\varphi \circ L_q ) = [( V \varphi ) \circ L_q ]_p \ \in  C^\infty( U_{q *p} ),
$$
for every $ p,q \in \mathbb{G}$ and $ \varphi \in C^\infty (\mathbb{G})$, where $U_{q *p} $ is a neighbourhood of $q*p$.
\end{defin}

\begin{no}\label{neighbourhood}             
Often we will need to refer to neighbourhoods of points. For this reason, we introduce the notation
$$
\mathcal{U}_p := \left  \{ U_p \ / \ U_p \text{ a neighbourhood of } p  \right \}.
$$
\end{no}

\begin{obs}             
The most important property of left invariant vector fields is that they are uniquely determined by their value at one point, which is usually taken to be the neutral element.\\
In general, to compute the value of a left invariant vector field $V_q \in T_q \mathbb{G}$ at another point $p$, we can simply left-translate by $p*q^{-1}$:
$$
( L_{p*q^{-1}} )_* V_q = V_{p*q^{-1}*q}=V_p.
$$
\end{obs}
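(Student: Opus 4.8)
The plan is to derive both assertions directly from the defining relation of left-invariance recorded in Definition \ref{leftinv}, namely $(L_g)_* V_a = V_{g*a}$ for all $g,a \in \mathbb{G}$, combined with the group axioms (associativity and inverses) and the fact that each $L_g$ is a diffeomorphism. Nothing beyond these ingredients should be needed.

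First I would establish the displayed formula. Given points $p,q \in \mathbb{G}$, the goal is to transport the value $V_q$ to the point $p$, so I choose the translation that carries $q$ to $p$: since $L_g(q) = g*q = p$ forces $g = p*q^{-1}$, I substitute $g = p*q^{-1}$ and $a = q$ into the left-invariance relation to get $(L_{p*q^{-1}})_* V_q = V_{(p*q^{-1})*q}$. Associativity of $*$ then yields $(p*q^{-1})*q = p*(q^{-1}*q) = p$, whence $(L_{p*q^{-1}})_* V_q = V_p$, exactly as claimed.

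The uniqueness statement follows at once. If $V$ and $W$ are left-invariant vector fields agreeing at a single point $q$, that is $V_q = W_q$, then applying the formula just proved to each field gives, for an arbitrary $p \in \mathbb{G}$, the chain $V_p = (L_{p*q^{-1}})_* V_q = (L_{p*q^{-1}})_* W_q = W_p$, so the two fields coincide everywhere. Taking $q$ to be the neutral element shows in particular that a left-invariant vector field is completely determined by its value at the identity.

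There is no genuine obstacle in this observation; the only point deserving a moment of care is that $(L_g)_*$ is an actual linear isomorphism $T_a\mathbb{G} \to T_{g*a}\mathbb{G}$, which is guaranteed because $L_g$ is a diffeomorphism whose smooth inverse is $L_{g^{-1}}$ (from Definition \ref{lie} and the observation that $L_q L_p = L_{qp}$). This ensures that the transported vector $(L_{p*q^{-1}})_* V_q$ is a well-defined element of $T_p\mathbb{G}$ and that the transport may be inverted, which is implicitly what makes the value at one point suffice to recover the field globally.
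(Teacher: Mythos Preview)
Your proof is correct and complete. The paper states this observation without proof, so there is nothing to compare against; your argument---applying the left-invariance relation of Definition~\ref{leftinv} with the specific choice $g = p*q^{-1}$ and then invoking associativity and inverses---is exactly the natural justification one would supply.
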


\noindent
There are special Lie Groups that hold additional important properties; they are called Carnot Groups. Before defining them, we need to introduce the Lie bracket operation:

\begin{defin}         
The \emph{Lie bracket} or \emph{commutator} of vector fields is an operator defined as follows
\begin{align*}
[,] : \mathfrak{g} \times \mathfrak{g}  \to \mathfrak{g},   \   ( V_1, V_2)  \mapsto [V_1,V_2]:=V_1 V_2 - V_2 V_1,
\end{align*}
where $\mathfrak{g}$ is an algebra over a field $\mathbb{G}$.
\end{defin}


\begin{defin}         
A \emph{Lie algebra} $\mathfrak{g}$ of a Lie group $\mathbb{G}$ is a non-associative algebra with the following properties: for all $ V_1,V_2,V_3 \in \mathfrak{g}$,
\begin{itemize}
\item
$[V_1,V_1]=0 $ \quad (Alternativity),
\item
$[V_1+V_2,V_3]=[V_1,V_3]+[V_2,V_3] \ \text{ and } \ [V_3,V_1+V_2]=[V_3,V_1]+[V_3,V_2] $ \quad \\  (Bilinearity),
\item
$[V_1,[V_2,V_3]]+[V_2,[V_3,V_1]]+[V_3,[V_1,V_2]]=0 $ \quad (Jacobi identity).
\end{itemize}
\end{defin}

\begin{defin}             
A \emph{Carnot group of step $k$} is a connected, simply connected Lie group whose Lie algebra $\mathfrak{g}$ admits a step $k$ stratification, i.e.,
$$
\mathfrak{g} = V_1 \oplus \cdots \oplus V_k,
$$
where every $V_j$ is a linear subspace of $\mathfrak{g}$ satisfying $[V_1,V_j]=V_{j+1}$. Additionally, $V_k \neq \{ 0 \}$ and $V_j= \{ 0 \}$ for $j>k$.
\end{defin}

\begin{defin}\label{homdim}              
Let $\mathbb{G}$ be a Carnot group and call $m_i := dim (V_i) $ for each $V_i$ in the stratification of $\mathfrak{g}$. Then the \emph{homogeneous dimension} of $\mathbb{G}$ is
$$
Q:= \sum_{i=1}^k i m_i.
$$
\end{defin}

\section{Definition of $\mathbb{H}^n$}\label{defH}

In this section we introduce the Heisenberg group $\mathbb{H}^n$ as well as some basic properties and automorphisms associated to it. Then we present the standard basis of
vector fields in the Heisenberg group $\mathbb{H}^n$, including the behaviour of the Lie brackets and the left invariance, which will lead to the definition of dual differential forms. Next we mention briefly different equivalent distances on $\mathbb{H}^n$: the \emph{Carnot-Carathéodory} distance $d_{cc}$ and  the \emph{Korányi} distance $d_{\mathbb{H}}$. Finally we mention the Heisenberg group's topology and dimensions.\\
General references are section 2.1 in \cite{GCmaster}, sections 2.1 and 2.2 in \cite{FSSC} and section 2.2 in \cite{CDPT}.

\begin{defin}\label{Heisenberg_Group}             
The $n$-dimensional \emph{Heisenberg Group} $\mathbb{H}^n$ is defined as
$$
\mathbb{H}^n:= (\mathbb{R}^{2n+1}, * ),
$$
where $*$ is the following product:
$$
(x,y,t)*(x',y',t') := \left  (x+x',y+y', t+t'- \frac{1}{2} \langle J
 \begin{pmatrix} 
x \\
y
\end{pmatrix} 
, 
 \begin{pmatrix} 
x' \\
y' 
\end{pmatrix} \rangle_{\mathbb{R}^{2n}} \right  ),
$$
with $x,y,x',y' \in \mathbb{R}^n$, $t,t' \in \mathbb{R}$ and $J=  \begin{pmatrix} 
 0 &  I_n \\
-I_n & 0
\end{pmatrix} $.\\\\
Notationally it is common to write $x=(x_1,\dots,x_n) \in \mathbb{R}^n$. Furthermore, with a simple computation of the matrix product, we immediately have that
$$
 \langle 
\begin{pmatrix} 
 0 &  I_n \\
-I_n & 0
\end{pmatrix}
 \begin{pmatrix} 
x \\
y
\end{pmatrix} 
, 
 \begin{pmatrix} 
x' \\
y' 
\end{pmatrix} \rangle
=
 \langle
 \begin{pmatrix} 
y \\
-x
\end{pmatrix} 
, 
 \begin{pmatrix} 
x' \\
y' 
\end{pmatrix} \rangle
=
\sum_{j=1}^n \left ( y_j x_j' - x_j y_j'  \right ),
$$
and so one can rewrite the product as
$$
(x,y,t)*(x',y',t') := \left  (x+x',y+y', t+t' + \frac{1}{2} \sum_{j=1}^n \left ( x_j y_j'  -  y_j x_j'  \right ) \right  ).
$$
\end{defin}

\begin{obs}             
The Heisenberg group $\mathbb{H}^n$ satisfies the conditions of Definition \ref{lie} and is hence a Lie group.
\end{obs}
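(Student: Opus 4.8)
The plan is to verify directly the three requirements of Definition \ref{lie}, none of which is deep: the whole point is that the group law is given by polynomials. The underlying set of $\mathbb{H}^n$ is $\mathbb{R}^{2n+1}$, which we equip with its standard smooth structure; hence the first bullet point, that $\mathbb{H}^n$ is a differentiable manifold, holds immediately and needs no further argument.

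For the second condition I would write the product map $m \colon \mathbb{R}^{2n+1} \times \mathbb{R}^{2n+1} \to \mathbb{R}^{2n+1}$ in coordinates using the reformulation given in Definition \ref{Heisenberg_Group}, namely $(x,y,t)*(x',y',t') = (x+x',\, y+y',\, t+t' + \frac{1}{2}\sum_{j=1}^n(x_j y_j' - y_j x_j'))$. Reading off each component, the first $2n$ coordinates are linear in the $4n+2$ input variables, while the last coordinate is a polynomial of degree two. Since every component is a polynomial, $m$ is of class $C^\infty$ and in particular differentiable.

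For the third condition, the only step that requires a short computation is producing the inverse map explicitly. I would first check that $(0,0,0)$ is the neutral element, which follows because the bilinear correction term vanishes whenever one factor is zero. Then, solving $(x,y,t)*(x',y',t') = (0,0,0)$ forces $x'=-x$ and $y'=-y$; substituting these into the last coordinate, the quadratic term again cancels and leaves $t'=-t$. Thus the inversion map is $(x,y,t) \mapsto (-x,-y,-t)$, which is linear and therefore smooth, and a symmetric computation confirms it is a two-sided inverse. With all three bullet points established, $\mathbb{H}^n$ is a Lie group.

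I expect no genuine obstacle: the statement is essentially a sanity check that the polynomial nature of the group law makes $\mathbb{H}^n$ a Lie group. The only point deserving any care is verifying that the degree-two term in the product truly cancels when one computes the inverse, so that the inversion map comes out linear (hence trivially smooth) rather than merely smooth.
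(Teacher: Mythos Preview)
Your argument is correct. The paper itself gives no proof for this observation; it simply states it and moves on, recording the neutral element and the inverse formula $(x,y,t)^{-1}=(-x,-y,-t)$ in the very next observation, again without justification. Your verification is therefore more detailed than what the paper provides, and nothing in it is mistaken.
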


\begin{obs}
One can easily see the following properties
\begin{itemize}
\item 
$\mathbb{H}^n$ is a non-commutative group.
\item 
The neutral element of $\mathbb{H}^n$ is $(0,0,0)$.
\item 
The inverse of $(x,y,t) \in \mathbb{H}^n$ is $(x,y,t)^{-1}=(-x,-y,-t)$.
\item 
The center of the group, namely the elements that commute with all the elements of the group, is $\{ (0,t) \in \mathbb{R}^{2n} \times \mathbb{R}  \}$.
\end{itemize}
\end{obs}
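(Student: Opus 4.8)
The plan is to verify each of the four properties by direct substitution into the explicit product formula
$$
(x,y,t)*(x',y',t') = \left(x+x',\,y+y',\, t+t' + \tfrac{1}{2}\sum_{j=1}^n (x_j y_j' - y_j x_j')\right),
$$
exploiting the fact that only the last coordinate carries the antisymmetric bilinear term $\omega(x,y;x',y') := \sum_{j=1}^n (x_j y_j' - y_j x_j')$. Everything reduces to the algebra of $\omega$, so no structural machinery is needed.

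For non-commutativity, I would compute both $(x,y,t)*(x',y',t')$ and $(x',y',t')*(x,y,t)$ and observe that the first $2n$ coordinates agree while the last coordinates differ by $\omega(x,y;x',y') - \omega(x',y';x,y) = 2\,\omega(x,y;x',y')$, since $\omega$ is antisymmetric under swapping its two pairs of arguments. As this quantity is nonzero for suitable choices of the entries (for instance $x_1 = 1$, $y_1' = 1$, all other components zero), the two products differ and the group is non-commutative.

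For the neutral element and the inverse I would simply substitute $(0,0,0)$ and $(-x,-y,-t)$ respectively into the formula. In each case the bilinear term either vanishes because one of its arguments is zero, or cancels because $\omega(x,y;-x,-y) = 0$ by antisymmetry, leaving the desired identities $(x,y,t)*(0,0,0) = (x,y,t)$ and $(x,y,t)*(-x,-y,-t) = (0,0,0)$, together with their left-handed counterparts obtained the same way.

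The only step requiring an actual argument rather than a single substitution is the characterization of the center, and this is the mild obstacle. Here I would fix $(a,b,s)$ and impose that it commute with an arbitrary element $(x,y,t)$. The first $2n$ coordinates commute automatically, so the condition collapses to $\omega(a,b;x,y) = \omega(x,y;a,b)$ for all $(x,y)$; using antisymmetry this is $2\sum_{j=1}^n (a_j y_j - b_j x_j) = 0$ for every $x,y \in \mathbb{R}^n$. Letting $(x,y)$ range over the standard basis vectors forces $a_j = b_j = 0$ for each $j$, while $s$ stays free. Hence the center is exactly $\{(0,0,s) : s \in \mathbb{R}\}$, identified with $\{(0,t) \in \mathbb{R}^{2n}\times\mathbb{R}\}$. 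The point of this universal-quantifier step is only to confirm that no nonzero horizontal component can survive commutation with every group element, so it remains entirely routine.
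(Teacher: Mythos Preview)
Your verification is correct and complete. The paper does not give a proof of this observation at all; it simply records the four properties as things one can ``easily see'' and moves on. Your direct computations with the bilinear form $\omega$ are exactly the routine checks one would supply if pressed for details, and in particular your treatment of the center (testing against standard basis vectors to force the horizontal components to vanish) is the standard argument.
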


\noindent
On the Heisenberg group $\mathbb{H}^n$ there are two important groups of automorphisms. The first one is the operation of left-translation (see Definition \ref{lefttr}) and the second one is the ($1$-parameter) group of the \emph{anisotropic dilatations $\delta_r$}:

\begin{defin}
\label{delta}
The ($1$-parameter) group of the \emph{anisotropic dilatations $\delta_r$}, with $r \in \mathbb{R}^+$, is defined as follows
\begin{align*}
\delta_r : \mathbb{H}^n   \to \mathbb{H}^n, \ (x,y,t)  \mapsto (rx,ry,r^2 t).
\end{align*}
\end{defin}

\subsection{Left Invariance and Horizontal Structure on $\mathbb{H}^n$}\label{lefthor}

In this subsection we present the standard basis of 
vector fields in the Heisenberg group $\mathbb{H}^n$, including the behaviour of the Lie brackets and the left invariance. This will lead to conclude that the Heisenberg group is a Carnot group and to the definition of dual differential forms.\\
General references are section 2.1 in \cite{GCmaster} and sections 2.1 and 2.2 in \cite{FSSC}.


\begin{defin}
\label{XYT}
A basis of left invariant vector fields in $\mathbb{H}^n$
consists of the following $2n+1$ vectors:
 $$
\begin{cases}
X_j = \partial_{x_j} - \frac{1}{2} y_j\partial_{t} \ \ \emph{\emph{ for }} \ j=1,\dots,n , \\
Y_j = \partial_{y_j} + \frac{1}{2} x_j\partial_{t} \ \ \emph{\emph{ for }} \ j=1,\dots,n,  \\
T = \partial_{t}.
\end{cases}
 $$
We will show in Lemma \ref{leftinvar} that they are indeed left invariant.
\end{defin}

\begin{obs}
One can observe the immediate property that $\{ X_1,\dots,X_n,Y_1,\dots,Y_n,T \}$ becomes $\{ \partial_{x_1},\dots, \partial_{x_n}, \partial_{y_1},\dots,\partial_{y_n}, \partial_{t} \}$ at the neutral element.
\end{obs}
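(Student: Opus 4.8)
The plan is to evaluate each of the vector fields $X_j$, $Y_j$ and $T$ directly at the neutral element and to observe that the vertical corrections simply drop out. First I would recall from the earlier observation that the neutral element of $\mathbb{H}^n$ is $(0,0,0)$, so that at this point every coordinate $x_j$ and $y_j$ is equal to $0$.

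Next I would substitute these values into the coefficient functions appearing in Definition \ref{XYT}. For $X_j = \partial_{x_j} - \frac{1}{2} y_j \partial_t$, the coefficient $-\frac{1}{2} y_j$ of $\partial_t$ is evaluated at $y_j = 0$ and hence vanishes, leaving $X_j = \partial_{x_j}$ at $(0,0,0)$. Symmetrically, for $Y_j = \partial_{y_j} + \frac{1}{2} x_j \partial_t$ the coefficient $\frac{1}{2} x_j$ vanishes at $x_j = 0$, giving $Y_j = \partial_{y_j}$ there. Finally $T = \partial_t$ carries no variable coefficient, so it already coincides with $\partial_t$ at every point, in particular at the neutral element. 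Collecting these three evaluations yields the claimed identification of the two bases at $(0,0,0)$.

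There is no genuine obstacle here: the statement follows from a single substitution, and the only point worth highlighting is that the terms which distinguish the horizontal fields from the Euclidean partials are precisely those multiplied by a coordinate that vanishes at the origin. This is exactly what makes the neutral element the natural basepoint, consistent with the preceding remark that a left-invariant field is determined by its value at one point, usually taken to be the identity.
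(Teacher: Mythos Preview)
Your proposal is correct and is exactly the direct substitution the paper has in mind; the paper itself offers no proof, calling the property ``immediate,'' and your evaluation of the coefficients $-\tfrac{1}{2}y_j$ and $\tfrac{1}{2}x_j$ at the neutral element $(0,0,0)$ is precisely that immediate verification.
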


\begin{lem}\label{simplecomposition}      
Consider a function $g: U \subseteq  \mathbb{H}^n \to \mathbb{H}^n$, $U$ open. Assume then $g \in \left [ C^1(U, \mathbb{R}) \right ]^{2n+1}$, meaning that all the $2n+1$ components of $g$ are $C^1$-regular.\\
Consider a second function $f: \mathbb{H}^n \to \mathbb{R}$, $f\in C^1(\mathbb{H}^n, \mathbb{R})$.
Then the following holds:
\begin{align}
X_j (f \circ g) &= (\nabla f)_g \cdot X_j g, \label{firstone} \\
Y_j (f \circ g) &= (\nabla f)_g \cdot Y_j g  , \\
T (f \circ g) &= (\nabla f)_g \cdot T g .
\end{align}
where $ j=1,\dots,n$ and  $\nabla$ describes the Euclidean gradient in $\mathbb{R}^{2n+1}=\mathbb{H}^n$.
\end{lem}

\begin{proof}
Equation \eqref{firstone} holds by direct computation. Indeed, we have
\begin{align*}
X_j(f \circ g) 
=&\left (  \partial_{x_j} -\frac{1}{2} y_j\partial_t  \right ) (f \circ g)\\
=& \sum_{i=1}^{n} \left ( \partial_{x_i} f \partial_{x_j} g^{i}+ \partial_{y_i} f \partial_{x_j} g^{n+i} \right ) + \partial_{t} f \partial_{x_j} g^{2n+1}\\
&
  -\frac{1}{2}y_j  \left [ \sum_{i=1}^{n} \left ( \partial_{x_i} f \partial_t g^{i}+ \partial_{y_i} f \partial_t g^{n+i} \right ) + \partial_{t} f \partial_t g^{2n+1} \right ]\\
=& \sum_{i=1}^{n} \left ( \partial_{x_i} f \partial_{x_j} g^{i}   -\frac{1}{2} y_{j}  \partial_{x_i} f \partial_t g^{i} \right )  +  \sum_{i=1}^{n} \left ( \partial_{y_i} f \partial_{x_j} g^{n+i}  -\frac{1}{2} y_{j}  \partial_{y_i} f \partial_t g^{n+i} \right ) \\
&
+ \partial_{t} f \partial_{x_j} g^{2n+1}   -\frac{1}{2} y_{j} \partial_{t} f \partial_t g^{2n+1} \\
=& \sum_{i=1}^{n} \partial_{x_i} f \left (  \partial_{x_j} g^{i}   -\frac{1}{2} y_{j}   \partial_t g^{i} \right ) + \partial_{y_i} f  \sum_{i=1}^{n} \left ( \partial_{x_j} g^{n+i}  -\frac{1}{2}  y_{j} \partial_t g^{n+i} \right ) \\
&
+ \partial_{t} f \left ( \partial_{x_j} g^{2n+1}   -\frac{1}{2}  y_{j}   \partial_t g^{2n+1} \right ) \\
=& \sum_{i=1}^{n} \partial_{x_i} f X_j g^{i} +  \sum_{i=1}^{n} \partial_{y_i} f X_j  g^{n+i} + \partial_{t} f X_j  g^{2n+1} \\
=&(\partial_{x_1} f, \dots, \partial_{x_n} f ,\partial_{y_1} f, \dots, \partial_{y_n} f \partial_t f)_g \cdot  (X_j g^1, \dots , X_j g^{2n+1} )\\
=& (\nabla f)_g \cdot X_j g.
\end{align*}
\noindent
This proves the first part of the statement and the proof for $Y_j(f \circ g)$ is analogous. The case of $T$ is much simpler:
\begin{align*}
T(f \circ g)
=& \left (  \partial_t  \right ) (f \circ g)\\
=&
\sum_{i=1}^{n}
\left (
\partial_{x_i} f \partial_t g^{i}+
\partial_{y_i} f \partial_t g^{n+i}
\right ) +
\partial_{t} f \partial_t g^{2n+1}\\
=&
\sum_{i=1}^{n}
\left (
\partial_{x_i} f T g^{i}+
\partial_{y_i} f T g^{n+i}
\right ) +
\partial_{t} f T g^{2n+1}\\
=&(\partial_{x_1} f, \dots, \partial_{x_n} f ,\partial_{y_1} f, \dots, \partial_{y_n} f \partial_t f)_g 
\cdot
 (T g^1, \dots , T g^{2n+1} )\\
=&(\nabla f)_g \cdot T g.
\end{align*}
\end{proof}

\begin{lem}\label{leftinvar}      
In Definition \ref{XYT} we claimed that $X_j$'s, $Y_j$'s and $T$ are left invariant vector fields. we prove it here.
\end{lem}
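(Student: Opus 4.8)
The plan is to verify left-invariance directly from the definition, using the characterization from Definition~\ref{leftinv}. Specifically, I must show that for each of the vector fields $V \in \{X_1,\dots,X_n,Y_1,\dots,Y_n,T\}$, one has
$$
V_p(\varphi \circ L_q) = \bigl[(V\varphi)\circ L_q\bigr]_p
$$
for every $p,q \in \mathbb{H}^n$ and every $\varphi \in C^\infty(\mathbb{H}^n)$. The natural tool here is Lemma~\ref{simplecomposition}, which computes $V(\varphi \circ g)$ for a composition with a $C^1$ map $g$. Taking $g = L_q$, the left translation by $q$, reduces the whole problem to the explicit computation of $X_j L_q$, $Y_j L_q$ and $T L_q$, i.e.\ applying each vector field to the $2n+1$ components of the map $L_q$.

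\medskip\noindent
\textbf{First I would} write down $L_q$ explicitly in coordinates. Setting $q=(a,b,c)$ and $p=(x,y,t)$, the group law gives
$$
L_q(x,y,t) = \Bigl(a+x,\; b+y,\; c+t+\tfrac{1}{2}\sum_{k=1}^n\bigl(a_k y_k - b_k x_k\bigr)\Bigr),
$$
so the components are $L_q^i = a_i + x_i$, $L_q^{n+i}=b_i+y_i$ for $i=1,\dots,n$, and $L_q^{2n+1}=c+t+\tfrac{1}{2}\sum_k(a_k y_k - b_k x_k)$. I would then apply $X_j=\partial_{x_j}-\tfrac12 y_j\partial_t$ componentwise: it sends the $x$-components to $\delta_{ij}$, kills the $y$-components, and acting on the last component gives $\partial_{x_j}L_q^{2n+1}-\tfrac12 y_j\partial_t L_q^{2n+1} = -\tfrac12 b_j - \tfrac12 y_j$.

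\medskip\noindent
\textbf{The key step} is then to compare the result $\nabla\varphi|_{L_q}\cdot X_j L_q$ coming from Lemma~\ref{simplecomposition} with the expression $(X_j\varphi)\circ L_q$. Evaluating $X_j\varphi$ at the point $L_q(p)=(a+x,b+y,\,\cdot\,)$ means using the coordinate $y_j + b_j$ in place of $y_j$, so that
$$
(X_j\varphi)\circ L_q = \partial_{x_j}\varphi|_{L_q} - \tfrac12(y_j+b_j)\,\partial_t\varphi|_{L_q}.
$$
I expect this to match $\nabla\varphi|_{L_q}\cdot X_j L_q$ exactly, precisely because the $-\tfrac12 b_j$ term produced by differentiating the last component of $L_q$ is what accounts for the shift $y_j \mapsto y_j + b_j$. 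The computations for $Y_j$ (where the sign from $J$ produces $+\tfrac12 a_j$) and for $T=\partial_t$ (which is immediate, since $\partial_t L_q^{2n+1}=1$ and $T$ commutes trivially with the shift) are analogous and simpler.

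\medskip\noindent
\textbf{The main obstacle} I anticipate is purely bookkeeping: keeping the roles of $p$, $q$, and the shifted coordinates straight, and making sure the factor of $-\tfrac12 b_j$ arising from $X_j$ acting on the $t$-component of $L_q$ cancels correctly against the coordinate shift in $\partial_t\varphi$. There is no conceptual difficulty once Lemma~\ref{simplecomposition} is invoked; the whole statement is essentially a consequence of that lemma together with the explicit form of the group operation, and the verification is a short direct check for each basis field.
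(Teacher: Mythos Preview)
Your proposal is correct and follows essentially the same approach as the paper: invoke Lemma~\ref{simplecomposition} with $g=L_q$, compute the components $X_j L_q$, and match against $(X_j\varphi)\circ L_q$ using the shifted coordinate $y_j+b_j$. The paper carries this out explicitly only for $n=1$ and $X$, then appeals to symmetry for the other fields, whereas you sketch the general $n$ directly; otherwise the arguments are identical.
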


\begin{proof}
For notational simplicity, we consider $n=1$ and $X_1=X =  \partial_{x} - \frac{1}{2} y\partial_{t}$. The other cases and the general situation follow with hardly any change.\\
Consider $f \in C^1 (\mathbb{H}^1, \mathbb{R} )$ and $p=(x_p, y_p,t_p), q=(x_q, y_q,t_q) \in \mathbb{H}^1$. By Lemma  \ref{simplecomposition},
\begin{align*}
X_p (f \circ L_q)
=&   (\nabla f)_{L_q(p)} \cdot X_p (L_q) \\
=& (\partial_x f, \partial_y f, \partial_t f)_{L_q(p)} \cdot \left (   X (L_q^{(1)})(p), X (L_q^{(2)})(p), X (L_q^{(3)})(p)   \right )\\
=& (\partial_x f, \partial_y f, \partial_t f)_{L_q(p)} \cdot \left (  1 , 0 , - \frac {1}{2} y_q  - \frac{1}{2} y_p   \right )\\
=&(\partial_x f)_{L_q(p)} -\frac{1}{2} (y_q+y_p) (\partial_t f)_{L_q(p)}   \\
=& \left [
 \partial_x f -\frac{1}{2}y \partial_t f
\right ]_{L_q(p)} =
 [ (Xf) \circ L_q ]_p,
\end{align*}
where, for the third line, we used that
$$
\begin{cases}
 (L_q^{(1)})(p) =x_q + x_p,\\
(L_q^{(2)})(p) = y_q + y_p \quad \text{and}\\
(L_q^{(3)})(p) =t_q + t_p +\frac {1}{2} (x_q y_p - y_q x_p).
\end{cases}
$$
This proves the left invariance of $X$. Repeating the same argument for all $X_j$, $Y_j$ and $T$ completes the proof.
\end{proof}

\begin{obs}      
\label{[,]}
The only non-trivial commutators of the vector fields $X_j,Y_j$ and $T$ are
$$
[X_j,Y_j]=T  \ \emph{\emph{ for }} \ j=1,\dots,n.
$$
This immediately tells that all the higher-order commutators are zero.
\end{obs}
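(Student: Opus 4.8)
The plan is to prove the claim by directly computing all pairwise brackets of the basis and then deducing the vanishing of higher-order commutators from the resulting structure. I would use the standard fact that if two vector fields are written in the coordinates $(x_1,\dots,x_n,y_1,\dots,y_n,t)$ as $V=\sum_k a^k\partial_k$ and $W=\sum_k b^k\partial_k$, then their commutator is again a first-order operator, namely $[V,W]=\sum_k\bigl(V(b^k)-W(a^k)\bigr)\partial_k$, the second-order terms cancelling automatically because partial derivatives commute. This is the key simplification: among $X_j=\partial_{x_j}-\tfrac12 y_j\partial_t$, $Y_j=\partial_{y_j}+\tfrac12 x_j\partial_t$ and $T=\partial_t$, the only non-constant coefficient is the one in front of $\partial_t$, so every bracket reduces to the computation of its $\partial_t$-component alone.

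Carrying this out, for the mixed brackets I would compute the $\partial_t$-component of $[X_i,Y_j]$ as $X_i(\tfrac12 x_j)-Y_j(-\tfrac12 y_i)=\tfrac12\delta_{ij}+\tfrac12\delta_{ij}=\delta_{ij}$, all other components being zero since the remaining coefficients are constants; this yields $[X_i,Y_j]=\delta_{ij}T$, which is exactly $[X_j,Y_j]=T$ together with $[X_i,Y_j]=0$ for $i\neq j$. For $[X_i,X_j]$ and $[Y_i,Y_j]$ the relevant differentiations, such as $X_i(-\tfrac12 y_j)$, act only on variables the coefficient does not depend on, so the $\partial_t$-component vanishes and these brackets are zero. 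Finally $[X_i,T]$ and $[Y_i,T]$ vanish because $T=\partial_t$ annihilates the $y_i$ and $x_i$ appearing in the coefficients. This exhausts all pairs and establishes the first assertion.

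For the higher-order commutators, the point is that $T=\partial_t$ has constant coefficients and hence commutes with each of $X_j$, $Y_j$ and $T$; that is, $T$ is central. By the first part, every bracket of two basis fields is a scalar multiple of $T$, so any triple commutator is, up to a constant, a bracket of a basis field with $T$, hence zero; the same argument kills all longer nested brackets by induction. This is precisely the step-$2$ stratification $\mathfrak h=\mathfrak h_1\oplus\mathfrak h_2$ with $\mathfrak h_1=\spn\{X_j,Y_j\}$ and $\mathfrak h_2=\spn\{T\}$.

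I do not expect a genuine obstacle here, since the computation is routine. The only point requiring care is the product rule when differentiating the non-constant coefficients, which is what makes the two half-integer contributions in $[X_j,Y_j]$ add rather than cancel, producing $T$. If one prefers to verify the brackets directly on a test function $f\in C^\infty(\mathbb H^n)$ in the style of Lemma \ref{simplecomposition}, rather than via the coefficient formula, then one must additionally check by hand that all second-order terms cancel, which is bookkeeping-heavy but conceptually immediate.
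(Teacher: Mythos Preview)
Your proof is correct and complete; the paper itself leaves this as a stated observation without proof (the computation being elementary), so your systematic verification via the coefficient formula $[V,W]=\sum_k\bigl(V(b^k)-W(a^k)\bigr)\partial_k$ is exactly the kind of direct check intended. If anything, your argument is more thorough than what the paper sketches, since you explicitly handle all pairs $[X_i,Y_j]$, $[X_i,X_j]$, $[Y_i,Y_j]$, $[X_i,T]$, $[Y_i,T]$ and then deduce the vanishing of higher brackets from the centrality of $T$.
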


\begin{rem} \label{Hcarnot}     
The observation above shows that the Heisenberg group is a Carnot group of step 2. Indeed we can write its algebra $\mathfrak{h}$ as:
$$
\mathfrak{h} =\mathfrak{h}_1 \oplus \mathfrak{h}_2,
$$
with
$$
\mathfrak{h}_1 = \spn \{ X_1,  \ldots, X_n, Y_1, \ldots, Y_n \} \text{ and } \mathfrak{h}_2 =\spn \{ T \}.
$$
Usually one calls $\mathfrak{h}_1$ the space of \emph{horizontal vectors} and $\mathfrak{h}_2$ the space of \emph{vertical vectors}.
\end{rem}

\begin{obs}      
Consider a function $f \in C^1 (U, \mathbb{R} )$, $U\subseteq \mathbb{H}^n$ open. It is useful to mention that the vector fields $\{ X_1,\dots,X_n,Y_1,\dots,Y_n\}$ are homogeneous of order $1$ with respect to the dilatation $\delta_r, \  r \in \mathbb{R}^+$, i.e.,
$$
X_j (f\circ \delta_r)=r X_j(f)\circ \delta_r \ \ \ \  \text{ and }   \ \ \ \   Y_j (f\circ \delta_r)=r Y_j(f)\circ \delta_r ,
$$
for any $j=1,\dots,n$.\\
On the other hand, the vector field $T$ is homogeneous of order $2$, that is,
$$
T(f\circ \delta_r)=r^2T(f)\circ \delta_r.
$$
The proof is a simple application of Lemma \ref{simplecomposition}.\\
It is not a surprise, then, that the homogeneous dimension of $\mathbb{H}^n$ (see Definition \ref{homdim}) is
$$
Q=2n+2.
$$
\end{obs}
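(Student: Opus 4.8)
The plan is to treat $\delta_r$ as a map $g = \delta_r : \mathbb{H}^n \to \mathbb{H}^n$ whose $2n+1$ components are
$$g^i(x,y,t) = r x_i, \quad g^{n+i}(x,y,t) = r y_i \ \ (i=1,\dots,n), \quad g^{2n+1}(x,y,t) = r^2 t,$$
and apply Lemma \ref{simplecomposition} directly. Since each component is smooth, the lemma gives $X_j(f \circ \delta_r) = (\nabla f)_{\delta_r} \cdot X_j \delta_r$, so the whole computation reduces to evaluating $X_j$, $Y_j$ and $T$ on these very simple coordinate functions.

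First I would compute the action of $X_j = \partial_{x_j} - \frac{1}{2} y_j \partial_t$ on the components: only $g^j = r x_j$ and $g^{2n+1} = r^2 t$ survive, giving $X_j g^j = r$ and $X_j g^{2n+1} = -\frac{1}{2} r^2 y_j$, while all other $X_j g^k$ vanish. Substituting into the lemma yields
$$X_j(f \circ \delta_r) = r (\partial_{x_j} f)_{\delta_r} - \tfrac{1}{2} r^2 y_j (\partial_t f)_{\delta_r}.$$

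The key observation, and the only point requiring care, is the comparison with $r\, X_j(f) \circ \delta_r$. Here one must remember that $X_j f = \partial_{x_j} f - \frac{1}{2} y_j \partial_t f$ carries the coefficient $y_j$, and that precomposing with $\delta_r$ replaces this coefficient by its value at $\delta_r(x,y,t)$, namely $r y_j$. Thus $r\,X_j(f)\circ \delta_r = r(\partial_{x_j} f)_{\delta_r} - \frac{1}{2} r^2 y_j (\partial_t f)_{\delta_r}$, which matches the expression above exactly. The argument for $Y_j$ is identical, with $+\frac{1}{2} x_j$ in place of $-\frac{1}{2} y_j$, establishing the order-$1$ homogeneity of the horizontal fields.

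Finally, for $T = \partial_t$ the computation is immediate: the only nonzero derivative is $T g^{2n+1} = r^2$, so the lemma gives $T(f \circ \delta_r) = r^2 (\partial_t f)_{\delta_r} = r^2 T(f) \circ \delta_r$, i.e. order-$2$ homogeneity. The statement $Q = 2n+2$ then follows directly from Definition \ref{homdim}, since $\dim \mathfrak{h}_1 = 2n$ contributes with weight $1$ and $\dim \mathfrak{h}_2 = 1$ with weight $2$. The only genuine subtlety in the whole argument is the bookkeeping of the coefficient $y_j$ (resp. $x_j$) under composition with $\delta_r$; everything else is a mechanical substitution into Lemma \ref{simplecomposition}.
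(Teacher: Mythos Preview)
Your proof is correct and follows exactly the approach the paper indicates: apply Lemma~\ref{simplecomposition} with $g=\delta_r$, compute $X_j\delta_r$, $Y_j\delta_r$, $T\delta_r$ componentwise, and match the result with $r\,X_j(f)\circ\delta_r$ (respectively $r\,Y_j(f)\circ\delta_r$, $r^2\,T(f)\circ\delta_r$) by tracking how the coefficient $y_j$ (resp.\ $x_j$) scales under $\delta_r$. The paper does not spell out these steps, so your write-up is a faithful expansion of what it calls ``a simple application of Lemma~\ref{simplecomposition}.''
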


\begin{obs}\label{scal}
The vectors $X_1,\dots,X_n,Y_1,\dots,Y_n,T$ can be made an orthonormal basis of $\mathfrak{h}$ with a scalar product $\langle \cdot , \cdot \rangle $.\\
In the same way, the vectors $X_1,\dots,X_n,Y_1,\dots,Y_n$ form an orthonormal basis of $\mathfrak{h}_1$ with a scalar product $\langle \cdot , \cdot \rangle_H $ defined purely on $\mathfrak{h}_1$.
\end{obs}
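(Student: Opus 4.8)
The statement is an elementary fact about finite-dimensional real vector spaces, so the plan is simply to exhibit the scalar product and verify it does what is claimed. The one input that needs a word of justification is that the $2n+1$ fields $X_1,\dots,X_n,Y_1,\dots,Y_n,T$ actually form a basis of $\mathfrak{h}$. First I would invoke the Observation following Definition \ref{XYT}: at the neutral element these fields reduce to $\partial_{x_1},\dots,\partial_{x_n},\partial_{y_1},\dots,\partial_{y_n},\partial_t$, which are manifestly linearly independent; since a left-invariant vector field is determined by its value at a single point and $\dim\mathfrak{h}=2n+1$, the fields are linearly independent and hence a basis.

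Once the basis is fixed, I would define the scalar product by declaring it orthonormal. Concretely, writing $E_1,\dots,E_{2n+1}$ for $X_1,\dots,X_n,Y_1,\dots,Y_n,T$, set
$$
\langle E_i, E_j \rangle := \delta_{ij}
$$
and extend bilinearly; equivalently, transport the standard Euclidean inner product of $\mathbb{R}^{2n+1}$ along the linear isomorphism $\mathfrak{h}\to\mathbb{R}^{2n+1}$, $E_i\mapsto e_i$. Symmetry and bilinearity are built in, and positive-definiteness holds because in these coordinates $\langle V, V\rangle=\sum_i a_i^2$ for $V=\sum_i a_i E_i$, which vanishes only when every $a_i=0$. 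By construction the $E_i$ are orthonormal, proving the first claim.

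For the second claim I would argue identically on the horizontal layer. By Remark \ref{Hcarnot}, $\mathfrak{h}_1=\spn\{X_1,\dots,X_n,Y_1,\dots,Y_n\}$ is the $2n$-dimensional space of horizontal vectors, and the spanning set is a basis of it, its linear independence being inherited from that of the full basis. I would then either restrict $\langle\cdot,\cdot\rangle$ to $\mathfrak{h}_1\times\mathfrak{h}_1$ or repeat the declaration above on $\{X_j,Y_j\}$ alone; both yield the same form $\langle\cdot,\cdot\rangle_H$, defined purely on $\mathfrak{h}_1$, with respect to which $X_1,\dots,X_n,Y_1,\dots,Y_n$ is orthonormal.

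There is no genuine obstacle here: the whole content is the linear-algebra remark that any basis of a finite-dimensional real vector space is orthonormal for exactly one inner product, and the only thing specific to the Heisenberg setting is confirming that the listed fields form a basis, which is immediate from their values at the identity. If anything deserves care, it is only to emphasise that the construction is a \emph{choice} — the scalar product is not canonically forced by the group structure, but is defined precisely so as to render the distinguished frame orthonormal.
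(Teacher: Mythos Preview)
Your argument is correct; the paper states this observation without proof, treating it as the elementary linear-algebra fact you describe. Your write-up supplies exactly the right justification, including the one point that merits a word (that the listed fields are indeed a basis, read off from their values at the identity).
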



\begin{no}\label{notW}
Sometimes it will be useful to consider all the elements of the basis of $\mathfrak{h}$ with one symbol. To do so, one can notationally write
$$
\begin{cases}
W_i &:= X_i \ \text{ for } j=1,\dots,n,\\
W_{n+i} &:= Y_i\ \text{ for } j=1,\dots,n,\\
W_{2n+1}&:=T.
\end{cases}
$$
In the same way, the point $(x_1,\dots,x_n,y_1,\dots,y_n,t)$ will be denoted as $(w_1,\dots,w_{2n+1})$.
\end{no}

\begin{defin}
\label{dual_basis}
Consider $ {\prescript{}{}\bigwedge}^1 \mathfrak{h}$ as the dual space of $\mathfrak{h}$, 
which inherits an inner product from the one of $\mathfrak{h}$. By duality, one can find a dual orthonormal basis of covectors $\{\omega_1,\dots,\omega_{2n+1}\}$ in $ {\prescript{}{}\bigwedge}^1 \mathfrak{h}$ such that 
$$
 \omega_j ( W_k ) =\langle \omega_j , W_k \rangle = \langle \omega_j \vert W_k \rangle =\delta_{jk}, \quad  j,k=1,\dots,2n+1,
$$
where $W_k$ is an element of the basis of $\mathfrak{h}$ and the notation varies in the literature. Such covectors are differential forms in the Heisenberg group. It turns out that the dual orthonormal basis is given by
$$
\{dx_1,\dots,dx_n,dy_1,\dots,dy_n,\theta \},
$$
where $\theta$ is called \emph{contact form} and is defined as
$$
\theta :=dt - \frac{1}{2}  \sum_{j=1}^{n} (x_j d y_j-y_j d x_j).
$$
\end{defin}

\begin{no}\label{dfnota}
As it will be useful sometimes to call all such forms by the same name, one can notationally write,
$$
\begin{cases}
\theta_i &:= dx_i\ \text{ for } j=1,\dots,n,\\
\theta_{n+i} &:= dy_i\ \text{ for } j=1,\dots,n,\\
\theta_{2n+1}&:=\theta.
\end{cases}
$$
In particular the covector $\theta_i$ is always the dual of the vector $W_i$, for all $i=1,\dots,2n+1$.
\end{no}

\begin{obs}\label{contactcontactcontact} 
Note that, one could have introduced the Heisenberg group $\mathbb{H}^n$ with a different approach and defined it as a \emph{contact manifold}. A contact manifold is a manifold with a contact structure, meaning that its
algebra $\mathfrak{h}$ has a $1$-codimensional subspace $Q$ that can be written as a kernel of a non-degenerate $1$-form, which is then called \emph{contact form}.\\
The just-defined $\theta$ satisfy all these requirements and is indeed the contact form of the Heisenberg group, while $Q=\mathfrak{h}_1$. The non-degeneracy condition is $\theta \wedge d \theta \neq 0$. A straightforward computation shows that
$$
d \theta =- \sum_{j=1}^{n} dx_j \wedge dy_j,
$$
and so indeed
$$
\theta \wedge d \theta =- \sum_{j=1}^{n} dx_j \wedge dy_j \wedge \theta  \neq 0.
$$
\end{obs}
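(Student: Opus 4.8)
The plan is to verify the three assertions bundled in the observation: the formula for $d\theta$, the non-degeneracy $\theta \wedge d\theta \neq 0$, and the identification $Q = \mathfrak{h}_1$. All three reduce to direct exterior-algebra computations, so the work is essentially bookkeeping, and I would carry them out in that order.

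First I would compute $d\theta$ straight from its definition. Since $\theta = dt - \frac{1}{2}\sum_{j=1}^n (x_j\, dy_j - y_j\, dx_j)$ and $d(dt)=0$, applying the Leibniz rule together with $d(dx_j)=d(dy_j)=0$ yields $d(x_j\, dy_j) = dx_j \wedge dy_j$ and $d(y_j\, dx_j) = dy_j \wedge dx_j = -dx_j \wedge dy_j$. The point to notice is that these two contributions \emph{add} rather than cancel, so the factor $\tfrac12$ is absorbed and one obtains the stated $d\theta = -\sum_{j=1}^n dx_j \wedge dy_j$.

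Next I would form $\theta \wedge d\theta$ and show it does not vanish. Splitting $\theta$ into its vertical part $dt$ and its horizontal part $-\frac{1}{2}\sum_i (x_i\, dy_i - y_i\, dx_i)$, the wedge with $-\sum_j dx_j \wedge dy_j$ produces a leading term $-\sum_j dt \wedge dx_j \wedge dy_j$ arising from $dt$, plus further terms built only from the differentials $dx_i, dy_i, dx_j, dy_j$. The crucial observation is that the $dt$-term cannot be cancelled by any of the remaining terms, since the latter contain no $dt$ factor; hence $\theta \wedge d\theta$ has a nonvanishing $dt$-component and is therefore $\neq 0$ (for $n=1$ this is exactly a volume form on $\mathbb{R}^3$). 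After reordering the single $1$-form $\theta$ past the $2$-form $dx_j\wedge dy_j$, which contributes the sign $(-1)^{1\cdot 2}=+1$, this matches the expression $-\sum_{j=1}^n dx_j \wedge dy_j \wedge \theta$ written in the statement.

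Finally I would check $Q = \mathfrak{h}_1$. By Definition \ref{dual_basis} the family $\{dx_1,\dots,dx_n,dy_1,\dots,dy_n,\theta\}$ is dual to $\{X_1,\dots,X_n,Y_1,\dots,Y_n,T\}$, so $\theta(X_j)=\theta(Y_j)=0$ for every $j$ while $\theta(T)=1$. Consequently $\Ker\theta = \spn\{X_1,\dots,X_n,Y_1,\dots,Y_n\} = \mathfrak{h}_1$, a $1$-codimensional subspace, which is precisely the claimed contact distribution $Q$. Since none of these steps involves any genuine difficulty, I do not expect a real obstacle here; the only point demanding a moment's attention is the argument in the third paragraph that the $dt$-component survives, as this is exactly what guarantees non-degeneracy.
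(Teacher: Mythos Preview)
Your proposal is correct and follows the same approach as the paper, which simply asserts the formulas as ``a straightforward computation'' without spelling out any details; you supply exactly the exterior-derivative bookkeeping that the paper omits. Your additional verification that $\Ker\theta=\mathfrak{h}_1$ is not argued in the paper either, so in fact you give more than the original.
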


\begin{obs}\label{df} 
As a useful example, we show here that the just-defined bases of vectors and covectors behave as one would expect when differentiating. Specifically, consider $f: U\subseteq \mathbb{H}^n \to \mathbb{R}$, $U$ open, $f \in C^1 (U, \mathbb{R} )$, then one has:
\begin{align*}
 df
=& \sum_{j=1}^{n}  \left ( \partial_{x_j} f d x_j + \partial_{y_j} f dy \right ) +  \partial_t f dt  \\
=&  \sum_{j=1}^{n}  \left ( \partial_{x_j} f d x_j + \partial_{y_j} f dy \right )  +  (\partial_t f) \left ( \theta + \frac{1}{2}\sum_{j=1}^{n} x_j dy_j -  \frac{1}{2}\sum_{j=1}^{n} y_j dx_j    \right )\\
=&  \sum_{j=1}^{n}  \left ( X_j f d x_j + Y_j f dy_j \right ) + Tf \theta.
\end{align*}
\end{obs}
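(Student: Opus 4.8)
The plan is to start from the ordinary de Rham differential of $f$ written in the Cartesian coordinates $(x,y,t)$ and then re-express it in the adapted dual basis $\{dx_1,\dots,dx_n,dy_1,\dots,dy_n,\theta\}$ of Definition \ref{dual_basis}. Since $f \in C^1(U,\mathbb{R})$, the usual formula
$$
df = \sum_{j=1}^{n}\left(\partial_{x_j} f\, dx_j + \partial_{y_j} f\, dy_j\right) + \partial_t f\, dt
$$
holds, so the only covector that needs to be rewritten is $dt$, which is not one of the adapted basis elements.

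Next I would invert the defining relation for the contact form. From $\theta = dt - \frac{1}{2}\sum_{j=1}^{n}(x_j\, dy_j - y_j\, dx_j)$ one reads off directly that
$$
dt = \theta + \frac{1}{2}\sum_{j=1}^{n}(x_j\, dy_j - y_j\, dx_j).
$$
Substituting this into the expression for $df$ and distributing the scalar $\partial_t f$ over the sum produces an expansion written purely in terms of $dx_j$, $dy_j$ and $\theta$.

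The key step is then to collect the coefficient of each adapted covector. The coefficient of $dx_j$ becomes $\partial_{x_j} f - \frac{1}{2} y_j \partial_t f$, the coefficient of $dy_j$ becomes $\partial_{y_j} f + \frac{1}{2} x_j \partial_t f$, and the coefficient of $\theta$ is exactly $\partial_t f$. Comparing with Definition \ref{XYT}, these three coefficients are precisely $X_j f$, $Y_j f$ and $Tf$, which yields the claimed identity.

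I do not expect any genuine obstacle: the whole argument is one substitution followed by careful bookkeeping. The only place to stay alert is the sign and the factor $\frac{1}{2}$ when redistributing $\partial_t f\, dt$, since the $x_j\, dy_j$ term must contribute with a plus sign (matching the $+\frac{1}{2}x_j\partial_t$ in $Y_j$) while the $y_j\, dx_j$ term must contribute with a minus sign (matching the $-\frac{1}{2}y_j\partial_t$ in $X_j$). Once these signs line up with the definitions of $X_j$ and $Y_j$, the recognition of the horizontal and vertical derivatives is immediate and the observation follows.
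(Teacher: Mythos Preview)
Your proposal is correct and follows exactly the route taken in the paper: start from the standard expression of $df$ in Cartesian coordinates, replace $dt$ by $\theta + \frac{1}{2}\sum_j(x_j\,dy_j - y_j\,dx_j)$, and collect coefficients to recognise $X_j f$, $Y_j f$, $Tf$. There is nothing to add.
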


\noindent
The next natural step is to define vectors and covectors of higher order.

\begin{defin} \label{kdim}   
We define the sets of $k$-vectors and $k$-covectors, respectively, as follows:
\begin{align*}
\Omega_k \equiv {\prescript{}{}\bigwedge}_k \mathfrak{h} &:= \spn \{ W_{i_1} \wedge \dots \wedge W_{i_k} \}_{1\leq i_1 \leq \dots \leq i_k \leq 2n+1 },
\end{align*}
and
\begin{align*}
\Omega^k \equiv {\prescript{}{}\bigwedge}^k \mathfrak{h} &:= \spn \{ \theta_{i_1} \wedge \dots \wedge \theta_{i_k} \}_{1\leq i_1 \leq \dots \leq i_k \leq 2n+1 }.
\end{align*}
The same definitions can be given for $ \mathfrak{h}_1$ and produces the spaces $ {\prescript{}{}\bigwedge}_k \mathfrak{h}_1 $ and $ {\prescript{}{}\bigwedge}^k \mathfrak{h}_1 $.
\end{defin}

\begin{defin}
For $k=1,\dots,2n+1$, if $\omega \in  {\prescript{}{}\bigwedge}^k \mathfrak{h}$, then we define $\omega^* \in  {\prescript{}{}\bigwedge}_k \mathfrak{h}$ so that
$$
\langle \omega^* , V   \rangle   =     \langle \omega \vert V   \rangle, \ \ \ V \in  {\prescript{}{}\bigwedge}_k \mathfrak{h}.
$$
\end{defin}

\noindent
We give here the definition of Pansu differentiability for maps between Carnot groups $\mathbb{G}_1$ and $\mathbb{G}_2$. After that, we state it in the special case of $\mathbb{G}_1=\mathbb{H}^n$ and $\mathbb{G}_2=\mathbb{R}$.\\
We call a function $h : (\mathbb{G}_1,*,\delta^1) \to (\mathbb{G}_2,*,\delta^2)$ \emph{homogeneous} if $h(\delta^1_r(p))= \delta^2_r \left ( h(p) \right )$ for all $r>0$.

\begin{defin}[see \cite{PANSU} and 2.10 in \cite{FSSC}]\label{dGGG}
Consider two Carnot groups $(\mathbb{G}_1,*,\delta^1)$ and $(\mathbb{G}_2,*,\delta^2)$. A function $f: U \to \mathbb{G}_2$, $U \subseteq \mathbb{G}_1$ open, is \emph{P-differentiable} at $p_0 \in U$ if there is a (unique) homogeneous Lie groups homomorphism $d_H f_{p_0} : \mathbb{G}_1 \to \mathbb{G}_2$ such that
$$
d_H f_{p_0} (p) := \lim\limits_{r \to 0} \delta^2_{\frac{1}{r}} \left ( f(p_0)^{-1} * f(p_0* \delta_r^1 (p) ) \right ),
$$
uniformly for $p$ in compact subsets of $U$.
\end{defin}

\begin{defin}\label{dHHH}
Consider $f: U \to \mathbb{R}$, $U \subseteq \mathbb{H}^n$ open. $f$ is \emph{P-differentiable} at $p_0 \in U$ if there is a (unique) homogeneous Lie groups homomorphism $d_H f_{p_0} : \mathbb{H}^n \to \mathbb{R}$ such that
$$
d_H f_{p_0} (p) := \lim\limits_{r \to 0} \frac{  f \left (p_0* \delta_r (p) \right ) - f(p_0) }{r},
$$
uniformly for $p$ in compact subsets of $U$.
\end{defin}

\begin{obs}
Consider $f:U\subseteq \mathbb{H}^n \to \mathbb{H}^n = \mathbb{R}^{2n+1}$, $U$ open, and interpret it in components as $f=(f^1,\dots,f^{2n+1})$. A straightforward computation shows immediately that, $f$ is P-differentiable in the sense of Definition \ref{dGGG}, then $f^1,\dots,f^{2n}$ are P-differentiable in the sense of Definition \ref{dHHH}.
\end{obs}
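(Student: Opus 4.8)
The plan is to compute the expression inside the limit of Definition \ref{dGGG} explicitly and observe that its first $2n$ components are precisely the scalar difference quotients appearing in Definition \ref{dHHH}.

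First I would set $q := f(p_0) = (q^1, \ldots, q^{2n+1})$ and, for fixed $p \in U$ and small $r$, write $w := f(p_0 * \delta_r(p)) = (w^1, \ldots, w^{2n+1})$, so that $w^i = f^i(p_0 * \delta_r(p))$. Using the inverse formula $q^{-1} = (-q^1, \ldots, -q^{2n+1})$ and the group law of Definition \ref{Heisenberg_Group}, the product $q^{-1} * w$ has first $2n$ components equal to $w^i - q^i$ (the spatial coordinates simply add), while its last component equals $w^{2n+1} - q^{2n+1}$ plus a symplectic correction term quadratic in the spatial coordinates. Applying the target dilation $\delta^2_{1/r}$, which divides the first $2n$ coordinates by $r$ and the last by $r^2$, the $i$-th component of $\delta^2_{1/r}(q^{-1} * w)$ for $i \le 2n$ becomes exactly
$$
\frac{w^i - q^i}{r} = \frac{f^i(p_0 * \delta_r(p)) - f^i(p_0)}{r},
$$
which is the difference quotient of Definition \ref{dHHH}.

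Next I would invoke the hypothesis: since $f$ is P-differentiable in the sense of Definition \ref{dGGG}, the vector $\delta^2_{1/r}(q^{-1} * w)$ converges, uniformly for $p$ in compact subsets of $U$, to $d_H f_{p_0}(p)$ as $r \to 0$. Convergence of a vector forces convergence of each component, so for each $i = 1, \ldots, 2n$ the scalar limit
$$
\lim_{r \to 0} \frac{f^i(p_0 * \delta_r(p)) - f^i(p_0)}{r} = \left( d_H f_{p_0}(p) \right)^i
$$
exists, again uniformly on compacts. It then remains to check that each component map $p \mapsto (d_H f_{p_0}(p))^i$ is a homogeneous Lie group homomorphism $\mathbb{H}^n \to \mathbb{R}$, and this is inherited from $d_H f_{p_0}$: since the group product on the first $2n$ coordinates of $\mathbb{H}^n$ is plain addition, the identity $d_H f_{p_0}(p * p') = d_H f_{p_0}(p) * d_H f_{p_0}(p')$ gives additivity into $(\mathbb{R},+)$ componentwise; and since $\delta^2_r$ acts by multiplication by $r$ on the first $2n$ coordinates, homogeneity of $d_H f_{p_0}$ yields $(d_H f_{p_0}(\delta_r(p)))^i = r\,(d_H f_{p_0}(p))^i$, which is exactly the first-order homogeneity required in Definition \ref{dHHH}.

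The computation is entirely routine, so there is no genuine obstacle; the only point that deserves care is understanding \emph{why the argument stops at the first $2n$ components}. The last coordinate is divided by $r^2$ rather than $r$ under $\delta^2_{1/r}$ and additionally carries the symplectic cross-terms from the group law, so its limit is not a plain difference quotient and does not fit the scalar definition of Definition \ref{dHHH}. This is precisely the structural reason the statement is restricted to $f^1, \ldots, f^{2n}$.
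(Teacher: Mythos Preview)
Your argument is correct and follows the same route as the paper: compute $\delta_{1/r}\bigl(f(p_0)^{-1}*f(p_0*\delta_r(p))\bigr)$ explicitly, observe that the first $2n$ coordinates are precisely the scalar difference quotients of Definition~\ref{dHHH}, and read off their convergence from the assumed P-differentiability of $f$. The paper carries this out only for $n=1$ and stops once the limit is identified, whereas you work in general $n$ and additionally verify that each component of $d_Hf_{p_0}$ is a homogeneous homomorphism into $(\mathbb{R},+)$; this extra check is a welcome completion of the argument rather than a different method.
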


\begin{proof}
Consider n=1 for simplicity. The other cases follow immediately. Consider $p_0=(x_0,y_0,t_0)$ and $p=(x,y,t)$ in $\mathbb{H}^n$. A straightforward computation shows that
\begin{align*}
&\lim_{r\to 0+}  \delta_{\frac{1}{r}} \left ( f(p_0)^{-1} * f(p_0* \delta_r (p) ) \right )=\\
&= \lim_{r\to 0+} \Bigg (  
 \frac{f^1 (p_0 *\delta_r(p)  ) - f^1 (p_0)}{r}  ,  \frac{f^2 (p_0 *\delta_r(p)  ) - f^2 (p_0)}{r} ,\\
& \frac{f^3 (p_0 *\delta_r(p)  ) - f^3 (p_0)  +\frac{1}{2}  \left (   f^2 (p_0)f^1 (p_0 *\delta_r(p)  ) -    f^1 (p_0) f^2 (p_0 *\delta_r(p)  )        \right )   }{r^2}
\Bigg ).
\end{align*}
By hypothesis the limit exists and the first two components give us the claim.
\end{proof}

\begin{defin}[see 2.11 in \cite{FSSC}]
Let $f: U \subseteq \mathbb{H}^n \to \mathbb{R}$, $U$ open, be a P-differentiable function at $p \in U$. Then the \emph{Heisenberg gradient} or \emph{horizontal gradient} of $f$ at $p$ is defined as
$$
\nabla_\mathbb{H} f(p) := d_H f (p)^* \in \mathfrak{h}_1,
$$
or, equivalently,
$$
\nabla_\mathbb{H} f(p) = \sum_{j=1}^{n} \left [  (X_j f)(p) X_j  + (Y_j f)(p) Y_j  \right ].
$$
\end{defin}


\begin{no}[see 2.12 in \cite{FSSC}]\label{CH1}
Sets of differentiable functions can be defined with respect to the P-differentiability. Take $U \subseteq \mathbb{G}_1$ open, then
\begin{itemize}
\item
$C_{\mathbb{H}}^1 (U, \mathbb{G}_2)$ is the vector space of continuous functions $f:U \to \mathbb{G}_2 $  such that the P-differential $d_H f$ is continuous.
\item
$\left [ C_{\mathbb{H}}^1 (U, \mathbb{G}_2) \right ]^k$ is the set of $k$-tuples $f=\left (f^1,\dots, f^k \right)$ such that $f^i \in C_{\mathbb{H}}^1 (U, \mathbb{G}_2)$ for each $i=1 ,\dots , k$.
\end{itemize}
In particular, take $ U \subseteq \mathbb{H}^n$ open; then
\begin{itemize}
\item
$C_{\mathbb{H}}^1 (U, \mathbb{H}^n)$ is the vector space of continuous functions $f:U \to \mathbb{H}^n $  such that the P-differential $d_H f$ is continuous.
\item
$C_{\mathbb{H}}^1 (U, \mathbb{R})$ is the vector space of continuous functions $f:U \to \mathbb{R} $  such that $\nabla_\mathbb{H} f$ is continuous in $U$ (or, equivalently, such that the P-differential $d_H f$ is continuous).
\item
$C_{\mathbb{H}}^k (U, \mathbb{R})$ is the vector space of continuous functions $f:U \to \mathbb{R}$ such that the derivatives of the kind $W_{i_1} \dots W_{i_k}f$ are continuous in $U$, where $W_{i_h}$ is any $X_j$ or $Y_j$.
\item
$\left [ C_{\mathbb{H}}^m (U,\mathbb{R}) \right ]^k$ is the set of $k$-tuples $f=\left (f^1,\dots, f^k \right)$ such that $f^i \in C_{\mathbb{H}}^m (U,\mathbb{R})$ for each $i=1 ,\dots , k$.
\end{itemize}
\end{no}

\begin{obs}\label{diamond}
Given the notation above we have:
$$
\diamondinclusion{C^3 (U, \mathbb{R})}{C^2 (U, \mathbb{R})\\C_{\mathbb{H}}^3 (U, \mathbb{R})}{ C_{\mathbb{H}}^2 (U, \mathbb{R}) }
   \subsetneq C^1 (U, \mathbb{R}) \subsetneq  C_{\mathbb{H}}^1 (U, \mathbb{R}).
$$
\end{obs}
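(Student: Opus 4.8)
The plan is to read the diamond as the six strict inclusions it encodes and to treat each in two stages: first the inclusion itself, which is purely algebraic, and then strictness, which I would witness by an explicit function. For the inclusions, the two ``downward'' ones $C^3\subseteq C^2$ and $C_{\mathbb{H}}^3\subseteq C_{\mathbb{H}}^2$ are immediate, since controlling derivatives up to order $3$ a fortiori controls those up to order $2$. For the two ``Euclidean-into-Heisenberg'' ones $C^3\subseteq C_{\mathbb{H}}^3$ and $C^2\subseteq C_{\mathbb{H}}^2$ I would expand a horizontal derivative $W_{i_1}\cdots W_{i_k}f$ using $X_j=\partial_{x_j}-\frac{1}{2}y_j\partial_t$ and $Y_j=\partial_{y_j}+\frac{1}{2}x_j\partial_t$: since each field is first order with polynomial coefficients, the result is a polynomial-coefficient combination of Euclidean partials of $f$ of order at most $k$, hence continuous whenever $f\in C^k$. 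For $C_{\mathbb{H}}^2\subseteq C^1$ I would run this backwards, using $\partial_t f=X_jY_jf-Y_jX_jf$ (Observation \ref{[,]}) together with $\partial_{x_j}f=X_jf+\frac{1}{2}y_j\partial_t f$ and $\partial_{y_j}f=Y_jf-\frac{1}{2}x_j\partial_t f$ to recover all Euclidean first derivatives as continuous functions. Finally $C^1\subseteq C_{\mathbb{H}}^1$ is trivial, as $X_jf$ and $Y_jf$ are combinations of continuous Euclidean first derivatives.

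For strictness I would split the six edges by the kind of roughness they detect. The three edges that remain inside one category or drop into $C^1$ are witnessed by ordinary Euclidean singularities: $f=|x_1|^3$ lies in $C^2\setminus C^3$ and simultaneously in $C_{\mathbb{H}}^2\setminus C_{\mathbb{H}}^3$ (its Euclidean and horizontal third derivatives both produce a discontinuous $\mathrm{sgn}$), which settles $C^3\subsetneq C^2$ and $C_{\mathbb{H}}^3\subsetneq C_{\mathbb{H}}^2$; and $f=|x_1|^{3/2}\in C^1\setminus C_{\mathbb{H}}^2$ settles $C_{\mathbb{H}}^2\subsetneq C^1$. Since left translations are smooth diffeomorphisms preserving the fields $X_j,Y_j$, and hence both families of classes, I may place the singular point at any chosen $p\in U$, so each counterexample works for an arbitrary nonempty open $U$.

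The remaining three edges $C^3\subsetneq C_{\mathbb{H}}^3$, $C^2\subsetneq C_{\mathbb{H}}^2$ and $C^1\subsetneq C_{\mathbb{H}}^1$ are the genuinely Heisenberg phenomenon and need functions smooth along the horizontal directions but Euclidean-rough in $t$. Here I would use powers of the Korányi gauge $N:=\big((x^2+y^2)^2+16\,t^2\big)^{1/4}$, which is smooth away from the origin and homogeneous of degree $1$ under $\delta_r$. A $k$-th order horizontal derivative of $N^{\alpha}$ is homogeneous of degree $\alpha-k$, so for $\alpha>k$ it extends continuously by $0$ at the origin and $N^{\alpha}\in C_{\mathbb{H}}^k$; on the other hand, on the $t$-axis one has $N\sim|t|^{1/2}$, so $\partial_t^{\,k}N^{\alpha}$ scales like $|t|^{(\alpha-2k)/2}$ and blows up at the origin when $\alpha<2k$, whence $N^{\alpha}\notin C^k$. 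Choosing $k<\alpha<2k$, for instance $N^{3/2}$, $N^{3}$, $N^{4}$ for $k=1,2,3$, then separates the classes.

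The main obstacle is precisely this last family. One cannot manufacture a $C_{\mathbb{H}}^k\setminus C^k$ example by inserting a jump of $\partial_t f$ across a hypersurface, because the horizontal distribution is a contact, hence nowhere integrable, distribution: no hypersurface is tangent to it, and any naive vertical jump leaks into $X_jf$ and $Y_jf$ through the coupling coefficients $\pm\frac{1}{2}x_j,\pm\frac{1}{2}y_j$. The point of the gauge construction is to concentrate the roughness at the single point where all these coefficients vanish and to let homogeneity bookkeep the horizontal ($\alpha-k$) and vertical ($\alpha-2k$) scalings separately; verifying the continuity of the horizontal derivatives of $N^{\alpha}$ at the origin and the blow-up of $\partial_t^{\,k}N^{\alpha}$ is the only step demanding a careful, though routine, estimate.
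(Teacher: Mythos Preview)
The paper gives no proof of this observation at all; it is simply stated and left to the reader. So there is nothing to compare against, and your proposal stands on its own merits.

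Your overall strategy is sound and well organised: the six inclusions are established by the right algebraic identities (in particular recovering $\partial_t f = X_jY_jf - Y_jX_jf$ from $C_{\mathbb{H}}^2$ is the key nontrivial step, and you have it), and the plan of using Euclidean singularities in $x_1$ for the ``same-category'' edges and powers of the Kor\'anyi gauge for the ``Euclidean $\subsetneq$ Heisenberg'' edges is exactly right.

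There is, however, one genuine slip. Your example $N^4$ for the edge $C^3 \subsetneq C_{\mathbb{H}}^3$ does not work: $N^4 = (x^2+y^2)^2 + 16\,t^2$ is a polynomial, hence $C^\infty$, so it certainly does not lie outside $C^3$. Your homogeneity heuristic ``$\partial_t^{\,k} N^{\alpha}$ scales like $|t|^{(\alpha-2k)/2}$'' is correct \emph{generically}, but it breaks down precisely when $\alpha$ is a multiple of $4$, since then $N^{\alpha}$ is a polynomial in $t$ and the putative singular term vanishes identically. The fix is trivial: choose any $\alpha \in (3,6)\setminus\{4\}$, for instance $\alpha = 7/2$ or $\alpha = 5$, and then the blow-up of $\partial_t^{\,3} N^{\alpha}$ on the $t$-axis is genuine. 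With that correction your argument goes through.
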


\noindent
We define here also an operator that will be useful later: the Hodge operator. The Hodge operator of a vector returns a second vector of dual dimension with the property to be orthogonal to the first. This will be used when talking about orientability as well as tangent and normal vector fields.

\begin{defin}[see 2.3 in \cite{FSSC} or 1.7.8 in \cite{FED}]\label{hodge}
Let $1 \leq k \leq 2n$. The \emph{Hodge operator} is the linear isomorphism
\begin{align*}
*: {\prescript{}{}\bigwedge}_k \mathfrak{h} &\rightarrow {\prescript{}{}\bigwedge}_{2n+1-k} \mathfrak{h} ,\\
\sum_I v_I V_I &\mapsto  \sum_I v_I (*V_I),
\end{align*}
where
$$
*V_I:=(-1)^{\sigma(I) }V_{I^*},
$$
and, for $1 \leq  i_1 \leq \cdots \leq i_k \leq 2n+1$,
\begin{itemize}
\item $I=\{ i_1,\cdots,i_k \}$,
\item $V_I= V_{i_1} \wedge \cdots \wedge V_{i_k} $,
\item $I^*=\{ i_1^*,\dots,i_{2n+1-k}^* \}=\{1, \cdots, 2n+1\} \smallsetminus I \quad $  and
\item $\sigma(I)$ is the number of couples $(i_h,i_l^*)$ with $i_h > i_l^*$.
\end{itemize}
\end{defin}


\subsection{Distances on $\mathbb{H}^n$}

In this subsection we mention briefly different equivalent distances on $\mathbb{H}^n$. General references are section 2.1 in \cite{GCmaster} and section 2.2.1 in \cite{CDPT}.\\
The usual intrinsic distance in the Heisenberg group is the \emph{Carnot} -- \emph{Carathéodory} distance $d_{cc}$, which measures the distance between any two points along shortest curves whose tangent vectors are horizontal.\\
Here we define more precisly another distance, equivalent to the first, called the \emph{Korányi} distance:

\begin{defin}
\label{norm}
We define the \emph{Korányi} distance on $\mathbb{H}^n$ by setting, for $p,q \in \mathbb{H}^n$,
$$
d_{\mathbb{H}} (p,q) :=  \norm{ q^{-1}*p }_{\mathbb{H}},
$$
where $ \norm{ \cdot }_{\mathbb{H}}$ is the \emph{Korányi}  norm
$$
\norm{(x,y,t)}_{\mathbb{H}}:=\left (  |(x,y)|^4+16t^2  \right )^{\frac{1}{4}},
$$
with $(x,y,t) \in \mathbb{R}^{2n} \times  \mathbb{R} $ and $| \cdot |$ the Euclidean norm.
\end{defin}

\begin{obs}
We show that $\norm{ \cdot }_{\mathbb{H}}$ is indeed a norm, as it satisfies the following properties:
\begin{enumerate}
\item
$\norm{(x,y,t)}_{\mathbb{H}} \geq 0, \ \norm{(x,y,t)}_{\mathbb{H}} = 0 \Leftrightarrow (x,y,t)=0$,
\item
$\norm{(x,y,t)*(x',y',t')}_{\mathbb{H}} \leq \norm{(x,y,t)}_{\mathbb{H}} + \norm{(x',y',t')}_{\mathbb{H}} $,
\item
Also $\norm{ \cdot }_{\mathbb{H}}$ is homogeneous of degree $1$ with respect to $\delta_r$:
$$
\norm{ \delta_r (x,y,t)}_{\mathbb{H}} = r  \norm{(x,y,t)}_{\mathbb{H}} ,
$$
where $\delta_r$ appears in Definition \ref{delta}.
\end{enumerate}
\end{obs}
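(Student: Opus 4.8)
The plan is to check the three listed properties separately, the genuine content lying entirely in the subadditivity estimate (2); properties (1) and (3) are immediate. For (1), note that $|(x,y)|^4 \geq 0$ and $16t^2 \geq 0$, so their sum is a non-negative real whose fourth root is well defined and non-negative, and it vanishes exactly when both summands vanish, i.e. when $(x,y)=0$ and $t=0$, which is the definiteness. For (3), the dilation $\delta_r$ of Definition \ref{delta} acts by $(x,y,t)\mapsto(rx,ry,r^2t)$, so $|(rx,ry)|^4 = r^4|(x,y)|^4$ and $16(r^2t)^2 = r^4\cdot 16t^2$; factoring $r^4$ out from under the fourth root yields $\norm{\delta_r(x,y,t)}_{\mathbb{H}} = r\,\norm{(x,y,t)}_{\mathbb{H}}$.

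The heart of the matter is the triangle inequality (2), which I would prove by a complexification trick. First I would pass to complex coordinates $z = x+iy \in \mathbb{C}^n$, so that $|z|^2 = |(x,y)|^2$ and the Hermitian product $\langle z,z'\rangle := \sum_j \bar z_j z_j'$ satisfies $\operatorname{Im}\langle z,z'\rangle = \sum_j (x_j y_j' - y_j x_j')$. In these coordinates the product of Definition \ref{Heisenberg_Group} reads $(z,t)*(z',t') = \big(z+z',\ t+t'+\tfrac12\operatorname{Im}\langle z,z'\rangle\big)$. I would then introduce the complex-valued gauge $\phi(z,t) := |z|^2 + 4it \in \mathbb{C}$, whose modulus is precisely $|\phi(z,t)| = \sqrt{|z|^4 + 16t^2} = \norm{(z,t)}_{\mathbb{H}}^2$. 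The key algebraic identity to establish is
\[
\phi\big((z,t)*(z',t')\big) = \phi(z,t) + \phi(z',t') + 2\langle z,z'\rangle,
\]
which follows by expanding $|z+z'|^2 = |z|^2 + |z'|^2 + 2\operatorname{Re}\langle z,z'\rangle$ and recombining $2\operatorname{Re}\langle z,z'\rangle + 2i\operatorname{Im}\langle z,z'\rangle = 2\langle z,z'\rangle$.

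With this identity in hand, the estimate falls out cleanly. Writing $a = \norm{(z,t)}_{\mathbb{H}}$, $b = \norm{(z',t')}_{\mathbb{H}}$ and $\Phi := \phi\big((z,t)*(z',t')\big)$, the ordinary triangle inequality for the complex modulus together with Cauchy--Schwarz give $|\Phi| \leq a^2 + b^2 + 2|\langle z,z'\rangle| \leq a^2 + b^2 + 2|z|\,|z'|$. Since $a^2 = \sqrt{|z|^4 + 16t^2} \geq |z|^2$ we have $|z| \leq a$, and likewise $|z'| \leq b$, whence $|\Phi| \leq a^2 + b^2 + 2ab = (a+b)^2$; taking square roots yields $\norm{(z,t)*(z',t')}_{\mathbb{H}} = |\Phi|^{1/2} \leq a+b$, which is exactly (2). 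The main obstacle is precisely finding the right gauge $\phi$ so that the cross term collapses to $2\langle z,z'\rangle$ and can then be controlled by Cauchy--Schwarz together with the elementary bound $|z| \leq \norm{(z,t)}_{\mathbb{H}}$; once the identity is set up, no further computation is needed.
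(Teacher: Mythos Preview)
Your proof is correct. For (1) and (3) you do exactly what the paper does (a one-line verification), and for the triangle inequality (2) the paper gives no argument at all, merely citing $1.F$ in \cite{KORR}. What you have written is in fact the classical Cygan-type argument that lies behind that citation: introducing the complex gauge $\phi(z,t)=|z|^2+4it$ whose modulus is $\norm{(z,t)}_{\mathbb{H}}^2$, verifying the key additive identity $\phi\big((z,t)*(z',t')\big)=\phi(z,t)+\phi(z',t')+2\langle z,z'\rangle$, and then closing with Cauchy--Schwarz together with the trivial bound $|z|\le \norm{(z,t)}_{\mathbb{H}}$. So you have not taken a different route so much as filled in the proof the paper delegates to the literature; the only thing to be careful about is that your sign conventions for the Hermitian product and the group law match (they do, since the paper's law gives the $t$-increment $\tfrac12\sum_j(x_jy_j'-y_jx_j')=\tfrac12\operatorname{Im}\langle z,z'\rangle$ with your convention $\langle z,z'\rangle=\sum_j\bar z_j z_j'$).
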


\begin{proof}
The first and third point can be verified immediately. A proof of the triangle inequality can be found in $1.F$ in \cite{KORR}.
\end{proof}

\begin{obs}
The Korányi distance is left invariant, namely,
$$
d_{\mathbb{H}} (p*q,p*q')=d_{\mathbb{H}} (q,q'),  \quad  p,q,q' \in \mathbb{H}^n. 
$$
It is, moreover, homogeneous of degree $1$ with respect to $\delta_r$:
$$
d_\mathbb{H} \left ( \delta_r (p), \delta_r (q)  \right ) = r d_{\mathbb{H}} (p,q) .
$$
\end{obs}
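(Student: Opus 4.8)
The plan is to reduce both identities to already-established structural facts: the group axioms for $(\mathbb{H}^n,*)$, the explicit inverse formula $(x,y,t)^{-1}=(-x,-y,-t)$, the fact that $\delta_r$ is a group automorphism (Definition \ref{delta}), and the degree-one homogeneity of the Korányi norm recorded as point 3 of the preceding observation. Both statements are purely algebraic consequences of the definition $d_{\mathbb{H}}(p,q)=\norm{q^{-1}*p}_{\mathbb{H}}$, so no analytic estimate is required.

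For the left invariance, I would start from the definition and write
$$
d_{\mathbb{H}}(p*q,\,p*q') = \norm{(p*q')^{-1} * (p*q)}_{\mathbb{H}}.
$$
The key algebraic step is the standard group identity $(p*q')^{-1}=q'^{-1}*p^{-1}$; combined with associativity and the cancellation $p^{-1}*p=e$, this collapses the argument of the norm to $q'^{-1}*q$, giving $\norm{q'^{-1}*q}_{\mathbb{H}}=d_{\mathbb{H}}(q,q')$. The only ingredients are the group structure and the inverse formula already listed for $\mathbb{H}^n$.

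For the homogeneity, I would again begin from the definition,
$$
d_{\mathbb{H}}\bigl(\delta_r(p),\,\delta_r(q)\bigr) = \norm{\delta_r(q)^{-1} * \delta_r(p)}_{\mathbb{H}},
$$
and use that $\delta_r$ is a group automorphism, hence commutes with both inversion and $*$, so that $\delta_r(q)^{-1}*\delta_r(p)=\delta_r(q^{-1}*p)$. Applying the degree-one homogeneity $\norm{\delta_r(\cdot)}_{\mathbb{H}}=r\norm{\cdot}_{\mathbb{H}}$ then yields $\norm{\delta_r(q^{-1}*p)}_{\mathbb{H}}=r\norm{q^{-1}*p}_{\mathbb{H}}=r\,d_{\mathbb{H}}(p,q)$.

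There is no genuine obstacle here; the entire content of the second part lies in invoking the homomorphism property of $\delta_r$. If it were not already available, I would verify it by the one-line check that $\delta_r\bigl((x,y,t)*(x',y',t')\bigr)=\delta_r(x,y,t)*\delta_r(x',y',t')$, which holds because the symplectic term $\tfrac{1}{2}\sum_j(x_jy_j'-y_jx_j')$ scales by exactly $r^2$, matching the scaling of the $t$-component.
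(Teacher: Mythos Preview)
Your proof is correct and complete; the paper itself states this observation without proof, treating it as an immediate consequence of the definitions, which is precisely what your argument makes explicit. The ingredients you invoke---the group inversion identity, associativity, the automorphism property of $\delta_r$, and the degree-one homogeneity of $\norm{\cdot}_{\mathbb{H}}$ from the preceding observation---are exactly the ones the paper has already recorded.
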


\begin{obs}
\label{8.8stein}
We already mentioned that we use $| \cdot |$ for the Euclidean norm. One can prove the following inequality:
$$
|p| \leq \norm{p}_\mathbb{H} \leq |p|^{\frac{1}{2}} \ \ \text{ when } \norm{p}_\mathbb{H} \leq 1.
$$
\end{obs}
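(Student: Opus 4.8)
The plan is to reduce both inequalities to elementary polynomial inequalities by clearing the fractional exponents, and then to exploit the hypothesis $\norm{p}_\mathbb{H}\le 1$ to control the sizes of the individual coordinates. Write $p=(x,y,t)$ and abbreviate the Euclidean length of the horizontal part by $s:=|(x,y)|$, so that $|p|^2 = s^2 + t^2$ and $\norm{p}_\mathbb{H}^4 = s^4 + 16 t^2$. The standing assumption $\norm{p}_\mathbb{H}\le 1$ means $s^4 + 16 t^2 \le 1$, which immediately forces $s \le 1$ and $t^2 \le \tfrac{1}{16}$; these two bounds are the only consequences of the hypothesis I expect to need. Since every quantity involved is nonnegative, each of the two desired inequalities is equivalent to the one obtained by raising both sides to the fourth power, which removes the roots and leaves a comparison between polynomials in $s$ and $t$.

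For the left-hand inequality $|p| \le \norm{p}_\mathbb{H}$, raising to the fourth power turns it into $(s^2+t^2)^2 \le s^4 + 16 t^2$. Expanding the left-hand side and cancelling the common $s^4$ reduces this to $t^2\bigl(2 s^2 + t^2\bigr) \le 16\, t^2$, which is trivial when $t=0$ and otherwise amounts to $2 s^2 + t^2 \le 16$. Using $s \le 1$ and $t^2 \le \tfrac{1}{16}$ from the hypothesis, the left-hand side is at most $2 + \tfrac{1}{16} < 16$, so this inequality holds with room to spare. This settles the first bound.

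The right-hand inequality $\norm{p}_\mathbb{H} \le |p|^{1/2}$ is the delicate one and is where I expect the real work to lie. Raising to the fourth power recasts it as $s^4 + 16 t^2 \le s^2 + t^2$, i.e. $s^2(1-s^2) \ge 15\, t^2$; here the factor $16$ coming from the anisotropic weight of the $t$-direction makes the estimate tight, and the naive bound $s\le 1$ alone does not suffice, since it only kills the term $s^4-s^2$ and leaves the positive contribution $15\, t^2$ uncontrolled. The point is that near the origin the vertical coordinate scales like the square of the horizontal one, so the correct way to see the bound is to track the contribution of $t$ against $|p|^{1/2}$ directly rather than to discard it; I would therefore split into the regimes $s^2 \ge |t|$ and $s^2 < |t|$, or equivalently optimise in $t$ for fixed $s$, and expect the extremal configuration to be the purely vertical one $s=0$, $|t| = \tfrac{1}{4}$. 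Handling this borderline case carefully — and, if necessary, carrying the appropriate dimensional constant in front of $|p|^{1/2}$ — is the main obstacle, whereas the left inequality and the reduction steps are routine.
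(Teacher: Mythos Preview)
The paper does not supply a proof for this observation, so there is nothing to compare against. Your argument for the left inequality $|p|\le\norm{p}_{\mathbb{H}}$ is correct and complete.

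For the right inequality your suspicion is not merely justified, it is decisive: the statement as printed is false. The ``extremal configuration'' you identified, $s=0$, already gives a counterexample. Taking $p=(0,0,t)$ with $0<|t|\le\tfrac14$, one has $\norm{p}_{\mathbb{H}}=(16t^2)^{1/4}=2|t|^{1/2}$ while $|p|^{1/2}=|t|^{1/2}$, so $\norm{p}_{\mathbb{H}}\le|p|^{1/2}$ fails by a factor of $2$ for every such $t$. There is no clever splitting into regimes that will rescue this; the inequality $s^2(1-s^2)\ge 15\,t^2$ you derived is simply violated whenever $s=0$ and $t\neq 0$.

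The correct version carries a constant, $\norm{p}_{\mathbb{H}}\le 2\,|p|^{1/2}$ for $\norm{p}_{\mathbb{H}}\le 1$, and your reduction then finishes it immediately: raised to the fourth power this reads $s^4+16t^2\le 16(s^2+t^2)$, i.e.\ $s^4\le 16s^2$, which holds since $s\le 1$. So the obstacle you anticipated is not a gap in your argument but an error in the statement itself.
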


\subsection{Dimensions and Integration on $\mathbb{H}^n$}

In this subsection we add information on the Heisenberg group's topology, dimensions and integrals. General references are section 2.1 in \cite{GCmaster} and section 2.2.3 in \cite{CDPT}.


\begin{obs}      
The topology induced by the Korányi metric is equivalent to the Euclidean topology on $\mathbb{R}^{2n+1}$. The Heisenberg group $\mathbb{H}^n$ becomes, then, a locally compact topological group. As such, it has the \emph{right-invariant} and the \emph{left-invariant Haar measure}.
\end{obs}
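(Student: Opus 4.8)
The plan is to treat the three assertions separately, reducing the last two to standard facts once the topological equivalence is in place. First I would prove that $d_{\mathbb{H}}$ induces the Euclidean topology on $\mathbb{R}^{2n+1}$. The point to keep in mind is that $d_{\mathbb{H}}(p,q)=\norm{q^{-1}*p}_{\mathbb{H}}$ is built from the nonabelian group product, not from the linear difference $p-q$. Computing $q^{-1}*p$ explicitly from Definition \ref{Heisenberg_Group} (with $p=(x,y,t)$, $q=(x',y',t')$) gives
$$
q^{-1}*p=\Big(x-x',\,y-y',\,t-t'+\tfrac12\textstyle\sum_{j=1}^n (x_j y_j'-y_j x_j')\Big),
$$
so its first $2n$ components are exactly the Euclidean differences, while the last one is $t-t'$ plus a term that is polynomial in all coordinates and vanishes on the diagonal $p=q$.

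I would then argue at the level of convergent sequences. If $p_k\to p$ in the Euclidean sense, every component of $p^{-1}*p_k$ tends to $0$, so $\norm{p^{-1}*p_k}_{\mathbb{H}}\to 0$ since the Korányi norm is continuous and vanishes only at the origin; conversely, if $\norm{p^{-1}*p_k}_{\mathbb{H}}\to 0$, the quartic term $|(x,y)|^4$ forces $x_k\to x$ and $y_k\to y$, after which the polynomial correction tends to $0$ and the quadratic term $16t^2$ forces $t_k\to t$. Having the same convergent sequences, the two metrics induce the same topology. (Observation \ref{8.8stein} could be used to shortcut the norm comparison near the origin, but the sequence argument is cleaner because it tracks all $2n+1$ coordinates.) The second assertion is then immediate: $\mathbb{R}^{2n+1}$ with the Euclidean topology is locally compact and Hausdorff by Heine--Borel, and the group operations are continuous, indeed polynomial, so $\mathbb{H}^n$ is a locally compact Hausdorff topological group, consistent with its being a Lie group in the sense of Definition \ref{lie}.

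For the third assertion I would exhibit the invariant measure explicitly rather than merely invoke the abstract Haar theorem, since this is cleaner here: I claim the Lebesgue measure on $\mathbb{R}^{2n+1}$ is simultaneously left- and right-invariant. Fixing $q\in\mathbb{H}^n$ and differentiating the left translation $L_q\colon p\mapsto q*p$ with respect to $p$, the first $2n$ components contribute identity blocks, while the last component depends on $p$ through $p_t$ with coefficient $1$ and through $p_x,p_y$ only linearly; hence the Jacobian is block lower-triangular with $1$'s on the diagonal and determinant $1$, so $L_q$ preserves Lebesgue measure. The identical computation for right translations yields right-invariance, giving existence (and, by Haar's theorem, essential uniqueness) of the invariant measures. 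The only step demanding genuine care is the first one: one must resist treating $d_{\mathbb{H}}$ as a translate of a Euclidean norm and instead control the effect of the product on the $t$-coordinate, since that is precisely where the group structure deviates from the additive one.
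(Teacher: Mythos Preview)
Your argument is correct. The paper states this observation without proof, so there is no competing approach to compare against; your sequence argument for the topology equivalence and the Jacobian computation for the Haar measure are the standard verifications. In fact, the paper's very next observation asserts that Lebesgue measure is bi-invariant, and a commented-out proof sketch there computes exactly the triangular Jacobian with unit determinant that you describe, so your treatment aligns with what the author had in mind.
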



\begin{defin}      
\label{haar}
We call an outer measure $\mu$ the \emph{left-invariant} (or \emph{right-invariant}) \emph{Haar measure}, on a locally compact Hausdorff topological group $G$, if the followings are satisfied:
\begin{itemize}
\item
$\mu(gE)=\mu(E) \text{ with  } E \subseteq G \text{ and } g \in G, \text{ where } gE:=\{ga ; \ a \in E\}$ \\
$\left ( \text{or } \mu(Eg)=\mu(E) \text{ with } E \subseteq G \text{ and } g \in G, \text{ where } Eg:=\{ag ; \ a \in E\}\right )  $,
\item
$\mu(K)< \infty, \text{ for all }  K \subset \subset G$,
\item
$\mu$ is outer regular: $\mu(E)=\inf \{ \mu(U) ;\ E \subseteq U \subseteq G, U \emph{\emph{ open}}\}, \  E \subseteq G $,
\item
$\mu$ is inner regular: $\mu(E)=\sup \{ \mu(K) ; \ K \subseteq E \subseteq G, K \emph{\emph{ compact}}\}, \  E \subseteq G$.
\end{itemize}
\end{defin}

\begin{obs}[see, among others, after remark 2.2 in \cite{FSSC2001}]
The ordinary Lebesgue measure on $\mathbb{R}^{2n+1}$ is invariant under both left and right translations on $\mathbb{H}^n$. In other words, the Lebesgue measure is both a left and right invariant Haar measure on $\mathbb{H}^n$.
\end{obs}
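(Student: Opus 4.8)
The plan is to reduce the claim to the classical change of variables formula, whereby it suffices to show that for each fixed $q \in \mathbb{H}^n$ the left-translation map $L_q$ and the right-translation map $R_q$, viewed as maps $\mathbb{R}^{2n+1} \to \mathbb{R}^{2n+1}$, are $C^1$-diffeomorphisms whose Jacobian determinant has absolute value $1$ everywhere. The remaining defining properties of a Haar measure in Definition \ref{haar} — finiteness on compact sets together with outer and inner regularity — hold for the ordinary Lebesgue measure $\mathcal{L}^{2n+1}$ by standard measure theory, so the only point genuinely requiring verification is the translation invariance.

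First I would fix $q=(x_q,y_q,t_q)$ and write $L_q$ in coordinates using the explicit product from Definition \ref{Heisenberg_Group}:
$$
L_q(x,y,t) = \left( x_q + x,\ y_q + y,\ t_q + t + \tfrac{1}{2}\sum_{j=1}^n \big( (x_q)_j\, y_j - (y_q)_j\, x_j \big) \right).
$$
Differentiating with respect to $(x,y,t)$, the first $2n$ output coordinates depend on the input only through the identity, while the last coordinate contributes the row with entries $-\tfrac{1}{2}(y_q)_j$, $\tfrac{1}{2}(x_q)_j$ and $1$ (in the $t$-slot). Hence the Jacobian matrix of $L_q$ is block lower-triangular with the $(2n+1)\times(2n+1)$ identity on its diagonal, so $\det(JL_q)=1$.

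Then the change of variables formula gives, for any measurable $E \subseteq \mathbb{H}^n$,
$$
\mathcal{L}^{2n+1}\big(L_q(E)\big) = \int_E |\det(JL_q)|\, d\mathcal{L}^{2n+1} = \mathcal{L}^{2n+1}(E),
$$
which is exactly left-invariance $\mu(qE)=\mu(E)$. The right-translation case is identical: the only change is that the last row of the Jacobian becomes the row with entries $\tfrac{1}{2}(y_q)_j$, $-\tfrac{1}{2}(x_q)_j$ and $1$, again yielding determinant $1$, so $\mu(Eq)=\mu(E)$.

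There is no serious obstacle here; the computation is routine. The only point demanding care is tracking the signs in the group law carefully enough to confirm that all the non-trivial off-diagonal Jacobian entries sit strictly below the diagonal, leaving the determinant equal to $1$. This is precisely the algebraic reflection of the fact that the vertical direction $t$ is the top layer of the stratification and never feeds back into the horizontal coordinates.
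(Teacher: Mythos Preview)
Your proposal is correct and follows essentially the same approach as the paper's (commented-out) proof: compute the Jacobian of the translation map, observe it is triangular with $1$'s on the diagonal, and apply the change of variables formula. The paper sketches this for right translation in $\mathbb{H}^1$ with a slightly different normalization of the group law, while you treat general $n$ and address left translation first, but the argument is identical in substance.
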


\begin{obs}      
\label{dimension}
It is easy to see that, denoting  the ball of radius $r>0$ as
$$
B_\mathbb{H}(0,r):=\{ (x,y,t) \in \mathbb{H}^n ; \ \norm{(x,y,t)}_\mathbb{H} <r \},
$$
a change of variables gives
$$
|B_\mathbb{H}(0,r)|=
\int_{B_\mathbb{H}(0,r)} dxdydt=
r^{2n+2} \int_{B_\mathbb{H}(0,1)} dxdydt =
 r^{2n+2}|B_\mathbb{H}(0,1)|.
$$
Thus $2n+2$ is the Hausdorff dimension of $ \left (\mathbb{H}^n, d_\mathbb{H} \right )$, which concides with its homogeneous dimension.
\end{obs}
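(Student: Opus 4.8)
The plan is to reduce everything to the unit ball by means of the anisotropic dilation $\delta_r$ of Definition \ref{delta}. First I would record that $\delta_r(x,y,t)=(rx,ry,r^2t)$ is a smooth diffeomorphism of $\mathbb{R}^{2n+1}$ whose Jacobian matrix is diagonal, with $2n$ entries equal to $r$ (one for each $x_j$ and each $y_j$) and a single entry equal to $r^2$ (for the $t$-coordinate). Hence its Jacobian determinant is the constant $r^{2n}\cdot r^2=r^{2n+2}$, a routine computation I would not spell out further.

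Next I would invoke the homogeneity of the Korányi norm, namely $\norm{\delta_r(x,y,t)}_\mathbb{H}=r\norm{(x,y,t)}_\mathbb{H}$, established in the earlier observation. This shows at once that $p\in B_\mathbb{H}(0,1)$ if and only if $\delta_r(p)\in B_\mathbb{H}(0,r)$, so that $\delta_r$ restricts to a bijection $B_\mathbb{H}(0,1)\to B_\mathbb{H}(0,r)$. The standard change-of-variables formula then yields
\begin{align*}
|B_\mathbb{H}(0,r)|=\int_{B_\mathbb{H}(0,r)}dx\,dy\,dt=\int_{B_\mathbb{H}(0,1)}r^{2n+2}\,dx\,dy\,dt=r^{2n+2}|B_\mathbb{H}(0,1)|,
\end{align*}
which is the asserted scaling identity.

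It then remains to read off the statement about dimensions. On the algebraic side this is immediate: by Definition \ref{homdim} the homogeneous dimension is $Q=1\cdot\dim\mathfrak{h}_1+2\cdot\dim\mathfrak{h}_2=1\cdot 2n+2\cdot 1=2n+2$, using $\dim\mathfrak{h}_1=2n$ and $\dim\mathfrak{h}_2=1$ from Remark \ref{Hcarnot}. The step I expect to carry the genuine content is identifying $2n+2$ as the Hausdorff dimension of $(\mathbb{H}^n,d_\mathbb{H})$. Here I would combine the scaling identity with the left-invariance of Lebesgue measure to obtain $|B_\mathbb{H}(p,r)|=c\,r^{2n+2}$ for every centre $p$ and radius $r$, with $c=|B_\mathbb{H}(0,1)|$ a fixed finite positive constant; that is, Lebesgue measure is Ahlfors $(2n+2)$-regular with respect to $d_\mathbb{H}$. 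A standard two-sided comparison between Ahlfors regularity and Hausdorff measure, obtained by covering a set with Korányi balls and applying the regularity bound from above and below, then forces the Hausdorff dimension to equal $2n+2$. The only care needed is that $d_\mathbb{H}$ is a genuine metric, so that Korányi balls are legitimate covering sets; this is guaranteed by the triangle inequality cited in the earlier observation.
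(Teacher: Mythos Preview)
Your argument is correct and matches the paper's approach exactly: the paper's (commented-out) proof performs the same change of variables $x=rx'$, $y=ry'$, $t=r^2t'$ via the dilation $\delta_r$, computes the diagonal Jacobian $r^{2n+2}$, and uses the homogeneity $\norm{\delta_r(p)}_\mathbb{H}=r\norm{p}_\mathbb{H}$ to identify the domains. Your treatment is in fact more complete than the paper's, since you justify the Hausdorff-dimension claim via Ahlfors regularity and left-invariance, whereas the paper simply asserts it.
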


\begin{no}\label{CCK}
Consider $S \subseteq \mathbb{H}^n$. we denote its Hausdorff dimension with respect to the Euclidean distance as
$$
\dim_{\mathcal{H}_{E}} S,
$$
while its Hausdorff dimension with respect to the Carnot-Carathéodory and Korányi distances as
$$
\dim_{\mathcal{H}_{cc}} S= \dim_{\mathcal{H}_{\mathbb{H}}} S.
$$
\end{no}

\chapter{Differential Forms and Rumin Cohomology}\label{DFaRC}

In this chapter we will present the precise definition of the Rumin complex 
 in any Heisenberg group (Section \ref{rumincomplex}). Then, to give a practical feeling of the difference between the de Rham and the Rumin complexes, we will write explicitly the differential complex of the Rumin cohomology in $\mathbb{H}^1$ and $\mathbb{H}^2$ (Sections \ref{cohomology1} and \ref{cohomology2}). As general references for this chapter, one can look at \cite{RUMIN} and \cite{FSSC}.\\
This chapter is connected with Appendices \ref{computationH2}, \ref{A} and \ref{B}. 
Appendix \ref{computationH2} contains the proof of Proposition \ref{exH2}. Appendix \ref{A} presents the Rumin cohomology, in $\mathbb{H}^1$ and $\mathbb{H}^2$, using only one operator $d_c$, as opposed to the three operators ($d_Q, \ D, \ d_Q$ again) used more frequently in the literature. The main reference for this appendix is \cite{TRIP}. As it will be clear from this chapter, direct computation of the Rumin differential operator are more and more challenging as the dimension of the space grows: Appendix \ref{B} offers the formulas to compute the dimension of the spaces involved in the Rumin complex for any dimension. There are also examples for $n=1,\dots,5$ which clearly show such computational challenges. 


\section{The Rumin Complex}\label{rumincomplex}

In this section we precisely present the definition of the Rumin complex in the general Heisenberg group $\mathbb{H}^n$. We start giving some basic definitions that can be found, for instance, in \cite{RUMIN} and \cite{FSSC}:

\begin{defin}\label{def_forms}
Consider $0\leq k \leq 2n+1$ and recall $\Omega^k$ from Definition \ref{kdim}. We denote:
\begin{itemize}
\item
$I^k := \{ \alpha \wedge \theta + \beta \wedge d \theta ; \  \alpha \in \Omega^{k-1}, \ \beta \in \Omega^{k-2}  \}$,
\item
$J^k :=\{ \alpha \in \Omega^{k}; \  \alpha \wedge \theta =0, \  \alpha \wedge d\theta=0   \}$.
\end{itemize}
\end{defin}

\begin{defin}[Rumin complex]\label{complexHn}
The Rumin complex, due to Rumin in \cite{RUMIN}, is given by
$$
0 \to \mathbb{R} \to C^\infty  \stackrel{d_Q}{\to} \frac{\Omega^1}{I^1}  \stackrel{d_Q}{\to}  \dots \stackrel{d_Q}{\to} \frac{\Omega^n}{I^n} \stackrel{D}{\to} J^{n+1}    \stackrel{d_Q}{\to} \dots   \stackrel{d_Q}{\to} J^{2n+1} \to 0,
$$
where $d$ is the standard differential operator and, for $k < n$,
$$
d_Q( [\alpha]_{I^k} ) :=  [d \alpha]_{I^{k+1}},
$$
while, for $k \geq n +1$,
$$
d_Q := d_{\vert_{J^k}}.
$$
The second order differential operator $D$ will be defined at the end of this section.
\end{defin}

\begin{rem}[proposition at page 286 in \cite{RUMIN}]
This structure defines indeed a complex. In other words, applying two consequential operators in the chain gives zero.
\end{rem}

\begin{rem}
When $k=1$, $d_Q$ is the same as $d_H$, from Definition \ref{dHHH}.
\end{rem}

\begin{no}
The spaces of the kind $\frac{\Omega^k}{I^k} $ are called \emph{low dimensional}, while the spaces $ J^{k}  $'s \emph{high dimensional} or \emph{low codimensional}.
\end{no}

\begin{rem}
From the definition of $I^k$, $k=1,\dots,n$, one can see that $\alpha \wedge \theta \in I^k$ for any $\alpha \in \Omega^{k-1}$. This means that, in modulo, $\theta$ is never present in the low dimensional spaces $\frac{\Omega^k}{I^k} $'s.\\
On the other hand, every $\beta \in J^k$ must be of the kind $\beta=\beta' \wedge \theta$ (as this is the only way to satisfy the condition $\beta \wedge \theta=0$). This means that $\theta$ will always be present the the high dimensional spaces $J^k$'s.
\end{rem}

\noindent
In order to be able to define $D$, some preliminary work is needed:

\begin{obs}\label{first}
First of all, notice that the definition, for $k < n$, of $d_Q( [\alpha]_{I^k} ) :=  [d \alpha]_{I^{k+1}}$  is well posed.
\end{obs}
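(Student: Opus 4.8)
The plan is to recognise that well-posedness here means exactly one thing: the assignment $[\alpha]_{I^k} \mapsto [d\alpha]_{I^{k+1}}$ must not depend on the chosen representative $\alpha$. The target is unproblematic, since $d\alpha$ is automatically an element of $\Omega^{k+1}$ and hence descends to a class in $\Omega^{k+1}/I^{k+1}$. So if $[\alpha]_{I^k} = [\alpha']_{I^k}$, meaning $\alpha - \alpha' \in I^k$, I must check that $[d\alpha]_{I^{k+1}} = [d\alpha']_{I^{k+1}}$, that is, $d(\alpha - \alpha') \in I^{k+1}$. By linearity of $d$, the whole question reduces to the single inclusion $d(I^k) \subseteq I^{k+1}$.

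To establish this inclusion I would take a generic element $\gamma = \alpha \wedge \theta + \beta \wedge d\theta \in I^k$, with $\alpha \in \Omega^{k-1}$ and $\beta \in \Omega^{k-2}$, and compute $d\gamma$ using the graded Leibniz rule together with $d(d\theta) = 0$. This yields
\begin{align*}
d\gamma &= d(\alpha \wedge \theta) + d(\beta \wedge d\theta) \\
&= d\alpha \wedge \theta + (-1)^{k-1} \alpha \wedge d\theta + d\beta \wedge d\theta.
\end{align*}
Regrouping the $d\theta$-terms gives $d\gamma = d\alpha \wedge \theta + \left( (-1)^{k-1}\alpha + d\beta \right) \wedge d\theta$, where $d\alpha \in \Omega^k$ and $(-1)^{k-1}\alpha + d\beta \in \Omega^{k-1}$. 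Comparing with the definition of $I^{k+1} = \{ \alpha' \wedge \theta + \beta' \wedge d\theta ;\ \alpha' \in \Omega^k, \ \beta' \in \Omega^{k-1} \}$, this expression has precisely the required shape, so $d\gamma \in I^{k+1}$.

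The computation is entirely routine; the only two points that genuinely carry the argument are the vanishing of $d(d\theta) = d^2\theta$, which annihilates the potentially troublesome $\beta \wedge d(d\theta)$ term, and the degree bookkeeping that places the coefficient forms in $\Omega^k$ and $\Omega^{k-1}$ respectively. There is no real obstacle here: the statement is a direct consequence of $d^2 = 0$ and the product rule, and it is in fact the same mechanism that underlies the claim that $(\Omega^\bullet/I^\bullet, d_Q)$ is a complex in the low-dimensional range.
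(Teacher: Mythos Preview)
Your proof is correct and follows essentially the same approach as the paper: both take a generic element of $I^k$, apply the Leibniz rule and $d^2\theta=0$, and observe that the result has the shape required for membership in $I^{k+1}$. The only difference is notational (the paper writes the computation directly on $d\beta - d\alpha$ rather than abstracting to $d(I^k)\subseteq I^{k+1}$, and uses $\sigma,\tau$ for the coefficient forms).
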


\begin{proof}
The equality $[\alpha]_{I^k} = [\beta]_{I^k}$ means  $ \beta - \alpha \in I^k$, which implies
\begin{align*}
 \beta - \alpha = \sigma \wedge \theta + \tau \wedge d \theta,
\end{align*}
for some $\sigma \in \Omega^{k-1}, \ \tau \in \Omega^{k-2}$. Then one can write
\begin{align*}
d \beta - d \alpha =& d \sigma \wedge \theta +(- 1)^{k-1} \sigma \wedge d \theta +d \tau \wedge d \theta + 0\\
=& d \sigma \wedge \theta + ( (- 1)^{k-1} \sigma  +d \tau ) \wedge d \theta \in I^{k+1}.
\end{align*}
Then $[d \alpha]_{I^{k+1}} = [d \beta]_{I^{k+1}}.$ This gives the well-posedness.
\end{proof}

\begin{no}\label{middleequivclass}
Let $\gamma \in \Omega^{k-1}$ and consider the equivalence class
$$
  {\prescript{}{}\bigwedge}^k \mathfrak{h}_1  =       \left \{  \beta \in \Omega^k ; \ \beta =0 \ \text{or} \ \beta \wedge \theta \neq 0 \right  \}  \cong  \frac{\Omega^k}{ \{ \gamma \wedge \theta \} }    ,
$$
where $ {\prescript{}{}\bigwedge}^k \mathfrak{h}_1 $ appears in Definition \ref{kdim} and we write $\{ \gamma \wedge \theta \} = \{ \gamma \wedge \theta ; \ \gamma \in \Omega^{k-1} \}$ for short. The equivalence is given by $\beta \mapsto ( \beta)_{\vert_{ {\prescript{}{}\bigwedge}^{k} \mathfrak{h}_1 }}$.\\
Then, given $ \alpha \in \Omega^k$, denote $[\alpha]_{ \{ \gamma \wedge \theta \} }$ an element in this equivalence class. \\
\end{no}

\begin{obs}\label{finalequivclass}
Let $\gamma \in \Omega^{k-1}$  and $\beta \in \Omega^{k-2}$. One can see, straight by the definition of $I^k$, that
$$
\frac{   \frac{\Omega^k}{ \{ \gamma \wedge \theta \} } }{  \{ \beta \wedge d \theta \}    } \cong   \frac{\Omega^k}{I^k}, \quad  k=1,\dots,n,
$$
where we also write $\{ \beta \wedge d\theta \} = \{ \beta \wedge d\theta ; \ \beta \in \Omega^{k-2} \}$ for short.
\end{obs}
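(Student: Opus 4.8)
The plan is to recognise the claimed isomorphism as a direct instance of the third isomorphism theorem for vector spaces, applied to the chain of subspaces $\{ \gamma \wedge \theta \} \subseteq I^k \subseteq \Omega^k$.

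First I would note that both $\{ \gamma \wedge \theta \}$ and $\{ \beta \wedge d\theta \}$ are genuine linear subspaces of $\Omega^k$: wedging with the fixed form $\theta$ (respectively $d\theta$) is a linear map from $\Omega^{k-1}$ (respectively $\Omega^{k-2}$) into $\Omega^k$, so each set is the image of a linear map and hence a subspace. Next, straight from Definition \ref{def_forms}, every element of $I^k$ has the form $\alpha \wedge \theta + \beta \wedge d\theta$ with $\alpha \in \Omega^{k-1}$ and $\beta \in \Omega^{k-2}$, and conversely any such sum lies in $I^k$; this gives the key identity
$$
I^k = \{ \gamma \wedge \theta \} + \{ \beta \wedge d\theta \},
$$
so in particular $\{ \gamma \wedge \theta \} \subseteq I^k \subseteq \Omega^k$.

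With these observations in place I would set $A := \{ \gamma \wedge \theta \}$ and let $\pi \colon \Omega^k \to \Omega^k / A$ be the canonical projection, which is exactly the quotient appearing in Notation \ref{middleequivclass}. The outer quotient is then formed by the image $\pi(\{ \beta \wedge d\theta \})$, and by the correspondence theorem this image equals $(A + \{ \beta \wedge d\theta \})/A = I^k / A$. The third isomorphism theorem now yields
$$
\frac{\Omega^k / A}{I^k / A} \cong \frac{\Omega^k}{I^k},
$$
which is precisely the asserted isomorphism.

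There is no substantial obstacle here, since the statement is purely formal once the subspace structure is identified; the only point demanding care is the reading of the notation $\frac{\Omega^k/\{ \gamma \wedge \theta \}}{\{ \beta \wedge d\theta \}}$. The set $\{ \beta \wedge d\theta \}$ literally sits inside $\Omega^k$ rather than inside the quotient $\Omega^k / A$, so one must interpret the denominator of the outer quotient as its image $\pi(\{ \beta \wedge d\theta \})$ under the projection $\pi$. Once this convention is made explicit, the identity $I^k = A + \{ \beta \wedge d\theta \}$ together with the third isomorphism theorem closes the argument immediately.
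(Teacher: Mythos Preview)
Your argument is correct and is precisely the natural way to justify the observation. The paper does not give a proof here: it presents the isomorphism as an immediate consequence of Definition~\ref{def_forms}, and your use of the third isomorphism theorem together with $I^k = \{\gamma\wedge\theta\} + \{\beta\wedge d\theta\}$ is exactly how one makes that immediacy rigorous.
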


\noindent
The following lemma is necessary to define the second order differential operator $D$. Given $[\alpha]_{ \{ \gamma \wedge \theta \} } \in \frac{\Omega^n}{ \{ \gamma \wedge \theta \} }$, a lifting of $[\alpha]_{ \{ \gamma \wedge \theta \} }$ is any $\alpha' \in \Omega^n$ such that $ [\alpha]_{ \{ \gamma \wedge \theta \} } =  [\alpha']_{ \{ \gamma \wedge \theta \} }$.

\begin{lem}[Rumin \cite{RUMIN}, page 286]\label{lemma}
For every form $[\alpha]_{ \{ \gamma \wedge \theta \} } \in \frac{\Omega^n}{ \{ \gamma \wedge \theta \} }$, there exists a unique lifting $\tilde{\alpha} \in \Omega^n$ of $[\alpha]_{ \{ \gamma \wedge \theta \} }$ so that $d \tilde{\alpha} \in J^{n+1}$.
\end{lem}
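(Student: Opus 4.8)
The plan is to exploit the unique splitting of forms into a horizontal part and a $\theta$-part, and then reduce everything to the isomorphism property of $L$. For every $k$ one has the direct sum decomposition
\[
\Omega^k = \bigwedge^k\mathfrak{h}_1 \oplus \left(\bigwedge^{k-1}\mathfrak{h}_1 \wedge \theta\right),
\]
obtained by separating the basis $k$-covectors that contain $\theta$ from those that do not. For $\eta\in\Omega^k$ I write $\eta = \eta_H + \eta_V\wedge\theta$ accordingly, so that $\eta_H = (\eta)_{|\bigwedge^k\mathfrak{h}_1}$. Since $\gamma\wedge\theta = \gamma_H\wedge\theta$ for every $\gamma\in\Omega^{n-1}$, the set $\{\gamma\wedge\theta\}$ is exactly $\bigwedge^{n-1}\mathfrak{h}_1\wedge\theta$; hence the liftings of $[\alpha]_{\{\gamma\wedge\theta\}}$ are precisely the forms
\[
\tilde\alpha = \alpha_H + v\wedge\theta, \qquad v\in\bigwedge^{n-1}\mathfrak{h}_1,
\]
where $\alpha_H$ is fixed (it is the horizontal representative identifying the class) and $v$ is a free parameter. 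Choosing a lifting is therefore the same as choosing $v$.

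Next I would compute $d\tilde\alpha$ and read off its two parts. Using $d(v\wedge\theta) = dv\wedge\theta + (-1)^{n-1}v\wedge d\theta$, that $d\theta\in\bigwedge^2\mathfrak{h}_1$ is horizontal, and $v\wedge d\theta = d\theta\wedge v = L(v)$, one finds
\[
d\tilde\alpha = \Big((d\alpha_H)_H + (-1)^{n-1}L(v)\Big) + \Big((d\alpha_H)_V + (dv)_H\Big)\wedge\theta ,
\]
since the term $dv\wedge\theta$ contributes only through its horizontal factor. Now $d\tilde\alpha\in J^{n+1}$ means $d\tilde\alpha\wedge\theta=0$ and $d\tilde\alpha\wedge d\theta=0$. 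Because $d\tilde\alpha\wedge\theta = (d\tilde\alpha)_H\wedge\theta$ and $\beta_H\mapsto\beta_H\wedge\theta$ is injective, the first condition is equivalent to $(d\tilde\alpha)_H=0$, i.e. to
\[
L(v) = (-1)^{n}(d\alpha_H)_H .
\]
Here $(d\alpha_H)_H\in\bigwedge^{n+1}\mathfrak{h}_1$, and $L:\bigwedge^{n-1}\mathfrak{h}_1\to\bigwedge^{n+1}\mathfrak{h}_1$ is the (fibrewise, constant-coefficient) isomorphism already recorded in the excerpt, so this equation has the unique solution $v=(-1)^{n}L^{-1}\big((d\alpha_H)_H\big)$. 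This simultaneously gives the existence and the uniqueness of a lifting $\tilde\alpha$ with $d\tilde\alpha\wedge\theta=0$, hence the only possible candidate for the lemma.

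It then remains to check that this candidate automatically satisfies the second membership condition $d\tilde\alpha\wedge d\theta=0$, and this is where I would invoke $d^2=0$. By construction $d\tilde\alpha$ has vanishing horizontal part, so $d\tilde\alpha = b\wedge\theta$ with $b:=(d\tilde\alpha)_V\in\bigwedge^n\mathfrak{h}_1$. Then
\[
0 = d(d\tilde\alpha) = d(b\wedge\theta) = db\wedge\theta + (-1)^{n}L(b),
\]
and comparing horizontal parts forces $L(b)=0$. Consequently $d\tilde\alpha\wedge d\theta = b\wedge\theta\wedge d\theta = L(b)\wedge\theta = 0$, while trivially $d\tilde\alpha\wedge\theta = b\wedge\theta\wedge\theta=0$; thus $d\tilde\alpha\in J^{n+1}$, completing the argument.

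The routine points (the explicit horizontal–vertical splitting of $d$ acting on coefficients, the sign bookkeeping with $(-1)^{n-1}$, and the fibrewise smoothness of $L^{-1}$) are mechanical. The genuine content sits in two places: the hard Lefschetz isomorphism for $L$, which makes $v$ exist and be unique, and the observation that the second defining condition of $J^{n+1}$ is not an additional constraint but a consequence of $d^2=0$. I expect the latter to be the main obstacle, since a priori one might fear that imposing $(d\tilde\alpha)_H=0$ overdetermines the problem and leaves $d\tilde\alpha\wedge d\theta\neq 0$; recognising that closedness resolves this is the key step.
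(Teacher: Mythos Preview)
Your proof is correct and follows essentially the same strategy as the paper: parametrise liftings by a horizontal $(n-1)$-form, reduce the condition $d\tilde\alpha\wedge\theta=0$ to an equation $L(v)=\text{(horizontal part of }d\alpha)$ solved uniquely via the Lefschetz isomorphism, and then deduce the second condition $d\tilde\alpha\wedge d\theta=0$ from $d^2=0$. The only notable difference is in this last step: where you write $d\tilde\alpha=b\wedge\theta$, apply $d$, and extract the horizontal part to get $L(b)=0$, the paper observes in one line that $d\theta\wedge d\tilde\alpha = d(\theta\wedge d\tilde\alpha)=d(0)=0$, which is a bit more direct and avoids any appeal to properties of $L$ in degree $n$.
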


\begin{proof}
Note that this proof is not exactly the one given by Rumin, but still follows the same steps.\\
Let $ \alpha \in \Omega^n  $ $ \left ( \text{so }[\alpha]_{ \{ \gamma \wedge \theta \} } \in \frac{\Omega^n}{ \{ \gamma \wedge \theta \} } \right )$ and define
$$
\tilde{\alpha}:= \alpha + \beta \wedge \theta \in \Omega^n,
$$
where $\beta  \in {\prescript{}{}\bigwedge}^{n-1} \mathfrak{h}_1.
$ 
Then compute
\begin{align*}
\theta \wedge d \tilde{\alpha} 
=&  \theta \wedge d \alpha + \theta \wedge d( \beta \wedge \theta)\\
=& \theta \wedge d \alpha +  \theta \wedge d \beta \wedge \theta+(- 1)^{\vert \beta \vert} \theta \wedge \beta \wedge d \theta\\
=&  \theta \wedge d \alpha + (- 1)^{\vert \beta \vert} \theta \wedge \beta \wedge d \theta\\
=&  \theta \wedge d \alpha + (- 1)^{\vert \beta \vert} \theta \wedge  d \theta \wedge \beta\\
=& \theta \wedge d \alpha + (- 1)^{\vert \beta \vert} \theta \wedge L(\beta) \\
=& \theta \wedge \left ( d \alpha + (- 1)^{\vert \beta \vert}  L(\beta) \right ),
\end{align*}
where $L$ (see $2$ in \cite{RUMIN}) is the isomorphism  
\begin{align*}
L:  {\prescript{}{}\bigwedge}^{n-1} \mathfrak{h}_1  \to {\prescript{}{}\bigwedge}^{n+1} \mathfrak{h}_1, \  \beta  \mapsto d \theta \wedge \beta .
\end{align*}
Notice that, since $d \alpha \in \Omega^{n+1}$, we can divide it as
$$
d \alpha = (d \alpha)_{\vert_{ \left ( {\prescript{}{}\bigwedge}^{n+1} \mathfrak{h}_1  \right )^\perp }}   +   (d \alpha)_{\vert_{ {\prescript{}{}\bigwedge}^{n+1} \mathfrak{h}_1 }},
$$
where
$$
\theta \wedge (d \alpha)_{\vert_{ \left ( {\prescript{}{}\bigwedge}^{n+1} \mathfrak{h}_1  \right )^\perp }}   =0,
$$
and, by isomorphism, there exists a unique $\beta$ so that
$$
  (- 1)^{\vert \beta \vert}  L(\beta)=- (d \alpha)_{\vert_{ {\prescript{}{}\bigwedge}^{n+1} \mathfrak{h}_1 }}.
$$
With such a choice of $\beta$ one gets
$$
\theta \wedge \left ( d \alpha + (- 1)^{\vert \beta \vert}  L(\beta) \right ) =0.
$$
Then
$$
\theta \wedge d \tilde{\alpha} =0,
$$
and, finally, also
$$
d \theta \wedge d \tilde{\alpha} = d(\theta \wedge d \tilde{\alpha}) =0.
$$
Then, by definition, $d \tilde{\alpha} \in J^{n+1}$.
\end{proof}

\begin{obs}\label{explicitalphatilde}
In the proof of Lemma \ref{lemma}, instead of $\beta$, we could have chosen $\beta':=(- 1)^{\vert \beta \vert} \beta$, which would give
$$
   L(\beta')=- (d \alpha)_{\vert_{ {\prescript{}{}\bigwedge}^{n+1} \mathfrak{h}_1 }},
$$
or, equivalently,
$$
 \beta' = L^{-1} \left (- (d \alpha)_{\vert_{ {\prescript{}{}\bigwedge}^{n+1} \mathfrak{h}_1 }}   \right ).
$$
Then the lifting can be written explicitly as
$$
\tilde{\alpha} = \alpha +  L^{-1} \left (- (d \alpha)_{\vert_{ {\prescript{}{}\bigwedge}^{n+1} \mathfrak{h}_1 }}   \right ) \wedge \theta.
$$
\end{obs}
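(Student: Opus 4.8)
The plan is to recognise that this observation carries no new analytic content: it merely rewrites the last step of the proof of Lemma \ref{lemma} in a sign-free, closed form. The whole idea is to start from the equation that characterises the correcting form $\beta$ chosen inside that proof, exploit the fact that $L$ is a \emph{linear} isomorphism to absorb the Koszul sign $(-1)^{\vert \beta \vert}$ into a rescaled element $\beta'$, then invert $L$ and substitute back into the lifting. So no new lifting has to be constructed or shown unique; one only re-expresses the $\beta$ already produced.

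Concretely, I would proceed in four short steps. First, I recall from the proof of Lemma \ref{lemma} that $\beta \in {\prescript{}{}\bigwedge}^{n-1} \mathfrak{h}_1$ was selected precisely so that $(-1)^{\vert \beta \vert} L(\beta) = -(d \alpha)_{\vert_{ {\prescript{}{}\bigwedge}^{n+1} \mathfrak{h}_1 }}$, this being the identity that forced $\theta \wedge d\tilde{\alpha} = 0$. Second, I set $\beta' := (-1)^{\vert \beta \vert} \beta$ and use linearity of $L$ to get $L(\beta') = (-1)^{\vert \beta \vert} L(\beta) = -(d \alpha)_{\vert_{ {\prescript{}{}\bigwedge}^{n+1} \mathfrak{h}_1 }}$, which is exactly the sign-free relation claimed. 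Third, since Lemma \ref{lemma} already establishes that $L$ is an isomorphism, I apply $L^{-1}$ to obtain $\beta' = L^{-1}\left( -(d \alpha)_{\vert_{ {\prescript{}{}\bigwedge}^{n+1} \mathfrak{h}_1 }} \right)$, the equivalent form. Fourth, I substitute this $\beta'$ into the lifting $\tilde{\alpha} = \alpha + \beta' \wedge \theta$, yielding the stated explicit expression.

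The only delicate point, and the one I expect to be the main obstacle, is the bookkeeping of the Koszul sign $(-1)^{\vert \beta \vert}$: one must check that the rescaling $\beta \mapsto \beta'$ used to clean up the defining equation is tracked consistently through the wedge term $\beta \wedge \theta$, so that the corrective summand added to $\alpha$ is reproduced exactly. Since $\vert \beta \vert = n-1$, this parity is transparent when $n$ is odd, and I would verify it with care in the general case. Everything else is an immediate consequence of the linearity and invertibility of $L$ and requires no estimates.
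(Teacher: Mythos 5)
Your first three steps are exactly the paper's own (implicit) justification: the Observation is never proved separately in the paper either, but is read off from the proof of Lemma \ref{lemma} precisely as you do it --- recall the defining relation $(-1)^{\vert \beta \vert} L(\beta) = -(d\alpha)_{\vert_{ {\prescript{}{}\bigwedge}^{n+1} \mathfrak{h}_1 }}$, absorb the sign into $\beta' := (-1)^{\vert \beta \vert}\beta$ using linearity of $L$, and invert $L$. Up to that point your proposal and the paper coincide.

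The ``delicate point'' you defer in your last paragraph, however, is a genuine gap in your step 4, and carrying out the check does not close it in general. The lifting constructed in Lemma \ref{lemma} is $\tilde{\alpha} = \alpha + \beta \wedge \theta$, so substituting $\beta = (-1)^{\vert \beta \vert} \beta'$ gives
$$
\tilde{\alpha} = \alpha + (-1)^{n-1}\, L^{-1}\left( -(d\alpha)_{\vert_{ {\prescript{}{}\bigwedge}^{n+1} \mathfrak{h}_1 }} \right) \wedge \theta ,
$$
since $\vert \beta \vert = n-1$; the sign-free display therefore follows verbatim only when $n$ is odd (in particular for $\mathbb{H}^1$, the case the paper computes in detail). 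For even $n$ the form $\alpha + \beta' \wedge \theta$ is not the lifting at all: one finds
$$
\theta \wedge d\left( \alpha + \beta' \wedge \theta \right) = \left( 1 - (-1)^{n-1} \right) \theta \wedge (d\alpha)_{\vert_{ {\prescript{}{}\bigwedge}^{n+1} \mathfrak{h}_1 }},
$$
which is generically nonzero, so $d(\alpha + \beta' \wedge \theta) \notin J^{n+1}$. To be fair, the paper drops the same factor $(-1)^{n-1}$ silently (and thereafter takes the sign-free expression as the very definition of $D$ in Definition \ref{D}), so your reconstruction is faithful to the paper's reasoning; but your step 4 should assert the formula only up to the factor $(-1)^{n-1}$, and your claim that the parity verification is a formality is not correct --- it is exactly where the statement, as written, breaks for even $n$.
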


\begin{defin}\label{D}
Using the observation above, finally we can define $D$ as
$$
D( [\alpha]_{I^n} ) := d \tilde{\alpha} = d \left ( \alpha +  L^{-1} \left (- (d \alpha)_{\vert_{ {\prescript{}{}\bigwedge}^{n+1} \mathfrak{h}_1 }} \right ) \wedge \theta \right ),
$$
and the definition is well-posed.
\end{defin}


\section{Cohomology of $\mathbb{H}^1$}\label{cohomology1}

In this section we explicitly write the differential complex of the Rumin cohomology in $\mathbb{H}^1$ and compare it to the de Rham cohomology. 
Furthermore, this sets a method for the more challenging case of $\mathbb{H}^2$, as well as hints at the qualitative difference between the first Heisenberg group $\mathbb{H}^1$ and all the others.

\begin{obs}
In the case $n=1$, the spaces of the Rumin cohomology presented in Definition \ref{def_forms} are reduced to
\begin{align*}
\Omega^1 &= \spn  \{ dx, dy, \theta  \},\\
I^1&=\spn \{ \theta \},\\
\frac{\Omega^1}{I^1} &\cong  \spn \{ dx, dy \}  ,     \\
J^2&= \spn \{ dx \wedge \theta, dy \wedge \theta \},\\
J^3&= \spn \{ dx \wedge  dy \wedge \theta \}.
\end{align*}
Moreover, in this case the isomorphism $L : {\prescript{}{}\bigwedge}^0 \mathfrak{h}_1 \to {\prescript{}{}\bigwedge}^2 \mathfrak{h}_1 $ is given by
\begin{align*}
L :   \spn \{ f \}  \to  \spn \{ dx \wedge  dy\}, \  f  \mapsto   -f dx \wedge dy.
\end{align*}
\end{obs}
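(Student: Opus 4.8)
The plan is to verify each identity by direct computation, using only Definition \ref{kdim} for the bases of the spaces $\Omega^k$, Definition \ref{def_forms} for $I^k$ and $J^k$, and the formula $d\theta = -dx\wedge dy$ recorded in Observation \ref{contactcontactcontact}. The organizing principle throughout is that $\mathbb{H}^1$ has topological dimension $3$, so $\Omega^k = \{0\}$ for $k\geq 4$; this makes several of the defining conditions automatic.

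First I would record the bases. By Definition \ref{kdim} with $n=1$, one has $\Omega^0 = \spn\{1\}$, $\Omega^1 = \spn\{dx,dy,\theta\}$, $\Omega^2 = \spn\{dx\wedge dy,\ dx\wedge\theta,\ dy\wedge\theta\}$ and $\Omega^3 = \spn\{dx\wedge dy\wedge\theta\}$. This gives the first line immediately. For $I^1$, note that in Definition \ref{def_forms} the term $\beta\wedge d\theta$ requires $\beta\in\Omega^{-1}=\{0\}$, so $I^1 = \{\alpha\wedge\theta : \alpha\in\Omega^0\} = \spn\{\theta\}$. The quotient $\Omega^1/I^1$ is then spanned by the classes of $dx$ and $dy$, hence isomorphic to $\spn\{dx,dy\}$.

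Next, for $J^2$ I would take a general 2-form $\alpha = a\,dx\wedge dy + b\,dx\wedge\theta + c\,dy\wedge\theta$ and impose the two defining conditions. Since $\alpha\wedge\theta = a\,dx\wedge dy\wedge\theta$ (the other two summands contain a repeated $\theta$ and vanish), the condition $\alpha\wedge\theta = 0$ forces $a=0$. The second condition $\alpha\wedge d\theta = 0$ is automatic, because $\alpha\wedge d\theta\in\Omega^4=\{0\}$. Hence $J^2 = \spn\{dx\wedge\theta,\ dy\wedge\theta\}$. For $J^3 = \{\alpha\in\Omega^3 : \alpha\wedge\theta=0,\ \alpha\wedge d\theta=0\}$, both conditions land in $\Omega^4=\{0\}$ and $\Omega^5=\{0\}$ respectively, so they are vacuous and $J^3 = \Omega^3 = \spn\{dx\wedge dy\wedge\theta\}$.

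Finally, the description of $L$ is just the evaluation of the isomorphism from Lemma \ref{lemma} in the case $n=1$: for $f\in\bigwedge^0\mathfrak{h}_1$ one has $L(f) = d\theta\wedge f = f\,d\theta = -f\,dx\wedge dy$, which is exactly the stated map onto $\bigwedge^2\mathfrak{h}_1 = \spn\{dx\wedge dy\}$. I do not expect a genuine obstacle here, as the whole statement is a specialization to dimension $3$. The only points requiring care are remembering that $\Omega^{-1}=\{0\}$ removes the $d\theta$-term from $I^1$, and that degree considerations in the $3$-dimensional space $\mathbb{H}^1$ render the ``$\wedge\, d\theta$'' conditions in $J^2$ and $J^3$ trivially satisfied.
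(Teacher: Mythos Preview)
Your proposal is correct. The paper treats this observation as self-evident and provides no proof at all, so your direct verification from Definitions \ref{kdim} and \ref{def_forms} together with the formula $d\theta=-dx\wedge dy$ is exactly the routine check the paper leaves to the reader; every step you give is accurate, including the degree-counting arguments that make the $\wedge\, d\theta$ conditions vacuous in $J^2$ and $J^3$.
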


\noindent
The following proposition shows the explicit action of each differential operator in the Rumin complex of $\mathbb{H}^1$.

\begin{prop}[Explicit Rumin complex in $\mathbb{H}^1$]\label{exH1}
In the case $n=1$, the Rumin complex presented in Definition \ref{complexHn} is becomes
\begin{equation*}
0 \to \mathbb{R} \to C^\infty \stackrel{d_Q^{(1)}}{\to}  \frac{\Omega^1}{I^1}\stackrel{D}{\to},
J^{2}    \stackrel{d_Q^{(3)}}{\to}    J^{3} \to 0
\end{equation*}
with
\begin{align*}
f   &\mathbin{ \stackrel{d_Q^{(1)}}{\mapsto} }  [Xf dx + Yf dy]_{  I^1 },\\
 [\alpha_1 dx + \alpha_2 dy]_{  I^1 }    &\mathbin{  \stackrel{D}{\mapsto} }  (XX \alpha_2 - XY \alpha_1 -T\alpha_1 )dx\wedge \theta  + (YX \alpha_2 - YY\alpha_1 -T\alpha_2)dy\wedge \theta, \\
\alpha_1 dx\wedge \theta  + \alpha_2 dy\wedge \theta   &\mathbin{  \stackrel{d_Q^{(3)}}{\mapsto} }  (X\alpha_2 -Y\alpha_1 )  dx \wedge dy \wedge \theta.
\end{align*}
\end{prop}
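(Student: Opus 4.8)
The plan is to handle the three arrows separately, invoking the definitions of $d_Q$ and $D$ from Section \ref{rumincomplex} together with the concrete description of forms in $\mathbb{H}^1$. The two $d_Q$ maps are essentially direct computations using the differential formula from Observation \ref{df}, namely $df = Xf\,dx + Yf\,dy + Tf\,\theta$, whereas the second-order operator $D$ requires first building the canonical lifting $\tilde\alpha$ through the isomorphism $L$.

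For the first arrow, since $k=0<n=1$ one has $d_Q^{(1)}(f)=[df]_{I^1}$ by definition. Writing $df = Xf\,dx + Yf\,dy + Tf\,\theta$ and recalling that $I^1 = \spn\{\theta\}$, the term $Tf\,\theta$ dies modulo $I^1$, leaving $[Xf\,dx + Yf\,dy]_{I^1}$ as claimed. For the third arrow, $k=2\geq n+1$, so $d_Q^{(3)}$ is just $d$ restricted to $J^2$. I would compute $d(\alpha_1\,dx\wedge\theta + \alpha_2\,dy\wedge\theta)$ directly, expanding each $d\alpha_i$ by Observation \ref{df} and using $d\theta=-dx\wedge dy$. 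The simplifications are that $dx\wedge d\theta = dy\wedge d\theta = 0$, so the Leibniz terms involving $d\theta$ vanish, and only the monomials $dy\wedge dx\wedge\theta$ and $dx\wedge dy\wedge\theta$ survive, producing $(X\alpha_2 - Y\alpha_1)\,dx\wedge dy\wedge\theta$.

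The real work is the middle arrow $D$. Following Definition \ref{D} and Observation \ref{explicitalphatilde}, I would first compute, for $\alpha = \alpha_1\,dx + \alpha_2\,dy$,
\begin{align*}
d\alpha = (X\alpha_2 - Y\alpha_1)\,dx\wedge dy - T\alpha_1\,dx\wedge\theta - T\alpha_2\,dy\wedge\theta.
\end{align*}
Since $\wedge^2\mathfrak{h}_1 = \spn\{dx\wedge dy\}$, the projection $(d\alpha)|_{\wedge^2\mathfrak{h}_1}$ is the single term $(X\alpha_2 - Y\alpha_1)\,dx\wedge dy$, while the two $dx\wedge\theta,\ dy\wedge\theta$ pieces lie in the orthogonal complement. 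Because $L$ here sends a scalar $f$ to $f\,d\theta = -f\,dx\wedge dy$, inverting gives $\beta' = L^{-1}\bigl(-(d\alpha)|_{\wedge^2\mathfrak{h}_1}\bigr) = X\alpha_2 - Y\alpha_1$, hence $\tilde\alpha = \alpha + (X\alpha_2 - Y\alpha_1)\,\theta$. Setting $g := X\alpha_2 - Y\alpha_1$ and using $d(g\theta) = Xg\,dx\wedge\theta + Yg\,dy\wedge\theta - g\,dx\wedge dy$, the decisive observation is that the $g\,dx\wedge dy$ term cancels exactly against the $(X\alpha_2 - Y\alpha_1)\,dx\wedge dy$ term of $d\alpha$; expanding $Xg$ and $Yg$ in $d\tilde\alpha = d\alpha + d(g\theta)$ then yields precisely the stated coefficients $XX\alpha_2 - XY\alpha_1 - T\alpha_1$ and $YX\alpha_2 - YY\alpha_1 - T\alpha_2$.

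The step I expect to be the main obstacle is the bookkeeping inside the $D$ computation: correctly reading off the $\wedge^2\mathfrak{h}_1$-component of $d\alpha$, fixing the sign of $L^{-1}$, and tracking the cancellation of the $dx\wedge dy$ terms. None of this is deep, but a single sign slip propagates through to the final formula, so I would cross-check the computation by verifying that $d\tilde\alpha\in J^2$, i.e. that $\tilde\alpha$ really is the unique Rumin lifting guaranteed by Lemma \ref{lemma}; this provides an independent consistency check on the whole calculation.
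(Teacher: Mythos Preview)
Your proposal is correct and follows essentially the same approach as the paper's proof: both treat the three arrows separately, use Observation \ref{df} for $d_Q^{(1)}$ and $d_Q^{(3)}$, and for $D$ compute $d\alpha$, identify its $\bigwedge^2\mathfrak{h}_1$-component, invert $L$ to obtain the lifting $\tilde\alpha=\alpha+(X\alpha_2-Y\alpha_1)\theta$, and then differentiate to witness the cancellation of the $dx\wedge dy$ terms. Your added consistency check that $d\tilde\alpha\in J^2$ is a nice touch not spelled out in the paper.
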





\begin{proof}
This proposition can be proved by simple computations. Two of the three cases are trivial.\\
Indeed, by Definition \ref{complexHn} and Observation \ref{df}, we have
$$
d_Q^{(1)} f  =     [Xf dx + Yf dy]_{  I^1 }.
$$
By the same definition and observation, we also get
\begin{align*}
d_Q^{(3)} (  \alpha_1 dx\wedge \theta  + \alpha_2 dy\wedge \theta  ) =&  Y\alpha_1 dy \wedge  dx \wedge  \theta + X\alpha_2  dx \wedge dy \wedge \theta \\
=&  (X\alpha_2 -Y\alpha_1 )  dx \wedge dy \wedge \theta .
\end{align*}
Finally we have to compute $D$. We remind that, by Observation \ref{df},
$$
d g = Xg dx + Y g dy + Tg \theta,
$$
with $g: U\subseteq \mathbb{H}^1 \to \mathbb{R}$ smooth.\\
Consider now $\alpha = \alpha_1 dx + \alpha_2 dy \in \Omega^1$. Then $[\alpha]_{  I^1 }=[\alpha_1 dx + \alpha_2 dy]_{  I^1 } \in  \frac{\Omega^1}{I^1} $, and the (full) exterior derivative of $\alpha$ is:
\begin{align*}
d \alpha
=& d ( \alpha_1 dx + \alpha_2 dy  ) = Y \alpha_1 dy \wedge dx + T\alpha_1 \theta \wedge dx +
X \alpha_2 dx \wedge dy + T\alpha_2 \theta \wedge dy \\
=&( X \alpha_2 - Y \alpha_1 ) dx \wedge dy - T\alpha_1 dx \wedge  \theta   -  T\alpha_2 dy \wedge  \theta.
\end{align*}
Then
$$
 (d \alpha)_{\vert_{ {\prescript{}{}\bigwedge}^{1} \mathfrak{h}_1 }} = ( X \alpha_2 - Y \alpha_1 ) dx \wedge dy = - ( X \alpha_2 - Y \alpha_1 ) d \theta,
$$
and
$$
 L^{-1} \left (- (d \alpha)_{\vert_{ {\prescript{}{}\bigwedge}^{1} \mathfrak{h}_1 }} \right ) =  X \alpha_2 - Y \alpha_1 .
$$
Finally
\begin{align*}
D([\alpha]_{I^1} )  =& d \left ( \alpha +   ( X \alpha_2 - Y \alpha_1 ) \theta  \right )\\
=& d \alpha + d \left ( X \alpha_2 - Y \alpha_1 \right ) \wedge \theta + \left ( X \alpha_2 - Y \alpha_1 \right ) d \theta\\
=&    ( X \alpha_2 - Y \alpha_1 ) dx \wedge dy - T\alpha_1 dx \wedge  \theta   -  T\alpha_2 dy \wedge  \theta \\
&+  X( X \alpha_2 - Y \alpha_1 ) dx \wedge \theta + Y ( X \alpha_2 - Y \alpha_1 ) dy \wedge \theta  - ( X \alpha_2 - Y \alpha_1 ) dx \wedge dy \\
=&  ( XX \alpha_2 - XY \alpha_1 - T\alpha_1 ) dx \wedge \theta + (YX \alpha_2 -YY \alpha_1 -  T\alpha_2 ) dy \wedge \theta.
\end{align*}
\end{proof}


\section{Cohomology of $\mathbb{H}^2$}\label{cohomology2}

In this section, as we did in the previous one for $\mathbb{H}^1$, we explicitly write the differential complex of the Rumin cohomology in $\mathbb{H}^2$. The computation is quantitative more challenging than the previous one and so we report it in Appendix \ref{computationH2}. In this case the bases of the spaces of the complex have more variety, as one must take into account more possible combinations than in the previous case. In a qualitative sense, this is caused by the fact that the algebra of $\mathbb{H}^n$ allows a strict subalgebra of step $2$ only for $n >1$.

\begin{obs}
\label{obsH2}
For $n=2$, the spaces of the Rumin cohomology presented in Definition \ref{def_forms} are reduced to
\begin{align*}
\Omega^1 &= \spn \{ dx_1,  dx_2, dy_1, dy_2, \theta \},\\
I^1&=\spn \{ \theta \},\\
\frac{\Omega^1}{I^1} &\cong  \spn \{dx_1,  dx_2, dy_1, dy_2 \},\\
\Omega^2 &= \spn \{ dx_1 \wedge  dx_2, dx_1 \wedge dy_1, dx_1 \wedge dy_2, dx_1 \wedge \theta, dx_2 \wedge dy_1, dx_2 \wedge dy_2, \\
& \hspace{7.8cm} dx_2 \wedge \theta,  dy_1 \wedge dy_2, dy_1 \wedge \theta, dy_2 \wedge \theta \} ,\\
I^2&=\spn \{  dx_1\wedge \theta,  dx_2\wedge \theta, dy_1\wedge \theta, dy_2 \wedge \theta, dx_1 \wedge dy_1 + dx_2 \wedge dy_2  \},\\
\frac{\Omega^2}{I^2} &\cong  \spn \{
 dx_1 \wedge  dx_2, dx_1 \wedge dy_2,  dx_2 \wedge dy_1,  dy_1 \wedge dy_2 \}  \oplus \frac{  \spn \{  dx_1 \wedge dy_1,  dx_2 \wedge dy_2     \}     }{     \spn \{ dx_1 \wedge dy_1 + dx_2 \wedge dy_2 \}    } ,\\
&\cong   \spn \{  dx_1 \wedge  dx_2, dx_1 \wedge dy_2,  dx_2 \wedge dy_1,  dy_1 \wedge dy_2,  dx_1 \wedge dy_1 - dx_2 \wedge dy_2 \} ,\\
J^3&= \spn \{ dx_1 \wedge  dx_2 \wedge \theta, dx_1 \wedge dy_2 \wedge \theta,  dx_2 \wedge dy_1 \wedge \theta,  dy_1 \wedge dy_2 \wedge \theta,\\
&\hspace{8.8cm}   dx_1 \wedge dy_1 \wedge \theta - dx_2 \wedge dy_2 \wedge \theta \},\\
J^4&= \spn \{ dx_1 \wedge  dx_2 \wedge dy_1 \wedge \theta, dx_1 \wedge dx_2 \wedge dy_2 \wedge \theta,  dx_1 \wedge  dy_1 \wedge dy_2 \wedge \theta,\\
&\hspace{10.5cm}  dx_2 \wedge  dy_1 \wedge dy_2 \wedge \theta \},\\
J^5&= \spn \{ dx_1 \wedge  dx_2 \wedge dy_1  \wedge dy_2 \wedge \theta  \}.
\end{align*}
Note that, in rewriting $\frac{\Omega^2}{I^2} $, we simply observe that $\{ dx_1 \wedge dy_1,  dx_2 \wedge dy_2 \}$ and $\{  dx_1 \wedge dy_1 + dx_2 \wedge dy_2,  dx_1 \wedge dy_1 - dx_2 \wedge dy_2 \}$ span the same subspace.
\end{obs}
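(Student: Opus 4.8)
The plan is to compute each of the listed spaces directly from Definition \ref{def_forms} and Definition \ref{kdim}, specialised to $n=2$; here the horizontal covectors are $dx_1, dx_2, dy_1, dy_2$, the full dual basis is $\{dx_1, dx_2, dy_1, dy_2, \theta\}$, and by Observation \ref{contactcontactcontact} one has $d\theta = -(dx_1 \wedge dy_1 + dx_2 \wedge dy_2)$, which is used throughout. First I would record $\Omega^1$ and $\Omega^2$ as the spans of all single and double wedges of the five basis covectors (five and ten basis elements, respectively).

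For the ideals $I^k$ I would substitute directly into $I^k = \{\alpha \wedge \theta + \beta \wedge d\theta\}$. When $k=1$ the term $\beta \wedge d\theta$ is absent (as $\Omega^{-1}=0$) and $\alpha$ ranges over constants, giving $I^1 = \spn\{\theta\}$ and hence $\Omega^1/I^1 \cong \spn\{dx_1, dx_2, dy_1, dy_2\}$ after deleting $\theta$. When $k=2$, the piece $\alpha \wedge \theta$ contributes the four forms $dx_i \wedge \theta$ and $dy_i \wedge \theta$, while $\beta \wedge d\theta$ with $\beta$ constant contributes $\spn\{dx_1 \wedge dy_1 + dx_2 \wedge dy_2\}$; this gives the five-dimensional $I^2$. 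In the quotient $\Omega^2/I^2$ the four forms involving $\theta$ die, and among the six surviving horizontal $2$-forms the single relation $dx_1 \wedge dy_1 + dx_2 \wedge dy_2 \equiv 0$ drops the dimension to five, so picking $dx_1 \wedge dy_1 - dx_2 \wedge dy_2$ as the representative of the $\{dx_1 \wedge dy_1, dx_2 \wedge dy_2\}$ block produces the stated basis.

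The substantive part is the high-dimensional spaces $J^3, J^4, J^5$, defined by $\alpha \wedge \theta = 0$ and $\alpha \wedge d\theta = 0$. The first condition I would dispatch uniformly: writing $\alpha = \alpha_0 + \alpha_1 \wedge \theta$ with $\alpha_0, \alpha_1$ horizontal, independence of $\theta$ forces $\alpha_0 = 0$, so every element of $J^k$ has the form $\alpha_1 \wedge \theta$ with $\alpha_1 \in {\prescript{}{}\bigwedge}^{k-1} \mathfrak{h}_1$. It then remains to impose $\alpha_1 \wedge \theta \wedge d\theta = 0$, equivalently $\alpha_1 \wedge d\theta = 0$ as a horizontal form (since $\eta \mapsto \eta \wedge \theta$ is injective on horizontal forms). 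For $J^4$ and $J^5$ this is automatic, since $\alpha_1$ is then a horizontal form of degree $3$ or $4$ and $\alpha_1 \wedge d\theta$ has horizontal degree $5$ or $6$, which vanishes because $\dim \mathfrak{h}_1 = 4$; thus $J^4 = {\prescript{}{}\bigwedge}^{3} \mathfrak{h}_1 \wedge \theta$ (four basis elements) and $J^5 = {\prescript{}{}\bigwedge}^{4} \mathfrak{h}_1 \wedge \theta$ (the single top form).

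The only genuine computation is $J^3$, where I must find the kernel of $\alpha_1 \mapsto \alpha_1 \wedge d\theta$ on ${\prescript{}{}\bigwedge}^{2} \mathfrak{h}_1$. Wedging each of the six horizontal $2$-form basis elements against $d\theta = -(dx_1 \wedge dy_1 + dx_2 \wedge dy_2)$, I expect $dx_1 \wedge dx_2$, $dx_1 \wedge dy_2$, $dx_2 \wedge dy_1$, $dy_1 \wedge dy_2$ each to vanish (a repeated factor appears against both summands of $d\theta$), while $dx_1 \wedge dy_1$ and $dx_2 \wedge dy_2$ both map to $-dx_1 \wedge dy_1 \wedge dx_2 \wedge dy_2$. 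The kernel is therefore five-dimensional, spanned by those four forms together with $dx_1 \wedge dy_1 - dx_2 \wedge dy_2$, and wedging with $\theta$ recovers the stated basis of $J^3$. The main obstacle is nothing conceptual but merely sign bookkeeping when reordering the wedge products, which I would control by tabulating the six products once and for all.
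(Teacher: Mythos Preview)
Your proposal is correct and is precisely the direct verification from Definition~\ref{def_forms} that the paper has in mind; the paper itself states this observation without proof, only adding the brief remark about the change of basis for $\Omega^2/I^2$, so your write-up is in fact more detailed than the paper's treatment. Your reduction of the $J^k$ conditions to a kernel computation on horizontal forms is clean and makes the $J^3$ case transparent.
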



\begin{obs}
In this case the isomorphism $L$ acts as follows
\begin{align*}
L :  {\prescript{}{}\bigwedge}^1 \mathfrak{h}_1   & \to {\prescript{}{}\bigwedge}^3 \mathfrak{h}_1 ,\\
 \omega & \mapsto  \omega \wedge d \theta,\\
 dx_1 & \mapsto  -dx_1 \wedge dx_2 \wedge dy_2,\\
 dy_1 & \mapsto  -  dy_1 \wedge dx_2 \wedge   dy_2,\\
dx_2 & \mapsto -  dx_2 \wedge dx_1 \wedge  dy_1,\\
 dy_2 & \mapsto -  dy_2 \wedge  dx_1 \wedge  dy_1.
\end{align*}
\end{obs}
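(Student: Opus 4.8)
The plan is to verify each of the four image formulas by direct computation, relying on two facts already established in the excerpt: the definition of $L$ from Lemma \ref{lemma}, and the explicit expression of $d\theta$ for $n=2$, namely $d\theta = -(dx_1 \wedge dy_1 + dx_2 \wedge dy_2)$, obtained by specialising the general formula $d\theta = -\sum_{j=1}^n dx_j \wedge dy_j$ of Observation \ref{contactcontactcontact} to $n=2$. I would first note that, although Lemma \ref{lemma} defines $L(\beta) = d\theta \wedge \beta$, the order of the factors is immaterial here: since $d\theta$ is a $2$-form its degree is even, so $d\theta \wedge \omega = \omega \wedge d\theta$ for any $1$-form $\omega$, which matches the form $\omega \mapsto \omega \wedge d\theta$ stated in the observation.

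Next I would substitute the expression for $d\theta$ into $L(\omega) = \omega \wedge d\theta$ for each of the four basis covectors $\omega \in \{ dx_1, dy_1, dx_2, dy_2 \}$ of ${\prescript{}{}\bigwedge}^1 \mathfrak{h}_1$. Each wedge expands into two terms, one per summand of $d\theta$. The only point requiring any care is that the antisymmetry of the wedge product forces one term to vanish whenever $\omega$ repeats a factor of the corresponding summand. For example, $L(dx_1) = -dx_1 \wedge dx_1 \wedge dy_1 - dx_1 \wedge dx_2 \wedge dy_2$, where the first term dies because $dx_1 \wedge dx_1 = 0$, leaving $-dx_1 \wedge dx_2 \wedge dy_2$, exactly as claimed.

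I would then repeat this reduction for the remaining three covectors: $L(dy_1)$ loses its $dy_1 \wedge dy_1$ term, $L(dx_2)$ loses its $dx_2 \wedge dx_2$ term, and $L(dy_2)$ loses its $dy_2 \wedge dy_2$ term, each time leaving precisely the single monomial asserted. Extending by linearity then recovers the general rule $\omega \mapsto \omega \wedge d\theta$ on all of ${\prescript{}{}\bigwedge}^1 \mathfrak{h}_1$.

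There is no genuine obstacle here: the computation is wholly routine, the only bookkeeping being the signs and the vanishing of the repeated-factor terms. That $L$ is in fact an isomorphism is not re-established in this observation, being inherited directly from the general statement of Lemma \ref{lemma}.
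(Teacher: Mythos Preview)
Your proposal is correct and matches what the paper intends: the observation is stated without proof, leaving the routine verification of each image by direct expansion of $\omega \wedge d\theta$ to the reader, which is exactly what you carry out.
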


\begin{rem}
Notice in particular that in the highest low dimensional space, $\frac{\Omega^2}{I^2}$, and in the lowest high dimensional space, $J^3$, there are generators that did not appear in the case $n=1$, namely $dx_1 \wedge dy_1 - dx_2 \wedge dy_2$ and $ dx_1 \wedge dy_1 \wedge \theta - dx_2 \wedge dy_2 \wedge \theta$ respectively. This is due to the fact that th first Heisenberg group $\mathbb{H}^1$ is the only Heisenberg group to be also a free group.
\end{rem}


\begin{prop}[Explicit Rumin complex in $\mathbb{H}^2$]\label{exH2}
\begin{equation*}
0 \to \mathbb{R} \to C^\infty  \stackrel{d_Q^{(1)}}{\to} \frac{\Omega^1}{I^1}
 \stackrel{d_Q^{(2)}}{\to}   \frac{\Omega^2}{I^2}
\stackrel{D}{\to}  J^{3}  
 \stackrel{d_Q^{(3)}}{\to}    J^{4} 
  \stackrel{d_Q^{(4)}}{\to}
 J^{5} \to 0, \quad \text{with}
\end{equation*}
\begin{align*}
 f & \mathbin{  \stackrel{d_Q^{(1)}}{\mapsto} } [X_1f dx_1 + X_2f dx_2+ Y_1f dy_1+ Y_2f dy_2 ]_{I^1}  ,\\
& \\
& [ \alpha_1 dx_1 + \alpha_2 dx_2  + \alpha_3 dy_1+ \alpha_4 dy_2 ]_{I^1} \\
&  \mathbin{  \stackrel{d_Q^{(2)}}{\mapsto} }  \Big [ ( X_1 \alpha_2  - X_2 \alpha_1 ) dx_1 \wedge dx_2 + ( X_2 \alpha_3   - Y_1 \alpha_2 ) dx_2 \wedge dy_1\\
&\ \ \ \ \  + ( Y_1 \alpha_4   -  Y_2 \alpha_3  ) dy_1 \wedge dy_2  + (    X_1 \alpha_4  -  Y_2 \alpha_1 ) dx_1 \wedge dy_2\\
& \ \ \ \ \  + \left ( \frac{ X_1 \alpha_3 - Y_1 \alpha_1 -  X_2 \alpha_4  + Y_2 \alpha_2    }{2} \right ) ( dx_1 \wedge dy_1 - dx_2 \wedge dy_2 ) \Big ]_{I^2},\\
& \\
& [\alpha_1 dx_1 \wedge  dx_2 + \alpha_3  dx_1 \wedge dy_2 + \alpha_4 dx_2 \wedge dy_1 +\alpha_6 dy_1 \wedge dy_2 \\
& + \beta (dx_1 \wedge dy_1 - dx_2 \wedge dy_2)    ]_{I^2}\\
&  \mathbin{\stackrel{D}{\mapsto} }  \left [ (  - X_1 Y_1 -Y_2 X_2) \alpha_1  +X_2 X_2 \alpha_3  -X_1 X_1 \alpha_4   +2X_1 X_2 \beta  \right  ] dx_1 \wedge  dx_2 \wedge \theta  \\
&\ \ \ \ \  +\left [  -Y_2 Y_2 \alpha_1 +(X_2 Y_2 \alpha_3 -X_1 Y_1 )\alpha_3  +X_1 X_1 \alpha_6  +2X_1 Y_2 \beta   \right  ]   dx_1 \wedge dy_2 \wedge \theta \\
&\ \ \ \ \  +\left [ +Y_1 Y_1 \alpha_1     +(Y_1 X_1  -Y_2 X_2) \alpha_4   -X_2 X_2 \alpha_6  -2 X_2 Y_1 \beta   \right  ] dx_2 \wedge dy_1 \wedge \theta  \\
&\ \ \ \ \  +\left [  -Y_1 Y_1 \alpha_3 +Y_2 Y_2 \alpha_4  +(Y_1 X_1 + X_2 Y_2) \alpha_6   + 2Y_1 Y_2 \beta   \right  ]   dy_1 \wedge dy_2 \wedge \theta \\
&\ \ \ \ \  +\left [  -Y_1 Y_2 \alpha_1   +Y_1 X_2 \alpha_3   -X_1 Y_2 \alpha_4    -X_1 X_2 \alpha_6    \right  ]   ( dx_1 \wedge  dy_1 \wedge \theta -dx_2 \wedge dy_2\wedge \theta),\\
& \\
& \alpha_1 dx_1 \wedge  dx_2 \wedge \theta +\alpha_2 dx_1 \wedge dy_2 \wedge \theta + \alpha_3  dx_2 \wedge dy_1 \wedge \theta + \alpha_4  dy_1 \wedge dy_2 \wedge \theta \\
& + \alpha_5( dx_1 \wedge dy_1 \wedge \theta - dx_2 \wedge dy_2 \wedge \theta) \\
 &  \mathbin{ \stackrel{d_Q^{(3)}}{\mapsto} } ( Y_1 \alpha_1  + X_1 \alpha_3 - X_2  \alpha_5)   dx_1 \wedge  dx_2 \wedge dy_1 \wedge \theta \\
&\ \ \ \ \  +( Y_2 \alpha_1  -X_2 \alpha_2 - X_1\alpha_5      )dx_1 \wedge  dx_2 \wedge dy_2 \wedge \theta\\
&\ \ \ \ \  +( -Y_1 \alpha_2 + X_1 \alpha_4   + Y_2  \alpha_5       )     dx_1 \wedge dy_1 \wedge dy_2 \wedge \theta \\
&\ \ \ \ \  +(  Y_2  \alpha_3  +  X_2    \alpha_4 +Y_1 \alpha_5 ) dx_2 \wedge dy_1 \wedge dy_2 \wedge \theta ,\\
& \\
& \alpha_1 dx_1 \wedge  dx_2 \wedge dy_1 \wedge \theta + \alpha_2 dx_1 \wedge dx_2 \wedge dy_2 \wedge \theta  \\
& + \alpha_3 dx_1 \wedge  dy_1 \wedge dy_2 \wedge \theta + \alpha_4  dx_2 \wedge  dy_1 \wedge dy_2 \wedge \theta \\
 & \mathbin{  \stackrel{d_Q^{(4)}}{\mapsto} }   ( -Y_2  \alpha_1 + Y_1  \alpha_2 - X_2  \alpha_3 + X_1  \alpha_4 )  dx_1 \wedge  dx_2 \wedge dy_1 \wedge dy_2 \wedge \theta .
\end{align*}
\end{prop}


\begin{proof}[Proof of Proposition \ref{exH2} is at Appendix \ref{computationH2}]
\end{proof}



\chapter{Pushforward and Pullback in $\mathbb{H}^n$}\label{PPHN}

In this chapter we define pushforward and pullback on $\mathbb{H}^n$ and, after some properties, we prove that the pullback by a contact map commutes with the Rumin differential at every dimension. Then we show explicit formulas for pushforward and pullback, in Heisenberg notations, in three different cases: for a general function, for a contact map and a contact diffeomorphism. Finally we present the same formulas in the Rumin cohomology.   
References for this chapter are \cite{KORR}, from which we use some results, \cite{RUMIN} and \cite{FSSC}.\\
Appendix \ref{explicitcommutation} concerns an explicit proof of commutation between pullback and Rumin differential. 


\section{Definitions and Properties}


\begin{defin}[Pushforward in $\mathbb{H}^n$]
Let $f:  U\subseteq \mathbb{H}^n \to \mathbb{H}^n$, $U$ open, $f=\left (f^1,\dots,f^{2n+1} \right ) \in \left [ C^1 (U,\mathbb{R}) \right  ]^{2n+1}$. The \emph{pushforward by $f$} is defined as follows:\\
if $k=1$, we set
\begin{align*}
f_*:=d f  : \mathfrak{h} &\to \mathfrak{h},\\
v&\mapsto v(f).
\end{align*}
If $k > 1$, we set
$$
f_* \equiv \Lambda_k d f  : {\prescript{}{}\bigwedge}_k \mathfrak{h} \to {\prescript{}{}\bigwedge}_k \mathfrak{h}
$$
to be the linear map satisfying
$$
\Lambda_k d f (v_1 \wedge \dots \wedge v_k) :=df (v_1) \wedge \dots \wedge d f (v_k),   
$$
i.e.,
$$
 f_* (v_1 \wedge \dots \wedge v_k) :=f_* v_1 \wedge \dots \wedge  f_* v_k,  
$$
for $v_1,\dots,v_k \in \mathfrak{h}$.
\end{defin}

\begin{defin}[Pullback in $\mathbb{H}^n$]
Let $f:  U\subseteq \mathbb{H}^n \to \mathbb{H}^n$, $U$ open, $f=\left (f^1,\dots,f^{2n+1} \right ) \in \left [ C^1 (U,\mathbb{R}) \right  ]^{2n+1}$. The \emph{pullback by $f$ }is defined by duality with respect to the pushforward as:
$$
f^* \equiv \Lambda^k d f  : \Omega^k \to \Omega^k, \text{ so that}
$$
$$
\langle \Lambda^k d f (\omega) \vert v \rangle =\langle \omega \vert \Lambda_k d f (v) \rangle,
$$
i.e.,
$$
\langle f^* (\omega) \vert v \rangle =\langle \omega \vert f_*  (v) \rangle.
$$
\end{defin}

\noindent
As in the Riemannian case, we have the following

\begin{obs}\label{pullprop} 
Let $f:  U\subseteq \mathbb{H}^n \to \mathbb{H}^n$, $U$ open, $f=\left (f^1,\dots,f^{2n+1} \right ) \in \left [ C^1 (U,\mathbb{R}) \right  ]^{2n+1}$, and $\alpha \in \Omega^k$,  $\beta \in \Omega^h$ differential forms. One can verify that
$$
f^* (\alpha \wedge \beta ) =f^* \alpha \wedge f^* \beta.
$$
\end{obs}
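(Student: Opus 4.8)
The plan is to show that the pullback $f^*$ is nothing but the $k$-th exterior power of its own action on $1$-forms; once this is established, compatibility with the wedge product is automatic. Write $A := df = f_*|_{\mathfrak{h}}$ for the pushforward on single vectors, a linear endomorphism of $\mathfrak{h}$, and let $A^* \colon {\prescript{}{}\bigwedge}^1 \mathfrak{h} \to {\prescript{}{}\bigwedge}^1 \mathfrak{h}$ be its adjoint, characterised by $\langle A^* \omega \vert v \rangle = \langle \omega \vert A v \rangle$ for all $\omega \in {\prescript{}{}\bigwedge}^1 \mathfrak{h}$ and $v \in \mathfrak{h}$. By the very definition of the pullback, $A^*$ coincides with $f^*$ on $1$-forms. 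Since both sides of the claimed identity are bilinear in $(\alpha, \beta)$, and every form is a linear combination of decomposable ones, it suffices to treat $\alpha = \omega_1 \wedge \dots \wedge \omega_k$ and $\beta = \eta_1 \wedge \dots \wedge \eta_h$ with all factors in ${\prescript{}{}\bigwedge}^1 \mathfrak{h}$.

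The central step is the identity
$$
f^*(\omega_1 \wedge \dots \wedge \omega_k) = A^* \omega_1 \wedge \dots \wedge A^* \omega_k, \qquad \omega_1, \dots, \omega_k \in {\prescript{}{}\bigwedge}^1 \mathfrak{h}.
$$
To prove it I would pair both sides against an arbitrary decomposable $k$-vector $v_1 \wedge \dots \wedge v_k$ and invoke the determinant formula for the induced duality pairing,
$$
\langle \mu_1 \wedge \dots \wedge \mu_k \vert v_1 \wedge \dots \wedge v_k \rangle = \det \big( \langle \mu_i \vert v_j \rangle \big)_{i,j=1}^k,
$$
which follows from Definition \ref{dual_basis} and the definition of the wedge product on the basis. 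Using the definition of the pullback and the multiplicativity of the pushforward on decomposable multivectors, one computes
\begin{align*}
\langle f^*(\omega_1 \wedge \dots \wedge \omega_k) \vert v_1 \wedge \dots \wedge v_k \rangle
&= \langle \omega_1 \wedge \dots \wedge \omega_k \vert A v_1 \wedge \dots \wedge A v_k \rangle \\
&= \det \big( \langle \omega_i \vert A v_j \rangle \big) = \det \big( \langle A^* \omega_i \vert v_j \rangle \big) \\
&= \langle A^* \omega_1 \wedge \dots \wedge A^* \omega_k \vert v_1 \wedge \dots \wedge v_k \rangle,
\end{align*}
where the third equality is merely the entrywise definition of $A^*$. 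As the decomposable $k$-vectors span ${\prescript{}{}\bigwedge}_k \mathfrak{h}$ and the pairing is non-degenerate, the identity follows.

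With this in hand the conclusion is immediate: applying it to the $(k+h)$-form $\alpha \wedge \beta = \omega_1 \wedge \dots \wedge \omega_k \wedge \eta_1 \wedge \dots \wedge \eta_h$ gives
$$
f^*(\alpha \wedge \beta) = A^* \omega_1 \wedge \dots \wedge A^* \omega_k \wedge A^* \eta_1 \wedge \dots \wedge A^* \eta_h = f^*\alpha \wedge f^*\beta,
$$
and bilinearity extends the equality to arbitrary $\alpha \in \Omega^k$, $\beta \in \Omega^h$. The only genuinely delicate point is the determinant formula for the pairing on higher exterior powers; everything else is formal manipulation of the adjoint relation. I would therefore either record that formula as a short elementary lemma or derive it directly by expanding all forms in the basis $\{\theta_{i_1} \wedge \dots \wedge \theta_{i_k}\}$, in which case the whole statement reduces to checking the multiplicative behaviour of $f^*$ on basis elements together with the associativity of $\wedge$.
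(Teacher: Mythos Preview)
The paper does not prove this observation; it records the identity as a familiar fact (``One can verify that\dots'') and moves on. Your argument is correct and supplies precisely the verification the paper leaves implicit: you reduce to decomposable forms by bilinearity and then identify $f^*$ on $\Omega^k$ with the $k$-th exterior power of the adjoint $A^*=(df)^*$ via the determinant formula for the duality pairing, after which multiplicativity with respect to $\wedge$ is immediate. This is the standard linear-algebra proof, and there is nothing in the paper to compare it against.
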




\begin{defin}\label{contact}
A map $f:  U\subseteq \mathbb{H}^n \to \mathbb{H}^n$, $U$ open, $f=\left (f^1,\dots,f^{2n+1} \right ) \in \left [ C^1 (U,\mathbb{R}) \right  ]^{2n+1}$ is a \emph{contact map} if 
$$
 f_* \left (\spn \{ X_1, \dots, X_n,Y_1,\dots, Y_n \} \right ) \subseteq \spn \{  X_1, \dots, X_n,Y_1,\dots, Y_n \}.
$$
\end{defin}

\begin{obs}
Remembering Notation \ref{notW}, the pushforward $f_*$ can be expressed in terms of the basis given by $\{W_1,\dots, W_{2n+1} \}$, where one can think of them as vectors: $W_j=e_j^T$ ($e_j$ the vector with value $1$ at position $j$ and zero elsewhere). 
Then the general pushforward matrix looks like
\begin{align*}
f_*=&
\left (
\begin{matrix}
\langle dw_1 , f_* W_1 \rangle                   & \ldots          & \langle dw_1 , f_* W_{2n+1} \rangle  \\
                                   \vdots                       &                    & \vdots\\
\langle dw_{2n+1} , f_* W_1 \rangle         &   \ldots        & \langle dw_{2n+1} , f_* W_{2n+1} \rangle  
\end{matrix}
\right ).
\end{align*}
If the function $f$ is also contact, then by definition we have
\begin{align*}
f_*=&
\left (
\begin{matrix}
\langle dw_1 , f_* W_1 \rangle                   & \ldots         & \langle dw_1 , f_* W_{2n} \rangle   & \langle dw_1 , f_* W_{2n+1} \rangle  \\
                                   \vdots                       &                    & \vdots       & \vdots   \\
\langle dw_{2n} , f_* W_1 \rangle         &   \ldots       & \langle dw_{2n} , f_* W_{2n} \rangle    & \langle dw_{2n} , f_* W_{2n+1} \rangle  \\
                                                     0         &   \ldots      &   0      & \langle dw_{2n+1} , f_* W_{2n+1} \rangle  
\end{matrix}
\right ).
\end{align*}
By definition of pulback, the pullback matrix is the transpose of the pushforward matrix, so
\begin{align*}
f^*=(f_*)^T=&
\left (
\begin{matrix}
\langle dw_1 , f_* W_1 \rangle                   & \ldots         & \langle dw_{2n} , f_* W_1 \rangle         & 0  \\
                                   \vdots                       &                    & \vdots                                                       & \vdots   \\
\langle dw_1 , f_* W_{2n} \rangle              &   \ldots       & \langle dw_{2n} , f_* W_{2n} \rangle    & 0 \\
 \langle dw_1 , f_* W_{2n+1} \rangle         &   \ldots      &    \langle dw_{2n} , f_* W_{2n+1} \rangle   & \langle dw_{2n+1} , f_* W_{2n+1}  \rangle    
\end{matrix}
\right ).
\end{align*}
This shows that an equivalent condition for contactness is to ask
$$
  f^* \theta = \lambda_f \theta, \quad \text{with} \quad \lambda_f =  \langle f^* \theta , T \rangle  .
$$
\end{obs}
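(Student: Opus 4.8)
The whole statement is a matter of reading the three displayed matrices off the definitions, so the plan is to organise the bookkeeping cleanly and then extract the equivalence at the end. First I would record that, since $f_*$ is a linear endomorphism of $\mathfrak{h}$, its matrix in the basis $\{W_1,\dots,W_{2n+1}\}$ has for its $(i,j)$ entry the $i$-th coordinate of $f_* W_j$; because the dual basis $\{dw_1,\dots,dw_{2n+1}\}$ of Definition \ref{dual_basis} (with $dw_{2n+1}=\theta$ and $W_{2n+1}=T$) satisfies $\langle dw_i, W_k\rangle = \delta_{ik}$, that coordinate is exactly $\langle dw_i, f_* W_j\rangle$. This justifies the general pushforward matrix.

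Next I would invoke contactness in the form of Definition \ref{contact}: $f_*$ sends $\mathfrak{h}_1 = \spn\{W_1,\dots,W_{2n}\}$ into itself, so for every $j \le 2n$ the image $f_* W_j$ has vanishing $T$-component, i.e. $\langle dw_{2n+1}, f_* W_j\rangle = 0$. This is precisely the statement that the whole last row of the matrix vanishes apart from its final entry $\langle dw_{2n+1}, f_* W_{2n+1}\rangle = \langle\theta, f_* T\rangle$, which produces the contact pushforward matrix.

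The pullback matrix is the transpose directly from the defining duality $\langle f^*\omega, v\rangle = \langle\omega, f_* v\rangle$: specialising to $\omega = dw_i$ and $v = W_j$ shows the $(i,j)$ entry of $f^*$ in the dual basis is $\langle dw_j, f_* W_i\rangle$, i.e. $f^* = (f_*)^T$. Reading the last column of this transpose is the same as computing the action of $f^*$ on $dw_{2n+1} = \theta$, giving $f^*\theta = \sum_{j=1}^{2n}\langle\theta, f_* W_j\rangle\, dw_j + \langle\theta, f_* T\rangle\,\theta$; the contact zeros collapse this to $f^*\theta = \langle\theta, f_* T\rangle\,\theta = \lambda_f\theta$, where $\lambda_f = \langle\theta, f_* T\rangle = \langle f^*\theta, T\rangle$, the last identity again being duality.

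It remains to prove that this condition is not merely necessary but \emph{equivalent}, which is the only step with any content. Assuming $f^*\theta = \lambda_f\theta$ for some scalar $\lambda_f$, I would pair both sides with $W_j$ for $j \le 2n$: by duality $\langle f^*\theta, W_j\rangle = \langle\theta, f_* W_j\rangle$, while the right-hand side is $\lambda_f\langle\theta, W_j\rangle = 0$ since $\langle\theta, W_j\rangle = \delta_{2n+1,j} = 0$. Hence every $f_* W_j$ lies in $\mathfrak{h}_1$, so $f$ is contact. The main thing to watch is the index convention under transposition — ensuring that $f^*\theta$ is read from the $(2n+1)$-st row of $f_*$ rather than its column — together with the observation that the single scalar equation $f^*\theta = \lambda_f\theta$ silently packages all $2n$ vanishing conditions, which is what makes the equivalence genuine rather than one-directional.
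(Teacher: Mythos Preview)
Your proof is correct and follows the same bookkeeping approach that the paper intends --- the paper in fact leaves this observation without a separate proof, the reasoning being embedded in the displayed matrices themselves. Your argument is actually more complete: you explicitly verify the converse direction (that $f^*\theta = \lambda_f\theta$ forces $\langle\theta, f_* W_j\rangle = 0$ for $j\le 2n$), which the paper leaves implicit in the phrase ``this shows that an equivalent condition\ldots''.
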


\begin{obs}[See proposition 6 in \cite{KORREI}]
If $f : U \subseteq \mathbb{H}^n \to \mathbb{H}^n$ is a P-differentiable function from $\mathbb{H}^n$ to $\mathbb{H}^n$ as in the Definition \ref{dGGG}, then $f$ is a contact map.
\end{obs}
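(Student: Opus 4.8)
The plan is to reduce contactness to the vanishing of the horizontal contractions of $f^*\theta$, and then to read off that vanishing directly from the finiteness of the Pansu limit in Definition \ref{dGGG}. By the observation preceding the statement, $f$ is a contact map if and only if $f^*\theta = \lambda_f\theta$, equivalently $\langle f^*\theta, X_j\rangle = \langle f^*\theta, Y_j\rangle = 0$ for every $j = 1,\dots,n$; by Definition \ref{contact} this is the same as requiring that $f_*X_j$ and $f_*Y_j$ carry no component along $T$. So it suffices to extract these identities from the existence of $d_H f_{p_0}$.

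First I would fix $p_0 \in U$, write $f = (f^1,\dots,f^{2n+1})$, and study the curve $r\mapsto q_r := p_0 * \delta_r(p)$ for an arbitrary $p = (x,y,t)$. The elementary fact driving everything is that this curve is horizontal at $r=0$: a direct computation with the group law gives that its initial velocity is $\sum_{j}(x_j X_j + y_j Y_j)$ at $p_0$. Hence, by a chain-rule argument of the same type as Lemma \ref{simplecomposition}, every component expands as
$$f^i(q_r) = f^i(p_0) + r\sum_{j=1}^n \big(x_j\, X_j f^i + y_j\, Y_j f^i\big)(p_0) + O(r^2).$$
The first $2n$ slots of $d_H f_{p_0}(p)$ are then just the horizontal derivatives $\sum_j(x_j X_j f^i + y_j Y_j f^i)$ and cause no difficulty.

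The heart of the matter is the last slot. Writing the $(2n+1)$-th coordinate of $f(p_0)^{-1} * f(q_r)$ via the group law, the quantity to be divided by $r^2$ equals
$$f^{2n+1}(q_r) - f^{2n+1}(p_0) + \tfrac12\sum_{k=1}^n\big(f^{n+k}(p_0)\,f^k(q_r) - f^k(p_0)\,f^{n+k}(q_r)\big).$$
Substituting the expansion above and collecting the coefficient of $r$, the $O(r)$ part of this numerator is exactly
$$\sum_{j=1}^n\Big( \langle f^*\theta, X_j\rangle(p_0)\, x_j + \langle f^*\theta, Y_j\rangle(p_0)\, y_j\Big),$$
where one recognises $X_j f^{2n+1} - \tfrac12\sum_k\big(f^k\, X_j f^{n+k} - f^{n+k}\, X_j f^k\big) = \langle f^*\theta, X_j\rangle$, and analogously for $Y_j$.

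Finally I would conclude as follows. Since $f$ is P-differentiable, the $(2n+1)$-th component of $d_H f_{p_0}(p)$ is the limit of the above numerator divided by $r^2$; if its $O(r)$ coefficient were nonzero the quotient would diverge like $1/r$, so finiteness of the limit forces that coefficient to vanish. As this must hold for every $p = (x,y,t)$, each $\langle f^*\theta, X_j\rangle(p_0)$ and $\langle f^*\theta, Y_j\rangle(p_0)$ is zero; thus $f^*\theta$ has no horizontal part, $f^*\theta = \lambda_f\theta$, and $f$ is contact. The only real obstacle is bookkeeping: one must cleanly separate the $O(r)$ and $O(r^2)$ contributions in the last coordinate and verify that the surviving $O(r)$ coefficient is precisely the horizontal contraction of $f^*\theta$. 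The conceptual point is simply that the vertical direction scales like $r^2$ under $\delta_r$, so any genuine first-order term there would make the Pansu limit blow up.
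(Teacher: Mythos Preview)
The paper does not prove this Observation; it only cites Proposition~6 of Kor\'anyi--Reimann and moves on. So there is no in-paper argument to compare against.

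Your proof is correct in substance. The identification of the first-order coefficient in the last coordinate of $f(p_0)^{-1}*f(q_r)$ with $\langle f^*\theta, V\rangle(p_0)$ for $V=\sum_j(x_jX_j+y_jY_j)$ is exactly right (it is the quantity $\mathcal A(j,f)$ of Notation~\ref{callA}), and the finiteness of the Pansu limit indeed kills it. One small inaccuracy: with only the $C^1$ regularity implicit in Definition~\ref{contact}, the expansion of $f^i(q_r)$ carries remainder $o(r)$, not $O(r^2)$. This is harmless for your argument---if the limit of the numerator divided by $r^2$ exists and equals $L$, then the numerator is $Lr^2+o(r^2)=o(r)$; subtracting your expansion gives $r\,\langle f^*\theta, V\rangle(p_0)=o(r)$, hence the horizontal contractions vanish. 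You might also flag that P-differentiability in the sense of Definition~\ref{dGGG} does not by itself guarantee Euclidean $C^1$, so strictly speaking the statement should be read under the standing regularity that makes $f_*$ and Definition~\ref{contact} meaningful; this is a wrinkle in the paper's formulation rather than in your proof.
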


\begin{ex}\label{examplecontact}
The anisotropic dilation $\delta_r (\bar x, \bar y, t)=(r \bar x, r \bar y,r^2 t )$, $ (\bar x, \bar y, t) \in \mathbb{H}^n$, is a contact map. This will be shown later in Example \ref{contactdebt}.
\end{ex}

\begin{obs}[Section 2.B \cite{KORR}] \label{d_theta}  
Note that, if $f$ is a contact map and $\lambda_f =  \langle f^* \theta , T \rangle$, then
$$
f^* d \theta = d (f^* \theta) = d (\lambda_f  \theta) = d \lambda_f \wedge \theta + \lambda_f d\theta,
$$
where $d$ is the (full) exterior Riemannian derivative.
\end{obs}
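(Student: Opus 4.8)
The plan is to read the asserted identity as a chain of three equalities and to justify each one separately, since each is an instance of a standard fact. Reading left to right, the three links are: naturality of the exterior derivative under pullback, the contact characterization of $f$, and the graded Leibniz rule.

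First I would establish the leftmost equality $f^* d\theta = d(f^*\theta)$, which is nothing but the naturality of the exterior derivative under pullback. Since $f$ is interpreted as a smooth map $U \subseteq \mathbb{R}^{2n+1} \to \mathbb{R}^{2n+1}$ and both $f^*$ and $d$ are the ordinary Riemannian (Euclidean) operations, the commutation $f^* \circ d = d \circ f^*$ is the classical result. To keep the argument self-contained in our notation, I would verify it on generators: for a smooth function $g$ one checks $f^*(dg) = d(f^* g) = d(g \circ f)$ directly from the chain rule, and then extends to arbitrary forms by linearity together with the product property $f^*(\alpha \wedge \beta) = f^*\alpha \wedge f^*\beta$ of Observation \ref{pullprop} and the Leibniz rule for $d$. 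Applying this with the $1$-form $\theta$ gives the first equality.

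Next, the middle equality $d(f^*\theta) = d(\lambda_f \theta)$ is immediate from the characterization of contact maps obtained in the preceding observation: there it was shown, via the block structure of the pushforward and pullback matrices, that $f$ is contact precisely when $f^*\theta = \lambda_f \theta$ with $\lambda_f = \langle f^*\theta, T\rangle$. Substituting this expression for $f^*\theta$ and applying $d$ yields the claim. Finally, the rightmost equality $d(\lambda_f \theta) = d\lambda_f \wedge \theta + \lambda_f\, d\theta$ is just the graded Leibniz rule for the exterior derivative applied to the product of the $0$-form $\lambda_f$ and the $1$-form $\theta$.

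The only step that is not entirely trivial is the first one, the naturality $f^* \circ d = d \circ f^*$; everything after it is a direct substitution or the product rule. I expect this to be the main (and essentially the only) obstacle, and it is resolved by the generator check described above. It is worth recording explicitly because in this text $f^*$ is defined by duality with the pushforward $\Lambda_k df$ rather than by the usual coordinate formula, so one should confirm that the duality definition agrees with the operation for which naturality is classically stated before invoking it on $\theta$.
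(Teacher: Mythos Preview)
Your proposal is correct and matches the paper's approach: the paper presents this as an observation without further proof, treating the displayed chain of equalities as self-evident, and your breakdown into naturality of $d$ under pullback, the contact condition $f^*\theta = \lambda_f\theta$, and the Leibniz rule is exactly the justification each link requires.
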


\begin{obs}[Section 2.B \cite{KORR}]\label{section2.b} 
Note that, if $f$ is a contact map, $\lambda_f =  \langle f^* \theta , T \rangle$ and $v_1, v_2 \in  \spn \{ X_1,\dots,X_n, Y_1,\dots, Y_n \}$, then
$$
\langle d \theta \vert  f_* (v_1 \wedge v_2) \rangle  = \lambda_f \langle d\theta \vert  v_1 \wedge v_2  \rangle .
$$
\end{obs}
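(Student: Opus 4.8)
The plan is to reduce the statement to the already-established expansion of $f^* d \theta$ from Observation \ref{d_theta} by means of the pushforward--pullback duality, and then to discard the extra term using the fact that the contact form $\theta$ annihilates horizontal vectors.

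First I would invoke the defining relation of the pullback, $\langle f^*(\omega) \vert v \rangle = \langle \omega \vert f_*(v) \rangle$, with $\omega = d\theta$ and $v = v_1 \wedge v_2$, to transfer the pushforward onto the form:
$$
\langle d \theta \vert f_* (v_1 \wedge v_2) \rangle = \langle f^* d\theta \vert v_1 \wedge v_2 \rangle.
$$
Since $f$ is a contact map, Observation \ref{d_theta} yields $f^* d\theta = d\lambda_f \wedge \theta + \lambda_f\, d\theta$, so that
$$
\langle f^* d\theta \vert v_1 \wedge v_2 \rangle = \langle d\lambda_f \wedge \theta \vert v_1 \wedge v_2 \rangle + \lambda_f \langle d\theta \vert v_1 \wedge v_2 \rangle.
$$

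The crux is to show that the first summand vanishes. Expanding the pairing of a decomposable $2$-form against the simple bivector $v_1 \wedge v_2$ gives
$$
\langle d\lambda_f \wedge \theta \vert v_1 \wedge v_2 \rangle = \langle d\lambda_f \vert v_1 \rangle \langle \theta \vert v_2 \rangle - \langle d\lambda_f \vert v_2 \rangle \langle \theta \vert v_1 \rangle.
$$
By hypothesis $v_1, v_2 \in \spn \{ X_1, \dots, X_n, Y_1, \dots, Y_n \} = \mathfrak{h}_1$, and by Definition \ref{dual_basis} the contact form $\theta = \theta_{2n+1}$ is precisely the dual of $T = W_{2n+1}$, so $\langle \theta \vert X_j \rangle = \langle \theta \vert Y_j \rangle = 0$ for every $j$. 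Hence $\langle \theta \vert v_1 \rangle = \langle \theta \vert v_2 \rangle = 0$, and the entire term is zero. Substituting back leaves exactly $\lambda_f \langle d\theta \vert v_1 \wedge v_2 \rangle$, which is the claim.

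I do not expect any serious obstacle here: the argument is essentially bookkeeping, and the single substantive input is the vanishing of $\theta$ on $\mathfrak{h}_1$, which is immediate from the duality in Definition \ref{dual_basis}. The only point requiring mild care is the sign and normalization convention used in pairing a wedge of two $1$-forms with a bivector; however, since both surviving cross terms carry a factor of the form $\langle \theta \vert v_i \rangle$, the conclusion is entirely insensitive to that convention, and no genuine difficulty arises.
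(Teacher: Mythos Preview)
Your proof is correct and follows essentially the same route as the paper: apply the pullback--pushforward duality, expand $f^* d\theta$ via Observation~\ref{d_theta}, and drop the $d\lambda_f \wedge \theta$ term because $\theta$ annihilates horizontal vectors. The paper's version is terser, leaving the vanishing of $\langle d\lambda_f \wedge \theta \vert v_1 \wedge v_2 \rangle$ implicit, whereas you spell it out via the determinant expansion; your added detail is fine and changes nothing substantive.
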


\begin{proof}
By the definition of pullback and Observation \ref{section2.b}, we have
\begin{align*}
\langle d \theta \vert  f_* (v_1 \wedge v_2) \rangle   & = \langle f^* d \theta \vert  v_1 \wedge v_2 \rangle \\
&=  \langle d \lambda_f \wedge \theta \vert v_1 \wedge v_2  \rangle + \langle  \lambda_f d\theta \vert v_1 \wedge v_2  \rangle = \lambda_f \langle   d \theta \vert v_1 \wedge v_2  \rangle.
\end{align*}
\end{proof}




\section{Commutation of Pullback and Rumin Complex in $\mathbb{H}^n$}

We consider a contact map $f$ on $\mathbb{H}^n$. We know that $f^*$ commutes with the 
exterior derivative $d$ and here we show that the commutation holds also for the differentials in the Rumin complex.\\\\
\noindent
Recall from Definitions \ref{complexHn} and \ref{D} that the Rumin complex 
is given by
$$
0 \to \mathbb{R} \to C^\infty  \stackrel{d_Q}{\to} \frac{\Omega^1}{I^1}  \stackrel{d_Q}{\to}  \dots \stackrel{d_Q}{\to} \frac{\Omega^n}{I^n} \stackrel{D}{\to} J^{n+1}    \stackrel{d_Q}{\to} \dots   \stackrel{d_Q}{\to} J^{2n+1} \to 0,
$$
where, for $k<n$, $d_Q( [\alpha]_{I^k} ) :=  [d \alpha]_{I^k},$ while, for $k \geq n+1$, $d_Q := d_{\vert_{J^k}}.$ For $k=n$, $D$ was the second order differential operator uniquely defined as $D( [\alpha]_{I^n} )  = d \left ( \alpha +  L^{-1} \left (- (d \alpha)_{\vert_{ {\prescript{}{}\bigwedge}^{n+1} \mathfrak{h}_1 }} \right ) \wedge \theta \right ).$

\begin{theor}\label{Fdc=dcF}
A smooth contact map $f: \mathbb{H}^n \to \mathbb{H}^n$ satisfies
$$
f^* d_Q = d_Q f^* \ \ \ \ \text{ for } k \neq n,
$$
and
$$
f^* D = D f^* \ \ \ \ \text{ for } k = n.
$$
Namely, the pullback by a contact map $f$ commutes with the 
operators of the Rumin complex.\\\\
To the best of my knowledge, this result does not apper in the literature, but the main steps were explained to me by Bruno Franchi in September 2017. We present here a complete proof.\\
A computationally more explicit proof of this statement is available in Appendix \ref{explicitcommutation}.
\end{theor}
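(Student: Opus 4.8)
The plan is to reduce the whole statement to three structural properties of the pullback of a contact map $f$: that it commutes with the ordinary exterior derivative $d$; that $f^*\theta=\lambda_f\theta$ together with $f^* d\theta=d\lambda_f\wedge\theta+\lambda_f d\theta$ (Observation \ref{d_theta}); and that it is multiplicative, $f^*(\alpha\wedge\beta)=f^*\alpha\wedge f^*\beta$ (Observation \ref{pullprop}). The first task is to verify that $f^*$ respects the algebraic ingredients of the complex, namely $f^*(I^k)\subseteq I^k$ and $f^*(J^k)\subseteq J^k$. For $I^k$ I would apply $f^*$ to a generic element $\alpha\wedge\theta+\beta\wedge d\theta$ and substitute the formulas for $f^*\theta$ and $f^* d\theta$; collecting the terms carrying a factor $\theta$ and the single surviving term carrying $d\theta$, the image becomes $(\lambda_f f^*\alpha+f^*\beta\wedge d\lambda_f)\wedge\theta+\lambda_f f^*\beta\wedge d\theta$, which is again of the form $(k{-}1)$-form $\wedge\,\theta$ plus $(k{-}2)$-form $\wedge\,d\theta$, hence lies in $I^k$. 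In particular $f^*$ descends to a well-defined map on each quotient $\Omega^k/I^k$ via $f^*[\alpha]_{I^k}:=[f^*\alpha]_{I^k}$.

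For $J^k$ I would use that every $\alpha\in J^k$ is a multiple of $\theta$, say $\alpha=\alpha'\wedge\theta$ with $\alpha'\in\bigwedge^{k-1}\mathfrak{h}_1$. Then $f^*\alpha=\lambda_f\, f^*\alpha'\wedge\theta$ is automatically killed by wedging with $\theta$, so the first defining condition of $J^k$ costs nothing. For the second condition I would apply $f^*$ to the identity $\alpha\wedge d\theta=0$; expanding with the formula for $f^* d\theta$ and using $f^*\alpha\wedge\theta=0$ to annihilate the $d\lambda_f\wedge\theta$ contribution leaves $\lambda_f(f^*\alpha\wedge d\theta)=0$. A pointwise case analysis then finishes the argument: at points where $\lambda_f\neq 0$ we divide, and at points where $\lambda_f=0$ we have $f^*\alpha=0$ outright, so in either case $f^*\alpha\wedge d\theta=0$ and thus $f^*\alpha\in J^k$.

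With these preservation properties in hand, the commutation away from the middle degree is immediate. For $k<n$ one computes $f^* d_Q[\alpha]_{I^k}=[f^* d\alpha]_{I^{k+1}}=[d f^*\alpha]_{I^{k+1}}=d_Q f^*[\alpha]_{I^k}$, using $f^* d=d f^*$ and the well-definedness of $f^*$ on the quotients. For $k\geq n+1$ the operator $d_Q$ is just $d$ restricted to $J^k$, so $f^* d_Q\alpha=f^* d\alpha=d f^*\alpha=d_Q f^*\alpha$, the restriction being legitimate precisely because $f^*$ keeps us inside the spaces $J^k$.

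The decisive case is $k=n$, and here the idea is to exploit the \emph{uniqueness} in Lemma \ref{lemma} instead of differentiating the explicit formula for $D$. Given a representative $\alpha$ of $[\alpha]_{I^n}$, let $\tilde\alpha$ be the unique lifting with $d\tilde\alpha\in J^{n+1}$, so that $D([\alpha]_{I^n})=d\tilde\alpha$. I claim $f^*\tilde\alpha$ is the lifting that computes $D$ on $f^*\alpha$: since $\tilde\alpha-\alpha$ is a multiple of $\theta$ and $f^*$ sends multiples of $\theta$ to multiples of $\theta$, the form $f^*\tilde\alpha$ differs from $f^*\alpha$ by a multiple of $\theta$ and hence lifts $[f^*\alpha]$; moreover $d(f^*\tilde\alpha)=f^*(d\tilde\alpha)\in J^{n+1}$ by the preservation of $J^{n+1}$. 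By the uniqueness clause of Lemma \ref{lemma}, $f^*\tilde\alpha$ must be the lifting defining $D([f^*\alpha]_{I^n})$, whence $D f^*[\alpha]_{I^n}=d(f^*\tilde\alpha)=f^* d\tilde\alpha=f^* D[\alpha]_{I^n}$. The main obstacle is the bookkeeping in the $J^k$-preservation step, where the possible vanishing of $\lambda_f$ must be dealt with pointwise; once that is secured, the uniqueness argument makes the genuinely second-order operator $D$ behave just as formally as the first-order pieces.
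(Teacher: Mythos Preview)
Your proof is correct and follows the same architecture as the paper: first show $f^*(I^k)\subseteq I^k$ and $f^*(J^k)\subseteq J^k$, deduce well-definedness of $f^*$ on the quotients, and then treat the three ranges of $k$ separately. Your computation for $I^k$ is identical to the paper's Lemma~\ref{I}, and your handling of the cases $k<n$ and $k\geq n+1$ matches the paper verbatim.

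Two points are worth noting. First, your treatment of $J^k$-preservation is actually more careful than the paper's Lemma~\ref{lastone}: the paper writes $\lambda_f\,\theta\wedge f^*\alpha=0\Rightarrow\theta\wedge f^*\alpha=0$ without comment, whereas you explicitly dispose of points where $\lambda_f$ vanishes by observing that $f^*\alpha=\lambda_f\,f^*\alpha'\wedge\theta$ already vanishes there. Second, for the middle degree $k=n$, the paper proves the key fact $\widetilde{f^*\alpha}=f^*\tilde\alpha$ (Lemma~\ref{lemmaDtilde}) by a direct computation: it substitutes $\gamma=\lambda_f f^*\beta$ into the defining condition~\eqref{uniquecondition} and expands $\theta\wedge d(f^*\alpha)$ over roughly fifteen lines. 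You reach the same conclusion without any calculation, by simply checking that $f^*\tilde\alpha$ satisfies the two properties that characterise the lifting in Lemma~\ref{lemma} (it lies in the correct $\{\gamma\wedge\theta\}$-class, and $d(f^*\tilde\alpha)=f^*(d\tilde\alpha)\in J^{n+1}$ by the $J$-preservation already established) and then invoking uniqueness. This is a genuine streamlining: it makes transparent that the commutation with $D$ is a formal consequence of $f^*$ preserving both the $\theta$-ideal and the $J$-spaces, with no need to unwind the isomorphism $L$.
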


\noindent
Before starting the proof of Theorem \ref{Fdc=dcF}, some lemmas and definitions are necessary. 

\begin{lem}\label{I}
Consider a smooth contact map $f: \mathbb{H}^n \to \mathbb{H}^n$ and write $\{ \gamma \wedge \theta \}=\{ \gamma \wedge \theta ; \ \gamma \in \Omega^{k-1}\}$. Then
$$
f^* (\{ \gamma \wedge \theta \}) \subseteq \{ \gamma \wedge \theta \} \quad  \text{ and } \quad   f^* (I^k) \subseteq I^k \quad  k=1,\dots,n.
$$
\end{lem}

\begin{proof}
First notice, from Definition \ref{contact} and Observation \ref{d_theta} that
$$
f^* \theta = \lambda_f \theta 
\quad \text{and} \quad
f^* d \theta =
d \lambda_f \wedge \theta +\lambda_f d\theta .
$$
Then one has
$$
f^* (\alpha \wedge \theta ) = f^* \alpha \wedge f^* \theta  =\lambda_f f^* \alpha \wedge  \theta \in \{ \gamma \wedge \theta \}.
$$
This means $f^* (\{ \gamma \wedge \theta \} ) \subseteq \{ \gamma \wedge \theta \}$. Furthermore
\begin{align*}
f^* (\alpha \wedge \theta + \beta \wedge d \theta) &= f^* \alpha \wedge f^* \theta + f^* \beta \wedge f^* d \theta \\
&
= \lambda_f f^* \alpha \wedge  \theta + f^* \beta \wedge (d \lambda_f \wedge \theta +\lambda_f d\theta)\\
&
 =( \lambda_f f^* \alpha  + f^* \beta \wedge d \lambda_f ) \wedge \theta + \lambda_f f^* \beta \wedge  d \theta
\in I^k
\end{align*}
for $\alpha \in \Omega^{k-1}$ and $\beta \in \Omega^{k-2}$. This means that $f^* (I^k) \subseteq I^k$.
\end{proof}

\begin{defin}
Recall the equivalence class in Notation \ref{middleequivclass}:\\
$\frac{\Omega^k}{ \{ \gamma \wedge \theta \} }    \cong    \left \{  \beta \in \Omega^k ; \ \beta =0 \ \text{or} \ \beta \wedge \theta \neq 0 \right  \}=  {\prescript{}{}\bigwedge}^k \mathfrak{h}_1 $ where $[\alpha]_{ \{ \gamma \wedge \theta \} }$ is an element in this equivalence class. Then consider a smooth contact map $f: \mathbb{H}^n \to \mathbb{H}^n$. We define a pullback on such equivalence class as
$$
f^* : \frac{\Omega^k}{ \{ \gamma \wedge \theta \} } \to \frac{\Omega^k}{ \{ \gamma \wedge \theta \} },
$$
where
$$
f^* ( [\alpha]_{ \{ \gamma \wedge \theta \} }   ):= [f^* (\alpha )]_{ \{ \gamma \wedge \theta \} }, \quad  [\alpha]_{ \{ \gamma \wedge \theta \} } \in \frac{\Omega^k}{ \{ \gamma \wedge \theta \} }.
$$
This definition is well posed, as follows from the following lemma.
\end{defin}

\begin{lem} \label{Iq}
Consider a smooth contact map $f: \mathbb{H}^n \to \mathbb{H}^n$ and $\{ \gamma \wedge \theta \}=\{ \gamma \wedge \theta ; \ \gamma \in \Omega^{k-1}\}$. Then we have
$$
[\alpha]_{ \{ \gamma \wedge \theta \} } = [\beta]_{ \{ \gamma \wedge \theta \} } \Rightarrow f^* ( [\alpha ]_{ \{ \gamma \wedge \theta \} } )= f^* ([ \beta]_{ \{ \gamma \wedge \theta \} }  ).
$$
\end{lem}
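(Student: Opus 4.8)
The plan is to reduce this well-posedness statement directly to the inclusion already established in Lemma \ref{I}, using only linearity of the pullback and the definition of the equivalence class. The whole content is that $f^*$ descends to the quotient $\frac{\Omega^k}{\{\gamma\wedge\theta\}}$, and this is immediate once we know that $f^*$ preserves the subspace $\{\gamma\wedge\theta\}$ by which we are quotienting.

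First I would unwind the hypothesis. By definition of the equivalence class in Notation \ref{middleequivclass}, the equality $[\alpha]_{\{\gamma\wedge\theta\}}=[\beta]_{\{\gamma\wedge\theta\}}$ means precisely that $\beta-\alpha\in\{\gamma\wedge\theta\}$, i.e. there is some $\gamma\in\Omega^{k-1}$ with $\beta-\alpha=\gamma\wedge\theta$. Next I would apply the pullback. Since $f^*\equiv\Lambda^k df$ is a linear map on $\Omega^k$, I can write
$$
f^*(\beta)-f^*(\alpha)=f^*(\beta-\alpha)=f^*(\gamma\wedge\theta).
$$
Now I invoke Lemma \ref{I}, which asserts exactly that $f^*(\{\gamma\wedge\theta\})\subseteq\{\gamma\wedge\theta\}$; alternatively, one can recompute directly using Observation \ref{pullprop} and the contact condition $f^*\theta=\lambda_f\theta$ to get $f^*(\gamma\wedge\theta)=f^*\gamma\wedge f^*\theta=\lambda_f\,f^*\gamma\wedge\theta\in\{\gamma\wedge\theta\}$. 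Either way, $f^*(\beta-\alpha)\in\{\gamma\wedge\theta\}$.

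Finally I would conclude. From $f^*(\beta)-f^*(\alpha)\in\{\gamma\wedge\theta\}$ it follows that $[f^*(\alpha)]_{\{\gamma\wedge\theta\}}=[f^*(\beta)]_{\{\gamma\wedge\theta\}}$, which by the definition of the quotient pullback $f^*([\alpha]_{\{\gamma\wedge\theta\}}):=[f^*(\alpha)]_{\{\gamma\wedge\theta\}}$ is exactly the desired implication $f^*([\alpha]_{\{\gamma\wedge\theta\}})=f^*([\beta]_{\{\gamma\wedge\theta\}})$. I do not expect any genuine obstacle here: the statement is a formal consequence of Lemma \ref{I} together with the linearity of $f^*$, and the only point worth stating explicitly is that $f^*$ is a linear map so that it respects the difference $\beta-\alpha$. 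The real work has already been done in proving that $f^*$ stabilizes $\{\gamma\wedge\theta\}$.
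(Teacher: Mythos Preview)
Your proof is correct and follows essentially the same route as the paper's: unwind the hypothesis to $\beta-\alpha\in\{\gamma\wedge\theta\}$, apply linearity of $f^*$ together with Lemma~\ref{I} to obtain $f^*\beta-f^*\alpha\in\{\gamma\wedge\theta\}$, and conclude using the definition of $f^*$ on the quotient. The paper's argument is identical in substance, just more terse.
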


\begin{proof}
By definition of equivalence class we have that $[\alpha]_{ \{ \gamma \wedge \theta \} } = [\beta]_{ \{ \gamma \wedge \theta \} } $ means
$$
 \beta - \alpha \in \{ \gamma \wedge \theta \} ,
$$
which, by Lemma \ref{I}, implies
$$
f^*\beta - f^* \alpha = f^* (  \beta -  \alpha ) \in  \{ \gamma \wedge \theta \},
$$
which, again by definition, means
$$
   [f^* (\alpha )]_{ \{ \gamma \wedge \theta \} }  = [f^* (\beta )]_{ \{ \gamma \wedge \theta \} }.
$$
The claim follows by the definition of pushforward in this equivalence class.
\end{proof}

\begin{defin}\label{pushequivclass}
Consider another equivalence class as given in Observation \ref{finalequivclass}: 
$
\frac{   \frac{\Omega^k}{ \{ \gamma \wedge \theta \} } }{  \{ \beta \wedge d \theta \}    } =   \frac{\Omega^k}{I^k}
$ 
and consider a smooth contact map $f: \mathbb{H}^n \to \mathbb{H}^n$. Again there is a pushforward defined as
\begin{align*}
f^* : \frac{\Omega^k}{I^k}  \to \frac{\Omega^k}{I^k},
\end{align*}
where
\begin{align*}
 f^* ([\alpha]_{I^k}   )  := [f^* (\alpha )]_{I^k},   \quad  [\alpha]_{I^*} \in \frac{\Omega^k}{I^k}.
\end{align*}
Also this definition is well posed, as shown in the lemma below.
\end{defin}

\begin{lem}\label{Iqq}
Consider a smooth contact map $f: \mathbb{H}^n \to \mathbb{H}^n$. Then we have
$$
[\alpha]_{I^k} = [\beta]_{I^k} \Rightarrow  [f^* (\alpha )]_{I^k} = [f^* (\beta )]_{I^k}.
$$
\end{lem}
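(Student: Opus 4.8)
The plan is to mirror \emph{verbatim} the argument already used for Lemma \ref{Iq}, simply replacing the submodule $\{ \gamma \wedge \theta \}$ by $I^k$ and invoking the companion inclusion established in Lemma \ref{I}. Concretely, I would proceed in three short steps.

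First, I would unwind the hypothesis. By the definition of the quotient $\frac{\Omega^k}{I^k}$, the equality $[\alpha]_{I^k} = [\beta]_{I^k}$ means precisely that $\beta - \alpha \in I^k$. Second, I would use that the pullback $f^* \equiv \Lambda^k d f$ is a linear map on $\Omega^k$, so that
$$
f^* \beta - f^* \alpha = f^* (\beta - \alpha).
$$
Third — and this is the only genuine input — I would apply the second inclusion of Lemma \ref{I}, namely $f^* (I^k) \subseteq I^k$, which is valid exactly because $f$ is a contact map. Since $\beta - \alpha \in I^k$, this yields $f^*(\beta - \alpha) \in I^k$, that is, $f^* \beta - f^* \alpha \in I^k$.

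Finally, reinterpreting this membership back in terms of equivalence classes gives
$$
[f^* (\alpha )]_{I^k} = [f^* (\beta )]_{I^k},
$$
which is exactly the claim and, as announced in Definition \ref{pushequivclass}, shows that the induced pullback on $\frac{\Omega^k}{I^k}$ is well posed. The main obstacle is essentially nonexistent at this stage: all the real content — that the contact condition $f^* \theta = \lambda_f \theta$ forces $f^*$ to preserve the ideal $I^k$ — was already carried out in Lemma \ref{I}. The only point deserving a moment's care is the linearity of $f^*$, but this is immediate from its definition as the $k$-th exterior power of $d f$.
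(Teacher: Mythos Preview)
Your proof is correct and matches the paper's argument essentially line for line: unpack the equivalence class to get $\beta-\alpha\in I^k$, use linearity of $f^*$, invoke Lemma~\ref{I} for $f^*(I^k)\subseteq I^k$, and repackage. There is nothing to add.
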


\begin{proof}
By definition $[\alpha]_{I^k} = [\beta]_{I^k}$ means
$$
  \beta - \alpha \in I^k,
$$
which implies, by Lemma \ref{I},
$$
 f^* \beta - f^* \alpha =  f^* (\beta - \alpha )   \in I^k.
$$
So
$$
   [f^* (\alpha )]_{I^k} = [f^* (\beta )]_{I^k}.
$$
\end{proof}

\noindent
After all these lemmas about lower order object in the Rumin comples, we show here one on higher order spaces.

\begin{lem}\label{lastone}
Consider a smooth contact map $f: \mathbb{H}^n \to \mathbb{H}^n$. Then
$$
f^* (J^k) \subseteq J^k.
$$
\end{lem}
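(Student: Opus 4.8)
The plan is to unwind the definition $J^k=\{\alpha\in\Omega^k : \alpha\wedge\theta=0,\ \alpha\wedge d\theta=0\}$ and to check that both defining conditions survive pullback. Everything needed is already in place: $f^*$ respects wedge products (Observation \ref{pullprop}) and the exterior derivative, and contactness gives $f^*\theta=\lambda_f\theta$ together with $f^*d\theta=d\lambda_f\wedge\theta+\lambda_f d\theta$ (Observation \ref{d_theta}).

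The first thing I would try is to pull back the two defining equations directly. From $\alpha\wedge\theta=0$ one gets $0=f^*(\alpha\wedge\theta)=\lambda_f\,f^*\alpha\wedge\theta$, and analogously from $\alpha\wedge d\theta=0$. This leaves the conclusion hostage to a factor of $\lambda_f$, so that one would want to divide by $\lambda_f$ --- which is illegitimate exactly at the points where the contact factor vanishes. Circumventing this division is the only real obstacle, and the remedy is to exploit the \emph{shape} of the elements of $J^k$ rather than the raw equations.

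So I would first apply the division lemma for the nonvanishing $1$-form $\theta$ (recorded in the remark following Definition \ref{def_forms}): the condition $\alpha\wedge\theta=0$ forces $\alpha=\alpha'\wedge\theta$, and after absorbing any vertical part I may take $\alpha'\in{\prescript{}{}\bigwedge}^{k-1}\mathfrak{h}_1$ horizontal. The second condition then amounts to $\alpha'\wedge d\theta=0$, since wedging a horizontal form by $\theta$ is injective. Now $f^*\alpha=f^*\alpha'\wedge f^*\theta=\lambda_f\,f^*\alpha'\wedge\theta$ already displays a trailing $\theta$, so $f^*\alpha\wedge\theta=0$ for free, with no division performed. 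For the remaining condition I would pull back the relation $\alpha'\wedge d\theta=0$ to obtain
\[
0=f^*\alpha'\wedge\bigl(d\lambda_f\wedge\theta+\lambda_f\,d\theta\bigr),
\quad\text{i.e.}\quad
\lambda_f\,f^*\alpha'\wedge d\theta=-\,f^*\alpha'\wedge d\lambda_f\wedge\theta,
\]
and then, using $\theta\wedge d\theta=d\theta\wedge\theta$, compute
$f^*\alpha\wedge d\theta=\lambda_f\,f^*\alpha'\wedge d\theta\wedge\theta=-\,f^*\alpha'\wedge d\lambda_f\wedge\theta\wedge\theta=0$,
the last step forced by $\theta\wedge\theta=0$. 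Hence $f^*\alpha\in J^k$. The virtue of this route is that the possibly-vanishing factor $\lambda_f$ is never inverted: it is merely carried along and annihilated by a repeated $\theta$.
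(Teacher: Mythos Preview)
Your proof is correct. The paper argues more directly: it pulls back the two defining equations, obtains $\lambda_f\,\theta\wedge f^*\alpha=0$ and (after using the first conclusion) $\lambda_f\,d\theta\wedge f^*\alpha=0$, and then simply cancels $\lambda_f$. Your route instead writes $\alpha=\alpha'\wedge\theta$ with $\alpha'$ horizontal, so that $f^*\alpha=\lambda_f\,f^*\alpha'\wedge\theta$ already carries a factor of $\theta$; both conditions then follow from $\theta\wedge\theta=0$ and the pulled-back relation $\lambda_f\,f^*\alpha'\wedge d\theta=-f^*\alpha'\wedge d\lambda_f\wedge\theta$, with no cancellation of $\lambda_f$ needed. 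The advantage of your approach is exactly what you identify: it is uniform in $\lambda_f$ and handles points where the contact factor vanishes without any extra argument. The paper's shortcut is harmless in the end (at points where $\lambda_f=0$ one has $f^*\theta=0$, hence $f^*\alpha=f^*\alpha'\wedge f^*\theta=0$ outright), but this is not spelled out there, so your version is the cleaner one.
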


\begin{proof}
By definition of $J^k$, $\alpha \in J^k$ means
$$
 \theta \wedge \alpha = 0 \ \text{ and } \ d \theta \wedge \alpha =0.
$$
Then
$$
0=f^* ( \theta \wedge \alpha )=  f^* \theta \wedge f^* \alpha = \lambda_f  \theta \wedge f^* \alpha,
$$
which implies
$$
  \theta \wedge f^* \alpha =0.
$$
Moreover,
\begin{align*}
0&=f^* ( d \theta \wedge \alpha )=  f^* (d \theta ) \wedge f^* \alpha =d ( \lambda_f \theta ) \wedge f^* \alpha \\
&= ( d  \lambda_f \wedge  \theta ) \wedge f^* \alpha +   \lambda_f d \theta  \wedge f^* \alpha.
\end{align*}
Since $\theta  \wedge f^* \alpha=0$, we get that
$$
 d \theta \wedge f^* \alpha =0 .
$$
And, finally,
$$
\theta \wedge f^* \alpha =0 \ \text{ and } \ d \theta \wedge f^* \alpha =0, \  \text{ i.e. }  \ f^* \alpha \in J^k.
$$
\end{proof}

\noindent
At last, we will prove a lemma regarding the case $k=n$. After this we am finally ready to prove the main theorem.

\begin{lem}\label{lemmaDtilde}
Consider a smooth contact map $f: \mathbb{H}^n \to \mathbb{H}^n$. We know that, by Lemma \ref{lemma}, every form $[\alpha]_{ \{ \gamma \wedge \theta \} } \in \frac{\Omega^n}{ \{ \gamma \wedge \theta \} } $  has a unique lifting $\tilde{\alpha} \in \Omega^n$ so that $d \tilde{\alpha} \in J^{n+1}$. Then we have
$$
\widetilde{f^* \alpha}= f^* \tilde{\alpha}.
$$
\end{lem}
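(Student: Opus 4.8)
The plan is to exploit the uniqueness clause of Lemma \ref{lemma}. Recall that $\widetilde{f^* \alpha}$ is, by definition, the \emph{unique} element of $\Omega^n$ characterised by two properties: (i) it is a lifting of $[f^* \alpha]_{\{ \gamma \wedge \theta \}}$, i.e. $\widetilde{f^* \alpha} - f^* \alpha \in \{ \gamma \wedge \theta \}$, and (ii) $d(\widetilde{f^* \alpha}) \in J^{n+1}$. Hence it suffices to verify that the form $f^* \tilde{\alpha}$ enjoys both of these properties; the desired identity $f^* \tilde{\alpha} = \widetilde{f^* \alpha}$ then follows immediately. Note that $f^* \alpha \in \Omega^n$, so that Lemma \ref{lemma} does apply to $[f^* \alpha]_{\{ \gamma \wedge \theta \}}$ and $\widetilde{f^* \alpha}$ is indeed well defined.

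For property (i), I would start from the fact that $\tilde{\alpha}$ is a lifting of $[\alpha]_{\{ \gamma \wedge \theta \}}$, so that $\tilde{\alpha} - \alpha \in \{ \gamma \wedge \theta \}$. Applying $f^*$ and using its linearity gives $f^* \tilde{\alpha} - f^* \alpha = f^* (\tilde{\alpha} - \alpha)$, and by Lemma \ref{I} the right-hand side lies again in $\{ \gamma \wedge \theta \}$. Thus $f^* \tilde{\alpha}$ and $f^* \alpha$ determine the same class, which is exactly statement (i).

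For property (ii), I would use that $\tilde{\alpha}$ was constructed precisely so that $d \tilde{\alpha} \in J^{n+1}$ (Lemma \ref{lemma}). Since the pullback $f^*$ commutes with the full exterior derivative, one has $d(f^* \tilde{\alpha}) = f^* (d \tilde{\alpha})$, and Lemma \ref{lastone} guarantees $f^* (J^{n+1}) \subseteq J^{n+1}$, whence $d(f^* \tilde{\alpha}) \in J^{n+1}$. This establishes (ii) and, together with uniqueness, completes the argument.

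The proof is essentially a bookkeeping exercise, so there is no serious analytic obstacle; the only point requiring care is to confirm that the defining characterisation of the lifting is genuinely the pair of conditions (i)--(ii) and that each is preserved under $f^*$ by the appropriate preparatory result (Lemma \ref{I} for the class condition, and Lemma \ref{lastone} combined with $f^* d = d f^*$ for the $J^{n+1}$ condition). An alternative, more computational route would substitute the explicit formula $\tilde{\alpha} = \alpha + L^{-1}\left( -(d \alpha)_{\vert_{ {\prescript{}{}\bigwedge}^{n+1} \mathfrak{h}_1 }} \right) \wedge \theta$ from Observation \ref{explicitalphatilde} and track how $f^*$ interacts with $L$ and with the projection onto ${\prescript{}{}\bigwedge}^{n+1} \mathfrak{h}_1$; this is feasible but longer, and the uniqueness argument above is cleaner.
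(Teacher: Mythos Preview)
Your proof is correct and takes a cleaner, more conceptual route than the paper's own argument. The paper proceeds computationally: it writes $\tilde{\alpha}=\alpha+\beta\wedge\theta$ with $\beta\in{\prescript{}{}\bigwedge}^{n-1}\mathfrak{h}_1$ the unique element satisfying $\theta\wedge\bigl(d\alpha+(-1)^{|\beta|}L(\beta)\bigr)=0$, observes that $f^*\tilde{\alpha}=f^*\alpha+\lambda_f f^*\beta\wedge\theta$, and then verifies by a direct calculation (using $f^*\theta=\lambda_f\theta$ and $f^*d\theta=d\lambda_f\wedge\theta+\lambda_f\,d\theta$) that $\lambda_f f^*\beta$ satisfies the analogous condition for $f^*\alpha$, namely $\theta\wedge\bigl(d(f^*\alpha)+(-1)^{|\lambda_f f^*\beta|}L(\lambda_f f^*\beta)\bigr)=0$. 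In other words, the paper essentially carries out the ``alternative, more computational route'' you sketch at the end of your proposal, working at the level of the intermediate characterisation of $\beta$ rather than directly with the defining properties of the lifting. Your argument bypasses all of this by invoking the uniqueness statement of Lemma~\ref{lemma} together with Lemmas~\ref{I} and~\ref{lastone}, both of which are already available at this point in the paper; what you gain is brevity and transparency, while the paper's approach has the minor advantage of identifying explicitly the correction term $\lambda_f f^*\beta$.
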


\begin{proof}
Now we only have to prove the claim that $\widetilde{f^* \alpha}= f^* \tilde{\alpha}$.\\
Following the proof of Lemma \ref{lemma}, one knows that there exists a unique $ \beta \in {\prescript{}{}\bigwedge}^{n-1} \mathfrak{h}_1 $ so that
$$
 \tilde{\alpha} = \alpha + \beta \wedge \theta,
$$
and such $\beta$ is the only one for which the following condition is satisfied:
$$
\theta \wedge \left ( d \alpha + (- 1)^{\vert \beta \vert}  L(\beta) \right ) =0.
$$
Thus one has that
$$
f^* \tilde{\alpha} = f^* \alpha + f^* (\beta \wedge \theta ) = f^* \alpha + \lambda_f f^* \beta \wedge \theta.
$$
On the other hand, one can repeat the lifting process for $f^* \alpha \in  \frac{\Omega^n}{ \{ \gamma \wedge \theta \} } \cong {\prescript{}{}\bigwedge}^{n} \mathfrak{h}_1 $  (the congruence tells that if $\alpha$ belongs to $\frac{\Omega^n}{ \{ \gamma \wedge \theta \} } $, so does $f^*$).\\
Then there exists a unique $\gamma $ so that
$$
\widetilde{f^* \alpha} = f^* \alpha + \gamma \wedge \theta,
$$
where, as before, such $\gamma$ is the only one which satisfies
\begin{equation}\label{uniquecondition}
\theta \wedge \left ( d f^* \alpha + (- 1)^{\vert \gamma \vert}  L(\gamma) \right ) =0.
\end{equation}
To prove the claim one needs to show that $ \gamma = \lambda_f f^* \beta $. We can substitute $ \lambda_f f^* \beta$ in place of $\gamma$ in the condition \eqref{uniquecondition} and, by uniqueness, it is enough to show that 
\begin{equation}\label{11star}
\theta \wedge \left ( d f^* \alpha + (- 1)^{\vert \lambda_f f^* \beta \vert}  L(\lambda_f f^* \beta) \right ) =0.
\end{equation}
Indeed
\begin{align*}
d ( f^*  \alpha ) & =  
f^* (d  \alpha ) = 
 f^* \left [ d  \alpha+ L \left ( (- 1)^{\vert \beta \vert}\beta  \right ) -  L \left ( (- 1)^{\vert \beta \vert}\beta  \right ) \right ] \\
&
= f^* \left (d  \alpha+ L \left ( (- 1)^{\vert \beta \vert}\beta  \right ) \right ) -  f^* \left (  L \left ( (- 1)^{\vert \beta \vert}\beta  \right ) \right ) .
\end{align*}
Then
\begin{align*}
\theta \wedge d f^* \alpha & = \theta \wedge  f^* \left (d  \alpha+ L \left ( (- 1)^{\vert \beta \vert}\beta  \right ) \right ) 
-  \theta \wedge  f^* \left (  L \left ( (- 1)^{\vert \beta \vert}\beta  \right ) \right )\\
&
=
\lambda_f^{-1} f^* \left (    \theta \wedge   \left (d  \alpha+ (- 1)^{\vert \beta \vert}  L \left ( \beta  \right ) \right )  \right )     -  (- 1)^{\vert \beta \vert} \theta \wedge  f^* \left (   L \left ( \beta  \right ) \right ).
\end{align*}
Since  $  \theta \wedge   \left (d  \alpha+ (- 1)^{\vert \beta \vert}  L \left ( \beta  \right ) \right )=0$, we get
\begin{align*}
\theta \wedge d f^* \alpha & =  -  (- 1)^{\vert \beta \vert} \theta \wedge  f^* \left (   L \left ( \beta  \right ) \right )  =-  (- 1)^{\vert \beta \vert} \theta \wedge  f^* \left (  d\theta \wedge \beta  \right )   \\
& = -  (- 1)^{\vert \beta \vert} \theta \wedge  f^*   d\theta \wedge   f^* \beta = -  (- 1)^{\vert \beta \vert} \theta \wedge \lambda_f d \theta \wedge   f^*  \beta \\
&=    -  (- 1)^{\vert \beta \vert} \lambda_f \theta \wedge  d \theta \wedge   f^*  \beta,
\end{align*}
where, in the second to last equality, we used that $ f^*   d\theta = d f^*   \theta= d (\lambda_f \theta)= d \lambda_f \wedge \theta +\lambda_f d \theta$.\\
On the other hand, since $\vert \lambda_f f^* \beta \vert = \vert \beta \vert$,
\begin{align*}
  \theta \wedge \left (  (- 1)^{\vert \lambda_f f^* \beta \vert}  L(\lambda_f f^* \beta) \right ) &=(- 1)^{\vert \beta \vert} \theta \wedge \left (    L(\lambda_f f^* \beta) \right )\\
&=  (- 1)^{\vert \beta \vert} \theta \wedge \left (   d\theta \wedge \lambda_f f^* \beta \right ) \\
&= (- 1)^{\vert \beta \vert}  \lambda_f \theta \wedge   d\theta \wedge  f^* \beta .
\end{align*}
This shows that equation \eqref{11star} holds and thus ends the proof.
\end{proof}

\begin{proof}[Proof of Theorem \ref{Fdc=dcF}]
As the definition of Rumin complex is done by cases, we will also divide this proof by cases. As one may expect, the case $k=n$ is the one that requires more work.\\\\
\textbf{First case: $k \geq n+1$.}\\
If $k \geq n+1$, then $d_Q = d_{\vert_{J^k}}$ and we need to consider a differentual form $\alpha \in J^k$. The we have
$$
d_Q   \alpha  = d \alpha  \text{ on } J^k.
$$
So, since also $f^* \left (  \alpha  \right ) \in J^k$, we can already conclude that
$$
f^* \left (d_Q   \alpha  \right ) = f^* \left ( d \alpha  \right )  =  d   f^* \left (  \alpha  \right )  = d_Q f^* \left (  \alpha  \right ).
$$
\noindent
\textbf{Second case: $k < n$.}\\
For $k < n$, the definition says $d_Q \left ([  \alpha ]_{I^k} \right )  =[ d \alpha ]_{I^k}$ for any $\alpha \in \Omega^k$. 
Then, by Definition \ref{pushequivclass} of $f^*$,
$$
f^*d_Q ([  \alpha ]_{I^k}) = f^* \left ( [ d \alpha ]_{I^k}  \right )   = [  f^* d \alpha ]_{I^k} = [  d f^*  \alpha ]_{I^k} =d_Q [ f^*  \alpha ]_{I^k}= d_Q f^*  ( [ \alpha ]_{I^k}).
$$
\noindent
\textbf{Third and last case: $k =n$.}\\
We know that, by Lemma \ref{lemma}, every form $[\alpha]_{ \{ \gamma \wedge \theta \} } \in \frac{\Omega^n}{ \{ \gamma \wedge \theta \} } $ 
 has a unique lifting $\tilde{\alpha} \in \Omega^n$ so that $d \tilde{\alpha} \in J^{n+1}$. Given this existence and unicity, we can now define the following operator:
\begin{align*}
\tilde{D} : \frac{\Omega^n}{ \{ \gamma \wedge \theta \} } & \to J^{n+1},\\
\tilde{D} \left ( [\alpha]_{ \{ \gamma \wedge \theta \} } \right ) & := d\tilde{\alpha}.
\end{align*}
By Lemma \ref{lemmaDtilde}, we know that $\widetilde{f^* \alpha}= f^* \tilde{\alpha}$ holds. Then
$$
\tilde{D} f^* [  \alpha]_{ \{ \gamma \wedge \theta \} }= \tilde{D} [ f^* \alpha]_{ \{ \gamma \wedge \theta \} }
= d \left ( \widetilde{f^* \alpha} \right )= d ( f^* \tilde{\alpha}) =  f^* d \tilde{\alpha} = f^* \tilde{D} ( [\alpha]_{ \{ \gamma \wedge \theta \} } ).
$$
In Definition \ref{D}, we posed the second-order differential operator $D$ to be $D  (  [\alpha]_{I^n}  )  = d \tilde{\alpha}$, which means
$$
D  (  [\alpha]_{I^n}  ) = \tilde{D} \left ( [\alpha]_{ \{ \gamma \wedge \theta \} } \right ).
$$
%
Then, since $ D  [\alpha ]_{I^n} \in J^{n+1}$ and using the definitions of $D$ and $f^*$, as well as the fact that $f^*\tilde{D}=\tilde{D}f^* $, we get
\begin{align*}
 f^*    D  [\alpha ]_{I^n} 
& =  f^* \tilde{D} \left ( [\alpha]_{ \{ \gamma \wedge \theta \} } \right ) 
= \tilde{D} f^* \left ( [\alpha]_{ \{ \gamma \wedge \theta \} } \right ) 
= \tilde{D} \left ( [  f^*\alpha]_{ \{ \gamma \wedge \theta \} } \right )\\
& = D  (  [f^* \alpha]_{I^n}  ) =
 D  f^* [ \alpha]_{I^n}.
\end{align*}
This concludes the proof.
\end{proof}


\section{Derivative of Compositions, Pushforward and Pullback}\label{dercompuspul}

\noindent
In this section we start by writing the derivatives of composition of functions. After that we will move to writing explicitely the pushforward and pullback by such functions, in different situations. Unfortunately, if we ask only regularity but no contact properties, the calculation becomes quite heavy and, since its meaning is relative (as contactness is a natural assumption), we will not push this case after the first derivatives. One can see section 1.D in  \cite{KORR} as a reference. 

\subsection{General Maps}

\noindent
First we introduce the following notation:

\begin{no}
Remember Notation \ref{notW} and let $j=1,\dots,2n$. Define
$$
\tilde w_{ j}:=
\begin{cases}
w_{n+j}, \quad   &j=1,\dots,n,\\
-w_{j-n}, \quad   & j=n+1,\dots,2n.
\end{cases}
$$
Then we have that
$$
W_j = \partial_{w_j} -\frac{1}{2}  \tilde w_{ j} \partial_t, \quad j=1,\dots,2n.
$$
\end{no}

\begin{no}\label{callA}
Let $f: U \subseteq  \mathbb{H}^n \to \mathbb{H}^n$, $U$ open, $f=(f^1,\dots,f^{2n+1}) \in \left [ C_\mathbb{H}^2(\mathbb{H}^n, \mathbb{R}) \right ]^{2n+1}$. Denote
$$
\mathcal{A}(j,f):= W_{j} f^{2n+1}  + \frac{1}{2}  \sum_{l=1}^{2n} \tilde w_{ l} (f)  W_{j} f^l   , \quad j=1,\dots,2n+1.
$$
\end{no}

\begin{lem}\label{composition}
Consider a map $f: U \subseteq  \mathbb{H}^n \to \mathbb{H}^n$, $U$ open, $f=(f^1,\dots,f^{2n+1}) \in \left [ C^1(\mathbb{H}^n, \mathbb{R}) \right ]^{2n+1}$ and $g: \mathbb{H}^n \to \mathbb{R}$, $g\in C^1(\mathbb{H}^n, \mathbb{R})$.  
Then
\begin{align}
\begin{aligned}
 W_j (g \circ f)  &= \sum_{l=1}^{2n}  ( W_l g)_f  W_{j} f^l    + (Tg)_f   \left (   W_{j} f^{2n+1}  +  \frac{1}{2}  \sum_{l=1}^{2n} \tilde w_{ l} (f)  W_{j} f^l   \right )  \\
& = (\nabla_{\mathbb{H}} g)_f \cdot (W_j f^1, \dots, W_j f^{2n})     + (Tg)_f \mathcal{A}(j,f) ,
\end{aligned}
\end{align}
for $j=1,\dots,2n+1$. 
In particular, if $n=1$,
$$
\begin{cases}
 X (g \circ f)  = ( X g)_f  X f^1 +  ( Y g)_f  X f^2  + (Tg)_f   \left (   X f^{3}  +  \frac{1}{2} \left (  f^2 Xf^1 - f^1 X f^2   \right ) \right )   ,\\
 Y (g \circ f) = ( X g)_f  Y f^1 +  ( Y g)_f  Y f^2    + (Tg)_f   \left (   Y f^{3} +  \frac{1}{2} \left (  f^2 Yf^1 - f^1 Y f^2   \right )  \right )   ,\\
 T (g \circ f) = ( X g)_f  T f^1 +  ( Y g)_f  T f^2    + (Tg)_f   \left (   T f^{3} +  \frac{1}{2} \left (  f^2 Tf^1 - f^1 T f^2   \right )  \right )   .
\end{cases}
$$
\end{lem}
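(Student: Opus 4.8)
The plan is to reduce the whole statement to Lemma \ref{simplecomposition}, which already computes the horizontal derivative of a composition through the \emph{Euclidean} gradient, and then to rewrite the Euclidean partials in the horizontal frame. Relabelling Lemma \ref{simplecomposition} with the two functions exchanged, I would first record that for each $j=1,\dots,2n+1$,
$$
W_j(g \circ f) = (\nabla g)_f \cdot W_j f = \sum_{i=1}^{2n} (\partial_{w_i} g)_f\, W_j f^i + (\partial_t g)_f\, W_j f^{2n+1},
$$
where $\nabla$ is the Euclidean gradient on $\mathbb{R}^{2n+1}=\mathbb{H}^n$ and $W_j f=(W_j f^1,\dots,W_j f^{2n+1})$.

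Next I would invert the defining relations $W_i = \partial_{w_i} - \frac{1}{2}\tilde w_i \partial_t$ (for $i=1,\dots,2n$) and $T=\partial_t$ to obtain
$$
\partial_{w_i} = W_i + \frac{1}{2}\tilde w_i\, T \quad (i=1,\dots,2n), \qquad \partial_t = T.
$$
Evaluating these identities at $f$ turns the coordinate function $\tilde w_i$ into its value $\tilde w_i(f)$ on the components of $f$, so that $(\partial_{w_i} g)_f = (W_i g)_f + \frac{1}{2}\tilde w_i(f)\,(Tg)_f$. Substituting into the previous display and collecting the coefficient of $(Tg)_f$ yields
$$
W_j(g \circ f) = \sum_{l=1}^{2n} (W_l g)_f\, W_j f^l + (Tg)_f \left( W_j f^{2n+1} + \frac{1}{2}\sum_{l=1}^{2n} \tilde w_l(f)\, W_j f^l \right),
$$
which is exactly the asserted formula. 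The bracketed factor is $\mathcal{A}(j,f)$ by Notation \ref{callA}, and the first sum equals $(\nabla_{\mathbb{H}} g)_f \cdot (W_j f^1,\dots,W_j f^{2n})$ because $\nabla_{\mathbb{H}} g = \sum_{l=1}^{2n} (W_l g) W_l$ in the horizontal frame $W_1,\dots,W_{2n}$; this gives the second form of the statement.

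Finally, the case $n=1$ is just the general identity written out. There $W_1=X$, $W_2=Y$, $W_3=T$, while the definition of $\tilde w$ gives $\tilde w_1 = w_2 = y$ and $\tilde w_2 = -w_1 = -x$, hence $\tilde w_1(f)=f^2$ and $\tilde w_2(f)=-f^1$; inserting these for $j=1,2,3$ produces the three displayed equations. I do not expect a genuine obstacle: the argument is a linear change of coframe between the Euclidean and horizontal bases. The only point that needs care is the bookkeeping of the evaluation at $f$ — after composing, the coefficient $\tilde w_i$ must be read as the function $\tilde w_i(f)$ of the components of $f$, not as a fixed coordinate — and it is exactly this substitution that produces the terms $\tilde w_l(f)$ appearing in $\mathcal{A}(j,f)$.
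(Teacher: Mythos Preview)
Your proposal is correct and follows essentially the same route as the paper: both arrive at $W_j(g\circ f)=\sum_{l=1}^{2n+1}(\partial_{w_l}g)_f\,W_jf^l$ and then convert the Euclidean partials to the horizontal frame via $\partial_{w_l}=W_l+\tfrac12\tilde w_l\,T$, picking up the factor $\tilde w_l(f)$ upon evaluation at $f$. The only cosmetic difference is that you invoke Lemma~\ref{simplecomposition} (with the roles of $f$ and $g$ swapped) for the first step, whereas the paper reproves that step inline and then treats the case $j=2n+1$ separately; your use of the earlier lemma handles all $j$ at once and is a sensible economy.
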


\noindent
Note that, with our regularity hypotheses, we are not ready to use the equality $T=[W_j,W_{n+j}]$, as the double derivative is not well-defined yet. We will do this later when considering contact maps.

\begin{proof}[Proof of Lemma \ref{composition}]
\begin{align*}
 W_j (g \circ f)  &= \left (\partial_{w_j}- \frac{1}{2} \tilde w_{ j} \partial_t \right ) (g \circ f)\\
&= \sum_{l=1}^{2n+1} (\partial_{w_l} g)_f  \partial_{w_{j}} f^l  - \frac{1}{2} \tilde w_{ j}     \sum_{l=1}^{2n+1}  (\partial_{w_l} g)_f  \partial_{t} f^l \\
&= \sum_{l=1}^{2n+1} \left (  \partial_{w_{j}} f^l   - \frac{1}{2} \tilde w_{ j}    \partial_{t} f^l \right ) (\partial_{w_l} g)_f  = \sum_{l=1}^{2n+1} W_{j} f^l   (\partial_{w_l} g)_f\\
&= \sum_{l=1}^{2n} W_{j} f^l  \left (   W_l g+ \frac{1}{2} \tilde w_{ l}  T g \right )_f   +  W_{j} f^{2n+1}  (Tg)_f  \\
&= \sum_{l=1}^{2n}  ( W_l g)_f  W_{j} f^l    + (Tg)_f   \left (   W_{j} f^{2n+1}  +  \frac{1}{2}  \sum_{l=1}^{2n} \tilde w_{ l} (f)  W_{j} f^l   \right ) ,
\end{align*}
for $j=1,\dots,2n$. In the case of $j=2n+1,$
\begin{align*}
T (g \circ f)  &= \partial_{t} (g \circ f) = \sum_{l=1}^{2n+1} (\partial_{w_l} g)_f  \partial_{t} f^l  \\
&= \sum_{l=1}^{2n}\left (   W_l g+ \frac{1}{2} \tilde w_{ l}  T g \right )_f   T f^l   +   (Tg)_f   T f^{2n+1}     \\
&= \sum_{l=1}^{2n}  ( W_l g)_f  T f^l    + (Tg)_f   \left (   T f^{2n+1}  +  \frac{1}{2}  \sum_{l=1}^{2n} \tilde w_{ l} (f)  T f^l   \right )   .
\end{align*}
\end{proof}


\begin{prop}\label{pushforwardgeneral}
Let $f: U\subseteq \mathbb{H}^n \to \mathbb{H}^n$, $U$ open, $f=(f^1,\dots,f^{2n+1}) \in \left [ C^1(\mathbb{H}^n, \mathbb{R}) \right ]^{2n+1}$. 
Then
\begin{align}
\begin{aligned}
f_* W_j 
&= \sum_{l=1}^{2n+1}  \langle dw_l , f_* W_j \rangle W_l  \\
&= \sum_{l=1}^{2n} W_{j} f^l  W_l  +    \left (   W_{j} f^{2n+1}  +  \frac{1}{2}  \sum_{l=1}^{2n} \tilde w_{ l}(f)  W_{j} f^l   \right ) T \\
&=   \sum_{l=1}^{2n} W_{j} f^l  W_l  +   \mathcal{A}(j,f) T           ,
\end{aligned}
\end{align}
%
for $j=1,\dots,2n+1$.   In particular, if $n=1$,
 \begin{align}
  \begin{aligned}
f_* X=&
 \langle dx , f_* X \rangle X + \langle dy , f_* X \rangle Y + \langle \theta , f_* X \rangle    T \\
= &Xf^1 X +  Xf^2 Y +   \left (   X f^{3}  +  \frac{1}{2} \left (  f^2 Xf^1 - f^1 X f^2   \right ) \right )      T, 
  \end{aligned}
 \end{align}
 \begin{align}
  \begin{aligned}
f_* Y=&
 \langle dx , f_* Y \rangle X + \langle dy , f_* Y \rangle Y + \langle \theta , f_* Y \rangle    T \\
=&  Yf^1 X + Yf^2 Y +  \left (   Y f^{3}  +  \frac{1}{2} \left (  f^2 Yf^1 - f^1 Y f^2   \right ) \right )       T,
  \end{aligned}
 \end{align}
 \begin{align}
  \begin{aligned}
f_* T=&
 \langle dx , f_* T \rangle X + \langle dy , f_* T \rangle Y + \langle \theta , f_* T \rangle    T \\
=&  Tf^1 X + Tf^2 Y +  \left (   T f^{3}  +  \frac{1}{2} \left (  f^2 T f^1 - f^1 T f^2   \right ) \right )       T.
  \end{aligned}
 \end{align}
\end{prop}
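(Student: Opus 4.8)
The plan is to read off the statement directly from Lemma \ref{composition}, since the proposition is nothing but the ``vector form'' of that computation. First I would recall that, for $k=1$, the pushforward is defined by $f_* W_j = W_j(f)$, so that $f_* W_j$ is the tangent vector at $f(p)$ whose action on a smooth $g$ near $f(p)$ is $(f_* W_j)g = W_j(g \circ f)$ (the standard chain-rule identity). The first equality in the statement is then merely the decomposition of the vector $f_* W_j \in \mathfrak{h}$ with respect to the frame $\{W_1,\dots,W_{2n+1}\}$ and its dual coframe $\{\theta_1,\dots,\theta_{2n+1}\}$ from Definition \ref{dual_basis}: for any $v$ one has $v = \sum_l \langle \theta_l, v\rangle W_l$. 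Here I would stress that $\theta_l = dw_l$ for $l \leq 2n$ but $\theta_{2n+1} = \theta$, which is exactly why in the $n=1$ display the last coefficient is the pairing with $\theta$ and not with $dt$.

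The substantive content is identifying the coefficients. By Lemma \ref{composition},
\[
(f_* W_j)g = W_j(g \circ f) = \sum_{l=1}^{2n} (W_l g)_f\, W_j f^l + (Tg)_f\, \mathcal{A}(j,f).
\]
On the other hand, letting the candidate vector $\sum_{l=1}^{2n}(W_j f^l)\,W_l + \mathcal{A}(j,f)\,T$ (scalar coefficients evaluated at $p$, frame vectors at $f(p)$) act on the same $g$ produces exactly the right-hand side above, by linearity of a tangent vector in its coefficients. Since this holds for every smooth $g$ and a tangent vector is determined by its action on functions, the two vectors coincide, giving
\[
f_* W_j = \sum_{l=1}^{2n} (W_j f^l)\, W_l + \mathcal{A}(j,f)\, T,
\]
which is the second and third displayed equalities once $\mathcal{A}(j,f)$ is unfolded through Notation \ref{callA}. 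The special case $n=1$ then follows by setting $W_1 = X$, $W_2 = Y$, $W_3 = T$, $\tilde w_1(f) = f^2$ and $\tilde w_2(f) = -f^1$, so that $\mathcal{A}(1,f) = Xf^3 + \tfrac12(f^2 Xf^1 - f^1 Xf^2)$, and analogously for $f_*Y$ and $f_*T$.

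There is no genuine obstacle here, since the heavy lifting was already done in Lemma \ref{composition}; the only delicate points are bookkeeping ones. First, one must keep the base points straight: the coefficients $W_j f^l$ and $\mathcal{A}(j,f)$ are functions of $p$, while the frame vectors $W_l, T$ live at $f(p)$. Second, one must resist writing the last coefficient as $\langle dt, f_* W_j\rangle$: since $dt$ is not dual to $T$ (indeed $\langle dt, X_k\rangle = -\tfrac12 y_k \neq 0$), the correct coframe element for the $T$-slot is the contact form $\theta$. Alternatively, and equivalently, one could bypass Lemma \ref{composition} and argue purely by a change of frame: write $f_* W_j = \sum_{m=1}^{2n+1}(W_j f^m)\,\partial_{w_m}$ in Euclidean coordinates at $f(p)$, then substitute $\partial_{w_l} = W_l + \tfrac12 \tilde w_l T$ for $l \leq 2n$ and $\partial_t = T$, evaluating $\tilde w_l$ at $f(p)$; collecting the $T$-terms reproduces $\mathcal{A}(j,f)$ and the same conclusion.
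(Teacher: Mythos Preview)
Your proof is correct and matches the paper's approach exactly: the paper also says the result follows immediately from Lemma \ref{composition} via the identity $(f_* W_j)h = W_j(h\circ f)$, and offers as an alternative the very change-of-frame computation (expressing $f_* W_j$ in the $\partial_{w_l}$ basis, then substituting $\partial_{w_l} = W_l + \tfrac12 \tilde w_l T$) that you sketch at the end. Your remarks about keeping base points straight and about pairing with $\theta$ rather than $dt$ are accurate and a nice addition.
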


\begin{proof}
The proof follows immediately from Lemma \ref{composition} remembering that $(f_* W_j )h=W_j(h \circ f)$. Otherwise one can make the computation directly as following exactly the same strategy as in Lemma \ref{composition}:
\begin{align*}
f_* W_j &=f_* \left (\partial_{w_j}- \frac{1}{2} \tilde w_{ j} \partial_t \right )\\
&= \sum_{l=1}^{2n+1} \partial_{w_{j}} f^l \partial_{w_l}  - \frac{1}{2} \tilde w_{ j}     \sum_{l=1}^{2n+1}   \partial_{t} f^l \partial_{w_l}\\
&= \sum_{l=1}^{2n+1} \left (  \partial_{w_{j}} f^l   - \frac{1}{2} \tilde w_{ j}    \partial_{t} f^l \right ) \partial_{w_l} = \sum_{l=1}^{2n+1} W_{j} f^l    \partial_{w_l}\\
&= \sum_{l=1}^{2n} W_{j} f^l  \left (   W_l + \frac{1}{2} \tilde w_{ l}  T  \right )   +  W_{j} f^{2n+1}  T  \\
&= \sum_{l=1}^{2n} W_{j} f^l  W_l  +    \left (   W_{j} f^{2n+1}  +  \frac{1}{2}  \sum_{l=1}^{2n} \tilde w_{ l}  W_{j} f^l   \right ) T  ,
\end{align*}
for $j=1,\dots,2n$. Similarly for $f_* T$.
\end{proof}



\subsection{Contact Maps}

\begin{rem}
Recall that the Definition \ref{contact} of contact map says that 
$$
\langle \theta , f_* W_j \rangle =0,  \quad j=1,\dots,2n.
$$
Proposition \ref{pushforwardgeneral} shows clearly that this is the same as asking
$$
\mathcal{A}(j,f)= W_{j} f^{2n+1}  + \frac{1}{2}  \sum_{l=1}^{2n} \tilde w_{ l} (f)  W_{j} f^l   =0,  \quad j=1,\dots,2n.
$$
\end{rem}

\begin{ex}\label{contactdebt}
In Example \ref{examplecontact} we promised to prove that the anisotropic dilation
$$\delta_r (w_1, \dots, w_{2n},w_{2n+1})=(rw_1, \dots, r w_{2n}, r^2w_{2n+1}  )$$
is a contact map. In other words, we have to show that $\mathcal{A}(j,\delta_r)(w)=0$ for $j=1,\dots,2n$. Indeed
\begin{align*}
\mathcal{A}(j,\delta_r) (w)&= W_{j} \delta_r^{2n+1} (w)  + \frac{1}{2}  \sum_{l=1}^{2n} [\tilde w_{ l} (\delta_r)](w)  W_{j} \delta_r^l   (w) \\
&= \left  (\partial w_j -\frac{1}{2} \tilde w_{ j} \partial_{w_{2n+1}} \right ) ( r^2w_{2n+1}  )  + \frac{1}{2}  \sum_{l=1}^{2n} \tilde w_{ l} (rw) \left  (\partial w_j -\frac{1}{2} \tilde w_{ j} \partial_{w_{2n+1}} \right ) (r w_l)                \\
&=  -\frac{1}{2} \tilde w_{ j}  r^2      + \frac{1}{2} r \tilde w_{ j} \cdot  r             =0.
\end{align*}
For completeness we show also that $\mathcal{A}(2n+1,\delta_r)(w) \neq 0$, indeed:
\begin{align*}
\mathcal{A}(2n+1,\delta_r) (w)&= W_{2n+1} \delta_r^{2n+1} (w)  + \frac{1}{2}  \sum_{l=1}^{2n} [\tilde w_{ l} (\delta_r)](w)  W_{2n+1} \delta_r^l   (w) \\
&= \left  ( \partial_{w_{2n+1}} \right ) ( r^2w_{2n+1}  )  + \frac{1}{2}  \sum_{l=1}^{2n} \tilde w_{ l} (rw) \left  (  \partial_{w_{2n+1}} \right ) (r w_l)      =r^2 \neq 0 .
\end{align*}
\end{ex}


\begin{no}\label{callL}
Let $f: U \subseteq  \mathbb{H}^n \to \mathbb{H}^n$, $U$ open, $f=(f^1,\dots,f^{2n+1}) \in \left [ C_\mathbb{H}^2(\mathbb{H}^n, \mathbb{R}) \right ]^{2n+1}$. Denote
$$
\lambda(j,f):= \sum_{l=1}^{n} \left (    W_j f^l  W_{n+j} f^{n+l} - W_{n+j} f^{l}W_{j} f^{n+l}    \right ), \quad j=1,\dots,n.
$$
\end{no}

\begin{lem}\label{T3=XY12}
Let $f: U \subseteq  \mathbb{H}^n \to \mathbb{H}^n$, $U$ open, be a contact map, $f=(f^1,\dots,f^{2n+1}) \in \left [ C_\mathbb{H}^2 (\mathbb{H}^n, \mathbb{R}) \right ]^{2n+1}$. Then, for $j=1,\dots,n$,
$$
\mathcal{A}(2n+1,f) = T f^{2n+1}  + \frac{1}{2}  \sum_{l=1}^{2n} \tilde w_{ l} (f)  T f^l   
= \sum_{l=1}^{n} \left (    W_j f^l  W_{n+j} f^{n+l} - W_{n+j} f^{l}W_{j} f^{n+l}    \right ) = \lambda (j,f).
$$
In particular, for $n=1$, one has $j=1$ and
$$
\mathcal{A}(3,f) = T f^{3}  + \frac{1}{2} \left (  f^2  T f^1  - f^1  T f^2 \right ) =  \left (    X f^1  Y f^2 - Y f^1 X f^2    \right ) = \lambda (1,f).
$$
\end{lem}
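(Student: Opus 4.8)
The plan is to treat the three equalities in turn. The leftmost one is just the definition of $\mathcal{A}(2n+1,f)$ from Notation \ref{callA} with $W_{2n+1}=T$, once one records that $\tilde w_l(f)=f^{n+l}$ for $l\le n$ and $\tilde w_l(f)=-f^{l-n}$ for $l>n$; this rewrites $\tfrac12\sum_{l=1}^{2n}\tilde w_l(f)\,Tf^l$ as $\tfrac12\sum_{l=1}^{n}\bigl(f^{n+l}Tf^l-f^lTf^{n+l}\bigr)$. The genuine content is the middle equality $\mathcal{A}(2n+1,f)=\lambda(j,f)$ for each fixed $j\in\{1,\dots,n\}$.

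The key device is the commutator relation $T=[W_j,W_{n+j}]=W_jW_{n+j}-W_{n+j}W_j$ from Observation \ref{[,]}, which may legitimately be applied because $f\in\left[C_\mathbb{H}^2\right]^{2n+1}$ guarantees the relevant horizontal second derivatives exist. First I would write $Tf^{2n+1}=W_j(W_{n+j}f^{2n+1})-W_{n+j}(W_jf^{2n+1})$. Then I would insert the contact condition: by the Remark preceding the lemma, $\mathcal{A}(i,f)=0$ for $i=1,\dots,2n$, that is
$$W_if^{2n+1}=-\tfrac12\sum_{l=1}^{n}\bigl(f^{n+l}W_if^l-f^lW_if^{n+l}\bigr),\qquad i=1,\dots,2n.$$
Substituting $i=n+j$ into the first term and $i=j$ into the second, and then applying $W_j$, respectively $W_{n+j}$, by the Leibniz rule (valid since each $W_if^l\in C_\mathbb{H}^1$), produces the desired expansion.

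After subtracting, I would sort the resulting terms into two families. The terms in which both horizontal fields act on the same component assemble into $-\tfrac12\sum_{l=1}^{n}\bigl(f^{n+l}(W_jW_{n+j}-W_{n+j}W_j)f^l-f^l(W_jW_{n+j}-W_{n+j}W_j)f^{n+l}\bigr)$, and by the commutator relation this equals $-\tfrac12\sum_{l=1}^{n}(f^{n+l}Tf^l-f^lTf^{n+l})$; moving it to the left-hand side completes $\mathcal{A}(2n+1,f)$. The remaining terms, in which the outer field differentiates the coefficient $f^{n+l}$ or $f^l$, pair off (each distinct first-order product occurring twice with matching sign) and collapse to $\sum_{l=1}^{n}\bigl(W_jf^l\,W_{n+j}f^{n+l}-W_{n+j}f^l\,W_jf^{n+l}\bigr)=\lambda(j,f)$ from Notation \ref{callL}. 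Since $T=[W_j,W_{n+j}]$ holds for every $j$, this simultaneously identifies $\mathcal{A}(2n+1,f)$ with $\lambda(j,f)$ for each $j=1,\dots,n$.

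The main obstacle is purely the index and sign bookkeeping in the Leibniz expansion: one must track the four cross products per index $l$ and verify that they recombine precisely into the $T$-terms of $\mathcal{A}(2n+1,f)$ and into $\lambda(j,f)$, with nothing left over. The specialization to $n=1$ is then immediate, reading $\tilde w_1(f)=f^2$, $\tilde w_2(f)=-f^1$ and $\lambda(1,f)=Xf^1\,Yf^2-Yf^1\,Xf^2$.
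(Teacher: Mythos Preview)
Your proposal is correct and follows essentially the same approach as the paper: both arguments rest on writing $T=[W_j,W_{n+j}]$, invoking the contact condition $\mathcal{A}(i,f)=0$ for $i\le 2n$, and tracking the Leibniz bookkeeping. The only cosmetic difference is the order of operations: the paper expands the commutator on the full expression $\mathcal{A}(2n+1,f)$ and then regroups to exhibit $W_j\bigl(\mathcal{A}(n+j,f)\bigr)-W_{n+j}\bigl(\mathcal{A}(j,f)\bigr)$ (which vanishes), whereas you expand only $Tf^{2n+1}$, substitute the contact condition for $W_{n+j}f^{2n+1}$ and $W_jf^{2n+1}$, and recover the $Tf^l$ terms from the commutator afterward.
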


\begin{proof}
\begin{align*}
\mathcal{A}&(2n+1,f) = T f^{2n+1}  + \frac{1}{2}  \sum_{l=1}^{2n} \tilde w_{ l} (f)  T f^l  = \\
=& \left ( W_j   W_{n+j}  - W_{n+j} W_{j}   \right ) f^{2n+1}  + \frac{1}{2}  \sum_{l=1}^{2n} \tilde w_{ l} (f)  \left ( W_j   W_{n+j}  - W_{n+j} W_{j}   \right )  f^l         \\
=&    W_j   W_{n+j}  f^{2n+1}  + \frac{1}{2}  \sum_{l=1}^{2n} \tilde w_{ l} (f)   W_j   W_{n+j}    f^l   
 - W_{n+j} W_{j} f^{2n+1}  - \frac{1}{2}  \sum_{l=1}^{2n} \tilde w_{ l} (f)  W_{n+j} W_{j}  f^l     \\
=& W_j  \left (  W_{n+j}  f^{2n+1}  + \frac{1}{2}  \sum_{l=1}^{2n} \tilde w_{ l} (f)    W_{n+j}    f^l    \right )  - \frac{1}{2}  \sum_{l=1}^{2n}   W_j \tilde w_{ l} (f)      W_{n+j}    f^l \\
& - W_{n+j} \left (   W_{j} f^{2n+1}  + \frac{1}{2}  \sum_{l=1}^{2n} \tilde w_{ l} (f)  W_{j}  f^l     \right)  +  \frac{1}{2}  \sum_{l=1}^{2n} W_{n+j} \tilde w_{ l} (f)   W_{j}  f^l         \\
=& W_j \left ( \mathcal{A}(n+j,f) \right ) - \frac{1}{2}  \sum_{l=1}^{2n}   W_j \tilde w_{ l} (f)      W_{n+j}    f^l  - W_{n+j} \left ( \mathcal{A}(j,f) \right ) +  \frac{1}{2}  \sum_{l=1}^{2n} W_{n+j} \tilde w_{ l} (f)   W_{j}  f^l         \\
=&    - \frac{1}{2}  \sum_{l=1}^{2n}   W_j \tilde w_{ l} (f)      W_{n+j}    f^l   +\frac{1}{2}  \sum_{l=1}^{2n} W_{n+j} \tilde w_{ l} (f)   W_{j}  f^l    \\
=&  - \frac{1}{2}  \sum_{l=1}^{n}  \left (     W_j f^{n+l}      W_{n+j}    f^l  -   W_j  f^l     W_{n+j}    f^{n+l}  \right  ) + \frac{1}{2} \sum_{l=1}^{n}    \left ( W_{n+l} f^{n+l}   W_{j}  f^l  -    W_{n+j} f^l   W_{j}  f^{n+l}   \right )  \\
=& \sum_{l=1}^{n} \left (    W_j f^l  W_{n+j} f^{n+l} - W_{n+j} f^{l}W_{j} f^{n+l}    \right )= \lambda (j,f).
\end{align*}
\end{proof}

\noindent
The lemma shows clearly that $ \lambda(j,f)$ does not actually depend on  $j$, so from this point we can write $ \lambda(f)= \lambda(j,f)$.

\begin{lem}\label{composition_contact}
Consider a contact map $f: U \subseteq  \mathbb{H}^n \to \mathbb{H}^n$, $U$ open, $f=(f^1,\dots,f^{2n+1}) \in \left [ C^1(\mathbb{H}^n, \mathbb{R}) \right ]^{2n+1}$ and a map $g: \mathbb{H}^n \to \mathbb{R}$, $g\in C^1(\mathbb{H}^n, \mathbb{R})$. \\
Then, given the definition of contactness, it follows immediately from Lemma \ref{composition} that
\begin{align}
\begin{aligned}
 W_j (g \circ f)  &= \sum_{l=1}^{2n}  ( W_l g)_f  W_{j} f^l      \\
& = (\nabla_{\mathbb{H}} g)_f \cdot (W_j f^1, \dots, W_j f^{2n})      .
\end{aligned}
\end{align}
If $n=1$, they become
$$
\begin{cases}
 X (g \circ f)  = ( X g)_f  X f^1 +  ( Y g)_f  X f^2  ,\\
 Y (g \circ f) = ( X g)_f  Y f^1 +  ( Y g)_f  Y f^2    .
\end{cases}
$$
\end{lem}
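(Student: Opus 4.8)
The plan is to reduce everything to Lemma \ref{composition} and then kill the vertical term using contactness. Lemma \ref{composition} already records, for every $j=1,\dots,2n+1$, the identity
\begin{equation*}
W_j(g \circ f) = (\nabla_{\mathbb{H}} g)_f \cdot (W_j f^1,\dots,W_j f^{2n}) + (Tg)_f\,\mathcal{A}(j,f),
\end{equation*}
with $\mathcal{A}(j,f)$ as in Notation \ref{callA}. So the only thing to do is to show that the last summand vanishes once $f$ is assumed contact and $j$ is a horizontal index.

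First I would recall that, by Definition \ref{contact}, contactness means $\langle \theta, f_* W_j\rangle = 0$ for $j=1,\dots,2n$, and that Proposition \ref{pushforwardgeneral} identifies this pairing precisely with $\mathcal{A}(j,f)$; hence contactness is equivalent to the vanishing $\mathcal{A}(j,f)=0$ for $j=1,\dots,2n$. Substituting this into the displayed formula for each horizontal index $j$ makes the term $(Tg)_f\,\mathcal{A}(j,f)$ disappear, leaving
\begin{equation*}
W_j(g\circ f) = \sum_{l=1}^{2n}(W_l g)_f\, W_j f^l = (\nabla_{\mathbb{H}} g)_f \cdot (W_j f^1,\dots,W_j f^{2n}),
\end{equation*}
which is the asserted equality. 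The case $n=1$ then follows by setting $W_1 = X$, $W_2 = Y$ and expanding the sum over $l\in\{1,2\}$ explicitly.

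I do not expect any genuine obstacle here: the entire content of the statement is the substitution of the contact condition into the already-established composition formula of Lemma \ref{composition}. The single point worth spelling out is the equivalence between the geometric formulation of contactness (the pushforward $f_*$ preserving the horizontal bundle, i.e.\ $\langle\theta, f_*W_j\rangle=0$) and the analytic condition $\mathcal{A}(j,f)=0$, but this is exactly what Proposition \ref{pushforwardgeneral} supplies, so it requires no new computation and the lemma indeed follows \emph{immediately}.
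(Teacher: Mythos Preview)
Your proposal is correct and matches the paper's treatment exactly: the paper gives no separate proof, since the lemma statement itself already declares that the result ``follows immediately from Lemma \ref{composition}'' once the contact condition $\mathcal{A}(j,f)=0$ (for $j=1,\dots,2n$) is invoked. You have simply spelled out that one substitution, and your appeal to Proposition \ref{pushforwardgeneral} to identify $\langle\theta,f_*W_j\rangle$ with $\mathcal{A}(j,f)$ is precisely the remark the paper places just before this lemma.
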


\begin{lem}\label{nabla_comp1}
Consider a contact map $f: U \subseteq  \mathbb{H}^n \to \mathbb{H}^n$, $U$ open, $f=(f^1,\dots,f^{2n+1}) \in \left [ C^1(\mathbb{H}^n, \mathbb{R}) \right ]^{2n+1}$ and a map $g: \mathbb{H}^n \to \mathbb{R}$, $g \in C^1(\mathbb{H}^n, \mathbb{R})$. 
Then
\begin{equation}\label{tantinabla}
\nabla_{\mathbb{H}}  (g \circ f) =f_*^T (\nabla_{\mathbb{H}} g)_f .
\end{equation}
\end{lem}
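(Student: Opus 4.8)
The plan is to verify \eqref{tantinabla} componentwise in the basis $\{W_1,\dots,W_{2n}\}$ of $\mathfrak{h}_1$, thereby reducing it to the already-established Lemma \ref{composition_contact}. First I would expand the left-hand side: since $\nabla_{\mathbb{H}}(g\circ f)=\sum_{j=1}^{2n} W_j(g\circ f)\,W_j$, its $j$-th horizontal component is simply $W_j(g\circ f)$. By Lemma \ref{composition_contact}, for a contact map this equals
\[
W_j(g\circ f)=\sum_{l=1}^{2n}(W_l g)_f\,W_j f^l .
\]

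Next I would identify the relevant block of the pushforward matrix. By Proposition \ref{pushforwardgeneral} together with the contact hypothesis (which forces $\mathcal{A}(j,f)=0$ for $j=1,\dots,2n$), the horizontal-to-horizontal part of $f_*$ has entries $\langle dw_l , f_* W_j \rangle = W_j f^l$ for $l,j=1,\dots,2n$. Hence its transpose $f_*^T$ has $(j,l)$-entry $W_j f^l$, and since $(\nabla_{\mathbb{H}} g)_f$ is the horizontal vector with components $(W_l g)_f$, the $j$-th component of $f_*^T(\nabla_{\mathbb{H}} g)_f$ is
\[
\sum_{l=1}^{2n}(f_*^T)_{jl}\,(W_l g)_f=\sum_{l=1}^{2n} W_j f^l\,(W_l g)_f .
\]
Comparing the two displays shows that every horizontal component of the two sides agrees, which establishes the identity as an equation in $\mathfrak{h}_1$.

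The only point requiring care — and the closest thing to an obstacle — is the bookkeeping about which block of the $(2n+1)\times(2n+1)$ pushforward matrix actually enters. Both $\nabla_{\mathbb{H}}(g\circ f)$ and $(\nabla_{\mathbb{H}} g)_f$ lie in $\mathfrak{h}_1$, and contactness forces the bottom row of $f_*$ to vanish on the horizontal directions, so the computation genuinely involves only the $2n\times 2n$ horizontal block and the vertical ($T$) direction never interferes. Modulo this indexing check, the statement is nothing more than a matrix reformulation of Lemma \ref{composition_contact}, so no further work is needed.
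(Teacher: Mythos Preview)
Your proof is correct, but it takes a different route from the paper's. The paper argues by duality: it pairs both sides against an arbitrary horizontal vector $V$, passes from $\nabla_{\mathbb{H}}$ to $d_Q$ (and then to the full $d$, since $V$ and $f_*V$ are horizontal), invokes the functoriality of the pushforward $(g\circ f)_* = (g_*)_f\circ f_*$, and then unwinds back via the adjoint relation $\langle(\nabla_{\mathbb{H}}g)_f, f_*V\rangle_H = \langle f_*^T(\nabla_{\mathbb{H}}g)_f, V\rangle_H$. Your argument instead expands both sides componentwise and reads the identity off directly from Lemma~\ref{composition_contact}; as you say, the lemma then becomes nothing more than a matrix repackaging of that earlier result. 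Your approach is shorter and more concrete, while the paper's duality argument is coordinate-free and makes the role of contactness (namely, that $f_*V$ stays horizontal so one may pass between $d$ and $d_Q$) more transparent. Your closing remark about which block of $f_*$ is in play is the right thing to flag: the equation is to be read in $\mathfrak{h}_1$, and Remark~\ref{nabla_comp2} confirms that the paper intends exactly the $2n\times 2n$ horizontal block you isolate.
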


\begin{proof}
Consider a horizontal vector $V$ and compute the scalar product of the Heisenberg gradient of the composition (which is horizontal by definition) against such vector $V$.
$$
\langle \nabla_{\mathbb{H}}  (g \circ f) ,V \rangle_H   =  \langle d_Q  (g \circ f) \vert V \rangle .
$$
Note that here we can substitute $d$ to $d_Q$ (and viceversa) because the last component of the differential does not play any role as the computation regards only horizontal objects. Formally we have
\begin{align*}
\langle d  \phi \vert V \rangle
=  &  \langle \sum_{j=1}^{n}  \left ( X_j \phi d x_j + Y_j \phi dy_j \right ) + T\phi \theta \vert \sum_{j=1}^{n}  \left ( V_j  X_j + V_{n+j} Y_j \right ) \rangle  \\
=&   \langle \sum_{j=1}^{n}  \left ( X_j \phi d x_j + Y_j \phi dy_j \right )  \vert \sum_{j=1}^{n}  \left ( V_j  X_j + V_{n+j} Y_j \right ) \rangle  \\
=&  \langle d_Q  \phi \vert V \rangle.
\end{align*}
We can also repeat this below for $f_*   V $ below, since  $f_*   V $ is still a horizontal vector field. Then
\begin{align*}   
\langle \nabla_{\mathbb{H}}  (g \circ f) ,V \rangle_H   =&  \langle d_Q  (g \circ f) \vert V \rangle =   \langle d  (g \circ f) \vert V \rangle =   
 (g \circ f)_* ( V ) \\
=& (  (g_*)_f  \circ f_* ) ( V ) =    (d g)_f  ( f_* V ) =    
 \langle    (d g)_f  \vert  f_*   V   \rangle \\
=&   \langle    (d_Q g)_f  \vert  f_*   V   \rangle  =      \langle    (\nabla_{\mathbb{H}}  g)_f  ,  f_*   V   \rangle_H =  \langle   f_{*}^T (\nabla_{\mathbb{H}}  g)_f, V \rangle_H .
\end{align*}   
Then, since $V$ is a general horizontal vector,
$$
\nabla_{\mathbb{H}}  (g \circ f) = f_*^T (\nabla_{\mathbb{H}}  g)_f .
$$
\end{proof}

\begin{rem}\label{nabla_comp2}
Equation \eqref{tantinabla} can be rewritten as
\begin{align*}   
 \nabla_{\mathbb{H}}  (g \circ f) = 
(X_1 g, \dots, X_n g, Y_1 g, \dots, Y_n g )_f \cdot
\left (
\begin{matrix}
X_1 f^1           & \ldots    & X_n f^1            & Y_1 f^1          & \ldots    & Y_n f^1 \\
\vdots              &               & \vdots               & \vdots              &              & \vdots\\
X_1 f^{2n}      &   \ldots  & X_n f^{2n}      & Y_1 f^{2n}      & \ldots   & Y_n f^{2n}
\end{matrix}
\right ).
\end{align*}   
\end{rem}

\noindent
Next we show the double derivative of the composition of two functions. By Lemma \ref{T3=XY12}, we will find our previous expression for $T$. 


\begin{lem}\label{doublederivativecomposition}
Consider a contact map $f: U \subseteq  \mathbb{H}^n \to \mathbb{H}^n$, $U$ open, $f=(f^1,\dots,f^{2n+1}) \in \left [ C^1(\mathbb{H}^n, \mathbb{R}) \right ]^{2n+1}$ and a map $g: \mathbb{H}^n \to \mathbb{R}$, $g\in  C_\mathbb{H}^2 (\mathbb{H}^n, \mathbb{R})$. For $j,i\in \{ 1,\dots, 2n\}$ one has:
\begin{align}
W_j W_i (g \circ f) =& \sum_{l=1}^{2n} \left [   
\left ( \sum_{h=1}^{2n} \left (W_h W_l g \right )_f W_j f^h  \right )  W_i f^l +   \left (W_l g \right )_f W_j W_i f^l
\right ], \quad \quad \quad \quad 
\end{align}
 \begin{align}
  \begin{aligned}\label{compoT}
\quad  T(g \circ f) =&   \sum_{l=1}^{2n}   \left (W_l g \right )_f T f^l   +
(Tg)_f    \sum_{l=1}^{n} \left (    W_h f^l  W_{n+h} f^{n+l} - W_{n+h} f^{l}W_{h} f^{n+l}    \right )\\
=&   \sum_{l=1}^{2n}   \left (W_l g \right )_f T f^l   +
(Tg)_f  \lambda (h,f) .
  \end{aligned}
 \end{align}
In case $n=1$, one gets
 \begin{align}\label{compT}
  \begin{aligned}
T(g \circ f) =&XgTf^1 + YgTf^2 +\lambda (1,f) Tg,\quad \quad \quad \quad \quad \quad \quad 
  \end{aligned}
 \end{align}
where $\lambda (1,f):=
Xf^1 Yf^2-Yf^1Xf^2$.
\end{lem}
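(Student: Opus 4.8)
The plan is to obtain both formulas by differentiating, one more time, the first-order composition rule for contact maps established in Lemma \ref{composition_contact}, and by reading off the $T$-component from the general composition rule of Lemma \ref{composition} together with the identity of Lemma \ref{T3=XY12}. The feature that keeps the computation clean is contactness: Lemma \ref{composition_contact} expresses $W_i(g\circ f)$ with no $(Tg)_f$ term at all, namely $W_i(g\circ f)=\sum_{l=1}^{2n}(W_l g)_f\, W_i f^l$, so that iterating a horizontal derivative produces only horizontal contributions and never reintroduces $T$.

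For the first formula I would start from this expression and apply $W_j$, with $j\in\{1,\dots,2n\}$, using the Leibniz rule:
$$W_jW_i(g\circ f)=\sum_{l=1}^{2n}\left[\, W_j\bigl((W_l g)_f\bigr)\,W_i f^l+(W_l g)_f\,W_jW_i f^l\,\right].$$
The only point needing care is the first summand $W_j\bigl((W_l g)_f\bigr)=W_j\bigl((W_l g)\circ f\bigr)$: here $W_l g:\mathbb{H}^n\to\mathbb{R}$ is a new scalar function, and since $g\in C_\mathbb{H}^2(\mathbb{H}^n,\mathbb{R})$ it is of class $C_\mathbb{H}^1$, so Lemma \ref{composition_contact} applies verbatim with $W_l g$ in place of $g$, giving $W_j\bigl((W_l g)\circ f\bigr)=\sum_{h=1}^{2n}(W_h W_l g)_f\, W_j f^h$. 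Substituting this back reproduces the claimed identity exactly.

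For the $T$-derivative I would instead invoke the general (non-restricted) composition rule of Lemma \ref{composition} in the case $j=2n+1$, which reads $T(g\circ f)=\sum_{l=1}^{2n}(W_l g)_f\,Tf^l+(Tg)_f\,\mathcal{A}(2n+1,f)$. By Lemma \ref{T3=XY12} the coefficient $\mathcal{A}(2n+1,f)$ equals $\lambda(f)$ for a contact map, which yields \eqref{compoT}. Specialising to $n=1$, where $W_1=X$, $W_2=Y$ and $\lambda(1,f)=Xf^1\,Yf^2-Yf^1\,Xf^2$, gives \eqref{compT} at once.

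I do not expect a genuine obstacle here: both statements are essentially mechanical consequences of results already proved. The only things to watch are that the $C_\mathbb{H}^2$-regularity of $g$ is exactly what is needed to apply the first-order rule a second time to $W_l g$, and that the appearance of the quadratic term $\lambda(f)$ in the $T$-formula is not an independent computation but is imported wholesale from Lemma \ref{T3=XY12}, which is where the relation $T=W_jW_{n+j}-W_{n+j}W_j$ was genuinely exploited.
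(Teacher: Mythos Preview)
Your argument for the first formula is exactly the paper's: apply Lemma \ref{composition_contact} to get $W_i(g\circ f)=\sum_{l=1}^{2n}(W_l g)_f\,W_i f^l$, differentiate by $W_j$ via Leibniz, and apply Lemma \ref{composition_contact} once more to the factor $(W_l g)\circ f$.

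For the $T$-formula you take a genuinely different route. The paper does not invoke Lemma \ref{composition} with $j=2n+1$ and Lemma \ref{T3=XY12}; instead it writes $T=W_jW_{n+j}-W_{n+j}W_j$, applies the freshly proved double-derivative formula to both terms, and then simplifies the resulting double sum $\sum_{l,h}(W_hW_lg)_f\bigl(W_jf^h\,W_{n+j}f^l-W_{n+j}f^h\,W_jf^l\bigr)$ by antisymmetry in $(l,h)$ and the fact that $[W_h,W_l]$ vanishes unless $h=l\pm n$. Your shortcut is entirely correct (indeed, $g\in C_\mathbb{H}^2\subsetneq C^1$ by Observation \ref{diamond}, so Lemma \ref{composition} applies) and is cleaner; the paper's route is more self-contained in that it re-derives the $\lambda(f)$ factor on the spot rather than importing it from Lemma \ref{T3=XY12}. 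Either way the regularity needed on $f$ is the same, since both arguments require the second horizontal derivatives $W_jW_if^l$ to make sense.
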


\begin{proof}
Remember that
$$
W_j  (g \circ f) = \sum_{l=1}^{2n} \left  (W_l g \right )_f W_j  f^l.
$$
Then
\begin{align*}   
W_j W_i (g \circ f) =& \sum_{l=1}^{2n}   W_j   \left (  \left ( W_l g \circ f \right ) W_i   f^l \right )  \\
=& \sum_{l=1}^{2n}  \left [   W_j \left (W_l g \circ f \right ) W_i f^l  +   \left (W_l g \right )_f W_j W_i f^l
\right ]\\
=& \sum_{l=1}^{2n}  \left [    \sum_{h=1}^{2n} \left (
\left (W_h W_l g \right )_f W_j f^h \right ) W_i f^l +   \left (W_l g \right )_f W_j W_i f^l
\right ].
\end{align*}   
\begin{align*}  
T(g \circ f)
=& \left  (W_j W_{n+j} - W_{n+j}  W_{j}    \right   ) (g\circ f)\\
=& \sum_{l=1}^{2n} \Bigg [   
\left ( \sum_{h=1}^{2n} \left (W_h W_l g \right )_f W_j f^h  \right )  W_{n+j} f^l + \left (W_l g \right )_f W_j W_{n+j} f^l \\
&- \left ( \sum_{h=1}^{2n} \left (W_h W_l g \right )_f W_{n+j} f^h  \right )  W_j f^l -   \left (W_l g \right )_f W_{n+j} W_j f^l   \Bigg ],\\
=& \sum_{l=1}^{2n} \left [   
\sum_{h=1}^{2n} \left (W_h W_l g \right )_f  \left (  W_j f^h   W_{n+j} f^l  - W_{n+j} f^h  W_j f^l   \right )  
+   \left (W_l g \right )_f T f^l   \right ]\\
=& \sum_{l=1}^{2n}    \left (W_l g \right )_f T f^l  + \sum_{l,h=1}^{2n}     \left (W_h W_l g \right )_f  \left (  W_j f^h   W_{n+j} f^l  - W_{n+j} f^h  W_j f^l   \right )  .
\end{align*}   
Note that every time that $l=h$, the corresponding term is zero. Furthermore the term corresponding to a pair $(l,h)$ is the same as the one of the pair $(h,l)$ with a change of sign. Then we can rewrite as
\begin{align*}  
T(g \circ f)
=& \sum_{l=1}^{2n}    \left (W_l g \right )_f T f^l  + 
    \sum_{\mathclap{\substack{     l,h=1\\   l<h   }}}^{2n} 
    \left (W_h W_l g - W_l W_h g \right )_f  \left (  W_j f^h   W_{n+j} f^l  - W_{n+j} f^h  W_j f^l   \right )  .
\end{align*}   
Then notice that all the terms in the second sum are zero apart from when $h=n+l$. So we can finally write
\begin{align*}  
T(g \circ f)
=& \sum_{l=1}^{2n}    \left (W_l g \right )_f T f^l  + 
   (Tg)_f \sum_{ l=1}^{n} 
     \left (  W_j f^l   W_{n+j} f^{n+l}  - W_{n+j} f^l  W_j f^{n+l}   \right )  .
\end{align*} 
\end{proof}


\begin{prop}\label{pushforwardcontact}
Consider a contact map $f: U \subseteq  \mathbb{H}^n \to \mathbb{H}^n$, $U$ open, $f=(f^1,\dots,f^{2n+1}) \in \left [ C^1(\mathbb{H}^n, \mathbb{R}) \right ]^{2n+1}$. 
Then the pushforward matrix can be written as
\begin{align*}
f_*=&
\left (
\begin{matrix}
 W_1 f^1       & \ldots         &   W_{2n} f^1         &  W_{2n+1} f^1  \\
     \vdots             &                  & \vdots                     & \vdots   \\
 W_1 f^{2n}  &   \ldots       &  W_{2n} f^{2n}     &  W_{2n+1} f^{2n}   \\
                 0         &   \ldots      &   0                            &  \lambda (h,f) 
\end{matrix}
\right )
,
\end{align*}
with $h \in\{1,\dots,n\}$.
This is the same as writing
\begin{align}
\begin{aligned}\label{pushforwardWj}
f_* W_j &= \sum_{l=1}^{2n}  \langle dw_l , f_* W_j \rangle W_l  =   \sum_{l=1}^{2n} W_{j} f^l  W_l  , \quad j=1,\dots,2n, \quad  \ \ 
\end{aligned}
\end{align}
\begin{align}
\begin{aligned}\label{pushforwardT}
f_* T 
&= \sum_{l=1}^{2n+1}  \langle dw_l , f_* T \rangle W_l  \\
&= \sum_{l=1}^{2n} T f^l  W_l  +       \sum_{l=1}^{n} \left (    W_h f^l  W_{n+h} f^{n+l} - W_{n+h} f^{l}W_{h} f^{n+l}    \right )   T \\
&=   \sum_{l=1}^{2n} T f^l  W_l  +   \lambda (h,f) T              .
\end{aligned}
\end{align}
In particular, if $n=1$, we have:
\begin{align*}
f_*=
\begin{pmatrix}
Xf^1 &Yf^1 & Tf^1 \\
Xf^2 &  Yf^2  & Tf^2 \\
0 & 0 &   \lambda (1,f)   
\end{pmatrix},
\quad \text{ i.e.,} \quad
\begin{cases}
f_* X = Xf^1 X +  Xf^2 Y,   \\
f_* Y =   Yf^1 X + Yf^2 Y, \\
f_* T =    Tf^1 X + Tf^2 Y +\lambda (1,f)  T.
\end{cases}
\end{align*}
\end{prop}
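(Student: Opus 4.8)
The plan is to obtain this proposition as a direct consequence of the general pushforward formula in Proposition \ref{pushforwardgeneral}, the contact condition of Definition \ref{contact}, and Lemma \ref{T3=XY12}; there is no essentially new computation, only an assembly of these three ingredients.

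First I would recall from Proposition \ref{pushforwardgeneral} that, for an arbitrary $f=(f^1,\dots,f^{2n+1}) \in \left[ C^1(\mathbb{H}^n,\mathbb{R}) \right]^{2n+1}$ and every $j=1,\dots,2n+1$,
$$
f_* W_j = \sum_{l=1}^{2n} W_j f^l \, W_l + \mathcal{A}(j,f)\, T,
$$
with $\mathcal{A}(j,f)$ the quantity of Notation \ref{callA}. Thus the coefficient of every horizontal $W_l$ is already $W_j f^l$, and the whole proof amounts to evaluating the single coefficient $\mathcal{A}(j,f)$ of $T$ under the contact hypothesis.

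For the horizontal indices $j=1,\dots,2n$ I would use that contactness is, by Definition \ref{contact} together with Proposition \ref{pushforwardgeneral}, exactly the statement $\mathcal{A}(j,f)=0$. Substituting this into the displayed formula removes the vertical component and yields equation \eqref{pushforwardWj}, i.e.\ $f_* W_j = \sum_{l=1}^{2n} W_j f^l\, W_l$. For the vertical index $j=2n+1$ the coefficient $\mathcal{A}(2n+1,f)$ does not vanish; instead Lemma \ref{T3=XY12} identifies it with $\lambda(h,f)$ (independent of $h$, as noted immediately after that lemma), and substituting gives equation \eqref{pushforwardT}.

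It then remains only to record these coefficients in matrix form. Since $\langle dw_l, f_* W_j\rangle = W_j f^l$ for $l=1,\dots,2n$ and $\langle \theta, f_* W_j\rangle = \mathcal{A}(j,f)$, the contact condition forces the entire bottom row of $f_*$ to vanish except for the entry $\langle \theta, f_* T\rangle = \lambda(h,f)$, producing the stated block matrix; the case $n=1$ is then obtained simply by writing out the three columns explicitly. The only real point of care — the closest thing to an obstacle — is the bookkeeping that attributes the vanishing $T$-components to the horizontal columns $j \le 2n$ and the surviving component $\lambda(h,f)$ to the vertical column $j=2n+1$, which is precisely what the contact condition and Lemma \ref{T3=XY12} deliver.
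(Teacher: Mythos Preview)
Your proposal is correct and follows essentially the same approach as the paper: equation \eqref{pushforwardWj} is obtained from Proposition \ref{pushforwardgeneral} by the contact condition $\mathcal{A}(j,f)=0$, and equation \eqref{pushforwardT} is obtained from Proposition \ref{pushforwardgeneral} together with Lemma \ref{T3=XY12}. The paper's proof is terser but the logical structure is identical.
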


\begin{proof}
The proof of equation \eqref{pushforwardWj} comes immediately from the definition of contactness. To prove equation \eqref{pushforwardT}, we can use Proposition \ref{pushforwardgeneral} together with Lemma \ref{T3=XY12}.
Instead, if we want to show the proof directly, we just need to work exactly as in the proof of equation \eqref{compoT}.
\end{proof}

\noindent
In the same way, since $\langle f^* \omega\vert v \rangle =\langle \omega \vert f_*  v \rangle$, we have an equivalent proposition for the pullback.

\begin{prop}
Consider a contact map $f: U \subseteq  \mathbb{H}^n \to \mathbb{H}^n$, $U$ open, $f=(f^1,\dots,f^{2n+1}) \in \left [ C^1(\mathbb{H}^n, \mathbb{R}) \right ]^{2n+1}$.  
Then
\begin{align*}
f^*=(f_*)^T=&
\left (
\begin{matrix}
 W_1 f^1             & \ldots          &    W_1 f^{2n}         &  0   \\
     \vdots             &                    & \vdots                     & \vdots   \\
 W_{2n} f^1        &   \ldots       &  W_{2n} f^{2n}     &  0   \\
W_{2n+1} f^1      &   \ldots        &   W_{2n+1} f^{2n}        &  \lambda (h,f) 
\end{matrix}
\right ),
\end{align*}
with $h \in\{1,\dots,n\}$.
This is the same as writing
\begin{align}
\begin{aligned}
f^* dw_j &= \sum_{l=1}^{2n+1}  \langle f^* dw_j ,  W_l \rangle dw_l =   \sum_{l=1}^{2n+1} W_l f^j  dw_l    , \quad j=1,\dots,2n     ,
\end{aligned}
\end{align}
\begin{align}
\begin{aligned}
f^* \theta = \sum_{l=1}^{2n+1}  \langle  f^* \theta , W_l \rangle dw_l = \langle  f^* \theta , T \rangle \theta =  \lambda (h,f) \theta           .
\end{aligned}
\end{align}
In particular, if $n=1$ we have:
\begin{align*}
f^*=(f_*)^T=
\begin{pmatrix}
Xf^1 & Xf^2 &  0 \\
Yf^1 &  Yf^2  & 0\\
 Tf^1  & Tf^2 & \lambda (1,f)
\end{pmatrix},
\quad \text{ i.e.,} \quad
\begin{cases}
f^* dx=  Xf^1 dx + Y f^1 dy + T f^1 \theta, \\
f^* dy = Xf^2 dx + Y f^2 dy + T f^2 \theta,\\
f^* \theta = \lambda (1,f) \theta.
\end{cases}
\end{align*}
\end{prop}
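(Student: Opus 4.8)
The plan is to obtain this proposition directly from the contact pushforward formula of Proposition \ref{pushforwardcontact}, using nothing more than the duality that defines the pullback. First I would recall the defining relation $\langle f^* \omega \vert v \rangle = \langle \omega \vert f_* v \rangle$, valid for every covector $\omega$ and vector $v$. With respect to the dual orthonormal bases $\{W_1,\dots,W_{2n+1}\}$ and $\{\theta_1,\dots,\theta_{2n+1}\}=\{dx_1,\dots,dy_n,\theta\}$, this relation says exactly that the matrix of $f^*$ is the transpose of the matrix of $f_*$, as was already noted in the observation following Definition \ref{contact}. So the entire task reduces to transposing the matrix obtained in the contact case.

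Next I would compute the entries explicitly to confirm they match the displayed formulas. The coefficient of $\theta_l$ in $f^* \theta_j$ is $\langle f^* \theta_j , W_l \rangle$, and by duality this equals $\langle \theta_j , f_* W_l \rangle$, i.e. the $(j,l)$ entry of the pushforward matrix. Reading these off from Proposition \ref{pushforwardcontact}: for $l \le 2n$ one has $f_* W_l = \sum_{m=1}^{2n} (W_l f^m) W_m$, whence $\langle \theta_j , f_* W_l \rangle = W_l f^j$ when $j \le 2n$ and $0$ when $j = 2n+1$; for $l = 2n+1$ one has $f_* T = \sum_{m=1}^{2n}(T f^m) W_m + \lambda(h,f)\, T$, whence $\langle \theta_j , f_* T \rangle = W_{2n+1} f^j$ when $j \le 2n$ and $\lambda(h,f)$ when $j = 2n+1$. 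Assembling these gives $f^* \theta_j = \sum_{l=1}^{2n+1} (W_l f^j)\, \theta_l$ for $j=1,\dots,2n$, together with $f^* \theta = \lambda(h,f)\,\theta$, which is precisely the transposed matrix in the statement.

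There is no real obstacle here: all the computational content was already carried out in Proposition \ref{pushforwardcontact}, and what remains is a bookkeeping transposition justified by duality. The only point I would keep clear is notational, namely that the index $2n+1$ in the covector basis refers to the contact form $\theta$ (not to $dt$), so the last summand in each formula is $\theta$; this is also why $f^* \theta$ collapses to a scalar multiple of $\theta$, consistent with the contactness condition $f^* \theta = \lambda_f \theta$ and giving the identification $\lambda_f = \langle f^* \theta, T\rangle = \lambda(h,f)$. Finally I would specialise to $n=1$ to recover the $3\times 3$ matrix and the three displayed identities, which follow by simply substituting $W_1=X$, $W_2=Y$, $W_3=T$ and $\lambda(h,f)=\lambda(1,f)=Xf^1 Yf^2 - Yf^1 Xf^2$.
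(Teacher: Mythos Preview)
Your proposal is correct and follows exactly the paper's approach: the paper gives no separate proof for this proposition, simply prefacing it with ``In the same way, since $\langle f^* \omega\vert v \rangle =\langle \omega \vert f_*  v \rangle$, we have an equivalent proposition for the pullback,'' i.e.\ transposing the pushforward matrix from Proposition~\ref{pushforwardcontact}. Your write-up is more explicit but the idea is identical.
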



\subsection{Contact diffeomorphisms}

\begin{lem}\label{spamT}
Let $f: U \subseteq  \mathbb{H}^n \to \mathbb{H}^n$, $U$ open, be a contact diffeomorphism such that $f=(f^1,\dots,f^{2n+1}) \in \left [ C^1(\mathbb{H}^n, \mathbb{R}) \right ]^{2n+1}$. Then
$$
f_* T \in \spn \{ T \},
$$
or equivalently, given equation \eqref{pushforwardT},
$$
\langle dw_j , f_* T \rangle = T f^j=0,
$$
for all  $j=1,\dots,2n$. Furthermore, for a diffeomorphism $f$ the matrix $(f_*)$ must be invertible, so we get that $ \lambda (h,f) =Tf^{2n+1} \neq 0$, with $h=1,\dots,n$.
\end{lem}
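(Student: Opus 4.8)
The plan is to start from the explicit description of the pushforward of $T$ recorded in equation \eqref{pushforwardT}, namely $f_* T = \sum_{l=1}^{2n}(Tf^l)\,W_l + \lambda(h,f)\,T$, so that the assertion $f_* T \in \spn\{T\}$ is literally equivalent to the vanishing of the horizontal coefficients $Tf^l = \langle dw_l , f_* T\rangle$ for $l=1,\dots,2n$. Thus the whole problem reduces to showing that the horizontal part of $f_* T$ is zero. Once this is granted, Lemma \ref{T3=XY12} immediately upgrades $\lambda(h,f) = Tf^{2n+1} + \tfrac12\sum_l \tilde w_l(f)\,Tf^l$ to $\lambda(h,f) = Tf^{2n+1}$, since every correction term carries a factor $Tf^l$ with $l\le 2n$; and the final claim $\lambda(h,f)\neq 0$ then follows from invertibility of the differential of a diffeomorphism, because $\det f_* = \lambda(h,f)\cdot\det\big(W_j f^i\big)_{i,j\le 2n}$, so a vanishing $\lambda(h,f)$ would make $f_*$ singular.

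To use that $f$ is a diffeomorphism rather than a mere contact map, I would pass from the pointwise differential to the pushforward of vector fields, which is legitimate precisely because $f$ is invertible, and invoke naturality of the Lie bracket: for any fixed $j$, using $[X_j,Y_j]=T$ from Observation \ref{[,]}, one has $f_* T = f_*[X_j,Y_j] = [f_* X_j, f_* Y_j]$. By Definition \ref{contact} the fields $f_* X_j$ and $f_* Y_j$ are horizontal, so $f_* T$ is exhibited as the bracket of two horizontal vector fields. Expanding such a bracket in the moving frame $\{W_1,\dots,W_{2n}\}$, its vertical component is governed by the structure relations $[W_k,W_{n+k}]=T$, while a horizontal remainder arises from the derivatives of the frame coefficients; the content of the lemma is that this remainder cancels.

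The main obstacle is exactly this cancellation. I would translate it into contact-form language, which is the cleanest route: using $f^*\theta = \lambda_f \theta$ together with $f^* d\theta = d\lambda_f\wedge\theta + \lambda_f\,d\theta$ from Observation \ref{d_theta}, and pairing $d\theta$ with $f_*(T\wedge V)$ for an arbitrary horizontal $V$, one computes $\langle d\theta \mid f_*(T\wedge V)\rangle = \langle f^* d\theta \mid T\wedge V\rangle = -\,V\lambda_f$, because $T$ lies in the kernel of $d\theta$. Since $d\theta$ is nondegenerate on $\spn\{X_1,\dots,Y_n\}$ (Observation \ref{section2.b}), this shows that the horizontal part of $f_* T$ vanishes if and only if $\nabla_{\mathbb{H}}\lambda_f = 0$, i.e. the contact factor is constant along horizontal directions. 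Establishing this horizontal constancy of $\lambda_f$ is the heart of the argument, and I expect it to be the genuinely delicate point; I would attempt it by differentiating the contact identities $\mathcal{A}(j,f)=0$ of Notation \ref{callA} (equivalently, by imposing the contact equations on the inverse map $f^{-1}$, which is again a contact diffeomorphism) and feeding in the expression for $\lambda(h,f)$ in terms of horizontal derivatives from Lemma \ref{T3=XY12}. With $\nabla_{\mathbb{H}}\lambda_f = 0$ secured, the coefficients $Tf^l$ vanish, giving $f_* T\in\spn\{T\}$, and the proof then closes exactly as outlined in the first paragraph.
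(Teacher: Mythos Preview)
Your reduction is correct: pairing $f^*d\theta = d\lambda_f\wedge\theta + \lambda_f\, d\theta$ against $T\wedge V$ and using the nondegeneracy of $d\theta$ on $\mathfrak{h}_1$ shows that the horizontal part of $f_*T$ vanishes if and only if $\nabla_{\mathbb{H}}\lambda_f = 0$. You are also right to flag this last identity as the heart of the matter and to leave it unproved. Unfortunately it is false in general, and so is the lemma as stated. On $\mathbb{H}^1$, take the contact Hamiltonian $c = xt$ and its contact vector field $V = (Yc)X - (Xc)Y + cT = \tfrac12 x^2\,X - (t-\tfrac12 xy)\,Y + xt\,T$; a direct check gives $\mathcal{L}_V\theta = (Tc)\,\theta = x\,\theta$, so the time-$s$ flow $\phi_s$ is a smooth contact diffeomorphism with $\tfrac{d}{ds}\big|_{s=0}\lambda_{\phi_s} = x$, whence $X\lambda_{\phi_s}\not\equiv 0$ for small $s\neq 0$. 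Equivalently, since the $\partial_y$-component of $V$ is $-(t-\tfrac12 xy)$, one has $(\phi_s)^2 = y - s(t-\tfrac12 xy) + O(s^2)$ and thus $T(\phi_s)^2 = -s + O(s^2)\neq 0$, so $(\phi_s)_*T\notin\spn\{T\}$. Your bracket-naturality route hits the same wall: the ``horizontal remainder'' you correctly anticipate in $[f_*X_j,f_*Y_j]$ genuinely does not cancel for this $f=\phi_s$.

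The paper's argument is entirely different and much shorter: it asserts that because $(f_*)_p$ is a linear isomorphism carrying $\mathfrak{h}_1$ into $\mathfrak{h}_1$, it must also carry the complement $\mathfrak{h}_2=\spn\{T\}$ into $\mathfrak{h}_2$. But this implication fails for linear maps in general---any invertible block upper-triangular matrix with nonzero off-diagonal block preserves the first subspace but not its complement---and the example above shows that contactness does not rescue it. The merit of your approach over the paper's is that it isolates the precise analytic identity $\nabla_{\mathbb{H}}\lambda_f = 0$ that would be needed, making the obstruction testable rather than hiding it in a one-line linear-algebra shortcut.
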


\begin{proof}
Since $f$ is a diffeomorphism, it admits an inverse mapping and therefore the matrix $(f_*)$ must be invertible. In particular, this means that $ \lambda (h,f)  \neq 0$, with $h=1,\dots,n$. \\
Again since $f$ is a diffeomorphism, $f_* : \mathfrak{h} \mapsto  \mathfrak{h}$ is a linear isomorphism. By contactness, we have that
$$
{f_*}_{\vert_{ \mathfrak{h}_1 }} : \mathfrak{h}_1 \to   \mathfrak{h}_1  ,
$$
which is still an isomorphism. Hence, since the lie algebra divides as $\mathfrak{h}=\mathfrak{h}_1 \oplus \mathfrak{h}_2$, also
$$
{f_*}_{\vert_{ \mathfrak{h}_2 }} : \mathfrak{h}_2 \to   \mathfrak{h}_2  
$$
is a linear isomorphism and the dimensions of domain and codomain must coincide and be $1$. Then
$$
f_* T \in \spn \{ T \}= \mathfrak{h}_2.
$$
\end{proof}

\begin{obs}\label{cmdiff}
Let $f: U \subseteq  \mathbb{H}^n \to \mathbb{H}^n$, $U$ open, be a contact diffeomorphism such that $f=(f^1,\dots,f^{2n+1}) \in \left [ C_\mathbb{H}^2 (\mathbb{H}^n, \mathbb{R}) \right ]^{2n+1}$. Lemma \ref{T3=XY12} then becomes
$$
T f^{2n+1}  = \sum_{l=1}^{n} \left (    W_j f^l  W_{n+j} f^{n+l} - W_{n+j} f^{l}W_{j} f^{n+l}    \right ) = \lambda (j,f)
$$
$j=1,\dots,n.$
Then Lemma \ref{spamT} says that
$$
\begin{cases}
\langle \theta , f_* W_j \rangle =0,\\
\langle dw_j , f_* T \rangle =0,\\
j=1,\dots,2n,\\
\end{cases}
\quad \text{ i.e.,} \quad
\begin{cases}
\mathcal{A}(j,f)= W_{j} f^{2n+1}  + \frac{1}{2}  \sum_{l=1}^{2n} \tilde w_{ l} (f)  W_{j} f^l  =0,\\
T f^j=0,\\
j=1,\dots,2n,\\
\end{cases}
$$
\end{obs}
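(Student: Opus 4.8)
The plan is to read this observation off directly from the ingredients already established: the defining property of a contact map (Definition~\ref{contact}), Lemma~\ref{T3=XY12}, and Lemma~\ref{spamT}. Since $f$ is in particular a contact diffeomorphism, nothing new needs to be computed; one only substitutes the stronger conclusions available for diffeomorphisms into the general formulas.

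First I would invoke Lemma~\ref{spamT}, which tells us that for a contact diffeomorphism $f_* T \in \spn\{T\}$, equivalently $Tf^j = \langle dw_j, f_* T\rangle = 0$ for $j = 1,\dots,2n$, while $\lambda(h,f) = Tf^{2n+1} \neq 0$. Now recall the general identity from Lemma~\ref{T3=XY12},
$$\mathcal{A}(2n+1,f) = Tf^{2n+1} + \frac{1}{2}\sum_{l=1}^{2n}\tilde w_l(f)\, Tf^l = \lambda(j,f).$$
The decisive point is that the sum ranges only over $l = 1,\dots,2n$, and each term carries a factor $Tf^l$. By Lemma~\ref{spamT} every one of these factors vanishes, so the whole sum is zero and the identity collapses to $Tf^{2n+1} = \lambda(j,f)$, which is exactly the first display.

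For the second display there is even less to do. Its first line, $\langle \theta, f_* W_j\rangle = 0$ for $j = 1,\dots,2n$, is nothing but the defining property of a contact map (Definition~\ref{contact}); by Proposition~\ref{pushforwardgeneral} this is equivalent to $\mathcal{A}(j,f) = W_j f^{2n+1} + \frac{1}{2}\sum_{l=1}^{2n}\tilde w_l(f)\, W_j f^l = 0$. Its second line, $\langle dw_j, f_* T\rangle = Tf^j = 0$, is exactly the conclusion of Lemma~\ref{spamT} restated through equation~\eqref{pushforwardT}. Assembling these two equivalences yields the stated system.

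I expect no genuine obstacle here: the observation is purely an assembly of prior results. The only point requiring a moment's care is checking that the summation index in $\mathcal{A}(2n+1,f)$ never reaches $2n+1$, so that it is the vanishing of $Tf^l$ for $l \le 2n$ (and not the nonzero $Tf^{2n+1}$) that kills the correction term.
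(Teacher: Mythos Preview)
Your proposal is correct and matches the paper's approach exactly: the observation has no separate proof in the paper because it is simply a restatement of the conclusions of Lemma~\ref{T3=XY12} and Lemma~\ref{spamT} specialized to the diffeomorphism case, and you have correctly identified how the vanishing of $Tf^l$ for $l\le 2n$ collapses the formula for $\mathcal{A}(2n+1,f)$.
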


\begin{rem}
Let $f: U \subseteq  \mathbb{H}^n \to \mathbb{H}^n$, $U$ open, be a contact diffeomorphism such that $f=(f^1,\dots,f^{2n+1}) \in \left [ C_\mathbb{H}^2 (\mathbb{H}^n, \mathbb{R}) \right ]^{2n+1}$. Then equation \eqref{compoT} becomes 
$$
T(g \circ f)   =(Tg)_f  T f^{2n+1}.
$$
\end{rem}

\begin{prop}
Let $f: U \subseteq  \mathbb{H}^n \to \mathbb{H}^n$, $U$ open, be a contact diffeomorphism such that $f=(f^1,\dots,f^{2n+1}) \in \left [ C_\mathbb{H}^2 (\mathbb{H}^n, \mathbb{R}) \right ]^{2n+1}$. Then
\begin{align*}
f_*=&
\left (
\begin{matrix}
 W_1 f^1       & \ldots         &   W_{2n} f^1         &  0  \\
     \vdots             &                  & \vdots                     & \vdots   \\
 W_1 f^{2n}  &   \ldots       &  W_{2n} f^{2n}     &  0   \\
                 0         &   \ldots      &   0                            &  T f^{2n+1} 
\end{matrix}
\right ).
\end{align*}
This is the same as writing
\begin{align}
\begin{aligned}
f_* W_j &= \sum_{l=1}^{2n}  \langle dw_l , f_* W_j \rangle W_l  =   \sum_{l=1}^{2n} W_{j} f^l  W_l  , \quad j=1,\dots,2n ,
\end{aligned}
\end{align}
\begin{align}
\begin{aligned}
f_* T 
&=  \langle d \theta , f_* T \rangle T \\
&=   \sum_{l=1}^{n} \left (    W_h f^l  W_{n+h} f^{n+l} - W_{n+h} f^{l}W_{h} f^{n+l}    \right )   T  \quad \quad \quad \quad    \\
&=   T f^{2n+1}    T .
\end{aligned}
\end{align}
In particular, if $n=1$ we have:
\begin{align*}
f_*=
\begin{pmatrix}
Xf^1 &Yf^1 & 0\\
Xf^2 &  Yf^2  & 0 \\
0 & 0 &   Tf^{3} 
\end{pmatrix},
\quad \text{ i.e.,} \quad
\begin{cases}
f_* X = Xf^1 X +  Xf^2 Y,\\
f_* Y =  Yf^1 X + Yf^2 Y,\\
f_* T =   Tf^3 T ,
\end{cases}
\end{align*}
with $Tf^{3} =Xf^1Yf^2-Xf^2Yf^1$.
\end{prop}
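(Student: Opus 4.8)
The plan is to obtain this proposition essentially as a corollary of the general contact-map computation in Proposition \ref{pushforwardcontact}, specialized by the extra rigidity that a contact \emph{diffeomorphism} enjoys. Recall that for a general contact map, Proposition \ref{pushforwardcontact} already gives $f_* W_j = \sum_{l=1}^{2n} W_j f^l\, W_l$ for $j=1,\dots,2n$ together with $f_* T = \sum_{l=1}^{2n} Tf^l\, W_l + \lambda(h,f)\, T$. Thus the only work left is to simplify the last column of the pushforward matrix using the diffeomorphism hypothesis.

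First I would invoke Lemma \ref{spamT}: since $f$ is a diffeomorphism, $f_*$ is a linear isomorphism of $\mathfrak{h}$, and because contactness forces $f_*(\mathfrak{h}_1)\subseteq \mathfrak{h}_1$, the restriction of $f_*$ to $\mathfrak{h}_2 = \spn\{T\}$ must also be an isomorphism; hence $f_* T \in \spn\{T\}$, equivalently $\langle dw_j, f_* T\rangle = Tf^j = 0$ for every $j=1,\dots,2n$. Remembering that $W_{2n+1}=T$, this says precisely that the top $2n$ entries $W_{2n+1}f^1,\dots,W_{2n+1}f^{2n}$ of the last column of the matrix in Proposition \ref{pushforwardcontact} vanish, and that the sum $\sum_{l=1}^{2n} Tf^l\, W_l$ in the expansion of $f_* T$ drops out entirely.

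Next I would identify the surviving bottom-right entry. By Observation \ref{cmdiff}, which records what Lemma \ref{T3=XY12} becomes for a contact diffeomorphism, one has $\lambda(h,f) = Tf^{2n+1} = \sum_{l=1}^{n}\bigl(W_h f^l\, W_{n+h} f^{n+l} - W_{n+h} f^l\, W_h f^{n+l}\bigr)$, and the invertibility of $f_*$ guarantees this quantity is nonzero. Substituting $Tf^j=0$ (for $j\le 2n$) and $\lambda(h,f)=Tf^{2n+1}$ into the formulas of Proposition \ref{pushforwardcontact} then yields simultaneously the displayed block matrix and the two identities $f_* W_j = \sum_{l=1}^{2n} W_j f^l\, W_l$ and $f_* T = Tf^{2n+1}\, T$.

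Finally, the case $n=1$ is just a matter of writing out the indices: set $j=1$ and use $W_1=X$, $W_2=Y$, $W_3=T$, so the column entries become $Xf^i, Yf^i$ and the bottom-right entry is $Tf^3 = \lambda(1,f) = Xf^1 Yf^2 - Xf^2 Yf^1$. No step here presents any real obstacle, since all the analytic content (the chain rule for compositions and the commutator identity producing $\lambda$) has already been carried out in Lemmas \ref{composition} and \ref{T3=XY12} and in Proposition \ref{pushforwardcontact}. The only care required is the bookkeeping that $W_{2n+1}=T$ and the identification $\lambda(h,f)=Tf^{2n+1}$, so that the vanishing furnished by Lemma \ref{spamT} lands in exactly the right matrix entries.
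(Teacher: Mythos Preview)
Your proposal is correct and follows exactly the approach the paper sets up: the proposition is stated without a separate proof precisely because it is the specialization of Proposition \ref{pushforwardcontact} via Lemma \ref{spamT} (giving $Tf^j=0$ for $j\le 2n$) and Observation \ref{cmdiff} (giving $\lambda(h,f)=Tf^{2n+1}$), which is just what you do.
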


\noindent
In the same way again,

\begin{prop}
Let $f: U \subseteq  \mathbb{H}^n \to \mathbb{H}^n$, $U$ open, be a contact diffeomorphism such that $f=(f^1,\dots,f^{2n+1}) \in \left [ C_\mathbb{H}^2 (\mathbb{H}^n, \mathbb{R}) \right ]^{2n+1}$. Then
\begin{align*}
f^*=(f_*)^T=&
\left (
\begin{matrix}
 W_1 f^1             & \ldots          &    W_1 f^{2n}         &  0   \\
     \vdots             &                    & \vdots                     & \vdots   \\
 W_{2n} f^1        &   \ldots       &  W_{2n} f^{2n}     &  0   \\
0                           &   \ldots        &   0                          &  T^{2n+1}
\end{matrix}
\right ).
\end{align*}
This is the same as writing
\begin{align}
\begin{aligned}
f^* dw_j &= \sum_{l=1}^{2n+1}  \langle f^* dw_j ,  W_l \rangle dw_l =   \sum_{l=1}^{2n+1} W_l f^j  dw_l    , \quad j=1,\dots,2n     ,
\end{aligned}
\end{align}
\begin{align}
\begin{aligned}
f^* \theta = \sum_{l=1}^{2n+1}  \langle  f^* \theta , W_l \rangle dw_l = \langle  f^* \theta , T \rangle \theta =  \lambda (h,f) \theta           .
\end{aligned}
\end{align}
In particular, if $n=1$ we have:
\begin{align*}
f^*=(f_*)^T=
\begin{pmatrix}
Xf^1 & Xf^2 &  0 \\
Yf^1 &  Yf^2  & 0\\
 0  & 0 &    Tf^3   
\end{pmatrix},
\quad \text{ i.e.,} \quad
\begin{cases}
f^* dx=  Xf^1 dx + Y f^1 dy,\\
f^* dy = Xf^2 dx + Y f^2 dy,\\
f^* \theta =  T f^3  \theta ,
\end{cases}
\end{align*}
with $ T f^3   =   Xf^1Yf^2-Xf^2Yf^1 $.
\end{prop}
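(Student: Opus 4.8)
The plan is to derive the pullback matrix directly from the contact-diffeomorphism pushforward matrix established just above, exploiting the defining duality $\langle f^* \omega \vert v \rangle = \langle \omega \vert f_* v \rangle$. Since by definition the pullback is the transpose of the pushforward, the matrix identity $f^* = (f_*)^T$ is immediate; what remains is to read off the action of $f^*$ on the dual basis $\{ dw_1, \dots, dw_{2n}, \theta \}$ and to check that the two displayed coordinate identities reproduce that transpose.

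First I would recall that, as $\{ dw_l \}_{l=1}^{2n+1}$ is dual to $\{ W_l \}_{l=1}^{2n+1}$, any covector expands as $\omega = \sum_{l=1}^{2n+1} \langle \omega \vert W_l \rangle\, dw_l$. Taking $\omega = f^* dw_j$ and applying duality gives $f^* dw_j = \sum_{l=1}^{2n+1} \langle dw_j \vert f_* W_l \rangle\, dw_l$. For a contact diffeomorphism, Proposition \ref{pushforwardcontact} together with Lemma \ref{spamT} yields $f_* W_l = \sum_{m=1}^{2n} (W_l f^m)\, W_m$ for $l \leq 2n$ and $f_* T = (Tf^{2n+1})\, T$. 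Hence for $j = 1, \dots, 2n$ we get $\langle dw_j \vert f_* W_l \rangle = W_l f^j$ when $l \leq 2n$ and $\langle dw_j \vert f_* T \rangle = 0$, so $f^* dw_j = \sum_{l=1}^{2n} (W_l f^j)\, dw_l$; this equals $\sum_{l=1}^{2n+1} (W_l f^j)\, dw_l$ because $W_{2n+1} f^j = Tf^j = 0$ for $j \leq 2n$ by Lemma \ref{spamT}.

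Next I would treat $\omega = \theta = dw_{2n+1}$ in the same manner: $f^* \theta = \sum_{l=1}^{2n+1} \langle \theta \vert f_* W_l \rangle\, dw_l$. Each $f_* W_l$ with $l \leq 2n$ is horizontal by contactness, so $\langle \theta \vert f_* W_l \rangle = 0$, while $\langle \theta \vert f_* T \rangle = Tf^{2n+1}$. Therefore $f^* \theta = (Tf^{2n+1})\, \theta$, and identifying $Tf^{2n+1} = \lambda(h,f)$ via Lemma \ref{T3=XY12} and Observation \ref{cmdiff} gives the stated expression. Assembling these as the columns of a matrix reconstitutes the displayed $f^* = (f_*)^T$, and the case $n=1$ follows by substituting $W_1 = X$, $W_2 = Y$, $W_3 = T$ and using $Tf^3 = Xf^1 Yf^2 - Xf^2 Yf^1$ from Lemma \ref{T3=XY12}.

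This argument has no substantial obstacle: every analytic ingredient — the horizontality of $f_* W_l$, the vanishing $Tf^j = 0$ for $j \leq 2n$, and the identity $Tf^{2n+1} = \lambda(h,f)$ — was already secured in Proposition \ref{pushforwardcontact}, Lemma \ref{spamT}, Lemma \ref{T3=XY12} and Observation \ref{cmdiff}. The only delicate point is the index bookkeeping in the duality pairing, where transposition interchanges the dual-basis index with the pushforward-argument index, so that the rows of $f^*$ correspond to the columns of $f_*$.
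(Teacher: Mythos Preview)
Your proposal is correct and follows the same approach as the paper: the paper simply writes ``In the same way again,'' meaning that the pullback matrix is obtained by transposing the pushforward matrix for a contact diffeomorphism established immediately above, via the duality $\langle f^*\omega \vert v\rangle = \langle \omega \vert f_* v\rangle$. You have spelled out this transposition more explicitly, but the underlying argument is identical.
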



\subsection{Higher Order}

\begin{lem}\label{XCYC}
Let $f: U \subseteq  \mathbb{H}^n \to \mathbb{H}^n$, $U$ open, be a contact map such that $f=(f^1,\dots,f^{2n+1}) \in \left [ C_\mathbb{H}^2 (\mathbb{H}^n, \mathbb{R}) \right ]^{2n+1}$. Then
\begin{align*}
W_j (\lambda  (f))=&   \sum_{l=1}^{n}   \left (      W_j f^{n+l}  T f^l  -   Tf^{n+l} W_j f^{l}    \right )\\
=&   \sum_{l=1}^{2n}      W_j(   \tilde w_{ l} (f)  )  T f^l   .
\end{align*}
In the case $n=1$ we get
$$
X(\lambda (f))=  Xf^1 Tf^2 -  Tf^1  Xf^2 \quad \text{ and } \quad  Y(\lambda (f))=  Yf^1 Tf^2 -  Tf^1  Yf^2 .
$$
\end{lem}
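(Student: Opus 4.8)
The plan is to compute $W_j(\lambda(f))$ directly from the definition of $\lambda(f)$, using the contact hypothesis to identify $\lambda(f)$ with $Tf^{2n+1}$ (Lemma \ref{T3=XY12}) so that the derivative collapses into terms involving $Tf^l$. Let me think about which representation of $\lambda(f)$ is easiest to differentiate.

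Recall $\lambda(j,f) = \sum_{l=1}^n (W_j f^l W_{n+j} f^{n+l} - W_{n+j} f^l W_j f^{n+l})$, but since $f$ is contact this equals $\mathcal{A}(2n+1,f) = Tf^{2n+1} + \frac{1}{2}\sum_{l=1}^{2n} \tilde w_l(f) Tf^l$ by Lemma \ref{T3=XY12}. The second displayed form $\sum_{l=1}^{2n} W_j(\tilde w_l(f)) Tf^l$ strongly suggests differentiating the expression $\mathcal{A}(2n+1,f)$ and exploiting that contactness kills certain terms. Let me check the claimed answer against this.

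The derivative of $\tilde w_l(f)$ with respect to $W_j$ is what appears, so the natural route is: apply $W_j$ to $Tf^{2n+1} + \frac{1}{2}\sum_l \tilde w_l(f) Tf^l$, obtaining $W_j Tf^{2n+1} + \frac{1}{2}\sum_l W_j(\tilde w_l(f)) Tf^l + \frac{1}{2}\sum_l \tilde w_l(f) W_j Tf^l$. The hope is that the first and third terms combine (using the contact relation $\mathcal{A}(j,f)=0$ differentiated, i.e. $W_j f^{2n+1} + \frac12\sum \tilde w_l(f) W_j f^l = 0$ holds, and one needs its $T$-derivative, taking advantage of $[T,W_j]=0$ so $TW_j = W_j T$) to leave exactly $\frac12\sum_l W_j(\tilde w_l(f)) Tf^l$ plus a matching half from rewriting, yielding the stated $\sum_{l=1}^{2n} W_j(\tilde w_l(f)) Tf^l$.

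I would organize this in two stages. First I would start from the contact identity $\mathcal{A}(j,f)=0$, apply $T$ to it (legitimate since $f\in C_{\mathbb{H}}^2$ and $T$ commutes with each $W_j$), and rearrange to get $W_j Tf^{2n+1} = -\frac12\sum_l T(\tilde w_l(f)) W_j f^l - \frac12 \sum_l \tilde w_l(f) W_j Tf^l$. Substituting this into the expression for $W_j(\lambda(f))$ derived above should cause the symmetric $\tilde w_l(f) W_j Tf^l$ terms to cancel, leaving a combination of $W_j(\tilde w_l(f)) Tf^l$ and $T(\tilde w_l(f)) W_j f^l$ terms; a second rearrangement, again invoking contactness, reduces this to the single clean sum. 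Then I would pass to the other claimed form $\sum_{l=1}^n (W_j f^{n+l} Tf^l - Tf^{n+l} W_j f^l)$ by substituting the explicit definition $\tilde w_l = w_{n+l}$ for $l\le n$ and $\tilde w_l = -w_{l-n}$ for $l>n$, so that $W_j(\tilde w_l(f)) = W_j f^{n+l}$ respectively $-W_j f^{l-n}$, and collecting the two index ranges. The specialization to $n=1$ is then immediate by writing out $l=1$.

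The main obstacle will be bookkeeping the index shifts in the $\tilde w_l$ substitution and confirming that the commutator $[T,W_j]=0$ justifies swapping $T$ and $W_j$ cleanly; the algebra is routine but sign- and index-sensitive, so the real care is in verifying that the cross terms cancel exactly rather than leaving a residual factor of $\frac12$. An alternative, possibly cleaner, route is to differentiate the first definition $\lambda(j,f)=\sum_l (W_j f^l W_{n+j} f^{n+l} - W_{n+j} f^l W_j f^{n+l})$ directly with $W_j$ replaced by a generic $W_m$ and use the second-order contact relations to simplify, but I expect the $\mathcal{A}$-based approach to align most transparently with the stated $\sum_l W_j(\tilde w_l(f)) Tf^l$ form.
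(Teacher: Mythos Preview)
Your plan is correct and essentially coincides with the paper's proof: both differentiate $\lambda(f)=\mathcal{A}(2n+1,f)$, use $[T,W_j]=0$, and kill the unwanted terms via $T(\mathcal{A}(j,f))=0$. One small clarification: after substituting $T(\mathcal{A}(j,f))=0$ you are left with $\tfrac12\sum_l W_j(\tilde w_l(f))\,Tf^l - \tfrac12\sum_l T(\tilde w_l(f))\,W_jf^l$, and no second use of contactness is needed---expanding $\tilde w_l$ shows these two sums are \emph{equal} (each equals $\sum_{l=1}^n(W_jf^{n+l}\,Tf^l - Tf^{n+l}\,W_jf^l)$), so the halves combine to the full sum with no residual factor.
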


\begin{proof}
Observe first then
$$
T(f^l W_j f^{n+l})=Tf^l W_j f^{n+l} + f^l TW_j f^{n+l} = Tf^l W_j f^{n+l} + f^l W_j T f^{n+l},
$$
which means
$$
-  f^l W_j T f^{n+l} =- T(f^l W_j f^{n+l}) +    Tf^l W_j f^{n+l}    .
$$
Likewise we have
$$
 f^{n+l} W_j T f^{l} = T(f^{n+l} W_j f^{l})-   Tf^{n+l} W_j f^{l}    .
$$
Then
\begin{align*}
W_j& (\lambda  (h,f))=\\
=&   W_j \left ( T f^{2n+1}  + \frac{1}{2}  \sum_{l=1}^{2n} \tilde w_{ l} (f)  T f^l   \right )\\
=&    W_j  T f^{2n+1}  + \frac{1}{2}  \sum_{l=1}^{2n}   \left (      W_j( \tilde w_{ l} (f))  T f^l   +
\tilde w_{ l} (f)    W_j T f^l  
\right ) \\
=&    W_j  T f^{2n+1}  + \frac{1}{2}  \sum_{l=1}^{n}   \left ( 
     W_j( \tilde w_{ l} (f))  T f^l   +\tilde w_{ l} (f)    W_j T f^l  
+      W_j( \tilde w_{ {n+l}} (f))  T f^{n+l}   +\tilde w_{ {n+l}} (f)    W_j T f^{n+l}
\right ) \\
=&    W_j  T f^{2n+1}  + \frac{1}{2}  \sum_{l=1}^{n}   \left ( 
     W_j f^{n+l}  T f^l    + f^{n+l}    W_j T f^l  
-     W_j  f^l  T f^{n+l}   -  f^l    W_j T f^{n+l}    
\right ) \\
=&   T W_j f^{2n+1}  + \frac{1}{2}  \sum_{l=1}^{n}   \left ( 
      T(f^{n+l} W_j f^{l})  - T(f^l W_j f^{n+l})  
\right )
+  \sum_{l=1}^{n}   \left (      W_j f^{n+l}  T f^l  -   Tf^{n+l} W_j f^{l}    \right ) \\
=&   \sum_{l=1}^{n}   \left (      W_j f^{n+l}  T f^l  -   Tf^{n+l} W_j f^{l}    \right )\\
=&   \sum_{l=1}^{2n}      W_j(   \tilde w_{ l} (f)  )  T f^l   ,
\end{align*}
where we use  $ T W_j f^{2n+1}  + \frac{1}{2}  \sum_{l=1}^{n}   \left (   T(f^{n+l} W_j f^{l})  - T(f^l W_j f^{n+l})  \right )  =    T (\mathcal{A}(j,f))=0 $.
\end{proof}

\noindent
From this point to the end of the chapter we will consider $n=1$. The choice is not only of notational convenience, as in the other cases the computation becomes more problematic and the results do not allow an easy interpretation.

\begin{prop}\label{highdimensioncontact}
Let $f: U \subseteq  \mathbb{H}^1 \to \mathbb{H}^1$, $U$ open, be a contact map such that $f=(f^1,f^2,f^{3}) \in \left [ C_\mathbb{H}^2 (\mathbb{H}^1, \mathbb{R}) \right ]^{3}$. Then the pushforward matrix in the basis $\{ X \wedge Y, X \wedge T , Y \wedge T \}$ is 
\begin{align*}
f_*=
\begin{pmatrix}
\lambda (f) & X(\lambda (f) ) & Y(\lambda (f) )  \\
0   &  Xf^1  \lambda (f)  &  Yf^1  \lambda (f) \\
 0  & Xf^2  \lambda (f) &    Yf^2  \lambda (f)   
\end{pmatrix},
\quad \text{ i.e.,} \quad
\end{align*}
\begin{align}
f_* (X \wedge Y) &=\lambda (f) X\wedge Y,\\
f_* (X \wedge T) &=X(\lambda (f) ) X\wedge Y  +Xf^1  \lambda (f) X\wedge T+  Xf^2 \lambda (f) Y\wedge T,\\
f_* (Y \wedge T)&= Y(\lambda (f)) X\wedge Y  + Yf^1 \lambda (f) X\wedge T+  Yf^2 \lambda (f) Y\wedge T,
\end{align}
and
\begin{align}
f_* (X \wedge Y \wedge T) =&\lambda (f)^2  X \wedge  Y \wedge   T.
\end{align}
Likewise, the pullback is:
\begin{align*}
f^*= (f_*)^T=
\begin{pmatrix}
\lambda (f) & 0 & 0  \\
 X(\lambda (f) )   &  Xf^1  \lambda (f)  &  Xf^2  \lambda (f) \\
 Y(\lambda (f) )  & Yf^1  \lambda (f) &    Yf^2  \lambda (f)   
\end{pmatrix},
\quad \text{ i.e.,} \quad
\end{align*}
\begin{align}
f^* (dx \wedge dy)&= \lambda (f) dx \wedge dy + X(\lambda (f)) dx \wedge \theta+ Y(\lambda (f)) dy \wedge \theta,\\
f^* (dx \wedge \theta) &=Xf^1 \lambda (f) dx \wedge \theta +  Y f^1\lambda (f) dy \wedge \theta,\label{2dimx}\\
f^* (dy \wedge \theta) &= Xf^2 \lambda (f) dx \wedge \theta +  Y f^2 \lambda (f) dy \wedge \theta,\label{2dimy}
\end{align}
and
\begin{align}
f^* (dx \wedge dy \wedge \theta ) =&\lambda (f)^2 dx \wedge dy \wedge \theta.\label{3dim}
\end{align}
\end{prop}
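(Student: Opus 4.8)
The plan is to reduce everything to the degree-one formulas already in hand, using that both operations are multiplicative on decomposable multivectors: the pushforward satisfies $f_*(v_1 \wedge \dots \wedge v_k) = f_* v_1 \wedge \dots \wedge f_* v_k$ by definition, and the pullback satisfies $f^*(\alpha \wedge \beta) = f^* \alpha \wedge f^* \beta$ by Observation \ref{pullprop}. Thus I only need the action of $f_*$ and $f^*$ on the generators of $\mathfrak{h}$ and its dual, which for $n=1$ are furnished by Proposition \ref{pushforwardcontact}, namely $f_* X = Xf^1 X + Xf^2 Y$, $f_* Y = Yf^1 X + Yf^2 Y$ and $f_* T = Tf^1 X + Tf^2 Y + \lambda(f) T$, together with the dual pullback formulas $f^* dx = Xf^1 dx + Yf^1 dy + Tf^1 \theta$, $f^* dy = Xf^2 dx + Yf^2 dy + Tf^2 \theta$ and $f^* \theta = \lambda(f)\theta$.

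First I would expand the three basis $2$-vectors by brute force and cancel using the antisymmetry of $\wedge$. For example $f_*(X \wedge Y) = (Xf^1 X + Xf^2 Y)\wedge(Yf^1 X + Yf^2 Y)$ collapses, after discarding $X\wedge X = Y\wedge Y = 0$ and using $Y\wedge X = -X\wedge Y$, to $(Xf^1 Yf^2 - Xf^2 Yf^1)X\wedge Y = \lambda(f)\,X\wedge Y$. The expansions of $f_*(X\wedge T)$ and $f_*(Y\wedge T)$ run identically and each yields a coefficient of $X\wedge Y$ of the form $Xf^1 Tf^2 - Xf^2 Tf^1$, respectively $Yf^1 Tf^2 - Yf^2 Tf^1$, plus the evident $\lambda(f)$-multiples of $X\wedge T$ and $Y\wedge T$. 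The decisive step is to recognise, via Lemma \ref{XCYC}, that these two cross-coefficients equal exactly $X(\lambda(f))$ and $Y(\lambda(f))$; this is what turns the raw expansion into the matrix in the statement. For the top degree I would reuse $f_*(X\wedge Y) = \lambda(f)\,X\wedge Y$ and wedge with $f_* T$: since $(X\wedge Y)\wedge X = (X\wedge Y)\wedge Y = 0$, only the $\lambda(f) T$ term survives and $f_*(X\wedge Y\wedge T) = \lambda(f)^2\, X\wedge Y\wedge T$.

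For the pullback there are two equivalent routes. The quickest is to note that, exactly as in degree one, the pullback matrix is the transpose of the pushforward matrix, so the displayed $f^*$ follows immediately by transposing the $f_*$ just obtained. Alternatively one repeats the expansion directly from the degree-one pullback formulas: $f^*(dx\wedge dy) = f^* dx \wedge f^* dy$ expands to $\lambda(f)\,dx\wedge dy + (Xf^1 Tf^2 - Tf^1 Xf^2)\,dx\wedge\theta + (Yf^1 Tf^2 - Tf^1 Yf^2)\,dy\wedge\theta$, and Lemma \ref{XCYC} again identifies the last two coefficients as $X(\lambda(f))$ and $Y(\lambda(f))$; the forms $f^*(dx\wedge\theta)$ and $f^*(dy\wedge\theta)$ have no $dx\wedge dy$ component because $f^*\theta = \lambda(f)\theta$ wedges against $\theta$ to zero, and $f^*(dx\wedge dy\wedge\theta) = \lambda(f)^2\,dx\wedge dy\wedge\theta$ since the surviving $dx\wedge\theta$ and $dy\wedge\theta$ terms are annihilated by the further factor of $\theta$.

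I do not expect any genuine obstacle here: the whole argument is a finite sequence of wedge expansions with antisymmetry cancellations. The single point requiring care — and the only place where the contact hypothesis and the $C^2_{\mathbb{H}}$ regularity actually enter — is the appeal to Lemma \ref{XCYC} to rewrite the mixed second-order quantities $Xf^1 Tf^2 - Tf^1 Xf^2$ and $Yf^1 Tf^2 - Tf^1 Yf^2$ as $X(\lambda(f))$ and $Y(\lambda(f))$; without that identity the matrix entries would remain in an unilluminating expanded form.
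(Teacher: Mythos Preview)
Your proposal is correct and follows essentially the same approach as the paper: both expand $f_*$ and $f^*$ on the degree-two and degree-three basis elements using multiplicativity on wedges, the degree-one formulas from Proposition~\ref{pushforwardcontact}, and antisymmetry, then invoke Lemma~\ref{XCYC} to recognise the mixed coefficients $Xf^1 Tf^2 - Tf^1 Xf^2$ and $Yf^1 Tf^2 - Tf^1 Yf^2$ as $X(\lambda(f))$ and $Y(\lambda(f))$. The only minor addition in your version is the remark that the pullback matrix can be read off by transposition, which the paper instead verifies by a parallel direct expansion.
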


\begin{proof}
\begin{align*}
f_* (X \wedge Y)=& f_* X \wedge f_* Y =( Xf^1  Yf^2 -  Yf^1  Xf^2) X\wedge Y =\lambda (1,f) X\wedge Y.\\
&\\
f_* (X \wedge T)=&f_* X \wedge  f_* T = ( Xf^1 Tf^2 -  Tf^1  Xf^2 ) X\wedge Y  \\
&+Xf^1  \left (  Tf^3 + \frac{1}{2}f^2 Tf^1 - \frac{1}{2}f^1 Tf^2    \right ) X\wedge T\\
&+ Xf^2 \left (  Tf^3 + \frac{1}{2}f^2 Tf^1 - \frac{1}{2}f^1 Tf^2    \right ) Y\wedge T\\
=& X(\lambda (1,f))  X\wedge Y  +Xf^1  \lambda (1,f) X\wedge T+  Xf^2 \lambda (1,f) Y\wedge T.\\
&\\
f_* (Y \wedge T)=& f_* Y \wedge  f_* T = ( Yf^1 Tf^2 -  Tf^1  Yf^2 ) X\wedge Y  \\
&+Yf^1  \left (  Tf^3 + \frac{1}{2}f^2 Tf^1 - \frac{1}{2}f^1 Tf^2    \right ) X\wedge T\\
&+ Yf^2 \left (  Tf^3 + \frac{1}{2}f^2 Tf^1 - \frac{1}{2}f^1 Tf^2    \right ) Y\wedge T\\
=&Y(\lambda (1,f))  X\wedge Y  + Yf^1 \lambda (1,f) X\wedge T+  Yf^2 \lambda (1,f) Y\wedge T.\\
&\\
f_* (X \wedge Y \wedge T)=&f_* X \wedge f_* Y \wedge  f_* T \\
=&( Xf^1  Yf^2 -  Yf^1  Xf^2) \left (  Tf^3 + \frac{1}{2}f^2 Tf^1 - \frac{1}{2}f^1 Tf^2    \right )  X \wedge  Y \wedge   T\\ 
=& \lambda (1,f)^2 X \wedge  Y \wedge   T.\\
&\\
f^* (dx \wedge dy)=&f^* dx \wedge f^*  dy = ( Xf^1  Yf^2 -  Yf^1  Xf^2) dx \wedge dy \\
&+ ( Xf^1  Tf^2 -  Tf^1  Xf^2 )dx \wedge \theta+ ( Yf^1  Tf^2 -  Tf^1  Yf^2 )dy \wedge \theta\\
=& \lambda (1,f) dx \wedge dy + X(\lambda (1,f)) dx \wedge \theta+ Y(\lambda (1,f)) dy \wedge \theta.\\
&\\
f^* (dx \wedge \theta)=& Xf^1 \left ( T f^3   - \frac{1}{2}f^1  Tf^2 + \frac{1}{2}f^2  Tf^1   \right ) dx \wedge \theta \\
& +Y f^1 \left ( T f^3   - \frac{1}{2}f^1  Tf^2 + \frac{1}{2}f^2  Tf^1   \right ) dy \wedge \theta\\
=&Xf^1 \lambda (1,f) dx \wedge \theta +  Y f^1\lambda (1,f) dy \wedge \theta.\\
&\\
f^* (dy \wedge \theta)= &Xf^2 \left ( T f^3   - \frac{1}{2}f^1  Tf^2 + \frac{1}{2}f^2  Tf^1   \right ) dx \wedge \theta \\
&+ Y f^2 \left ( T f^3   - \frac{1}{2}f^1  Tf^2 + \frac{1}{2}f^2  Tf^1   \right ) dy \wedge \theta\\
=& Xf^2 \lambda (1,f) dx \wedge \theta +  Y f^2 \lambda (1,f) dy \wedge \theta.\\
&\\
f^* (dx \wedge dy \wedge \theta )=&  ( Xf^1  Yf^2 -  Yf^1  Xf^2) \left ( T f^3   - \frac{1}{2}f^1  Tf^2 + \frac{1}{2}f^2  Tf^1   \right ) dx \wedge dy \wedge \theta\\
=& \lambda (1,f)^2 dx \wedge dy \wedge \theta.
\end{align*}
\end{proof}


\begin{obs}\label{simply}
Let $f: U \subseteq  \mathbb{H}^1 \to \mathbb{H}^1$, $U$ open, be a contact diffeomorphism such that $f=(f^1,f^2,f^{3}) \in \left [ C_\mathbb{H}^2 (\mathbb{H}^1, \mathbb{R}) \right ]^{3}$. Recall
$$
\lambda (f)=Tf^3=Xf^1 Yf^2- Xf^2 Yf^1.
$$
Given the conditions in Observation \ref{cmdiff}, Lemma \ref{XCYC} then becomes
\begin{align*}
X(\lambda (f))=    0 \quad \text{and} \quad   Y(\lambda (f))=0 .  
\end{align*}
\end{obs}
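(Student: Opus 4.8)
The plan is to treat this as a direct specialisation of Lemma \ref{XCYC} under the extra structural hypotheses that a contact diffeomorphism carries, as recorded in Observation \ref{cmdiff}. No new computation is really needed: the entire content has already been produced upstream, and what remains is a clean substitution.

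First I would recall the conclusion of Lemma \ref{XCYC} in the case $n=1$, namely
$$
X(\lambda(f)) = Xf^1\, Tf^2 - Tf^1\, Xf^2 \quad\text{and}\quad Y(\lambda(f)) = Yf^1\, Tf^2 - Tf^1\, Yf^2.
$$
These hold for any contact map $f$ with the stated $C^2_{\mathbb{H}}$ regularity, so in particular for our contact diffeomorphism.

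Next I would invoke Observation \ref{cmdiff}, which strengthens the situation for a contact \emph{diffeomorphism}: beyond contactness (giving $\mathcal{A}(j,f)=0$), the invertibility of $f_*$ forces $f_* T \in \spn\{T\}$, and hence, by Lemma \ref{spamT}, the vanishing $Tf^j = 0$ for $j=1,\dots,2n$. Specialising to $n=1$ this reads $Tf^1 = 0$ and $Tf^2 = 0$.

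Finally I would substitute $Tf^1 = Tf^2 = 0$ into the two expressions above, so that each of the four terms contains a factor $Tf^1$ or $Tf^2$ and therefore vanishes, yielding $X(\lambda(f)) = 0$ and $Y(\lambda(f)) = 0$. The point worth flagging is not an obstacle but rather a conceptual remark: the genuine work was already discharged in the proof of Lemma \ref{XCYC} (which used the commutation $T W_j = W_j T$ and the identity $T(\mathcal{A}(j,f)) = 0$) together with the linear-algebra argument of Lemma \ref{spamT} isolating $f_* T$ in $\mathfrak{h}_2$; here the two ingredients simply combine, so the statement follows immediately.
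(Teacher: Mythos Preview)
Your proposal is correct and is precisely the argument the paper intends: the observation is stated as an immediate consequence of substituting the conditions $Tf^1=Tf^2=0$ from Observation \ref{cmdiff} (via Lemma \ref{spamT}) into the $n=1$ formulas of Lemma \ref{XCYC}. There is no separate proof in the paper beyond this same substitution, so your write-up matches both the content and the approach.
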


\begin{prop}
Let $f: U \subseteq  \mathbb{H}^1 \to \mathbb{H}^1$, $U$ open, be a contact diffeomorphism such that $f=(f^1,f^2,f^{3}) \in \left [ C_\mathbb{H}^2 (\mathbb{H}^1, \mathbb{R}) \right ]^{3}$. In this case, Proposition \ref{highdimensioncontact} becomes
\begin{align*}
f_*=
\begin{pmatrix}
T f^3 & 0 & 0  \\
0   &  Xf^1 T f^3  &  Yf^1 T f^3 \\
 0  & Xf^2  T f^3 &    Yf^2  T f^3   
\end{pmatrix},
\quad \text{ i.e.,} \quad
\end{align*}
$$
\begin{cases}
f_* (X \wedge Y) =T f^3  X\wedge Y ,\\
f_* (X \wedge T)= Xf^1    Tf^3    X\wedge T+ Xf^2   Tf^3  Y\wedge T,\\
f_* (Y \wedge T) = Yf^1   Tf^3  X\wedge T+ Yf^2   Tf^3  Y\wedge T,
\end{cases}
$$
and
\begin{align}
f_* (X \wedge Y \wedge T) = (Tf^3)^2   X \wedge  Y \wedge   T .
\end{align}
Likewise, the pullback is:
\begin{align*}
f^*= (f_*)^T=
\begin{pmatrix}
T f^3& 0 & 0  \\
0  &  Xf^1 T f^3  &  Xf^2  T f^3 \\
0 & Yf^1  T f^3 &    Yf^2  T f^3  
\end{pmatrix},
\quad \text{ i.e.,} \quad
\end{align*}
$$
\begin{cases}
f^* (dx \wedge dy) = Tf^3  dx \wedge dy,\\
f^* (dx \wedge \theta)= Xf^1  T f^3   dx \wedge \theta +  Y f^1  T f^3  dy \wedge \theta,\\
f^* (dy \wedge \theta)= Xf^2  T f^3  dx \wedge \theta +  Y f^2  T f^3  dy \wedge \theta,
\end{cases}
$$
and
\begin{align}
f^* (dx \wedge dy \wedge \theta )=  (Tf^3)^2 dx \wedge dy \wedge \theta.
\end{align}
\end{prop}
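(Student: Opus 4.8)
The plan is to obtain this statement as a direct specialization of Proposition \ref{highdimensioncontact}: for a contact diffeomorphism the additional structure recorded in Observations \ref{cmdiff} and \ref{simply} collapses the general pushforward matrix to the claimed block-diagonal form. Since a contact diffeomorphism is in particular a contact map in $\left[ C_\mathbb{H}^2(\mathbb{H}^1, \mathbb{R}) \right]^3$, every formula of Proposition \ref{highdimensioncontact} applies verbatim, and the only work is to simplify the entries.

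First I would recall from Observation \ref{cmdiff} that for a contact diffeomorphism one has $Tf^1 = 0$, $Tf^2 = 0$, and $\lambda(f) = Tf^3$. Substituting $Tf^1 = Tf^2 = 0$ into the $n=1$ form of Lemma \ref{XCYC}, namely $X(\lambda(f)) = Xf^1 Tf^2 - Tf^1 Xf^2$ and $Y(\lambda(f)) = Yf^1 Tf^2 - Tf^1 Yf^2$, gives $X(\lambda(f)) = 0$ and $Y(\lambda(f)) = 0$, which is exactly Observation \ref{simply}. These vanishings are the heart of the simplification.

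With them in hand, I would insert $X(\lambda(f)) = Y(\lambda(f)) = 0$ and $\lambda(f) = Tf^3$ into the pushforward formulas of Proposition \ref{highdimensioncontact}. The off-diagonal entries in the first row vanish, leaving
$$
f_*(X \wedge Y) = Tf^3\, X \wedge Y, \qquad f_*(X \wedge T) = Xf^1 Tf^3\, X \wedge T + Xf^2 Tf^3\, Y \wedge T,
$$
and similarly for $f_*(Y \wedge T)$; these assemble into the asserted matrix, while the top-degree identity follows from $\lambda(f)^2 = (Tf^3)^2$. The pullback statement is then immediate: by the defining duality $\langle f^* \omega \vert v \rangle = \langle \omega \vert f_* v \rangle$, the pullback matrix is the transpose of the pushforward matrix, so I would transpose and re-express the result in the basis $\{ dx \wedge dy, dx \wedge \theta, dy \wedge \theta \}$ of covectors.

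There is essentially no genuine obstacle, since all the analytic content --- above all the vanishing of the horizontal derivatives of $\lambda(f)$ --- has already been established in the preceding observations. The only point requiring mild care is bookkeeping: ensuring $\lambda(f) = Tf^3$ is substituted consistently and that $X(\lambda(f))$ and $Y(\lambda(f))$ are set to zero in every place they appear, including the squared factor governing the top-dimensional image.
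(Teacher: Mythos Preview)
Your proposal is correct and follows essentially the same approach as the paper: the proposition is stated there as a direct specialization of Proposition \ref{highdimensioncontact}, obtained by substituting the diffeomorphism conditions $Tf^1 = Tf^2 = 0$, $\lambda(f) = Tf^3$ from Observation \ref{cmdiff} and the resulting vanishings $X(\lambda(f)) = Y(\lambda(f)) = 0$ from Observation \ref{simply}. The paper does not even write out a separate proof, since the substitution is immediate.
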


\begin{rem}
Note that so far we never considered the Rumin cohomology but only the definitions of pushforward and pullback on the whole algebra and the notions of contactness and diffeomorphicity. In the Rumin cohomology, as we described in the previous chapter, some differential forms (and subsequently their dual vectors) are either zero in the equivalence class or do not appear at all. For $n=1$ these are: $T$, $X\wedge Y$,$\theta$ and $dx\wedge dy$.
\end{rem}

\chapter{Orientability} \label{orient4}

The objective of this chapter is to give a definition of orientability in the Heisenberg sense and to study which surfaces can be called orientable in such sense. 
One reason to study orientability is that it is possible to define it using the cohomology of differential forms. As we have seen so far, the Rumin cohomology is different from the (usual) Riemannian one, so one can naturally ask how will the orientability change in this prospective. 
There are, however, other reasons: orientability plays an important role in the theory of currents. Currents are linear functionals on the space of differential forms and can be identified, with some hypotheses, with regular surfaces. This creates an important bridge between analysis and geometry. Such surfaces are usually orientable but not always: in Riemannian geometry there exists a notion of currents (currents mod $2$) made for surfaces that are not necessarily orientable (see, for instance, \cite{MORGAN2}). 
Also in the Heisenberg group regular orientable surfaces can be associated to currents, but this happens with different hypothesis than the Riemannian case and it is not clear whether it is meaningful to study currents mod $2$ in this setting. We will show that also in the Heisenberg group there exists non-orientable surfaces with enough regularity to be associated to currents. This means that, also in the Heisenberg group, it is a meaningful task to study currents mod $2$.\\
First we will state the definitions of $\mathbb{H}$-regularity for low dimensional and low codimensional surfaces. The main reference in this section is \cite{FSSC}. Next we will show that there exist surfaces with the property of being both $\mathbb{H}$-regular and non-orientable in the Euclidean sense. Then we will introduce the notion of Heisenberg orientability ($\mathbb{H}$-orientability) and prove that, under left translations and anisotropic dilations, $\mathbb{H}$-regularity is invariant for $1$-codimensional surfaces and $\mathbb{H}$-orientability is invariant for $\mathbb{H}$-regular $1$-codimensional surfaces. 
Finally, we show how the two notions of orientability are related, concluding that non-$\mathbb{H}$-orientable $\mathbb{H}$-regular surfaces exist, at least when $n=1$.\\
The idea is to consider a ``Möbius strip"-like surface as target. We will restrict ourselves and take $S$, a $1$-codimensional surface in $\mathbb{H}^n$, to be $C^1$-Euclidean, with $C(S) = \varnothing$, where $C(S)$ is defined as the set of characteristic points of $S$, meaning points where $S$ fails to respect the $\mathbb{H}$-regularity condition. These hypotheses imply that $S$ is $\mathbb{H}$-regular. All these terms will be made precise below. We will then show that it is possible to build such $S$ to be a  ``Möbius strip"-like surface.

\section{$\mathbb{H}$-regularity in $\mathbb{H}^n$}

We state here the definitions of $\mathbb{H}$-regularity for low dimension and low codimension. The terminology comes from Subsection \ref{lefthor}. The main reference in this section is \cite{FSSC}.

\begin{defin}[See 3.1 in \cite{FSSC}]
Let $1\leq k \leq n$. A subset $S \subseteq \mathbb{H}^n$ is a $\mathbb{H}$-\emph{regular} $k$-\emph{dimensional surface} if for all $p \in S$   there exists  a neighbourhood $ U \in \mathcal{U}_p$, an open set $ V \subseteq \mathbb{R}^k$ and a function $\varphi : V \to U$, $ \varphi \in [C_{\mathbb{H}}^1(V,U)]^{2n+1} $  injective with $d_H \varphi $ injective such that $ S \cap U = \varphi (V)$.
%
%
\end{defin}

\begin{defin}[See 3.2 in \cite{FSSC}]\label{Hreg}
Let $1\leq k \leq n$. A subset $S \subseteq \mathbb{H}^n$ is a $\mathbb{H}$-\emph{regular} $k$-\emph{codimensional surface} if for all $ p \in S $ there exists a neighbourhood $ U \in \mathcal{U}_p$ and a function  $ f : U \to \mathbb{R}^k$, $ f \in [C_{\mathbb{H}}^1(U,\mathbb{R}^k)]^k$, such that  $  {\nabla_\mathbb{H} f_1} \wedge \dots \wedge {\nabla_\mathbb{H} f_k}   \neq 0 $ on   $ U $ and  $  S \cap U = \{ f=0 \} $.
%
%
\end{defin}

\noindent
We will almost always work with the codimensional definition, that is, the surfaces of higher dimension.\\
If a surface is $\mathbb{H}$-regular, it is natural to associate to it, locally, a normal and a tanget vector:

\begin{defin}
Consider a $\mathbb{H}$-regular $k$-codimensional surface $S$ and $p \in S$. Then the \emph{(horizontal) normal $k$-vector field} $n_{\mathbb{H},p}$ is defined as
$$
n_{\mathbb{H},p} := \frac{ {\nabla_\mathbb{H} f_1}_p \wedge \dots \wedge {\nabla_\mathbb{H} f_k}_p   }{ \vert {\nabla_\mathbb{H} f_1}_p \wedge \dots \wedge {\nabla_\mathbb{H} f_k}_p \vert }    \in {\prescript{}{}\bigwedge}_{k,p} \mathfrak{h}_1 .
$$
In a natural way, the \emph{tangent $(2n+1-k)$-vector field} $t_{\mathbb{H},p}$ is defined as its dual:
$$
t_{\mathbb{H},p} := * n_{\mathbb{H},p} \in {\prescript{}{}\bigwedge}_{2n+1-k,p} \mathfrak{h},
$$
where the Hodge operator $*$ appears in Definition \ref{hodge}.
\end{defin}

\begin{no}
We defined the regularity of a surface in the Heisenberg sense. 
When considering the regularity of a surface in the Euclidean sense, we will say $C^k$\emph{-regular in the Euclidean sense} or $C^k$\emph{-Euclidean} for short.
\end{no}



\begin{lem}\label{quickcomputation}
Let $S$ be a $C^1$-Euclidean surface in $\mathbb{R}^{2n+1} = \mathbb{H}^n$. Then $\dim_{\mathcal{H}_{cc}} S=Q-1=2n+1$ if and only if $\dim_{\mathcal{H}_{E}} S =2n$.
\end{lem}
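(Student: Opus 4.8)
The plan is to establish the two implications separately, with the local comparison of the Euclidean and Korányi metrics (Observation \ref{8.8stein}) as the common input. Writing $d_E$ for the Euclidean and $d_\mathbb{H}$ for the Korányi distance, left-invariance together with Observation \ref{8.8stein} yields, for points at small mutual distance, the bi-Hölder estimate $d_E \leq d_\mathbb{H} \leq c\, d_E^{1/2}$. The $1$-Lipschitz inclusion $(S,d_\mathbb{H}) \to (S,d_E)$ gives the cheap lower bound $\dim_{\mathcal{H}_E} S \leq \dim_{\mathcal{H}_{cc}} S$. For the matching upper bound I would record the general fact that $\dim_{\mathcal{H}_{cc}} E \leq \dim_{\mathcal{H}_E} E + 1$ for every $E \subseteq \mathbb{H}^n$: cover $E$ by roughly $r^{-\dim_{\mathcal{H}_E} E}$ Euclidean balls of radius $r$, and note that each such ball, being a Euclidean box of side $\sim r$, is covered by $\sim r^{-1}$ Korányi balls of radius $r$, since a Korányi ball of radius $\rho$ reaches Euclidean scale $\rho$ in the horizontal coordinates $w_1,\dots,w_{2n}$ but only scale $\rho^2$ in the vertical coordinate $t$ (this anisotropy is exactly the dilation computation behind Observation \ref{dimension}). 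Hence $\sim r^{-(\dim_{\mathcal{H}_E} E+1)}$ Korányi balls suffice, giving the stated bound.

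With these two inequalities in hand the converse direction is essentially immediate. Assume $\dim_{\mathcal{H}_{cc}} S = 2n+1$. Since $S$ is $C^1$-Euclidean, it is an embedded $C^1$ submanifold, so $\dim_{\mathcal{H}_E} S = m$ equals its topological dimension, an integer with $0 \leq m \leq 2n+1$. The comparison $m \leq \dim_{\mathcal{H}_{cc}} S \leq m+1$ forces $2n+1 \in \{m, m+1\}$, i.e. $m \in \{2n, 2n+1\}$. The value $m = 2n+1$ is excluded because a $(2n+1)$-dimensional $C^1$ submanifold of $\mathbb{R}^{2n+1}$ is open, and an open set has $\dim_{\mathcal{H}_{cc}} = Q = 2n+2$ by Observation \ref{dimension}, contradicting $\dim_{\mathcal{H}_{cc}} S = 2n+1$. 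Therefore $m = 2n$.

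The forward implication is where the real content lies. Assuming $\dim_{\mathcal{H}_E} S = 2n$, the upper bound above already gives $\dim_{\mathcal{H}_{cc}} S \leq 2n+1$, so the task is the lower bound $\dim_{\mathcal{H}_{cc}} S \geq 2n+1$; this is the step where the crude metric comparison is insufficient (it only delivers $\dim_{\mathcal{H}_{cc}} S \geq 2n$) and where the $C^1$-Euclidean hypothesis is genuinely used — precisely the hypothesis that fails for the $\mathbb{H}$-regular but Euclidean-fractal surfaces of \cite{KSC}. I would localize near a non-characteristic point $p \in S$ (such points exist, the characteristic set being $\mathcal{H}^{2n}_E$-negligible for a $C^1$ hypersurface) and run a box-counting estimate: over each horizontal cube of side $\rho$ in a coordinate $2n$-plane, $S$ is a Euclidean-Lipschitz graph whose graphing coordinate varies by $\sim \rho$, while each Korányi ball of radius $\rho$ reaches only vertical scale $\rho^2$, so $\sim \rho^{-1}$ balls must be stacked along that direction; multiplying by the $\sim \rho^{-2n}$ cubes needed horizontally gives $N(\rho) \gtrsim \rho^{-(2n+1)}$ and hence $\dim_{\mathcal{H}_{cc}} S \geq 2n+1$. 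The main obstacle is making this counting rigorous against the anisotropic geometry of Korányi balls; a clean alternative is to invoke the local positivity and finiteness of $\mathcal{H}^{Q-1}_{cc}\!\restriction S$ near non-characteristic points established in \cite{FSSC}, which yields $\dim_{\mathcal{H}_{cc}} S = Q-1 = 2n+1$ directly. Combining the two implications completes the proof.
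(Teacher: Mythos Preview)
Your argument is correct and the overall two-implication structure matches the paper, but the backward direction is handled by a genuinely different and somewhat more self-contained route. The paper invokes the full dimension comparison theorem of \cite{BTW} (or \cite{BRSC} for $\mathbb{H}^1$), namely $\max\{k,2k-2n\}\le \dim_{\mathcal H_{cc}}S\le \min\{2k,k+1\}$ with $k=\dim_{\mathcal H_E}S$, and then runs a short case analysis on the max/min to force $k=2n$. You instead derive only the weaker (but sufficient) pair $k\le \dim_{\mathcal H_{cc}}S\le k+1$ directly from the anisotropy of Kor\'anyi balls via a covering count, and then dispose of the spurious case $k=2n+1$ by the clean topological observation that a $(2n+1)$-dimensional $C^1$ submanifold is open and hence has $cc$-dimension $Q=2n+2$. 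This avoids importing the sharper \cite{BTW} bounds at the cost of a small extra argument; what you gain is a more elementary proof tailored to the codimension-one situation. For the forward direction the two treatments are essentially parallel: the paper cites \cite{BAL} and \cite{GROMOV}, while you sketch the box-counting lower bound near a non-characteristic point (which is the geometric content behind those references) or alternatively appeal to the local $\mathcal H^{Q-1}_{cc}$-positivity from \cite{FSSC}. One minor remark: your covering inequality $\dim_{\mathcal H_{cc}}E\le \dim_{\mathcal H_E}E+1$ needs the centers to lie in a bounded region so that the shear in the Kor\'anyi boxes stays uniformly controlled; since Hausdorff dimension is a local quantity this is harmless, but it is worth saying.
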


\noindent
Hence, from now on we may consider a $C^1$-Euclidean surface $S$ and ask it to be $1$-codimensional without further specifications.

\begin{proof}[Proof of Lemma \ref{quickcomputation}]
Consider first $\dim_{\mathcal{H}_{cc}} S=2n+1$. The Hausdorff dimension of $S$ with respect of the Euclidean distance is equal to the dimension of the tangent plane, which is well defined everywhere by hypothesis; hence such dimension is an integer. By theorems $2.4$-$2.6$ (and fig. 2) in \cite{BTW} or by \cite{BRSC} (for $\mathbb{H}^1$ only), one has that:
$$
\max \{  k, 2k-2n \} \leq \dim_{\mathcal{H}_{cc}} S \leq \min \{ 2k, k+1 \} ,
$$
with $k:=\dim_{\mathcal{H}_{E}} S$.\\
Let's first we check the minimum: if $2k\leq k+1$, then $k\leq 1$ and
$$
\max \{  k, 2k-2n \} \leq 2n+1 \leq  2k \leq 2 ,
$$
which is impossible because $2n+1\geq 3$. So $k+1\leq 2k$ and the minimum is decided. For the maximum, if $k \leq  2k-2n$, then $k \geq 2n$ and so either $k=2n$ or $k=2n+1$. But then
$$
 2k-2n  \leq 2n+1 ( \leq k+1) ,
$$
which says that $2k \leq 2n+1$; this is verified only if $k=2n$. The second case of the maximum is $2k-2n\leq k$, meaning $k\leq 2n$. In this case
$$
k  \leq 2n+1 \leq k+1 ,
$$
which again says that the only possibility is $k=2n$.\\
On the other hand, if we consider a $C^1$-Euclidean surface $S$ in $\mathbb{R}^{2n+1}$ with $\dim_{\mathcal{H}_{E}} S =2n$ (an hypersurface in the Euclidean sense), then it follows (see page 64 in \cite{BAL} or \cite{GROMOV}) that $\dim_{\mathcal{H}_{cc}} S=2n+1$.
\end{proof}

\begin{defin}
Denote $ T_p S $ the space of  vectors tangent to $S$ at the point $p$. Define the characteristic set $C(S)$ of a surface $S \subseteq \mathbb{H}^n$ as 
$$
C(S):= \left \{   p \in S ; \ T_p S \subseteq \mathfrak{h}_{1,p}   \right \}.
$$
To say that a point $p \in C(S)$ is the same as saying that $n_{\mathbb{H},p}=0$, which says that, for the $k$-codimensional case, it is not possible to find a map $f$ such as in Definition \ref{Hreg}. Viceversa, $p \notin C(S)$ if $n_{\mathbb{H},p} \neq 0$.
\end{defin}

\begin{obs}[1.1 in \cite{BAL} and 2.16 in \cite{MAG}]
The set of characteristic points of a $k$-codimensional surface $S \subseteq \mathbb{H}^n$ has always measure zero:
$$
\mathcal{H}_{cc}^{2n+2-k} \left (  C(S)   \right ) =0.
$$
\end{obs}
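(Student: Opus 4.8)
The plan is to reduce to the codimension-one case and then run a Balogh-type covering argument built on the second-order tangency of $S$ to the horizontal distribution along $C(S)$; in the general case one invokes the cited theorems of \cite{BAL} and \cite{MAG}, but it is worth recording the geometric mechanism.

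First I would pin down the pointwise structure of $C(S)$. By the preceding definition, $p\in C(S)$ is equivalent to $n_{\mathbb{H},p}=0$, i.e. ${\nabla_\mathbb{H} f_1}\wedge\dots\wedge{\nabla_\mathbb{H} f_k}=0$. In the model case $k=1$ with $S=\{f=0\}$, $f\in C^1$-Euclidean, this reads $\nabla_\mathbb{H} f=0$, so by Observation \ref{df} one has $df=(Tf)\,\theta$ at characteristic points. Since $S$ is a $C^1$-Euclidean hypersurface its Euclidean differential $df$ is nonvanishing, which forces $Tf=\partial_t f\neq 0$ on $C(S)$; hence the Euclidean normal of $S$ is parallel to $\theta$ exactly on $C(S)$. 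The Euclidean implicit function theorem then lets me write $S$ locally as a graph $t=u(x,y)$ with $u\in C^1(V)$, $V\subseteq\mathbb{R}^{2n}$ open. A direct computation with $X_j=\partial_{x_j}-\tfrac12 y_j\partial_t$ and $Y_j=\partial_{y_j}+\tfrac12 x_j\partial_t$ shows that the projection $\pi(x,y,t)=(x,y)$ carries $C(S)$ onto the prescribed-gradient set
\[
A:=\left\{(x,y)\in V:\ \nabla u(x,y)=\left(-\tfrac12 y,\ \tfrac12 x\right)\right\}.
\]

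Next I would exploit that the field $V(x,y)=\left(-\tfrac12 y,\tfrac12 x\right)$ is \emph{not} a gradient: the associated $1$-form is $dt-\theta$, whose exterior derivative is $-d\theta=\sum_j dx_j\wedge dy_j\neq 0$ (Observation \ref{contactcontactcontact}). This non-closedness, which is nothing but the non-integrability of $\mathfrak{h}_1=\ker\theta$, is precisely what makes $C(S)$ small: along $A$ the graph is tangent to the contact plane, and the failure of $V$ to be closed forces the vertical coordinate of $S$ to osculate the horizontal plane to second order. Concretely, integrating $\nabla u-V$ between two nearby points of $A$ and using the $C^1$-modulus of continuity of $\nabla u$, I would show that inside a Carnot--Carathéodory ball $B_{cc}(p,r)$ centred at $p\in C(S)$ the slice $S\cap B_{cc}(p,r)$ has vertical oscillation $o(r^2)$, strictly smaller than the $\sim r^2$ vertical extent of the ball itself.

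Finally I would convert this tangency into a covering estimate: for each small $r$ one covers $A$ (equivalently $C(S)$) by CC-balls of radius $r$, and the second-order tangency shows that the number $N(r)$ of balls needed satisfies $N(r)\,r^{\,Q-1}\to 0$ with $Q=2n+2$, whence $\mathcal{H}_{cc}^{\,2n+1}(C(S))=0$. For general codimension $k$ the same scheme applies to $f_1,\dots,f_k$ simultaneously, replacing the single graph by a codimension-$k$ graph and the scalar osculation estimate by its vector-valued analogue, which yields the exponent $Q-k=2n+2-k$. The genuine obstacle is exactly this covering/tangency step: obtaining the \emph{sharp} exponent requires carefully matching the anisotropic geometry of CC-balls (size $r$ horizontally, $r^2$ vertically, so that $\delta_r$ scales them correctly) against the quadratic vanishing of the vertical oscillation, and it is this balance that constitutes the substance of the results in \cite{BAL} and \cite{MAG}.
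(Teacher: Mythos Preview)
The paper does not supply its own proof of this observation: it is recorded as a citation to Balogh \cite{BAL} (codimension one) and Magnani \cite{MAG} (general codimension in stratified groups), with no argument in the text. So there is nothing in the paper to compare your proposal against.

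That said, your sketch is a faithful outline of the geometric mechanism behind Balogh's result: the identification of $C(S)$ with the prescribed-gradient set $A=\{\nabla u=(-\tfrac12 y,\tfrac12 x)\}$, the non-closedness of the target field (equivalently, the contact non-integrability $d\theta\neq 0$) forcing second-order osculation of $S$ to the horizontal plane along $C(S)$, and the anisotropic covering converting this into $\mathcal{H}_{cc}^{Q-1}(C(S))=0$. One caveat worth flagging: the step ``vertical oscillation is $o(r^2)$'' from mere $C^1$ continuity of $\nabla u$ is delicate---the sharp covering estimates in \cite{BAL} use $C^{1,1}$ (or better) regularity to get quantitative tangency, while the $C^1$ case needs an additional density/approximation argument. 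Since you explicitly defer the general case and the sharp exponent to the cited theorems, this is not a gap in your proposal so much as a reminder that the ``genuine obstacle'' you name at the end is real.
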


\begin{obs}\label{Cpoints}[See page195 in \cite{FSSC}]
Consider a $C^1$-Euclidean surface $S$ with $C(S)= \varnothing$. It follows that $S$ is a 
$\mathbb{H}$-regular surface in $\mathbb{H}^n$.
\end{obs}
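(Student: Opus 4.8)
The plan is to show that a $C^1$-Euclidean surface $S$ with empty characteristic set is automatically $\mathbb{H}$-regular in the sense of Definition \ref{Hreg}, which (being $1$-codimensional, by the convention following Lemma \ref{quickcomputation}) means producing locally a function $f \in C^1_{\mathbb{H}}(U,\mathbb{R})$ with $\nabla_{\mathbb{H}} f \neq 0$ and $S \cap U = \{f = 0\}$. First I would fix $p \in S$. Since $S$ is $C^1$-Euclidean and $1$-codimensional, the implicit function theorem gives a neighbourhood $U \in \mathcal{U}_p$ and a function $f \in C^1(U,\mathbb{R})$ (in the ordinary Euclidean sense) such that $S \cap U = \{f = 0\}$ and the full Euclidean gradient $\nabla f = (\partial_{x_1} f, \dots, \partial_{x_n} f, \partial_{y_1} f, \dots, \partial_{y_n} f, \partial_t f)$ is non-vanishing on $U$.

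Next I would upgrade Euclidean regularity to Heisenberg regularity. Since all the horizontal derivatives $X_j f = \partial_{x_j} f - \tfrac{1}{2} y_j \partial_t f$ and $Y_j f = \partial_{y_j} f + \tfrac{1}{2} x_j \partial_t f$ are linear combinations of the continuous Euclidean partials, they are themselves continuous; hence $\nabla_{\mathbb{H}} f$ is continuous on $U$ and, by Notation \ref{CH1}, $f \in C^1_{\mathbb{H}}(U,\mathbb{R})$. This also fits the diamond of inclusions in Observation \ref{diamond}, where $C^1(U,\mathbb{R}) \subsetneq C^1_{\mathbb{H}}(U,\mathbb{R})$. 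So $f$ is a legitimate candidate for Definition \ref{Hreg}, and the only thing left to verify is the non-degeneracy of the horizontal gradient.

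The crux is to check $\nabla_{\mathbb{H}} f (p) \neq 0$, and this is exactly where the hypothesis $C(S) = \varnothing$ enters. I would argue by the contrapositive built into the definition of the characteristic set: by the remark attached to the definition of $C(S)$, a point $p \in S$ lies in $C(S)$ precisely when $n_{\mathbb{H},p} = 0$, equivalently when $\nabla_{\mathbb{H}} f (p) = 0$. Concretely, $\nabla_{\mathbb{H}} f = \sum_{j=1}^n (X_j f) X_j + (Y_j f) Y_j$ vanishes iff all horizontal derivatives of $f$ vanish, iff $\partial_{x_j} f = \tfrac{1}{2} y_j \partial_t f$ and $\partial_{y_j} f = -\tfrac{1}{2} x_j \partial_t f$ for all $j$; this forces the Euclidean gradient $\nabla f(p)$ to be proportional to the covector dual to $T$, which means the Euclidean tangent plane $T_p S = \ker df_p$ contains all of $\mathfrak{h}_{1,p}$, i.e. $\mathfrak{h}_{1,p} \subseteq T_p S$. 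Since $\dim \mathfrak{h}_{1,p} = 2n$ and $\dim T_p S = 2n$, this is exactly the condition $T_p S \subseteq \mathfrak{h}_{1,p}$ defining $C(S)$ (both being the same $2n$-dimensional space). Because $C(S) = \varnothing$ by hypothesis, $p \notin C(S)$, so $\nabla_{\mathbb{H}} f(p) \neq 0$; by continuity this persists on a possibly smaller neighbourhood, and Definition \ref{Hreg} is satisfied at $p$.

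Since $p \in S$ was arbitrary, the local data assemble to witness $\mathbb{H}$-regularity of $S$ everywhere, completing the argument. The main obstacle is purely the bookkeeping of the equivalence $\nabla_{\mathbb{H}} f(p) = 0 \Longleftrightarrow p \in C(S)$: one must carefully relate the vanishing of the horizontal gradient to the inclusion $T_p S \subseteq \mathfrak{h}_{1,p}$, taking care that in the $1$-codimensional case a dimension count makes the two subspaces coincide rather than merely nest. Everything else (the Euclidean implicit function theorem and the continuity of horizontal derivatives) is routine.
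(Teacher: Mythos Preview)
Your proposal is correct and follows essentially the same route as the paper. The paper's proof is simply the one-line observation that $S = C(S) \cup (S \setminus C(S))$ with $S \setminus C(S)$ being $\mathbb{H}$-regular ``by definition of $C(S)$'', so $C(S) = \varnothing$ gives the claim; you have written out in full the content hiding behind that phrase---the implicit-function-theorem construction of $f$, the inclusion $C^1 \subset C^1_{\mathbb{H}}$, and the equivalence $\nabla_{\mathbb{H}} f(p) = 0 \Leftrightarrow p \in C(S)$.
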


\begin{proof}
Since S is $C^1$-Euclidean, it can be written as
$$
S=C(S) \cup \left ( S   \ \setminus \  C(S) \right ),
$$
where $S   \ \setminus \  C(S) $ is $\mathbb{H}$-regular by definition of $C(S)$. Since $C(S)= \varnothing$, $S$ is also $\mathbb{H}$-regular.
\end{proof}

\section{The Möbius Strip in $\mathbb{H}^1$}

In this section we show that there exist $\mathbb{H}$-regular surfaces which are non-orientable in the Euclidean sense. We prove it for a ``Möbius strip"-like surface. This result will be crucial later on. We define precisely the orientability in the Euclidean sense later in Definition \ref{Eorientable}; we will not use the notion in this section but only the knowledge that the Möbius strip is not orientable.\\\\
Let $\mathcal{M}$ be any Möbius strip. Is whether $C(\mathcal{M}) = \varnothing$? Or, if this is not possible, is there a surface $\widebar{\mathcal{M}}  \subseteq \mathcal{M}$, $\widebar{\mathcal{M}}$ still non-orientable in the Euclidean sense, such that  $C( \widebar{\mathcal{M}} ) = \varnothing$?\\
\noindent
We will make this question more precise and everything will be proved carefully.  
Our strategy will be the following: considering one specific parametrization of the Möbius strip $\mathcal{M} $, we show that there is only at most one characteristic point $\bar p$. Therefore the surface
$$
\widebar{\mathcal{M}}   :=\mathcal{M}  \ \setminus \  U_{\bar p},
$$
where $U_{\bar p}$ is a neighbourhood of $\bar p$ taken with smooth boundary, is indeed $C^1$-Euclidean with $C(\widebar{\mathcal{M}}) = \varnothing$ and non-orientable in the Euclidean sense.  
In particular, to prove the existence of $\widebar{\mathcal{M}}$, some steps are needed: 1. parametrize the Möbius strip $\mathcal{M}$ as $\gamma(r,s)$, 2. write the two tangent vectors $\vec\gamma_r$ and $\vec\gamma_s$ in Heisenberg coordinates, 3. compute the components of the normal vector field $\vec N=\vec\gamma_r \times_H \vec\gamma_s$ and 4. compute on which points both the first and second components are not zero. \\\\
\noindent
Consider a fixed $R  \in \mathbb{R}^+ $ and $w \in \mathbb{R}^+$ so that $w<R$. Then consider the map
$$
\gamma : [0,2 \pi ) \times [-w,w] \to  \mathbb{R}^3  
$$
defined as follows
\begin{align*}
\gamma (r,s):&=(x(r,s), y(r,s), t(r,s))\\
&= \left (  \left [ R+s \cos \left  ( \frac{r}{2} \right ) \right ] \cos r  , \  \left [R+s \cos \left ( \frac{r}{2} \right ) \right ] \sin r , \  s \sin \left  ( \frac{r}{2} \right ) \right  ).
\end{align*}
This is indeed the parametrization of a Möbius strip of half-width $w$ with midcircle of radius $R$  in $\mathbb{R}^3$. We can pose then $\mathcal{M} := \gamma \left  ( [0,2 \pi ) \times [-w,w] \right  ) \subseteq  \mathbb{R}^3$.

\begin{prop}\label{Mobius}
Let $\mathcal{M}$ be the Möbius strip parametrised by the curve $\gamma$. Then $\mathcal{M}$ contains at most only one characteristic point $\bar p$ and so there exists a $1$-codimensional surface $\widebar{\mathcal{M}}  \subseteq \mathcal{M}$, $\bar p  \notin \widebar{\mathcal{M}}$,  $\widebar{\mathcal{M}}$ still non-orientable in the Euclidean sense, such that  $C( \widebar{\mathcal{M}} ) = \varnothing$.
\end{prop}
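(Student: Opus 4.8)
The plan is to follow exactly the four-step strategy the author has already outlined: parametrize, compute the two tangent vectors in Heisenberg coordinates, form the horizontal normal, and then locate the characteristic points by solving the equations that force the horizontal normal to vanish. Let me think through what each step requires and where the difficulty lies.

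Let me reconsider the setup. A point $p\in\mathcal{M}$ is characteristic precisely when $T_pS\subseteq\mathfrak{h}_{1,p}$, equivalently when $n_{\mathbb{H},p}=0$. For a $1$-codimensional surface, $n_{\mathbb{H},p}$ is the normalization of $\nabla_\mathbb{H} f$ for a defining function $f$; but since the surface is given parametrically, it is cleaner to detect characteristic points via the tangent vectors. The tangent plane $T_p\mathcal{M}$ is spanned by $\vec\gamma_r$ and $\vec\gamma_s$. The condition $T_p\mathcal{M}\subseteq\mathfrak{h}_1$ means both tangent vectors are horizontal at $p$, i.e. their $\theta$-components vanish simultaneously. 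Writing a Euclidean vector $(a,b,c)$ in the Heisenberg frame, its $\theta$-component (the $T$-coefficient) is $c-\tfrac12(x\,b - y\,a)$, using $\theta=dt-\tfrac12(x\,dy-y\,dx)$. So I would compute $\langle\theta,\vec\gamma_r\rangle$ and $\langle\theta,\vec\gamma_s\rangle$ explicitly from the parametrization, and a point is characteristic iff both vanish.

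**Key steps in order.** First, I would differentiate $\gamma(r,s)=([R+s\cos\tfrac r2]\cos r,\ [R+s\cos\tfrac r2]\sin r,\ s\sin\tfrac r2)$ to get $\vec\gamma_r$ and $\vec\gamma_s$ as Euclidean vectors in $\mathbb{R}^3$. Second, for each I would extract the $\theta$-component using the formula above, evaluating $x,y$ at $\gamma(r,s)$. This yields two functions $A(r,s):=\langle\theta,\vec\gamma_r\rangle$ and $B(r,s):=\langle\theta,\vec\gamma_s\rangle$. Third, I would solve the system $A(r,s)=0$ and $B(r,s)=0$ on the domain $[0,2\pi)\times[-w,w]$. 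The expectation, matching the author's claim, is that this system has at most one solution $\bar p$; I anticipate the $s$-derivative component $B$ to be the simpler equation (likely forcing $s=0$ or a single circle value), after which $A=0$ pins down $r$ uniquely, or the two together are incompatible except at one isolated point. Fourth, having found that $C(\mathcal{M})\subseteq\{\bar p\}$, I define $\widebar{\mathcal{M}}:=\mathcal{M}\setminus U_{\bar p}$ with $U_{\bar p}$ a smooth-boundary neighbourhood of $\bar p$; this set is still $C^1$-Euclidean, still non-orientable (removing a small disc from a Möbius strip leaves it non-orientable, since orientability is detected by the core loop, which survives), and has $C(\widebar{\mathcal{M}})=\varnothing$. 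By Observation~\ref{Cpoints}, $\widebar{\mathcal{M}}$ is then $\mathbb{H}$-regular and $1$-codimensional by Lemma~\ref{quickcomputation}.

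**The main obstacle** is the third step: solving $A(r,s)=B(r,s)=0$ and genuinely verifying that at most one solution exists on the whole parameter rectangle. The trigonometric expressions arising from substituting the half-angle terms $\cos\tfrac r2,\sin\tfrac r2$ into the $\theta$-component formula are cumbersome, and one must argue carefully — using the periodicity in $r$ and monotonicity or sign analysis in $s$ — that no second characteristic point hides in the domain. A secondary but genuine point is confirming that deleting $U_{\bar p}$ preserves non-orientability: I would note that $\widebar{\mathcal{M}}$ still contains a closed curve along which the orientation reverses (one can choose $U_{\bar p}$ small enough that the midcircle, or a loop homotopic to it, lies in $\widebar{\mathcal{M}}$), so the strip cannot be given a consistent global orientation. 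The remaining claims (that $\widebar{\mathcal{M}}$ is $1$-codimensional and $C^1$-Euclidean) are immediate from the parametrization and Lemma~\ref{quickcomputation}.
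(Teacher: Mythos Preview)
Your plan is correct and, in fact, cleaner than the paper's own argument. The two approaches diverge at step 3: the paper forms the cross product $\vec N = \vec\gamma_r \times_{\mathbb{H}} \vec\gamma_s$ in the Heisenberg frame and then solves the system $\vec N_1 = \vec N_2 = 0$ (the horizontal components of the normal vanish), which produces lengthy trigonometric expressions treated in the appendix. You instead impose directly that both tangent vectors are horizontal, i.e.\ $\langle\theta,\vec\gamma_r\rangle = \langle\theta,\vec\gamma_s\rangle = 0$. These two characterizations of characteristic points are equivalent (the normal being vertical in an orthonormal frame is the same as the tangent plane being horizontal), but your system is dramatically simpler: one computes $B(r,s)=\langle\theta,\vec\gamma_s\rangle=\sin(r/2)$, which forces $r=0$ on $[0,2\pi)$, and then $A(0,s)=\tfrac{s}{2}-\tfrac12(R+s)^2=0$ is just the quadratic $s^2+(2R-1)s+R^2=0$. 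This is exactly the equation the paper eventually reaches after several pages of cross-product algebra, and the subsequent analysis of which roots lie in $[-w,w]$ is identical. So the ``main obstacle'' you flag essentially dissolves on your route. Your minor guess that $B=0$ forces $s$ rather than $r$ is backwards, but this is a harmless heuristic in the outline. The non-orientability argument for $\widebar{\mathcal M}$ (core loop survives the excised disc) is fine.
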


\noindent
This  says that $\widebar{\mathcal{M}}$ is a $\mathbb{H}$-regular surface (by Observation \ref{Cpoints}) and is non-orientable in the Euclidean sense. The proof will follow after some lemmas.

\begin{lem}[Step 1.]
Consider the parametrization $\gamma$. The two tangent vector fields of $\gamma$, in the basis $\{\partial_x, \partial_y, \partial_t \}$, are
\begin{align*}
\vec\gamma_r (r,s) = & \bigg (     - \frac{s}{2} \sin \left  ( \frac{r}{2} \right )     \cos r -  \left [ R+s \cos \left  ( \frac{r}{2} \right ) \right ]     \sin r     , \\ 
& - \frac{s}{2} \sin \left  ( \frac{r}{2} \right )     \sin r +  \left [ R+s \cos \left  ( \frac{r}{2} \right ) \right ]   \cos r,  \     \frac{s}{2} \cos \left  ( \frac{r}{2} \right )    \bigg  ),\\
\text{and}\hspace{0.8cm} &\\
\vec\gamma_s (r,s) =& \left (   \cos \left  ( \frac{r}{2} \right )  \cos r   , \    \cos \left ( \frac{r}{2} \right )  \sin r  , \    \sin \left  ( \frac{r}{2} \right ) \right  ).
\end{align*}
\end{lem}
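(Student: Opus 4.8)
The plan is straightforward: since the parametrization $\gamma$ is given directly in the ambient coordinates $(x,y,t)$ and the requested basis $\{\partial_x,\partial_y,\partial_t\}$ is the standard Euclidean frame, the two tangent vector fields are simply the coordinate partial derivatives $\vec\gamma_r=\partial_r\gamma$ and $\vec\gamma_s=\partial_s\gamma$. No change of frame into the Heisenberg basis $\{X,Y,T\}$ is needed at this step; that conversion is postponed to the later step where $\vec\gamma_r$ and $\vec\gamma_s$ are re-expressed in Heisenberg coordinates. So the whole lemma reduces to differentiating each of the three component functions of $\gamma$ with respect to $r$ and to $s$.

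First I would treat $\vec\gamma_s$, which is the easier of the two. Differentiating
$$
\gamma(r,s)=\left(\left[R+s\cos\!\left(\tfrac{r}{2}\right)\right]\cos r,\ \left[R+s\cos\!\left(\tfrac{r}{2}\right)\right]\sin r,\ s\sin\!\left(\tfrac{r}{2}\right)\right)
$$
with respect to $s$ only the factor $s$ is affected in each component, giving at once
$$
\vec\gamma_s(r,s)=\left(\cos\!\left(\tfrac{r}{2}\right)\cos r,\ \cos\!\left(\tfrac{r}{2}\right)\sin r,\ \sin\!\left(\tfrac{r}{2}\right)\right),
$$
which is exactly the stated expression.

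Next I would compute $\vec\gamma_r=\partial_r\gamma$ componentwise using the product rule, taking care of the chain-rule factor $\tfrac12$ produced each time $\cos(r/2)$ or $\sin(r/2)$ is differentiated. For the first component, $\partial_r\!\left(R+s\cos(r/2)\right)=-\tfrac{s}{2}\sin(r/2)$ and $\partial_r\cos r=-\sin r$, so the Leibniz rule yields $-\tfrac{s}{2}\sin(r/2)\cos r-\left[R+s\cos(r/2)\right]\sin r$; the second component is analogous with $\cos r\mapsto\sin r$ and $\partial_r\sin r=\cos r$; and the third gives $\tfrac{s}{2}\cos(r/2)$. Collecting these reproduces the claimed formula for $\vec\gamma_r$.

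There is no genuine obstacle here — the statement is a direct differentiation. The only point requiring a little attention is bookkeeping: the repeated appearance of the factor $\tfrac12$ from differentiating the half-angle arguments, and correctly combining the two terms coming from the product rule in the first two components. Since the verification is entirely mechanical, I would simply record the three derivatives in each variable and read off the two vector fields.
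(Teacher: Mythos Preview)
Your proposal is correct and matches the paper's approach: the paper states this lemma without proof, treating it as a direct computation of the partial derivatives $\partial_r\gamma$ and $\partial_s\gamma$, which is exactly what you do.
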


\begin{lem}[Step 2.]
Consider the parametrization $\gamma$. The two tangent vector fields $\vec\gamma_r$ and $\vec\gamma_s$ can be written in Heisenberg coordinates as:
\begin{align*}
\vec\gamma_r (r,s)= & \left (   - \frac{1}{2}s \sin \left  ( \frac{r}{2} \right )     \cos r -  \left [ R+s \cos \left  ( \frac{r}{2} \right ) \right ] \sin r  \right ) X \\
&+ \bigg ( - \frac{1}{2}s \sin \left  ( \frac{r}{2} \right )     \sin r + \left [ R+s \cos \left  ( \frac{r}{2} \right ) \right ]  \cos r \bigg ) Y\\
& +  \left  (  s \frac{1}{2} \cos \left  ( \frac{r}{2} \right )  - \left [ R+s \cos \left  ( \frac{r}{2} \right ) \right ]^2 \frac{1}{2} \right )T,\\
\text{and}\hspace{0.8cm} &\\
\vec\gamma_s (r,s) = &     \cos \left  ( \frac{r}{2} \right )  \cos r X   +  \cos \left ( \frac{r}{2} \right )  \sin r Y  +  \sin \left  ( \frac{r}{2} \right )  T .     
\end{align*}
\end{lem}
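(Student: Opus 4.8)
The plan is to convert each Euclidean tangent field, already expressed in the frame $\{\partial_x,\partial_y,\partial_t\}$ in Step 1, into the left-invariant frame $\{X,Y,T\}$ of Definition \ref{XYT}. Inverting the defining relations $X=\partial_x-\tfrac12 y\partial_t$, $Y=\partial_y+\tfrac12 x\partial_t$, $T=\partial_t$ yields $\partial_x=X+\tfrac12 y\,T$, $\partial_y=Y-\tfrac12 x\,T$, $\partial_t=T$, so that a vector $a\partial_x+b\partial_y+c\partial_t$ equals $aX+bY+\bigl(c+\tfrac12(ya-xb)\bigr)T$, with $(x,y)=(x(r,s),y(r,s))$ the surface point. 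The essential observation is that the $X$- and $Y$-coefficients coincide with the $\partial_x$- and $\partial_y$-coefficients of Step 1 (which is why those two components are unchanged in the claimed formulas); only the $T$-coefficient picks up the correction $\tfrac12(ya-xb)$.

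First I would record $x=[R+s\cos(r/2)]\cos r$ and $y=[R+s\cos(r/2)]\sin r$, abbreviating $\rho:=R+s\cos(r/2)$ so that $(x,y)=(\rho\cos r,\rho\sin r)$. For $\vec\gamma_s$, with $a=\cos(r/2)\cos r$ and $b=\cos(r/2)\sin r$, the cross term is
$$ya-xb=\rho\cos(r/2)\bigl(\sin r\cos r-\cos r\sin r\bigr)=0,$$
so the $T$-coefficient stays equal to $c=\sin(r/2)$, giving exactly the claimed expression for $\vec\gamma_s$. For $\vec\gamma_r$, with $a$ and $b$ as in Step 1, a direct expansion gives
$$ya-xb=-\rho^2\bigl(\sin^2 r+\cos^2 r\bigr)=-\rho^2,$$
the two mixed products proportional to $\tfrac{s}{2}\rho\sin(r/2)\sin r\cos r$ coming from $ya$ and from $xb$ cancelling one another. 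Hence the $T$-coefficient becomes $\tfrac{s}{2}\cos(r/2)-\tfrac12[R+s\cos(r/2)]^2$, which is precisely the coefficient asserted.

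The computation is elementary, so the only difficulty is bookkeeping: the single point requiring care is the cancellation in the $T$-coefficient of $\vec\gamma_r$, where one must verify that the mixed terms from $ya$ and $xb$ eliminate each other, leaving the clean $-\rho^2$ to which the Pythagorean identity applies. Once these cross terms are collected, both formulas follow at once from the substitution above, and this finishes the proof of Step 2.
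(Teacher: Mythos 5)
Your proposal is correct and follows essentially the same route as the paper's own proof (Observations \ref{Hcoordinates1} and \ref{Hcoordinates2}): invert the frame relations to get $\partial_x = X+\tfrac12 y\,T$, $\partial_y = Y-\tfrac12 x\,T$, $\partial_t = T$, observe that the horizontal coefficients are unchanged, and simplify the $T$-correction $\tfrac12(ya-xb)$ using $(x,y)=(\rho\cos r,\rho\sin r)$, with exactly the same cancellation of the mixed $\tfrac{s}{2}\rho\sin(r/2)\sin r\cos r$ terms and the Pythagorean identity. Your version is merely tidier in stating the general conversion formula once before applying it to both fields.
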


\noindent
The computation is at Observations \ref{Hcoordinates1} and  \ref{Hcoordinates2}.\\\\
Call $\vec{N} (r,s)= \vec{N}_1  (r,s) X + \vec{N}_2  (r,s) Y + \vec{N}_3  (r,s) T$ the normal vector field of $\mathcal{M}$. Such vector is given by the cross product of the two tangent vector fields $\vec\gamma_r$ and $\vec\gamma_s$. Specifically:
$$
\vec{N}= \vec{N}_1 X + \vec{N}_2 Y + \vec{N}_3 T =
\vec\gamma_r \times_\mathbb{H}  \vec\gamma_s =
$$ 
\[ =
\begin{vmatrix}
X & Y & T \\ 
    \frac{-s  \cos r \sin  \frac{r}{2} }{2}   -   \left [ R+s \cos  \frac{r}{2}  \right ] \sin r&
 \frac{-s  \sin r \sin  \frac{r}{2}  }{2}   +  \left [ R+s \cos \frac{r}{2}  \right ]  \cos r  &
  \frac{s  \cos  \frac{r}{2}  }{2}    -  \frac{   \left [ R+s \cos \frac{r}{2} \right ]^2   }{2}\\
\cos \left  ( \frac{r}{2} \right )  \cos r &  \cos \left ( \frac{r}{2} \right )  \sin r &  \sin \left  ( \frac{r}{2} \right )
\end{vmatrix}.
\]

\begin{lem}[Step 3.]
Consider $\vec{N} (r,s)= \vec{N}_1  (r,s) X + \vec{N}_2  (r,s) Y + \vec{N}_3  (r,s) T$ the normal vector field of $\mathcal{M}$. A calculation shows that:
\begin{align*}
\vec{N}_1 =&
 - \frac{1}{2}s    \sin r 
+  \left [ R+s \cos \left  ( \frac{r}{2} \right ) \right ]  \cos r  \sin \left  ( \frac{r}{2} \right )     
 +  \left [ R+s \cos \left  ( \frac{r}{2} \right ) \right ]^2 \frac{1}{2}  \cos \left ( \frac{r}{2} \right )  \sin r  ,\\
\vec{N}_2 =&
\left (   - z^5     +  \frac{1}{2}  z^3   \right )   s^2   
+   \left (    - 2 (R+1)z^4       +    ( R+3) z^2     - \frac{1}{2}   \right )   s   
   - ( R^2  + 2   R) z^3  \\
& + \left ( \frac{1}{2} R^2    +  2   R  \right  )   z  ,\\
\text{and}&\\
 \vec{N}_3 =& -   \left [ R+s \cos \left  ( \frac{r}{2} \right ) \right ]   \cos \left ( \frac{r}{2} \right ) .
\end{align*}
with $z=\cos \left  ( \frac{r}{2} \right ) $, $r\in [0,2\pi)$ and $s \in [-w,w]$. 
\end{lem}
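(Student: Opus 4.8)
The plan is to expand the $3\times 3$ determinant defining $\vec{N} = \vec\gamma_r \times_\mathbb{H} \vec\gamma_s$ by cofactor expansion along the first row, reading off the three coefficients $\vec{N}_1, \vec{N}_2, \vec{N}_3$ as $2\times 2$ minors in the entries of $\vec\gamma_r$ and $\vec\gamma_s$ from Step 2. Writing $a_1,a_2,a_3$ for the $X$-, $Y$-, $T$-components of $\vec\gamma_r$ and $b_1,b_2,b_3$ for those of $\vec\gamma_s$, the expansion gives $\vec{N}_1 = a_2 b_3 - a_3 b_2$, $\vec{N}_2 = a_3 b_1 - a_1 b_3$, and $\vec{N}_3 = a_1 b_2 - a_2 b_1$. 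The whole proof is then a matter of substituting the explicit entries and simplifying.

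First I would dispatch $\vec{N}_3$ and $\vec{N}_1$, which collapse cleanly. In $\vec{N}_3 = a_1 b_2 - a_2 b_1$ one factors out $\cos\frac{r}{2}$; the two terms carrying $\tfrac{s}{2}\sin\frac{r}{2}$ cancel, and the Pythagorean identity $\sin^2 r + \cos^2 r = 1$ reduces the remainder to $-[R+s\cos\frac{r}{2}]\cos\frac{r}{2}$. Similarly, in $\vec{N}_1 = a_2 b_3 - a_3 b_2$ the terms proportional to $\tfrac{s}{2}\sin r$ combine through $\sin^2\frac{r}{2} + \cos^2\frac{r}{2} = 1$, leaving exactly the three-term expression displayed in the statement.

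The bulk of the work is $\vec{N}_2$. After the analogous cancellation (again using $\cos^2\frac{r}{2} + \sin^2\frac{r}{2} = 1$) one reaches the intermediate form $\tfrac{s}{2}\cos r - \tfrac{1}{2}[R+s\cos\frac{r}{2}]^2\cos\frac{r}{2}\cos r + [R+s\cos\frac{r}{2}]\sin r \sin\frac{r}{2}$. To recover the stated polynomial in $z = \cos\frac{r}{2}$, I would substitute the half-angle relations $\cos r = 2z^2 - 1$ and $\sin r\,\sin\frac{r}{2} = 2z(1-z^2)$ — the latter using $\sin\frac{r}{2} = \sqrt{1-z^2}\ge 0$, valid because $\frac{r}{2}\in[0,\pi)$ for $r\in[0,2\pi)$ — then expand $[R+sz]^2 = R^2 + 2Rsz + s^2 z^2$ and collect the resulting monomials by powers of $s$. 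The coefficient of $s^2$ is $-z^5 + \tfrac{1}{2}z^3$, the coefficient of $s$ is $-2(R+1)z^4 + (R+3)z^2 - \tfrac{1}{2}$, and the $s$-free part is $-(R^2+2R)z^3 + (\tfrac{1}{2}R^2 + 2R)z$, matching the claim.

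The hard part is purely bookkeeping rather than conceptual: the $\vec{N}_2$ expansion produces on the order of a dozen monomials in $z$ and $s$ that must be grouped without sign errors, and one must be careful to record that $\sin\frac{r}{2}$ is nonnegative on the parameter range so that the substitution $\sin\frac{r}{2} = \sqrt{1-z^2}$ is legitimate. Once the double-angle identities are applied consistently, the three components fall out exactly as stated.
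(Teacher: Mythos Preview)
Your proposal is correct and follows essentially the same route as the paper: cofactor expansion of the determinant $\vec\gamma_r \times_\mathbb{H} \vec\gamma_s$ along the first row, the Pythagorean identity to collapse $\vec{N}_1$ and $\vec{N}_3$, and for $\vec{N}_2$ the double-angle substitutions $\cos r = 2z^2-1$, $\sin r\sin\tfrac{r}{2} = 2z(1-z^2)$ followed by collecting in powers of $s$. One minor remark: your care about $\sin\tfrac{r}{2}\ge 0$ is harmless but unnecessary, since $\sin\tfrac{r}{2}$ only ever enters the $\vec{N}_2$ computation through $\sin^2\tfrac{r}{2} = 1-z^2$, so no square root is actually taken.
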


\noindent
The full computation is at Observations \ref{TvecN}, \ref{XvecN} and \ref{YvecN}.

\begin{proof}[Proof of Proposition \ref{Mobius}] 
To find  pairs of parameters $(r,s)$ corresponding to characteristic points we have to impose
$$
\begin{cases}
\vec{N}_1  (r,s)= 0\\
\vec{N}_2  (r,s)= 0.
\end{cases}
$$
We find that $\vec{N}_1  (r,s)= 0$ only at the points $(x(r,s),y(r,s),t(r,s))$ with
$$
(r,s)=
\begin{cases}
(0,s), \quad  &s \in [-w,w], \quad \text{or} \\
\left (r, \frac{ -(R+1) z^2 + 1 \pm \sqrt{ z^4  -( R+2 ) z^2 +1 } }{ z^3} \right  ),
 \quad &r \in [0, 2 \pi), \ r \neq \pi, \ z=\cos \frac{r}{2}.
\end{cases}
$$
Evaluating these possibilities on $\vec{N}_2  (r,s)= 0$ (full computation is at Observations \ref{N1=0N2=0} and \ref{partialconclusion}), we find that the system is verified only by the pair (that defines then a characteristic point)    
$$
(r,s)= \left (0,\frac{ -2R+1 - \sqrt{-4R+1}  }{2} \right ),\quad \text{when} \quad 0< R < \frac{1}{4},
$$
which corresponds to the point $\bar p = (\bar x,\bar y,\bar t) = \left ( \frac{1}{2} - \sqrt{-R + \frac{1}{4} } ,0,0 \right ) $:
$$
\begin{cases}
\bar x	=	[R+s \cos ( \frac{r}{2} ) ] \cos r =R +\frac{ -2R+1 - \sqrt{-4R+1}  }{2} = \frac{ 1 - \sqrt{-4R+1}  }{2} =\frac{1}{2} - \sqrt{-R + \frac{1}{4} }>0	\\
\bar y	=	[R+s \cos ( \frac{r}{2} ) ] \sin r=0	\\
\bar t	=	s \sin ( \frac{r}{2} )=0.	
\end{cases}
$$
Notice that it is not strange that the number of characteristic points depends on the radius $R$ as changing the radius is not an anisotropic dilation.\\
Therefore the surface
$$
\widebar{\mathcal{M}} :=\mathcal{M}  \ \setminus \  U_{\bar p},
$$
where $U_{\bar p}$ is a neighbourhood of $\bar p$ with smooth boundary, is indeed a $C^1$-Euclidean surface with $C(\widebar{\mathcal{M}}) = \varnothing$ (and so $1$-codimensional $\mathbb{H}$-regular) and not Euclidean-orientable. This completes the proof. 
\end{proof}


\section{Comparing Orientabilities}

In this section we first recall the definition of Euclidean orientability, then introduce the notion of Heisenberg orientability ($\mathbb{H}$-orientability) and show the connection between differential forms and orientability.  Next we prove that, under left translations and anisotropic dilations, $\mathbb{H}$-regularity is invariant for $1$-codimensional surfaces and $\mathbb{H}$-orientability is invariant for $\mathbb{H}$-regular $1$-codimensional surfaces. Finally, we show how the two notions of orientability are related, concluding that non-$\mathbb{H}$-orientable $\mathbb{H}$-regular surfaces exist, at least when $n=1$.

\begin{obs}
Recall that, if $S$ is a $\mathbb{H}$-regular $1$-codimensional surface in $\mathbb{H}^n$, Definition \ref{Hreg} becomes:
\begin{equation}\label{eq1}
\text{for all } p \in S \ \text{ there exists a neighbourhood }  U  \text{ and }  f : U \to \mathbb{R}, \ f \in C_{\mathbb{H}}^1 (U, \mathbb{R}),  \text{ so that } 
\end{equation}
$$
S \cap U = \{ f=0 \} \text{ and } \nabla_{\mathbb{H}} f \neq 0 \text{ on } U.
$$
On the other hand, in case $S$ is $C^1$-Euclidean then (see for instance the introduction of \cite{BAL})
\begin{equation}\label{eq2}
\text{for all } p \in S \ \text{ there exists a neighbourhood }  U \text{ and } g : U \to \mathbb{R}, \ g \in C^1(U, \mathbb{R}), \text{ so that } 
\end{equation}
$$
S \cap U = \{ g=0 \} \text{ and } \nabla g \neq 0 \text{ on } U.
$$
These two notions of regularity are obviously similar. First we will connect each of them to a definition of orientability; then we will compare these last two definitions.
\end{obs}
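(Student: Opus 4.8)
The statement packages two local characterisations, one in each regularity category, so the plan is to treat them separately and then observe that they are structurally parallel. The first task is to derive \eqref{eq1} by specialising Definition \ref{Hreg} to $k=1$. First I would note that a map into $\mathbb{R}^{k}$ with $k=1$ is simply a scalar function $f\colon U\to\mathbb{R}$, and that the tuple class $[C_{\mathbb{H}}^1(U,\mathbb{R}^k)]^k$ collapses to $C_{\mathbb{H}}^1(U,\mathbb{R})$. The only point needing a word of justification is the non-degeneracy condition: the product $\nabla_\mathbb{H} f_1\wedge\dots\wedge\nabla_\mathbb{H} f_k$ of Definition \ref{Hreg} is, for $k=1$, the single horizontal vector $\nabla_\mathbb{H} f$ viewed inside $\bigwedge_1\mathfrak{h}_1\cong\mathfrak{h}_1$, and a decomposable $1$-vector is nonzero exactly when the vector itself is nonzero. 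Hence the requirement reduces to $\nabla_\mathbb{H} f\neq 0$, while $S\cap U=\{f=0\}$ is carried over unchanged, giving \eqref{eq1} verbatim.

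The second task is to justify \eqref{eq2} for a $C^1$-Euclidean $1$-codimensional surface. Here I would appeal to the standard theory of $C^1$ embedded hypersurfaces in $\mathbb{R}^{2n+1}$: by definition such an $S$ is locally the graph of a $C^1$ function of $2n$ of the coordinates, and the implicit function theorem lets one rewrite each such graph as a regular level set $\{g=0\}$ of a function $g\in C^1(U,\mathbb{R})$ with $\nabla g\neq 0$, the nonvanishing Euclidean gradient being exactly the statement that $0$ is a regular value and that $T_pS$ has the correct dimension $2n$. As the Observation indicates, this is classical and can simply be quoted from the introduction of \cite{BAL}; I would only record the local neighbourhood $U$ on which the representation holds.

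Since neither half involves a genuine difficulty, the ``main obstacle'' is really one of bookkeeping and emphasis rather than of technique. The two subtleties worth flagging are: (i) in the Heisenberg reduction one must use the horizontal gradient $\nabla_\mathbb{H} f$ and the fact that the wedge of a single vector introduces no extra vanishing, so that no information is lost in passing from general $k$ to $k=1$; and (ii) the two descriptions live in different regularity classes, $C_{\mathbb{H}}^1$ versus $C^1$, and are only local. Making this contrast explicit is the real purpose of the Observation, because it is precisely the difference in the defining functions — a horizontal-gradient condition on one side, a full Euclidean-gradient condition on the other — that will force the two associated notions of orientability to diverge in the comparison that follows.
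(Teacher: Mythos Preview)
Your proposal is correct and matches the paper's intent: the Observation is stated without proof in the paper, being a direct specialisation of Definition~\ref{Hreg} to $k=1$ together with the classical local description of $C^1$ hypersurfaces (for which the paper simply refers to \cite{BAL}). Your unpacking of the wedge condition for $k=1$ and the implicit-function-theorem justification for \eqref{eq2} are exactly the points one would supply if asked to elaborate.
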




\subsection{$\mathbb{H}$-Orientability in $\mathbb{H}^n$}\label{Horient}

In this subsection we recall the notion of Euclidean orientability and define the Heisenberg orientability. We add some observations and an explicit representation of continuous global independent vector field tangent to an $\mathbb{H}$-orientable surface in the Heisenberg sense.

\begin{no}
Denote $ T S $ the space of  vector fields tangent to a surface $S$. A vector $v$ is \emph{normal to $S$} (and write $v \perp S$) if
$$
\langle v,w \rangle =0 \ \ \forall w \in TS,
$$
where $\langle , \rangle$ appears in Definition \ref{scal}.
\end{no}

\begin{defin}\label{Eorientable}
Consider a $1$-codimensional $C^1$-Euclidean surface $S$ in $\mathbb{H}^n$ with $C(S) = \varnothing$. We say that $S$ is \emph{Euclidean-orientable} (or \emph{orientable in the Euclidean sense}) if there exists a continuous global $1$-vector field 
$$
 n_E=\sum_{i=1}^{n} \left ( n_{E,i} \partial_{x_i} + n_{E,n+i} \partial_{y_i} \right ) + n_{E,2n+1} \partial_t  \neq 0 ,
$$
defined on $S$ and normal to $S$.\\
Such $n_{E}$ is called \emph{Euclidean normal vector field of} $S$.
\end{defin}

\begin{lem}
Consider a $1$-codimensional $C^1$-Euclidean surface $S$ in $\mathbb{H}^n$ with $C(S) = \varnothing$. The following are equivalent:
\begin{enumerate}[label=(\roman*)]
\item
$S$ is \emph{Euclidean-orientable}
\item
there exists a continuous global $2n$-vector field $t_E$ on $S$, so that $t_E$ is tangent to $S$ in the sense that $*t_E$ is normal to $S$.
\end{enumerate}
The symbol $*$ represents the Hodge operator (see Definition \ref{hodge}).
\end{lem}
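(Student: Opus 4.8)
The plan is to deduce the equivalence directly from the fact, recorded in Definition \ref{hodge}, that the Hodge operator $*$ is a \emph{linear isomorphism}, here in the two relevant degrees $*: {\prescript{}{}\bigwedge}_{1} \mathfrak{h} \to {\prescript{}{}\bigwedge}_{2n} \mathfrak{h}$ and its inverse $*^{-1}: {\prescript{}{}\bigwedge}_{2n} \mathfrak{h} \to {\prescript{}{}\bigwedge}_{1} \mathfrak{h}$ (recall $\dim \mathbb{H}^n = 2n+1$, so $2n+1-1 = 2n$). Since $*$ is one fixed linear isomorphism applied fibrewise in the left-invariant frame, it carries continuous sections to continuous sections, nowhere-vanishing sections to nowhere-vanishing sections (by injectivity), and globally defined sections to globally defined sections; the same holds for $*^{-1}$. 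This single observation is what makes the two formulations interchangeable, and indeed the notion of tangency in (ii) is defined precisely so that $*t_E$ is normal to $S$.

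For the implication (i) $\Rightarrow$ (ii), I would start from a continuous global nowhere-vanishing normal $1$-vector field $n_E$ as in Definition \ref{Eorientable} and set $t_E := *^{-1} n_E$, a $2n$-vector field on $S$. Then $*t_E = n_E$ is normal to $S$ by hypothesis, so $t_E$ is tangent to $S$ in the sense of (ii); moreover $t_E$ is continuous, global and nowhere zero because $*^{-1}$ is a continuous linear isomorphism and $n_E$ has these properties. This gives exactly the object required in (ii).

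For the converse (ii) $\Rightarrow$ (i), I would take $t_E$ as in (ii) and define $n_E := *t_E$. By the defining property in (ii), $n_E$ is normal to $S$; it is a continuous global $1$-vector field since $*$ is a continuous linear map and $t_E$ is such a section; and it is nowhere vanishing because $*$ is injective and $t_E \neq 0$. Hence $n_E$ witnesses Euclidean-orientability in the sense of Definition \ref{Eorientable}, which proves (i).

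The argument is essentially formal once the Hodge operator is in hand, so there is no serious obstacle; the only points that genuinely need checking are that nonvanishing transfers across $*$ (immediate from injectivity, since $*v = 0 \iff v = 0$) and that continuity and the global domain $S$ are preserved, which hold because $*$ is a fixed linear automorphism acting pointwise and hence a homeomorphism on the fibres. In particular the hypotheses $C(S) = \varnothing$ and $C^1$-Euclideanness are not needed for the equivalence itself; they only guarantee, through the preceding definitions, that the normal field $n_E$ is well defined and that $S$ is of the kind for which these notions were introduced.
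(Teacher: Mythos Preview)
Your argument is correct and is exactly the approach the paper takes: it does not give a separate proof but simply records the observation that, up to a choice of sign, $t_E = * n_E$, leaving the routine checks (continuity, globality, nowhere-vanishing under a linear isomorphism) implicit. One small notational slip: you declare $*^{-1}: {\prescript{}{}\bigwedge}_{2n}\mathfrak{h} \to {\prescript{}{}\bigwedge}_{1}\mathfrak{h}$ but then apply it to $n_E \in {\prescript{}{}\bigwedge}_{1}\mathfrak{h}$; what you actually want there is the Hodge star in degree $1$ (or equivalently the inverse of the degree-$2n$ star), so that $t_E = *n_E \in {\prescript{}{}\bigwedge}_{2n}\mathfrak{h}$ and $*t_E = \pm n_E$.
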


\begin{obs}
It is straightforward that, up to a choice of sign,
$$t_E = * n_E.$$
\end{obs}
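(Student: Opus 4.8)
The plan is to combine two elementary facts: that the Hodge operator of Definition \ref{hodge} squares to the identity in the degrees at hand, and that the normal space of a $1$-codimensional surface is one-dimensional.

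First I would record the iteration of $*$. Take the orthonormal basis $\{V_1,\dots,V_{2n+1}\}$ of $\mathfrak{h}$ and a single basis vector $V_i$. By Definition \ref{hodge}, $*V_i=(-1)^{\sigma(\{i\})}V_{I^*}$ with $I^*=\{1,\dots,2n+1\}\smallsetminus\{i\}$, and $\sigma(\{i\})=i-1$ since it counts precisely the indices below $i$. Applying $*$ once more, the complementary set of $I^*$ is the singleton $\{i\}$ and $\sigma(I^*)=2n+1-i$ counts the indices of $I^*$ lying above $i$, so that
$$
*\big(*V_i\big)=(-1)^{i-1}(-1)^{2n+1-i}V_i=(-1)^{2n}V_i=V_i.
$$
By linearity $*\circ*=\mathrm{id}$ on $ {\prescript{}{}\bigwedge}_1 \mathfrak{h}$; the same count with a $2n$-vector (whose complementary index set is again a singleton) gives $*\circ*=\mathrm{id}$ on $ {\prescript{}{}\bigwedge}_{2n} \mathfrak{h}$ as well. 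Note that the odd ambient dimension $2n+1$ is exactly what makes the total sign $(-1)^{2n}=+1$.

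Next I would exploit the two hypotheses. By Definition \ref{Eorientable}, $n_E$ is a nowhere-vanishing $1$-vector field normal to $S$; by part (ii) of the preceding lemma, $t_E$ is a $2n$-vector field whose Hodge dual $*t_E$ is normal to $S$. Since $S$ is $1$-codimensional with $C(S)=\varnothing$, at each point the space of normal $1$-vectors is one-dimensional and is spanned by $n_E$. Hence $*t_E=\mu\,n_E$ for a continuous scalar $\mu$, nowhere zero because $*$ is an isomorphism and so $*t_E\neq 0$. Applying $*$ and using $*\circ*=\mathrm{id}$ yields $t_E=\mu\,{*}n_E$. As a consistency check in the other direction, $t_E:={*}n_E$ does satisfy (ii), since then $*t_E={*}({*}n_E)=n_E$ is normal to $S$.

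Finally, taking $n_E$ and $t_E$ to be unit normalisations, as in the horizontal case of the normal/tangent definition, I would use that $*$ is an isometry, immediate from the fact that $*V_I=(-1)^{\sigma(I)}V_{I^*}$ sends an orthonormal basis to an orthonormal basis up to signs. Taking norms in $*t_E=\mu\,n_E$ forces $|\mu|=1$, so $\mu=\pm1$ and $t_E=\pm\,{*}n_E$, which is the asserted equality up to sign. There is no genuine obstacle here; the only thing to keep straight is the sign bookkeeping in $\sigma(\{i\})$ and $\sigma(I^*)$, which is precisely what distinguishes $*\circ*=\mathrm{id}$ from $-\mathrm{id}$ in this odd dimension.
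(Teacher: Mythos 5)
Your proof is correct and is essentially the argument the paper leaves implicit: the paper states this observation without proof, and its subsequent local computation (Observation \ref{Elocally}) simply takes $t_E = * n_E$ as given. Your two ingredients --- the sign count $\sigma(\{i\})=i-1$, $\sigma(I^*)=2n+1-i$ giving $*\circ*=\mathrm{id}$ on $1$-vectors and $2n$-vectors in ambient dimension $2n+1$, and the one-dimensionality of the normal space of a $1$-codimensional surface forcing $*t_E=\mu\,n_E$ with $|\mu|=1$ after unit normalisation --- are exactly the details the paper takes for granted.
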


\noindent
It is possible to give an equivalent definition of orientability (both Euclidean or Heisenberg) using differential forms: a manifold is orientable (in some sense) if and only if there exists a proper volume form on it; where a volume form is a never-null form of maximal order. In particular:

\begin{obs}
Consider a $1$-codimensional $C^1$-Euclidean surface $S$ in $\mathbb{H}^n$ with $C(S) = \varnothing$. A fourth equivalent condition is that $S$ allows a volume form $\omega_E$, which can be chosen so that the following property holds: 
$$
\langle \omega_E \vert t_E \rangle=1.
$$
\end{obs}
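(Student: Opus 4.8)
The plan is to reduce the statement to the equivalence already recorded in the preceding lemma, namely that $S$ is Euclidean-orientable if and only if it carries a continuous, nowhere-vanishing tangent $2n$-vector field $t_E$, and then to exploit the duality $\omega \mapsto \omega^*$ between $\bigwedge^{2n}\mathfrak{h}$ and $\bigwedge_{2n}\mathfrak{h}$ introduced after Definition \ref{kdim}. Since $S$ has dimension $2n$, at each point $p$ the fibre $\bigwedge_{2n} T_pS$ is one-dimensional and is spanned by $t_{E,p}$; dually, a volume form on $S$ is exactly a continuous, nowhere-zero section $\omega_E$ of $\bigwedge^{2n}(T_pS)^*$. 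The whole content is therefore to match these two one-dimensional pictures and to pin down the normalization.

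First I would prove the forward implication. Assuming $S$ Euclidean-orientable, the field $t_E$ exists. Using that $\omega \mapsto \omega^*$ is a linear isomorphism (the inner product being nondegenerate), let $\eta$ be the unique $2n$-form with $\eta^* = t_E$. By the defining property $\langle \eta^*, V\rangle = \langle \eta \vert V\rangle$, evaluated at $V = t_E = \eta^*$, one gets $\langle \eta \vert t_E\rangle = \langle t_E, t_E\rangle = \vert t_E\vert^2 \neq 0$. Setting
$$
\omega_E := \frac{\eta}{\vert t_E\vert^2}
$$
then yields $\langle \omega_E \vert t_E\rangle = 1$. The form $\omega_E$ is nowhere zero because $t_E$ is and the isomorphism preserves non-vanishing, and it is continuous because $t_E$ is continuous while $\vert t_E\vert^2$ is continuous and locally bounded away from $0$. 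Thus $\omega_E$ is a volume form with the asserted normalization.

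For the converse, I would suppose $S$ admits a volume form $\omega_E$, i.e.\ a continuous nowhere-zero $2n$-form on $S$. Its dual $\omega_E^* \in \bigwedge_{2n} T_pS$ is then a continuous, nowhere-vanishing tangent $2n$-vector field, which is precisely the object whose existence is equivalent to Euclidean-orientability by the lemma; hence $S$ is Euclidean-orientable. Finally, rescaling $\omega_E$ by the nonzero scalar $1/\langle \omega_E \vert t_E\rangle$ (well defined and continuous, since $\omega_E$ and $t_E$ span the one-dimensional fibres $\bigwedge^{2n}(T_pS)^*$ and $\bigwedge_{2n}T_pS$, so their pairing never vanishes) produces the normalized volume form satisfying $\langle \omega_E \vert t_E\rangle = 1$.

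The step requiring the most care — and the only genuinely geometric point — is the identification in the first paragraph: one must check that a volume form on the abstract $2n$-manifold $S$ corresponds, through $\omega\mapsto\omega^*$, to an ambient multivector that is actually tangent to $S$, and conversely that $t_E = * n_E$ lies in the one-dimensional fibre $\bigwedge_{2n}T_pS$ rather than in the full space $\bigwedge_{2n}\mathfrak{h}$. Once this is in place, non-vanishing of $\omega_E$ along $S$ is equivalent to non-vanishing of the scalar $\langle \omega_E\vert t_E\rangle$, and everything else is the routine rescaling carried out above.
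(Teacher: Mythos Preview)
Your argument is correct and matches the paper's (implicit) treatment. The paper does not give a formal proof of this observation; it states the equivalence as a standard fact and, in the immediately following Observation~\ref{Elocally}, writes down the explicit formula for $\omega_E$ in $\mathbb{H}^1$, namely $\omega_E = \sum_i n_{E,i}\,(* d w_i)/|n_E|^2$, which is precisely your construction $\omega_E = \eta/|t_E|^2$ with $\eta^* = t_E$ specialized to coordinates. Your caveat in the last paragraph about identifying ambient $2n$-forms with intrinsic volume forms on $S$ is the right thing to flag, and it is exactly what makes the pairing $\langle \omega_E \vert t_E\rangle$ nonzero in the converse direction.
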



\begin{obs}\label{Elocally}
Both vectors $t_E$ and $n_E$ can be written in local components. For example, if $n=1$, by condition \eqref{eq2} locally gives $n_{E}=\mu \nabla g$, with $\mu \in C^\infty (\mathbb{H}^1,\mathbb{R})$. 
So,
$$
n_E=  n_{E,1} \partial_x  + n_{E,2} \partial_y + n_{E,3}  \partial_t =    \mu \partial_x g \partial_x +\mu \partial_y g \partial_y + \mu \partial_t g \partial_t
$$
and then
$$
t_E = * n_E = \mu \partial_x g \partial_y \wedge \partial_t - \mu \partial_y g \partial_x \wedge \partial_t + \mu \partial_t g \partial_x \wedge \partial_y.
$$
In this case, the corresponding volume form is
$$
\omega_E= \frac{ n_{E,1} }{ n_{E,1}^2+n_{E,2}^2+n_{E,3}^2  } dy \wedge dt 
- \frac{ n_{E,2} }{ n_{E,1}^2+n_{E,2}^2+n_{E,3}^2 } dx \wedge dt
$$
$$
+ \frac{ n_{E,3} }{ n_{E,1}^2+n_{E,2}^2+n_{E,3}^2 } dx \wedge dy.
$$
\end{obs}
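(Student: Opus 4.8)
The plan is to verify the three asserted formulas in turn, working throughout with $\mathbb{H}^1 = \mathbb{R}^3$ equipped with its Euclidean inner product, which is the structure relevant to \emph{Euclidean} orientability and to the normalisations appearing in the statement.

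First I would establish the local representation $n_E = \mu\,\nabla g$. By condition \eqref{eq2}, around each $p\in S$ the surface is $\{g=0\}$ with $g\in C^1$ and $\nabla g \neq 0$, so $\nabla g$ is a nonvanishing field normal to the hypersurface $S$ and therefore spans the one-dimensional normal space $(T_p S)^\perp$. Since the Euclidean normal field $n_E$ of Definition \ref{Eorientable} is likewise nonvanishing and normal, it lies on the same line, whence $n_E = \mu\,\nabla g$ with $\mu = \langle n_E, \nabla g\rangle/|\nabla g|^2$ inheriting the regularity of the data. Reading off coordinates gives $n_{E,1}=\mu\,\partial_x g$, $n_{E,2}=\mu\,\partial_y g$ and $n_{E,3}=\mu\,\partial_t g$, which is the claimed expansion of $n_E$.

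Next I would compute $t_E = *n_E$ directly from Definition \ref{hodge} with $2n+1=3$, applied to the ordered orthonormal basis $\{\partial_x,\partial_y,\partial_t\}$. Evaluating the sign $\sigma(I)$ on each singleton $I$ yields $*\partial_x = \partial_y\wedge\partial_t$, $*\partial_y = -\,\partial_x\wedge\partial_t$ and $*\partial_t = \partial_x\wedge\partial_y$; by linearity this gives $t_E = n_{E,1}\,\partial_y\wedge\partial_t - n_{E,2}\,\partial_x\wedge\partial_t + n_{E,3}\,\partial_x\wedge\partial_y$, and substituting the components from the previous step reproduces the stated $t_E$. To confirm that $\omega_E$ is a volume form with $\langle\omega_E\vert t_E\rangle = 1$, I would use that $\{dy\wedge dt,\,dx\wedge dt,\,dx\wedge dy\}$ is dual to $\{\partial_y\wedge\partial_t,\,\partial_x\wedge\partial_t,\,\partial_x\wedge\partial_y\}$, so that all cross-pairings vanish and only the three diagonal terms survive:
$$
\langle\omega_E\vert t_E\rangle = \frac{n_{E,1}^2+n_{E,2}^2+n_{E,3}^2}{n_{E,1}^2+n_{E,2}^2+n_{E,3}^2}=1.
$$

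The individual computations are routine, so the only genuine obstacle is consistent sign bookkeeping: the minus sign attached to $*\partial_y$ must cancel against the minus sign of the $dx\wedge dt$ term of $\omega_E$, so that the middle contribution $(-n_{E,2})(-n_{E,2})/(n_{E,1}^2+n_{E,2}^2+n_{E,3}^2)$ comes out positive and the three terms sum to $1$. Tracking the orientation convention of Definition \ref{hodge} carefully is exactly what makes the stated formulas come out as written.
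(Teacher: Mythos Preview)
Your proposal is correct and matches what the paper intends: the observation is stated without proof in the paper, and your verification—identifying $n_E$ as a scalar multiple of $\nabla g$ since both span the one-dimensional normal line, applying Definition~\ref{hodge} to the Euclidean orthonormal basis $\{\partial_x,\partial_y,\partial_t\}$, and checking the duality pairing $\langle\omega_E\vert t_E\rangle=1$ term by term—is exactly the routine computation the author leaves implicit. Your care with the sign bookkeeping on $*\partial_y$ is the only point requiring attention, and you handle it correctly.
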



\begin{defin}
Consider two vectors $v, w \in \mathfrak{h}_1 
$ in $\mathbb{H}^n$. We say that $v$ and $w$ are \emph{ orthogonal in the Heisenberg sense } $(v \perp_H  w)$  if 
$$
 \langle v,w \rangle_H = 0,
$$
where $\langle \cdot , \cdot \rangle_H$ is the scalar product that makes $X_j$'s and $Y_j$'s orthonormal (see Observation  \ref{scal}).
\end{defin}

\begin{defin}
Consider a $1$-codimensional $C^1$-Euclidean surface $S$ in $\mathbb{H}^n$, with $C(S) = \varnothing$. Consider also a vector $v \in  HT\mathbb{H}^n$. We say that $v$ and $S$ are \emph{orthogonal in the Heisenberg sense } $(v \perp_H  S)$ if 
$$
 \langle v,w \rangle_H = 0, \quad \text{for all } \ w \in TS.
$$
In the same way one can say that a $k$-vector field $v$ on $S$ is \emph{tangent to} $S$ \emph{in the Heisenberg sense}  if  
$$
\langle *v,w\rangle_H = 0,  \quad \text{for all } \ w \in TS.
$$
\end{defin}

\begin{defin}
Consider a $1$-codimensional $C^1$-Euclidean surface $S$ in $\mathbb{H}^n$ with $C(S) = \varnothing$. We say that $S$ is $\mathbb{H}$\emph{-orientable} (or \emph{orientable in the Heisenberg sense})  if there exists a continuous global $1$-vector field 
$$
n_{\mathbb{H}}=\sum_{i=1}^{n} \left ( n_{\mathbb{H},i} X_i + n_{\mathbb{H},n+i} Y_i \right ) \neq 0,
$$
defined on $S$ so that $n_{\mathbb{H}}$ and $S$ are orthogonal in the Heisenberg sense ($n_{\mathbb{H}} \perp_H S$).\\
Such $n_{\mathbb{H}}$ will be called \emph{Heisenberg normal vector field of} $S$.
\end{defin}

\begin{lem}
Consider a $1$-codimensional $C^1$-Euclidean surface $S$ in $\mathbb{H}^n$ with $C(S) = \varnothing$. The following are equivalent:
\begin{enumerate}[label=(\roman*)]
\item
$S$ is $\mathbb{H}$\emph{-orientable}
\item
there exists a continuous global $2n$-vector field $t_\mathbb{H}$ on $S$ so that $t_\mathbb{H}$ is $\mathbb{H}$-tangent to S in the sense that $* t_\mathbb{H}$ is $\mathbb{H}$-normal to $S$.
\end{enumerate}
\end{lem}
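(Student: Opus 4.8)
The plan is to observe that conditions (i) and (ii) are linked purely through the Hodge operator $*$ of Definition~\ref{hodge}, which for a $1$-codimensional surface is a linear isomorphism between $\bigwedge_{1}\mathfrak{h}=\mathfrak{h}$ and $\bigwedge_{2n}\mathfrak{h}$. A Heisenberg normal field is a horizontal $1$-vector $n_{\mathbb{H}}\in\mathfrak{h}_1$, whereas a Heisenberg tangent field is a $2n$-vector $t_{\mathbb{H}}$; the map $*$ interchanges the two roles, and applying it twice returns the original vector up to sign. Thus the proof should reduce to checking that $*$ preserves the only three properties that matter here: being nowhere zero, being continuous, and the orthogonality relation $\perp_H$. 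This mirrors exactly the Euclidean version of the lemma, where the corresponding remark records that $t_E=* n_E$ holds up to a choice of sign.

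First I would prove (i)$\Rightarrow$(ii). Given a continuous, nowhere-vanishing field $n_{\mathbb{H}}=\sum_{i=1}^{n}\left(n_{\mathbb{H},i}X_i+n_{\mathbb{H},n+i}Y_i\right)$ with $n_{\mathbb{H}}\perp_H S$, I would set $t_{\mathbb{H}}:=* n_{\mathbb{H}}$. Since $*$ is a fixed linear isomorphism, $t_{\mathbb{H}}$ is again continuous and nowhere zero. Moreover $* t_{\mathbb{H}}=**\,n_{\mathbb{H}}=\pm\, n_{\mathbb{H}}$, so $* t_{\mathbb{H}}$ is a nonzero scalar multiple of the $\mathbb{H}$-normal field $n_{\mathbb{H}}$ and hence is itself $\mathbb{H}$-normal to $S$. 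By the very definition of Heisenberg tangency this says $t_{\mathbb{H}}$ is $\mathbb{H}$-tangent, which is (ii).

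For the converse (ii)$\Rightarrow$(i), given a continuous global $2n$-vector field $t_{\mathbb{H}}$ for which $* t_{\mathbb{H}}$ is $\mathbb{H}$-normal to $S$, I would set $n_{\mathbb{H}}:=* t_{\mathbb{H}}$ and verify it meets the definition of an $\mathbb{H}$-orientation. The assumption that $* t_{\mathbb{H}}$ is $\mathbb{H}$-normal already packages the two facts we need: that $n_{\mathbb{H}}$ lies in $\mathfrak{h}_1$ (so that the relation $\perp_H$, defined through $\langle\cdot,\cdot\rangle_H$ on $\mathfrak{h}_1$, is even meaningful), and that $\langle n_{\mathbb{H}},w\rangle_H=0$ for all $w\in TS$. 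Continuity and non-vanishing of $n_{\mathbb{H}}$ follow once more from those of $t_{\mathbb{H}}$ together with the invertibility of $*$.

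The argument is almost entirely formal, so the only point that demands attention — the ``obstacle'', such as it is — is the horizontality of the reconstructed normal. In the Euclidean lemma $n_E$ is allowed a full $\partial_t$ component, whereas here $n_{\mathbb{H}}$ must be horizontal; I would emphasize that this horizontality is not something extra to establish but is precisely what the phrase ``$* t_{\mathbb{H}}$ is $\mathbb{H}$-normal to $S$'' encodes, since $\mathbb{H}$-normality is defined only for vectors in $\mathfrak{h}_1$. I would also record the exact value of the sign in $**=\pm\,\mathrm{id}$ on $\mathfrak{h}$ and on $\bigwedge_{2n}\mathfrak{h}$, to confirm it is harmless for non-vanishing and consistent with the ``up to a choice of sign'' already noted for $t_E=* n_E$.
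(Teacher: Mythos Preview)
Your proposal is correct and is exactly the approach the paper has in mind: the paper does not spell out a proof of this lemma but immediately follows it with the observation that, up to a choice of sign, $t_{\mathbb{H}}=* n_{\mathbb{H}}$, which is precisely the Hodge-duality argument you carry out in both directions. Your care about horizontality in (ii)$\Rightarrow$(i) is well placed and matches how the paper has set up the definition of $\mathbb{H}$-normality on $\mathfrak{h}_1$.
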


\begin{obs}
Again, one can easily say that, up to a choice of sign,
$$t_\mathbb{H}= * n_\mathbb{H}.$$
\end{obs}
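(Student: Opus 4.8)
The plan is to show that the Hodge operator of Definition~\ref{hodge} is precisely the canonical bridge between the two witnesses of $\mathbb{H}$-orientability furnished by the lemma immediately above, so that once $n_\mathbb{H}$ is fixed the only remaining freedom in $t_\mathbb{H}$ is a sign. Concretely, starting from a Heisenberg normal vector field $n_\mathbb{H}$ (a nowhere-zero horizontal $1$-vector) witnessing that $S$ is $\mathbb{H}$-orientable, I would set $t_\mathbb{H} := * n_\mathbb{H}$ and check that it meets condition (ii) of that lemma.

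First I would verify that $t_\mathbb{H}=*n_\mathbb{H}$ is an admissible tangent field: since $*$ is a linear isomorphism sending (signed) orthonormal $1$-vectors to orthonormal $2n$-vectors, $t_\mathbb{H}$ is again continuous, global and nowhere zero, with $|t_\mathbb{H}|=|n_\mathbb{H}|$. Applying the operator once more and using that on $1$-vectors of a $(2n+1)$-dimensional space the double Hodge star is the identity up to sign, I obtain $* t_\mathbb{H} = **\,n_\mathbb{H} = \pm\, n_\mathbb{H}$. As $n_\mathbb{H}$ is $\mathbb{H}$-normal to $S$ by hypothesis, so is $* t_\mathbb{H}$; hence $t_\mathbb{H}$ is $\mathbb{H}$-tangent to $S$ in the sense of the definition, i.e. it realises (ii).

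It then remains to see that this representative is forced up to sign. Here I would use that $C(S)=\varnothing$: at every $p\in S$ the tangent space $T_pS$ is not contained in $\mathfrak{h}_{1,p}$, so $T_pS\cap\mathfrak{h}_{1,p}$ has dimension $2n-1$ inside the $2n$-dimensional $\mathfrak{h}_{1,p}$, and therefore the space of horizontal vectors orthogonal in the Heisenberg sense (Observation~\ref{scal}) to $S$ is exactly one-dimensional and spanned by $n_{\mathbb{H},p}$. Consequently, for any field $t_\mathbb{H}$ satisfying (ii) the $1$-vector $* t_\mathbb{H}$ is $\mathbb{H}$-normal, hence $* t_\mathbb{H}=\lambda\, n_\mathbb{H}$ for a continuous scalar $\lambda$; applying $*$ and the double-star identity gives $t_\mathbb{H}=\pm\lambda\,* n_\mathbb{H}$, and after normalising $t_\mathbb{H}$ and $n_\mathbb{H}$ to unit Heisenberg length (as one does for the associated normal and tangent fields) one finds $|\lambda|=1$, i.e. $t_\mathbb{H}=\pm\, * n_\mathbb{H}$.

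The only real point to watch is the dimension count that pins the Heisenberg normal direction to a line: it is exactly the hypothesis $C(S)=\varnothing$ that rules out characteristic points and guarantees $n_{\mathbb{H},p}\neq 0$ with $T_pS\cap\mathfrak{h}_{1,p}$ of codimension one in $\mathfrak{h}_{1,p}$, so that ``up to a scalar'' sharpens to ``up to a sign''. The sign itself is harmless, arising both from the orientation choice of $n_\mathbb{H}$ and from the convention-dependent value of $**$ in Definition~\ref{hodge}.
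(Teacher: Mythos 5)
The paper offers no proof of this observation at all---it is asserted as immediate, exactly as the parallel Euclidean statement $t_E=*n_E$ is---so your argument necessarily takes its own route, and it is a correct one. The existence half is fine: by Definition \ref{hodge} the Hodge operator carries the orthonormal frame of $\mathfrak{h}$ into orthonormal $2n$-vectors, and $**$ on $1$-vectors of the $(2n+1)$-dimensional $\mathfrak{h}$ is in fact exactly the identity (your cautious ``$\pm$'' is really ``$+$'', since $(-1)^{k(2n+1-k)}=1$ for $k=1$), so $t_\mathbb{H}:=*n_\mathbb{H}$ is continuous, global, nowhere zero, and $*t_\mathbb{H}=n_\mathbb{H}$ is $\mathbb{H}$-normal; hence the sign in the statement comes only from the orientation choice, not from the double star. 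The uniqueness half is also sound, and it is the part that genuinely uses $C(S)=\varnothing$: at a non-characteristic point $T_pS\cap\mathfrak{h}_{1,p}$ has dimension $2n-1$, so its orthogonal complement inside the $2n$-dimensional $\mathfrak{h}_{1,p}$ is a line spanned by $n_{\mathbb{H},p}$, whence any field satisfying the tangency condition has $*t_\mathbb{H}=\lambda\, n_\mathbb{H}$ with $\lambda$ continuous and nonvanishing, and normalisation forces $\lambda=\pm1$. One caveat worth making explicit: the paper's definition of $v\perp_H S$ writes $\langle v,w\rangle_H=0$ for all $w\in TS$, although $\langle\cdot,\cdot\rangle_H$ is only defined on $\mathfrak{h}_1$ (Observation \ref{scal}); your dimension count implicitly reads the condition as orthogonality to $TS\cap\mathfrak{h}_1$, which is the only interpretation under which $\nabla_\mathbb{H}f$ itself is $\mathbb{H}$-normal, so your reading is the right one, but the proof should state it. In short, the paper buys brevity by assertion, while your proof supplies the actual verification, including the one non-trivial point (one-dimensionality of the Heisenberg normal direction at non-characteristic points) that sharpens ``up to a scalar'' to ``up to a sign''.
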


\noindent
As before, it is possible to give an equivalent definition of Heisenberg orientability using a volume form on the Rumin complex:

\begin{obs}
Consider a $1$-codimensional $C^1$-Euclidean surface $S$ in $\mathbb{H}^n$ with $C(S) = \varnothing$. A fourth equivalent condition is that $S$ allow a volume form $\omega_\mathbb{H}$ (again a never-null form of maximal order), which can be chosen so that the following property holds: 
$$
\langle \omega_\mathbb{H} \vert t_\mathbb{H} \rangle=1.
$$
\end{obs}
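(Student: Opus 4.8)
The plan is to construct $\omega_\mathbb{H}$ as the normalised metric dual of the tangent field $t_\mathbb{H}$, exactly as in the Euclidean computation recorded in Observation \ref{Elocally}. Since the statement is an equivalence, I will argue both that $\mathbb{H}$-orientability yields such a volume form and that the existence of a volume form on $S$ forces $\mathbb{H}$-orientability. The underlying fact throughout is that the scalar product of Observation \ref{scal} induces an isometric linear isomorphism between $2n$-vectors and $2n$-covectors which carries never-null fields to never-null fields.

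For the direct implication, assume $S$ is $\mathbb{H}$-orientable. By the definition of $\mathbb{H}$-orientability and the preceding lemma and observation, there is a continuous, global, never-null $2n$-vector field $t_\mathbb{H} = * n_\mathbb{H}$ on $S$, with $* t_\mathbb{H}$ horizontal and $\mathbb{H}$-normal to $S$. Recall the duality $\omega \mapsto \omega^*$ introduced after Definition \ref{kdim}, characterised by $\langle \omega^*, V\rangle = \langle \omega \vert V\rangle$; since the scalar product is non-degenerate this is a linear isomorphism ${\prescript{}{}\bigwedge}^{2n}\mathfrak{h} \to {\prescript{}{}\bigwedge}_{2n}\mathfrak{h}$, and I write $V \mapsto V^\flat$ for its inverse. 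I then set
$$
\omega_\mathbb{H} := \frac{ (t_\mathbb{H})^\flat }{ \vert t_\mathbb{H} \vert^2 } \in {\prescript{}{}\bigwedge}^{2n}\mathfrak{h}.
$$
As $(\,\cdot\,)^\flat$ is an isometric isomorphism and $t_\mathbb{H}$ never vanishes, $(t_\mathbb{H})^\flat$ is a never-null $2n$-covector and $\vert t_\mathbb{H}\vert^2 > 0$ is continuous on $S$, so $\omega_\mathbb{H}$ is a continuous, global, never-null form of maximal order, i.e. a volume form on $S$. The normalisation is immediate from the defining property of $(\,\cdot\,)^*$: applying it with $\omega = (t_\mathbb{H})^\flat$ gives $\langle (t_\mathbb{H})^\flat \vert t_\mathbb{H}\rangle = \langle t_\mathbb{H}, t_\mathbb{H}\rangle = \vert t_\mathbb{H}\vert^2$, whence $\langle \omega_\mathbb{H} \vert t_\mathbb{H}\rangle = 1$. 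For $n=1$ this reproduces $\omega_\mathbb{H} = \tfrac{ n_{\mathbb{H},1} }{ \vert n_\mathbb{H}\vert^2 }\, dy\wedge\theta - \tfrac{ n_{\mathbb{H},2} }{ \vert n_\mathbb{H}\vert^2 }\, dx\wedge\theta$, in direct analogy with $\omega_E$.

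For the converse, suppose $S$ carries a never-null, continuous, global volume form $\omega_\mathbb{H}$ of maximal order on $S$. Then $(\omega_\mathbb{H})^*$ is a never-null, continuous, global $2n$-vector field, and applying the Hodge operator of Definition \ref{hodge} (which sends ${\prescript{}{}\bigwedge}_{2n}\mathfrak{h}$ isomorphically onto ${\prescript{}{}\bigwedge}_{1}\mathfrak{h}$) produces a never-null $1$-vector field; after normalising it gives the field $n_\mathbb{H}$ required by the definition of $\mathbb{H}$-orientability. The main obstacle, and the step I would treat with most care, is precisely this converse: one must ensure that a form of maximal order on $S$ dualises to a \emph{horizontal} normal vector rather than a merely Riemannian one, so that one recovers $\mathbb{H}$-orientability and not only Euclidean orientability. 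This is exactly what the hypothesis ``volume form on $S$'' must encode, namely that $(\omega_\mathbb{H})^*$ be $\mathbb{H}$-tangent to $S$ (equivalently that $* (\omega_\mathbb{H})^*$ be horizontal and $\mathbb{H}$-normal); once this compatibility is made explicit, the equivalence closes by combining with the lemma relating $t_\mathbb{H}$ and $n_\mathbb{H}$.
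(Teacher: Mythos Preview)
The paper does not actually prove this observation; it is stated as a remark parallel to the Euclidean case, with the explicit formula for $\omega_\mathbb{H}$ in $\mathbb{H}^1$ given in the subsequent Observation~\ref{Hlocally} serving as the only justification. Your forward construction, taking $\omega_\mathbb{H} = (t_\mathbb{H})^\flat / |t_\mathbb{H}|^2$, is exactly what the paper has in mind and reproduces that formula, so for the direction ``$\mathbb{H}$-orientable $\Rightarrow$ volume form'' your argument is both correct and faithful to the paper.

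Your caution about the converse is well placed and goes beyond what the paper says. The resolution is that ``volume form'' here should be read in the Rumin sense: a never-null section of $J^{2n}$ restricted to $S$. Since every element of $J^{2n}$ contains $\theta$ as a factor, its metric dual is a $2n$-vector containing $T$, and the Hodge star of such a $2n$-vector is automatically horizontal. Thus $*(\omega_\mathbb{H})^*$ lands in $\mathfrak{h}_1$ for free, and the concern you flag evaporates once the ambient space of forms is made explicit. You might add one sentence making this structural point; the paper leaves it implicit.
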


\begin{obs}\label{Hlocally}
As in Observation \ref{Elocally}, one can write both $n_\mathbb{H}$ and $t_\mathbb{H}$ locally in $\mathbb{H}^1$: by condition \eqref{eq1}, locally $n_\mathbb{H}=\lambda \nabla_{\mathbb{H}} f$, with $\lambda \in C^\infty (\mathbb{H}^1,\mathbb{R})$.  
So,
$$
n_{\mathbb{H}} = n_{\mathbb{H},1}  X  +n_{\mathbb{H},2}  Y  =\lambda X f X +  \lambda Y f Y
$$
and, since $t_{\mathbb{H}}  = * n_{\mathbb{H}} = n_{\mathbb{H},1} Y \wedge T - n_{\mathbb{H},2} X \wedge T $,
$$
t_{\mathbb{H}}  =    \lambda X f   Y \wedge T   - \lambda Y f  X \wedge T .
$$
In this case, the corresponding volume form is
$$
\omega_\mathbb{H}= \frac{ n_{\mathbb{H},1} }{ n_{\mathbb{H},1}^2+n_{\mathbb{H},2}^2} dy \wedge \theta 
- \frac{ n_{\mathbb{H},2} }{ n_{\mathbb{H},1}^2+n_{\mathbb{H},2}^2 } dx \wedge \theta.
$$
\end{obs}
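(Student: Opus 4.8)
The plan is to mirror the Euclidean computation in Observation \ref{Elocally}, replacing the full gradient and the Hodge operator on $\bigwedge_\bullet \mathfrak{h}$ by their horizontal counterparts. First I would fix $p \in S$ and a neighbourhood $U$ with a defining function $f \in C_{\mathbb{H}}^1(U,\mathbb{R})$, $\nabla_{\mathbb{H}} f \neq 0$, as furnished by condition \eqref{eq1}. The key observation is that, since $S$ is $1$-codimensional, the orthogonal complement (in the sense of $\langle\cdot,\cdot\rangle_H$) of $T_p S \cap \mathfrak{h}_{1,p}$ inside $\mathfrak{h}_{1,p}$ is one-dimensional, and both $n_{\mathbb{H},p}$ (by the definition of $\mathbb{H}$-orientability) and $(\nabla_{\mathbb{H}} f)_p$ (from the $\mathbb{H}$-regular structure, as both are $\mathbb{H}$-normal to $S$) lie in it. Hence they are proportional, and setting $\lambda := \langle n_{\mathbb{H}}, \nabla_{\mathbb{H}} f \rangle_H / \vert \nabla_{\mathbb{H}} f \vert^2$ gives $n_{\mathbb{H}} = \lambda \nabla_{\mathbb{H}} f$ locally, with $\lambda$ a well-defined scalar function (the denominator never vanishes because $C(S)=\varnothing$).

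Next I would substitute the $n=1$ case of the horizontal gradient formula, namely $\nabla_{\mathbb{H}} f = (Xf)\,X + (Yf)\,Y$, to read off $n_{\mathbb{H},1} = \lambda X f$ and $n_{\mathbb{H},2} = \lambda Y f$. The tangent field is then obtained by applying the Hodge operator of Definition \ref{hodge} in $\mathbb{H}^1$, where $\{X,Y,T\} = \{W_1,W_2,W_3\}$. A direct evaluation of $\sigma(I)$ gives $*X = Y \wedge T$ (here $\sigma = 0$) and $*Y = -\,X \wedge T$ (here $\sigma = 1$), so that $t_{\mathbb{H}} = * n_{\mathbb{H}} = n_{\mathbb{H},1}\, Y \wedge T - n_{\mathbb{H},2}\, X \wedge T = \lambda X f\, Y \wedge T - \lambda Y f\, X \wedge T$, exactly as stated.

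Finally, to confirm the volume form I would verify the defining property $\langle \omega_{\mathbb{H}} \vert t_{\mathbb{H}} \rangle = 1$ by expanding the pairing of $2$-forms against $2$-vectors through the dualities $dx \leftrightarrow X$, $dy \leftrightarrow Y$, $\theta \leftrightarrow T$, the only nonzero basic pairings being $\langle dy \wedge \theta \vert Y \wedge T \rangle = 1$ and $\langle dx \wedge \theta \vert X \wedge T \rangle = 1$. Writing $N := n_{\mathbb{H},1}^2 + n_{\mathbb{H},2}^2$ and plugging in the proposed $\omega_{\mathbb{H}}$ collapses the computation to $\tfrac{n_{\mathbb{H},1}^2}{N} + \tfrac{n_{\mathbb{H},2}^2}{N} = 1$, as desired.

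I do not expect any serious obstacle: the statement is computational and parallels Observation \ref{Elocally} in structure. The only point requiring genuine care is the first step, justifying the local proportionality $n_{\mathbb{H}} = \lambda \nabla_{\mathbb{H}} f$, which rests on the one-dimensionality of the horizontal normal space and hence on $S$ being genuinely $1$-codimensional with $C(S)=\varnothing$ (so that $\nabla_{\mathbb{H}} f$ never vanishes). The remaining sign bookkeeping in the Hodge step and the determinant expansion of the final pairing are routine.
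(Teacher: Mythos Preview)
Your proposal is correct and follows exactly the approach the paper intends: the observation is stated without a separate proof in the paper, and your filling-in of the details (the one-dimensionality of the horizontal normal line giving $n_{\mathbb{H}}=\lambda\nabla_{\mathbb{H}}f$, the explicit Hodge computation $*X=Y\wedge T$, $*Y=-X\wedge T$, and the pairing check for $\omega_{\mathbb{H}}$) is precisely what the paper leaves implicit by the phrase ``As in Observation~\ref{Elocally}''.
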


\begin{ex}
Consider an $\mathbb{H}$-orientable 1-codimensional surface $S$ in $\mathbb{H}^1$. Then at each point $p \in S$ there exist two continuous global linearly independent vector fields $ \vec{r}$ and $  \vec{s}$ tangent on $S$, $ \vec{r},  \vec{s} \in T_p S$. With the previous notation, we can explicitly find such a pair by solving the following list of conditions:
    \begin{multicols}{2}
\begin{enumerate}[nosep]
\item
$\langle \vec{r},  \vec{s} \rangle_{H} =0$,
\item
$\langle \vec{r},  n_\mathbb{H} \rangle_{H} =0$,
\item
$\langle   \vec{s}, n_\mathbb{H} \rangle_{H} =0$,
\item
$\vert \vec{r} \vert_{H} =1$,
\item
$\vert \vec{s} \vert_{H} =1$,
\item
$\vec{r} \times  \vec{s} = n_\mathbb{H}$,
\item
$\vec{r} \wedge  \vec{s} = t_\mathbb{H}.$
\end{enumerate}
    \end{multicols}
\noindent
Furthermore, one can (but it is not necessary) choose $\vec{r} = T$ since $n_\mathbb{H} \in \spn \{X,Y\}$. Then one can take $\vec{s} = aX+bY$, so the first two conditions are satisfied.
The third condition is $\langle  \vec{s}, n_\mathbb{H} \rangle_{H} =0$, meaning
$$
 a n_{\mathbb{H},1} + b n_{\mathbb{H},2} =0,
$$
and the solution is
$$
\begin{cases}
a= c n_{\mathbb{H},2}\\
b = -c n_{\mathbb{H},1}.
\end{cases}
$$
with $c$ arbitrary. Then, by Observation \ref{Hlocally}, there is a local function $f$ so that $\vec{s} = c n_{\mathbb{H},2} X - c n_{\mathbb{H},1} Y$ becomes
$$
\vec{s} =  c  \lambda Y f X  -c \lambda X f Y.
$$
The fourth condition comes for free. The fifth one is $\vert \vec{s} \vert_{\mathbb{H}} =1$, which gives
$$
1= \sqrt{c^2  \lambda^2 (Y f)^2 + c^2 \lambda^2 (X f)^2} = \vert c \lambda\vert \sqrt{ (Y f)^2 +  (X f)^2} = \vert c \lambda\vert \cdot \vert   \nabla_\mathbb{H} f   \vert,
$$
meaning
$$
 c=\pm \frac{1}{ \vert   \lambda  \nabla_\mathbb{H} f   \vert}.
$$
So one has that
\begin{align*}
\vec{s} &= \pm \frac{1}{ \vert   \lambda  \nabla_\mathbb{H} f   \vert} \left (  \lambda Y f X  - \lambda X f Y \right )=
\pm \frac{\lambda}{ \vert   \lambda  \vert }    \frac{  Y f X  -  X f Y }{  \vert  \nabla_\mathbb{H} f   \vert}\\
&=
\pm \ sign (\lambda)   \left ( 
  \frac{  Y f  }{  \vert  \nabla_\mathbb{H} f   \vert}X  -     \frac{  X f  }{  \vert  \nabla_\mathbb{H} f   \vert}Y
\right )  .
\end{align*}
The sixth condition is $\vec{r} \times  \vec{s} = n_{\mathbb{H}}$, so:
$$
 n_{\mathbb{H}}= \vec{r} \times  \vec{s} =
\begin{vmatrix}
X & Y & T \\ 
0 & 0 &  1 \\ 
c  n_{\mathbb{H},2} & -c  n_{\mathbb{H},1}  & 0
\end{vmatrix}
=
c  n_{\mathbb{H},2} Y + c  n_{\mathbb{H},1} X =c  n_{\mathbb{H}}.
$$
Then it is necessary to take $c=1$ and one has $\vert \lambda \nabla_\mathbb{H} f   \vert = 1$, namely,
$$
 \lambda = \pm  \frac{1}{\vert \nabla_\mathbb{H} f   \vert },
$$
and
$$
\vec{s} =   \lambda Y f X  - \lambda X f Y   = \pm    \left ( 
  \frac{  Y f  }{  \vert  \nabla_\mathbb{H} f   \vert}X  -     \frac{  X f  }{  \vert  \nabla_\mathbb{H} f   \vert}Y
\right )  .
$$
Finally, we verify $\vec{r} \wedge  \vec{s} =  t_{\mathbb{H}}$ (seventh and last condition):
$$
\vec{r} \wedge \vec{s} =  T \wedge ( \lambda Y f X  - \lambda X f Y  ) =\lambda X f Y \wedge T - \lambda Y f X \wedge T= t_{\mathbb{H}}.
$$
\end{ex}

\subsection{Invariances}

In this subsection we prove that, for a $1$-codimensional surface, the $\mathbb{H}$-regularity is invariant under left translations and anisotropic dilations. Furthermore, for an $\mathbb{H}$-regular $1$-codimensional surface, the $\mathbb{H}$-orientability is invariant under the same two types of transformations.

\begin{prop}\label{tau_prop}
Consider the left translation map $\tau_{\bar{p}} : \mathbb{H}^n \to \mathbb{H}^n$
, $\bar{p} \in \mathbb{H}^n$, and an $\mathbb{H}$-regular $1$-codimensional surface $S$.\\
Then $\tau_{\bar{p}} S  := \{ \bar{p}*p ; \ p \in S   \}$ is again an $\mathbb{H}$-regular $1$-codimensional surface.
\end{prop}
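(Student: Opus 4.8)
The plan is to transport the local defining function of $S$ through the left translation and verify that every requirement of Definition \ref{Hreg} (in the case $k=1$) survives. The crucial point is that the horizontal vector fields are left invariant (Lemma \ref{leftinvar}), so horizontal differentiation commutes with $\tau_{\bar p}$; this is what guarantees that the non-vanishing of the horizontal gradient is preserved.

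First I would fix an arbitrary point $\bar q = \bar p * p \in \tau_{\bar p} S$ with $p \in S$ and, using the $\mathbb{H}$-regularity of $S$, choose a neighbourhood $U \in \mathcal{U}_p$ together with $f \in C^1_{\mathbb{H}}(U,\mathbb{R})$ such that $S \cap U = \{ f = 0 \}$ and $\nabla_{\mathbb{H}} f \neq 0$ on $U$. Since $\tau_{\bar p}$ is a homeomorphism of $\mathbb{H}^n$ (the group operations are continuous, with continuous inverse $\tau_{\bar p}^{-1} = \tau_{\bar p^{-1}}$), the set $V := \tau_{\bar p}(U)$ is a neighbourhood of $\bar q$. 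I would then take the candidate defining function $g := f \circ \tau_{\bar p^{-1}} : V \to \mathbb{R}$ and check the level-set condition: for $q \in V$ one has $g(q) = 0$ if and only if $\tau_{\bar p^{-1}}(q) \in S \cap U$, i.e. if and only if $q \in (\tau_{\bar p} S) \cap V$. Hence $(\tau_{\bar p} S) \cap V = \{ g = 0 \}$, and the surface stays $1$-codimensional because $g$ is still scalar-valued (one equation).

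The remaining step is the $C^1_{\mathbb{H}}$-regularity and non-degeneracy of $g$. Here I would invoke left invariance in the form $W_j(f \circ \tau_{\bar p^{-1}}) = (W_j f) \circ \tau_{\bar p^{-1}}$ for each horizontal field $W_j \in \{ X_1,\dots,X_n,Y_1,\dots,Y_n \}$, which is precisely the defining identity of Definition \ref{leftinv} applied with $q = \bar p^{-1}$. This immediately yields $g \in C^1_{\mathbb{H}}(V,\mathbb{R})$ (continuity of $\nabla_{\mathbb{H}} g$ follows from that of $\nabla_{\mathbb{H}} f$ composed with the homeomorphism $\tau_{\bar p^{-1}}$) together with
$$
\nabla_{\mathbb{H}} g = \sum_{j=1}^n \big[ (X_j g) X_j + (Y_j g) Y_j \big] = (\nabla_{\mathbb{H}} f) \circ \tau_{\bar p^{-1}} .
$$
Since $\tau_{\bar p^{-1}}(q) \in U$ for every $q \in V$ and $\nabla_{\mathbb{H}} f$ does not vanish on $U$, I conclude $\nabla_{\mathbb{H}} g \neq 0$ on $V$. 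Thus $g$ witnesses the $\mathbb{H}$-regularity of $\tau_{\bar p} S$ near $\bar q$; as $\bar q$ was arbitrary, $\tau_{\bar p} S$ is an $\mathbb{H}$-regular $1$-codimensional surface.

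I do not expect a serious obstacle. The only things to watch are using the correct translation in the composition ($\tau_{\bar p^{-1}}$, not $\tau_{\bar p}$, so that the level set maps the right way), and noting that left invariance makes the pushforward of $\tau_{\bar p}$ the identity on the horizontal frame, which is why the horizontal gradient is genuinely preserved rather than merely transformed. Alternatively one could route the argument through the contact-map machinery, observing that $\tau_{\bar p}$ is a contact diffeomorphism and applying Lemma \ref{nabla_comp1} to get $\nabla_{\mathbb{H}} g = (\tau_{\bar p^{-1}})_*^T \, (\nabla_{\mathbb{H}} f) \circ \tau_{\bar p^{-1}}$ with $(\tau_{\bar p^{-1}})_*$ invertible; the direct left-invariance computation above is cleaner and self-contained.
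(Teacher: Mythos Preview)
Your proof is correct and follows essentially the same approach as the paper's: transport the local defining function via $f\circ\tau_{\bar p^{-1}}$, verify the level-set condition, and use left invariance of the horizontal fields (Definition \ref{leftinv}) to conclude that the horizontal gradient is preserved and hence non-vanishing. The paper writes out the gradient componentwise as $\sum_i [(X_i f)\circ\tau_{\bar p^{-1}}]X_i + [(Y_i f)\circ\tau_{\bar p^{-1}}]Y_i$, which is exactly your identity $\nabla_\mathbb{H} g = (\nabla_\mathbb{H} f)\circ\tau_{\bar p^{-1}}$ spelled out.
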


\begin{proof}
Since $\tau_{\bar{p}} S = \{ \bar{p}*p ; p \in S   \}$, for all $ q \in \tau_{\bar{p}} S$ there exists a point $ p \in S$ so that $q=\bar{p}*p$. 
For such $p \in S $, there exists a neighbourhood $U_p$ and a function $f: U_p \to \mathbb{R}$ so that $S \cap U_p = \{ f=0 \}$ and $\nabla_\mathbb{H} f \neq 0$ on $U_p$. 
Define $U_q := \tau_{\bar{p}} U_p = \bar{p} * U_p $, which is a neighbourhood of $q=  \bar{p}*p$, and a function $\tilde{f}: = f \circ \tau_{\bar{p}}^{-1} : U_q \to \mathbb{R} $. 
Then, for all $ q' \in U_q$,
$$
\tilde{f}(q') = (f \circ \tau_{\bar{p}}^{-1})(q')  =  f (\bar{p}^{-1} * \bar{p}* p' ) = f(p')=0,
$$
where $q'= \bar{p}* p' $ and $p' \in U_p$. Then
$$
\tau_{\bar{p}} S \cap U_q = \{ \tilde{f}=0 \}.
$$
Furthermore, on $U_q$, and using Definition \ref{leftinv},
\begin{align*}
\nabla_\mathbb{H} \tilde{f} = &\nabla_\mathbb{H} (  f \circ \tau_{\bar{p}}^{-1} ) =  \nabla_\mathbb{H} (  f \circ \tau_{\bar{p}^{-1}}) =
\sum_{i=1}^n \bigg (
X_i(  f \circ \tau_{\bar{p}^{-1}}) X_i + Y_i (  f \circ \tau_{\bar{p}^{-1}}) Y_i
\bigg )\\
=&
\sum_{i=1}^n \bigg (
[X_i (  f )\circ \tau_{\bar{p}^{-1}} ] X_i + [ Y_i (  f ) \circ \tau_{\bar{p}^{-1}} ] Y_i
\bigg )
 \neq 0
\end{align*}
as $X_i (  f )\circ \tau_{\bar{p}^{-1}}$ and $ Y_i (  f ) \circ \tau_{\bar{p}^{-1}}$ are defined on $U_p$ and on $U_p$ one of the two is always non-negative by the hypothesis that $\nabla_\mathbb{H} f \neq 0$ on $U_p$.
\end{proof}

\begin{prop}\label{delta_prop}
Consider the usual anisotropic dilation $\delta_r : \mathbb{H}^n \to \mathbb{H}^n$
, $r>0$, and an $\mathbb{H}$-regular $1$-codimensional surface $S$.\\
Then $\delta_r S := \{ \delta_r(p) ; \ p \in S   \}$ is again an $\mathbb{H}$-regular $1$-codimensional surface.
\end{prop}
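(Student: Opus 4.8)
The plan is to follow the argument of Proposition \ref{tau_prop} almost verbatim, replacing the role of left-invariance with the order-one homogeneity of the horizontal vector fields under $\delta_r$. Since $\delta_r S = \{ \delta_r(p) ; \ p \in S \}$, for every $q \in \delta_r S$ there is a point $p \in S$ with $q = \delta_r(p)$. By $\mathbb{H}$-regularity of $S$, there is a neighbourhood $U_p$ and a function $f : U_p \to \mathbb{R}$, $f \in C_\mathbb{H}^1(U_p,\mathbb{R})$, such that $S \cap U_p = \{ f = 0 \}$ and $\nabla_\mathbb{H} f \neq 0$ on $U_p$.

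First I would set $U_q := \delta_r U_p = \delta_r(U_p)$, which is a neighbourhood of $q = \delta_r(p)$ because $\delta_r$ is a smooth automorphism with inverse $\delta_{1/r}$, and define $\tilde f := f \circ \delta_r^{-1} = f \circ \delta_{1/r} : U_q \to \mathbb{R}$. Every $q' \in U_q$ is of the form $q' = \delta_r(p')$ with $p' \in U_p$, so $\tilde f(q') = f(\delta_{1/r}(\delta_r(p'))) = f(p')$, which vanishes exactly when $p' \in S \cap U_p$. Hence $\delta_r S \cap U_q = \{ \tilde f = 0 \}$, which is the level-set condition.

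It then remains to check that $\nabla_\mathbb{H} \tilde f \neq 0$ on $U_q$. Here I would invoke the order-one homogeneity of the fields $X_j, Y_j$ with respect to $\delta_r$ (the observation following Remark \ref{Hcarnot}), which in the form needed reads $X_j(f \circ \delta_{1/r}) = \tfrac{1}{r}\,(X_j f)\circ \delta_{1/r}$, and likewise for $Y_j$. Expanding the Heisenberg gradient,
\begin{align*}
\nabla_\mathbb{H} \tilde f
&= \sum_{j=1}^n \Big( (X_j \tilde f)\, X_j + (Y_j \tilde f)\, Y_j \Big)\\
&= \frac{1}{r}\sum_{j=1}^n \Big( [(X_j f)\circ \delta_{1/r}]\, X_j + [(Y_j f)\circ \delta_{1/r}]\, Y_j \Big).
\end{align*}
Since $\delta_{1/r}$ maps $U_q$ bijectively onto $U_p$ and $\nabla_\mathbb{H} f \neq 0$ on $U_p$, at every point of $U_q$ at least one coefficient $(X_j f)\circ \delta_{1/r}$ or $(Y_j f)\circ \delta_{1/r}$ is nonzero; the scalar factor $1/r$ is nonzero as $r > 0$, so $\nabla_\mathbb{H}\tilde f \neq 0$ on $U_q$, and $\delta_r S$ is $\mathbb{H}$-regular.

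The argument is essentially mechanical; the only point requiring care — the ``hard part,'' such as it is — is bookkeeping the scaling factor correctly, i.e.\ applying the homogeneity identity with $\delta_{1/r}$ rather than $\delta_r$ and confirming that the nonvanishing of the gradient transports through the composition with $\delta_{1/r}$. One should also record that $\tilde f \in C_\mathbb{H}^1(U_q,\mathbb{R})$, which follows from the same identity, since it exhibits the horizontal derivatives of $\tilde f$ as continuous functions.
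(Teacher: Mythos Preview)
Your proof is correct and follows the same architecture as the paper's: define $\tilde f = f \circ \delta_{1/r}$ on $U_q = \delta_r(U_p)$, verify the level-set condition, and then check $\nabla_\mathbb{H}\tilde f \neq 0$. The only difference is in the last step: the paper invokes Lemma~\ref{nabla_comp1} (the pushforward formula $\nabla_\mathbb{H}(g\circ f) = f_*^T(\nabla_\mathbb{H} g)_f$ for contact $f$, applied with $f=\delta_{1/r}$), whereas you expand the gradient directly via the order-one homogeneity $X_j(f\circ\delta_{1/r}) = \tfrac{1}{r}(X_j f)\circ\delta_{1/r}$; both arguments are equivalent here, and yours is slightly more explicit.
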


\begin{proof}
Since $\delta_r S = \{ \delta_r(p) ; \ p \in S   \}$, then for all $ q \in \delta_r S$ there exists a point $  p \in S$ so that $q=\delta_r(p)$. For such $p \in S $, there exists a neighbourhood $U_p$ and a function $f: U_p \to \mathbb{R}$ so that $S \cap U_p = \{ f=0 \}$ and $\nabla_\mathbb{H} f \neq 0$ on $U_p$. 
Define $U_q := \delta_r( U_p )  $, which is a neighbourhood of $q=  \delta_r ( p )$, and a function $\tilde{f}: = f \circ \delta_{1/r} : U_q \to \mathbb{R} $. 
Then, for all $ q' \in U_q$,
$$
\tilde{f}(q') = (f \circ \delta_{1/r}   )(q')  =  f ( \delta_{1/r}  \delta_r p' ) = f(p')=0,
$$
where $q'= \bar{p}* p' $ and $p' \in U_p$. Then
$$
 \delta_r S \cap U_q = \{ \tilde{f}=0 \}.
$$
Furthermore, on $U_q$, using the fact that $\delta_{1/r}$ is a contact map and Lemma \ref{nabla_comp1},
\begin{align*}
\nabla_{\mathbb{H}} \tilde{f} = \nabla_\mathbb{H} (  f \circ \delta_{1/r} ) = (\delta_{1/r})_*^T (\nabla_{\mathbb{H}}  f)_{\delta_{1/r}}  \neq 0.
\end{align*}
\end{proof}

\begin{prop}\label{letra}
Consider a left translation map $\tau_{\bar{p}} : \mathbb{H}^n \to \mathbb{H}^n$, $\bar{p} \in \mathbb{H}^n$, the anisotropic dilation $\delta_r : \mathbb{H}^n \to \mathbb{H}^n$, $r>0$ and an $\mathbb{H}$-regular $1$-codimensional surface $S$. 
Then the $\mathbb{H}$-regular $1$-codimensional surfaces $\tau_{\bar{p}} S$  and $\delta_r S $ are $\mathbb{H}$-orientable (respectively) if and only if $S$  $\mathbb{H}$-orientable.
\end{prop}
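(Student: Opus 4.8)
The plan is to treat both maps simultaneously through the characterisation of $\mathbb{H}$-orientability as the existence of a continuous, global, nowhere-vanishing horizontal normal vector field $n_{\mathbb{H}}$, and to transport this field along the map via the pushforward. The structural fact I would exploit is that a left translation $\tau_{\bar p}$ and an anisotropic dilation $\delta_r$ are both contact diffeomorphisms whose pushforward acts \emph{conformally} on the horizontal layer: by the left-invariance of the frame (Lemma \ref{leftinvar}) one has $(\tau_{\bar p})_* (X_i)_p = (X_i)_{\tau_{\bar p}(p)}$ and $(\tau_{\bar p})_* (Y_i)_p = (Y_i)_{\tau_{\bar p}(p)}$, so $(\tau_{\bar p})_*$ restricts to the identity on $\mathfrak{h}_1$; while the homogeneity $X_i(f\circ\delta_r)=r\,X_i(f)\circ\delta_r$ gives $(\delta_r)_* (X_i)_p = r\,(X_i)_{\delta_r(p)}$ and similarly for $Y_i$, so $(\delta_r)_*$ restricts to $r\,\mathrm{Id}$ on $\mathfrak{h}_1$. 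Writing $\phi$ for either map and $c_\phi>0$ for the corresponding factor ($c_{\tau_{\bar p}}=1$, $c_{\delta_r}=r$), this means $\langle \phi_* a,\phi_* b\rangle_H = c_\phi^2\,\langle a,b\rangle_H$ for all $a,b\in\mathfrak{h}_1$; moreover, contactness gives $\phi_*\mathfrak{h}_1\subseteq\mathfrak{h}_1$ and, since $\phi$ is a diffeomorphism, Lemma \ref{spamT} gives $\phi_*\mathfrak{h}_2\subseteq\mathfrak{h}_2$, so $\phi_*$ respects the splitting $\mathfrak{h}=\mathfrak{h}_1\oplus\mathfrak{h}_2$.

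For the forward implication I would argue as follows. Since $\phi$ is a diffeomorphism of $\mathbb{R}^{2n+1}$ preserving the horizontal distribution, it carries $C^1$-Euclidean surfaces to $C^1$-Euclidean surfaces and maps $C(S)$ bijectively onto $C(\phi S)$, and by Propositions \ref{tau_prop} and \ref{delta_prop} the image $\phi S$ is again $\mathbb{H}$-regular and $1$-codimensional; hence the $\mathbb{H}$-orientability of $\phi S$ is well posed. Assuming $S$ is $\mathbb{H}$-orientable with Heisenberg normal field $n_{\mathbb{H}}$, I would define on $\phi S$ the field
$$
n'_{\mathbb{H},q} := \phi_*\big( n_{\mathbb{H},\phi^{-1}(q)} \big), \qquad q\in\phi S.
$$
Because $\phi_*$ maps $\mathfrak{h}_1$ isomorphically into $\mathfrak{h}_1$, this $n'_{\mathbb{H}}$ is horizontal and nowhere vanishing, and it is continuous and global on $\phi S$ since $\phi$ is smooth and $n_{\mathbb{H}}$ is continuous and global on $S$.

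It then remains to verify Heisenberg-normality. Given $w\in T_q(\phi S)$, I would write $w=\phi_* v$ with $v\in T_p S$, $p=\phi^{-1}(q)$, and split $v=v_H+v_\perp$ into horizontal and vertical parts. Since $\phi_*$ respects the splitting, the horizontal component of $w$ is exactly $\phi_* v_H$, and as $n'_{\mathbb{H}}$ is horizontal only this component contributes, so by conformality
$$
\langle n'_{\mathbb{H},q}, w\rangle_H = \langle \phi_* n_{\mathbb{H},p},\, \phi_* v_H\rangle_H = c_\phi^2\,\langle n_{\mathbb{H},p}, v_H\rangle_H = c_\phi^2\,\langle n_{\mathbb{H},p}, v\rangle_H = 0,
$$
the last equality being the hypothesis $n_{\mathbb{H}}\perp_H S$. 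Thus $n'_{\mathbb{H}}\perp_H\phi S$ and $\phi S$ is $\mathbb{H}$-orientable.

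The converse is then immediate from symmetry, since $\tau_{\bar p}^{-1}=\tau_{\bar p^{-1}}$ is again a left translation and $\delta_r^{-1}=\delta_{1/r}$ is again an anisotropic dilation: applying the forward implication to $\phi^{-1}$ and the surface $\phi S$ shows that $\mathbb{H}$-orientability of $\phi S$ forces that of $S=\phi^{-1}(\phi S)$. Taking $\phi=\tau_{\bar p}$ and $\phi=\delta_r$ separately gives the two asserted equivalences. I expect the only genuinely delicate point to be the bookkeeping around the horizontal/vertical splitting — ensuring that $\langle n'_{\mathbb{H}},w\rangle_H$ really only detects $\phi_* v_H$ and that the conformal factor $c_\phi^2$ merely rescales the orthogonality condition without destroying it; everything else reduces directly to the left-invariance and the homogeneity of the horizontal frame.
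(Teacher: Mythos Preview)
Your proof is correct and follows essentially the same strategy as the paper: push the global horizontal normal $n_{\mathbb{H}}$ forward by $\phi\in\{\tau_{\bar p},\delta_r\}$, check it is again a valid global horizontal normal on $\phi S$, and invoke $\phi^{-1}$ (which is of the same type) for the converse. The only real difference is in how normality of the transported field is checked: the paper writes $n_{\mathbb{H}}$ locally as $\nabla_{\mathbb{H}} f/|\nabla_{\mathbb{H}} f|$ and observes that the pushforward has the same local form with respect to $\tilde f=f\circ\phi^{-1}$ (so normality is implicit from Propositions~\ref{tau_prop} and~\ref{delta_prop}), whereas you verify $n'_{\mathbb{H}}\perp_H \phi S$ directly via the conformal action of $\phi_*$ on $\mathfrak h_1$. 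Your route has the advantage of treating both maps uniformly and not touching local defining functions in the orientability step; the paper's route has the advantage of making it transparent that the transported field really is the ``canonical'' local normal of $\phi S$.
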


\begin{proof}
Remember that $\tau_{\bar{p}} S = \{ \bar{p}*p ; \ p \in S   \}$. From Proposition \ref{tau_prop}, one knows that for all $ q \in \tau_{\bar{p}} S$ there exists a point $ p \in S$ so that $q=\bar{p}*p$ and there exists a neighbourhood $U_p$ and a function $f: U_p \to \mathbb{R}$ so that $S \cap U_p = \{ f=0 \}$ and $\nabla_\mathbb{H} f \neq 0$ on $U_p$. 
Furthermore, $U_q = \tau_{\bar{p}} U_p = \bar{p} * U_p $ is a neighbourhood of $q=  \bar{p}*p$ and the function $\tilde{f}: = f \circ \tau_{\bar{p}}^{-1} : U_q \to \mathbb{R} $ is so that  $\tau_{\bar{p}} S \cap U_q = \{ \tilde{f}=0 \}$ and $\nabla_\mathbb{H} \tilde{f}  \neq 0$ on $U_q$.\\
Assume now that $S$ is $\mathbb{H}$-orientable, then there exists a global vector field
$$
n_{\mathbb{H}} = \sum_{j=1}^{n} \left (  n_{\mathbb{H},j} X_j + n_{\mathbb{H},n+j} Y_j  \right ),
$$
that, locally, takes the form of
$$
 \sum_{j=1}^{n}  \left ( 
\frac{ X_j f }{ \vert \nabla_\mathbb{H} f \vert }  X_j +  \frac{ Y_j f }{ \vert \nabla_\mathbb{H} f \vert }   Y_j \right ).
$$
Now we consider:
\begin{align*}
(\tau_{\bar{p}}^{-1})_*   n_\mathbb{H} = \sum_{j=1}^{n} \left (  
n_{\mathbb{H},j}  \circ \tau_{\bar{p}}^{-1} 
 {X_j}_{\tau_{\bar{p}}^{-1}} +
n_{\mathbb{H},n+j}  \circ \tau_{\bar{p}}^{-1} 
 {Y_j}_{\tau_{\bar{p}}^{-1}}  \right ).
\end{align*}
Locally, it becomes
\begin{align*}
(\tau_{\bar{p}}^{-1})_*   n_\mathbb{H} =
\sum_{j=1}^{n} \left (  
 \frac{ X_j f }{ \vert \nabla_\mathbb{H} f \vert }  \circ \tau_{\bar{p}}^{-1} 
 {X_j}_{\tau_{\bar{p}}^{-1}} +
\frac{ Y_j f }{ \vert \nabla_\mathbb{H} f \vert }  \circ \tau_{\bar{p}}^{-1} 
 {Y_j}_{\tau_{\bar{p}}^{-1}}  \right ).
\end{align*}
Note that this is still a global vector field and is defined on the whole $\tau_{\bar{p}} S$, therefore it gives an orientation to $\tau_{\bar{p}} S$. 
Since we can repeat the whole proof starting from $\tau_{\bar{p}} S$ to $S= \tau_{\bar{p}}^{-1} \tau_{\bar{p}} S$, this proves both directions.\\\\
For the dilation, remember that $\delta_r S = \{ \delta_r(p) ; \  p \in S   \}$. From Proposition \ref{delta_prop}, for all $ q \in \delta_r S$ there exists a point $ p \in S$ so that $q=\delta_r(p)$ and there exists a neighbourhood $U_p$ and a function $f: U_p \to \mathbb{R}$ so that $S \cap U_p = \{ f=0 \}$ and $\nabla_\mathbb{H} f \neq 0$ on $U_p$. 
In the same way, $U_q = \delta_r( U_p ) $ is a neighbourhood of $q=  \delta_r(p)$ and the function $\tilde{f}: = f \circ \delta_{1/r} : U_q \to \mathbb{R} $ is so that  $\delta_r S \cap U_q = \{ \tilde{f}=0 \}$ and $\nabla_\mathbb{H} \tilde{f}  \neq 0$ on $U_q$.\\
Assume now that $S$ is $\mathbb{H}$-orientable Then there exists a global vector field
$$
n_\mathbb{H} = \sum_{j=1}^{n} \left (  n_{\mathbb{H},j}  X_j + n_{\mathbb{H},n+j}  Y_j  \right ),
$$
that, locally is written as
$$
 \sum_{j=1}^{n}  \left ( 
\frac{ X_j f }{ \vert \nabla_\mathbb{H} f \vert }  X_j +  \frac{ Y_j f }{ \vert \nabla_\mathbb{H} f \vert }   Y_j \right ).
$$
Now
\begin{align*}
( \delta_{1/r} )_*   n_\mathbb{H} =  \sum_{j=1}^{n} \left (  
 n_{\mathbb{H},j}  \circ \delta_{1/r} 
 {X_j}_{\delta_{1/r}} +
n_{\mathbb{H},n+j}  \circ \delta_{1/r} 
 {Y_j}_{\delta_{1/r}}  \right ).
\end{align*}
Locally, it becomes
\begin{align*}
( \delta_{1/r} )_*   n_\mathbb{H}  =
 \sum_{j=1}^{n} \left (  
 \frac{ X_j f }{ \vert \nabla_\mathbb{H} f \vert }  \circ \delta_{1/r} 
 {X_j}_{\delta_{1/r}} +
\frac{ Y_j f }{ \vert \nabla_\mathbb{H} f \vert }  \circ \delta_{1/r}
 {Y_j}_{\delta_{1/r}}  \right ).
\end{align*}
Note that this is still a global vector field and is defined on the whole $\delta_r S$, therefore it gives an orientation to $\delta_r S$. 
Since we can repeat the whole proof starting from $\delta_r S$ to $S=\delta_{1/r} \delta_r S$, this proves both directions.
\end{proof}


\subsection{Comparison}

In this subsection we show how the two notions of orientability are related, concluding that non-$\mathbb{H}$-orientable $\mathbb{H}$-regular surfaces exist, at least when $n=1$.\\

\begin{no}
Consider a $1$-codimensional $C^1$-Euclidean surface $S$ in $\mathbb{H}^n$ with $C(S)\neq 0$. We say that a surface is $C^2_\mathbb{H}$-regular if its Heisenberg normal vector fields $n_\mathbb{H} \in C^1_\mathbb{H}$.
\end{no}

\begin{prop}\label{finalmente4}
Let $S$ be a  $1$-codimensional $C^1$-Euclidean surface in $\mathbb{H}^{n}$, with $C(S) = \varnothing$. Then the following holds: 
\begin{enumerate}
\item Suppose $S$ is Euclidean-orientable. Recall from condition \eqref{eq2} that $C^1$-Euclidean means that for all $ p \in S$ there exists $ U \in \mathcal{U}_p$ and $g : U \to \mathbb{R}$, $g \in C^1$, so that $S \cap U = \{ g=0 \}$ and $\nabla g \neq 0$  on $U$.
If, for any such $g$, no point of $S$ belongs to the set
$$
\left \{ 
\left ( 
  - \frac{2    ( \partial_{y_1}   g )_p  }{ ( \partial_t   g )_p  } 
, \dots, 
  - \frac{2    ( \partial_{y_n}   g )_p  }{ ( \partial_t   g )_p  } 
,
  \frac{2    ( \partial_{x_1}   g )_p  }{ ( \partial_t   g )_p  }
,\dots,
  \frac{2    ( \partial_{x_n}   g )_p  }{ ( \partial_t   g )_p  }
,t
 \right )
 , \text{ with }   ( \partial_t   g )_p \neq 0  \right \},
$$
then
\begin{equation}\label{.1}
 S \text{ is } \mathbb{H}\text{-orientable}.
\end{equation}
\item
If $S$ is  $C^2_\mathbb{H}$-regular,
\begin{equation}\label{.2}
S \text{ is } \mathbb{H}\text{-orientable implies }   S \text{ is Euclidean-orientable } .
\end{equation}
\end{enumerate}
\end{prop}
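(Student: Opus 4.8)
The plan is to prove the two implications by a single mechanism: in each case one is handed a globally defined, continuous, nowhere-vanishing normal of one type and must manufacture a normal of the other type by transporting the orientation (sign) data through a local defining function common to both structures. The algebraic engine is the fact that, for a local $C^1$-Euclidean defining function $g$, the Euclidean gradient $\nabla g$ and the horizontal gradient $\nabla_{\mathbb H} g$ scale together: since $X_jg=\partial_{x_j}g-\tfrac12 y_j\partial_t g$ and $Y_jg=\partial_{y_j}g+\tfrac12 x_j\partial_t g$ are linear in the full gradient, replacing $g$ by $cg$ — or, on an overlap, passing to another defining function $g'$ with $\nabla g'=c\,\nabla g$ — multiplies both $\nabla g$ and $\nabla_{\mathbb H}g$ by the same scalar $c$. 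Moreover $X_jg=\langle\nabla g,X_j\rangle$, $Y_jg=\langle\nabla g,Y_j\rangle$, so $\nabla_{\mathbb H}g$ spans the horizontal normal line and is therefore proportional to $n_{\mathbb H}$ wherever the latter is defined, exactly as in Observations \ref{Elocally} and \ref{Hlocally}.

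First I would record that the displayed set in part \eqref{.1} is precisely the zero locus of $\nabla_{\mathbb H}g$ on $S$: solving $X_jg=Y_jg=0$ with $(\partial_tg)_p\neq0$ yields $x_j=-2(\partial_{y_j}g)_p/(\partial_tg)_p$ and $y_j=2(\partial_{x_j}g)_p/(\partial_tg)_p$, while if $(\partial_tg)_p=0$ then $\nabla_{\mathbb H}g$ reduces to the Euclidean $(\partial_x g,\partial_y g)\neq0$ automatically. Thus the hypothesis of \eqref{.1} (equivalently $C(S)=\varnothing$) says exactly that $\nabla_{\mathbb H}g\neq0$ on $S\cap U$ for every local defining $g$.

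For \eqref{.1} I would cover $S$ by charts $U_\alpha$ with $C^1$-Euclidean defining functions $g_\alpha$ and write the global Euclidean normal as $n_E=\mu_\alpha\,\nabla g_\alpha$ on $U_\alpha$, with $\mu_\alpha$ continuous and nowhere zero. I then set
$$
n_{\mathbb H}\big|_{U_\alpha}:=\operatorname{sgn}(\mu_\alpha)\,\frac{\nabla_{\mathbb H}g_\alpha}{|\nabla_{\mathbb H}g_\alpha|},
$$
which is continuous, horizontal, nowhere zero, and an $\mathbb H$-normal by the proportionality above. The only real point is well-posedness on an overlap $U_\alpha\cap U_\beta$: there $\nabla g_\beta=c\,\nabla g_\alpha$ with $c$ continuous and nonzero, so $\mu_\alpha=c\,\mu_\beta$ and $\nabla_{\mathbb H}g_\beta=c\,\nabla_{\mathbb H}g_\alpha$, and substituting shows $\operatorname{sgn}(\mu_\beta)\tfrac{\nabla_{\mathbb H}g_\beta}{|\nabla_{\mathbb H}g_\beta|}=\operatorname{sgn}(\mu_\alpha)\tfrac{\nabla_{\mathbb H}g_\alpha}{|\nabla_{\mathbb H}g_\alpha|}$. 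Hence $n_{\mathbb H}$ is a global continuous non-vanishing $\mathbb H$-normal and $S$ is $\mathbb H$-orientable. For \eqref{.2} I run the construction in reverse: the global $n_{\mathbb H}$ is continuous and nowhere zero (and $C^1_{\mathbb H}$ by the $C^2_{\mathbb H}$-regularity hypothesis), and on each chart $S$ being $C^1$-Euclidean provides $g_\alpha$ with $\nabla g_\alpha\neq0$ and, since $C(S)=\varnothing$, $\nabla_{\mathbb H}g_\alpha\neq0$ proportional to $n_{\mathbb H}$. I fix the sign of $g_\alpha$ so that $\langle\nabla_{\mathbb H}g_\alpha,n_{\mathbb H}\rangle_H>0$ and set $n_E\big|_{U_\alpha}:=\nabla g_\alpha/|\nabla g_\alpha|$; on an overlap $\nabla g_\beta=c\,\nabla g_\alpha$ and $\nabla_{\mathbb H}g_\beta=c\,\nabla_{\mathbb H}g_\alpha$, and the two sign normalisations force $c>0$, so the unit Euclidean normals agree and $S$ is Euclidean-orientable by Definition \ref{Eorientable}.

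The main obstacle is entirely the gluing step — verifying that the locally defined normals patch to a global one — and everything there reduces to the single fact that $\nabla g$ and $\nabla_{\mathbb H}g$ transform by the same scalar under a change of defining function, so an orientation fixed for one type is automatically consistent for the other. Care is needed only in bookkeeping the signs $\operatorname{sgn}(\mu_\alpha)$ and $\operatorname{sgn}(c)$ and in confirming that $\nabla_{\mathbb H}g_\alpha$ never vanishes, which is exactly what the hypothesis of \eqref{.1} (respectively $C(S)=\varnothing$) supplies.
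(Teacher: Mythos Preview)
Your argument is correct and, for part~\eqref{.2}, genuinely cleaner than the paper's. For~\eqref{.1} the two proofs are essentially the same object viewed from two sides: the paper defines the global vector field $n_{\mathbb H}$ by the coordinate formula $n_{\mathbb H,i}:=n_{E,i}-\tfrac12 y_i\,n_{E,2n+1}$, $n_{\mathbb H,n+i}:=n_{E,n+i}+\tfrac12 x_i\,n_{E,2n+1}$, which localises (writing $n_E=\mu_\alpha\nabla g_\alpha$) precisely to $\mu_\alpha\,\nabla_{\mathbb H}g_\alpha$ --- your local candidate before normalisation. The paper thus trades your overlap check for a single global formula; the content is the same. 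You also correctly identify that the displayed ``excluded set'' is exactly $\{\nabla_{\mathbb H}g=0,\ \partial_t g\neq0\}$, so the extra hypothesis in~\eqref{.1} is equivalent to $C(S)=\varnothing$ and hence redundant --- a point the paper leaves as an open question.

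For~\eqref{.2} the approaches diverge. The paper reconstructs the $T$-component of $n_E$ by the formula $n_{E,2n+1}:=\tfrac1n\sum_j(X_j n_{\mathbb H,n+j}-Y_j n_{\mathbb H,j})$, which requires differentiating $n_{\mathbb H}$ and hence the $C^2_{\mathbb H}$ hypothesis. You instead use the already-available $C^1$-Euclidean defining functions $g_\alpha$ for the Euclidean gradient and invoke $n_{\mathbb H}$ only to fix signs on each chart; the overlap compatibility then follows from the single observation that $\nabla g$ and $\nabla_{\mathbb H}g$ scale by the same factor under change of defining function. This establishes~\eqref{.2} \emph{without} the $C^2_{\mathbb H}$-regularity assumption, so your argument is in fact strictly stronger. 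One cosmetic point: rather than ``fixing the sign of $g_\alpha$'' (which tacitly uses connectedness of $U_\alpha\cap S$), it is tidier to write directly $n_E|_{U_\alpha}:=\operatorname{sgn}\!\big(\langle\nabla_{\mathbb H}g_\alpha,n_{\mathbb H}\rangle_H\big)\,\nabla g_\alpha/|\nabla g_\alpha|$, which is manifestly pointwise and continuous.
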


\noindent
The proof will follow at the end of this chapter. A question arises naturally:

\begin{que}
About the extra conditions for the first implication in Proposition \ref{finalmente4}, what can we say about that set? Is it possible to do better?
\end{que}

\begin{rem}
Remember that, by Proposition \ref{Mobius}, $ \widebar{\mathcal{M}}$ is an $\mathbb{H}$-regular surface not Euclidean-orientable. 
Observe also that $ \widebar{\mathcal{M}}$ satisfies the hypotheses of Proposition \ref{finalmente4}.\\
Then implication \eqref{.2} in Proposition \ref{finalmente4} implies that $\widebar{\mathcal{M}}$ is not an $\mathbb{H}$-orientable $\mathbb{H}$-regular surface. 
Then we can say that there exist $\mathbb{H}$-regular surfaces which are not $\mathbb{H}$-orientable, at least when $n=1$.\\
This opens the possibility, among others, to study surfaces that are, in the Heisenberg sense, regular but not orientable; for example as supports of Sub-Riemannian currents.
\end{rem}

\noindent
Before proving the proposition, we state a weaker result as a lemma.

\begin{lem}\label{question}
Let $S$ be a $1$-codimensional $C^1$-Euclidean surface in $\mathbb{H}^n$, with $C(S) = \varnothing$.\\
Recall that, by Observation \ref{Cpoints}, $S$ is $\mathbb{H}$-regular and, by condition \eqref{eq1}, this means that for all $p \in S$ there exists $ U  \in \mathcal{U}_p$  and  $ f : U \to \mathbb{R}$, $f \in C_{\mathbb{H}}^1$,  so that $S \cap U = \{ f=0 \}$ and $ \nabla_{\mathbb{H}} f \neq 0$  on $U$. Likewise, by condition \eqref{eq2}, $C^1$-Euclidean means that for all $ p \in S$ there exists $ U' \in \mathcal{U}_p$ and $g : U' \to \mathbb{R}$, $g \in C^1$, so that $S \cap U' = \{ g=0 \}$ and $\nabla g \neq 0 $ on $ U'$.\\
Assume that for each $p \in S$ we can consider $f=g$ on $U \cap U'$.  Then the following holds:
$$
S \text{ is Euclidean-orientable implies }   S \text{ is } \mathbb{H}\text{-orientable}.
$$
If $S$ is  $C^2_\mathbb{H}$-regular,
$$
S \text{ is } \mathbb{H}\text{-orientable implies }   S \text{ is Euclidean-orientable } .
$$
\end{lem}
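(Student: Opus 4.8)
The plan is to exploit the fact that, at each point $p=(x,y,t)$, passing from the Euclidean gradient to the horizontal gradient is a fixed \emph{linear} operation. Concretely, I would introduce the pointwise linear map
$$
L_p(a_1,\dots,a_n,b_1,\dots,b_n,c):=\sum_{j=1}^n\left(\Big(a_j-\tfrac12 y_j c\Big)X_j+\Big(b_j+\tfrac12 x_j c\Big)Y_j\right),
$$
which, by the very definition of $X_j,Y_j$, sends the Euclidean-gradient components $(\partial_{x_j}f,\partial_{y_j}f,\partial_t f)$ of any function $f$ to $\nabla_{\mathbb{H}} f=\sum_j (X_jf)X_j+(Y_jf)Y_j$; in particular $L_p(\nabla g)=\nabla_{\mathbb{H}} g$ (compare Observation \ref{df}). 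The two facts used throughout are that $L_p$ is linear and that $C(S)=\varnothing$ forces $\nabla_{\mathbb{H}} f\neq 0$ along $S$ (Observation \ref{Cpoints}), so that no normal vector ever degenerates.

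For the first implication I would start from a global, continuous, nowhere-vanishing Euclidean normal $n_E$ (Definition \ref{Eorientable}) and define $n_{\mathbb{H}}$ by applying $L_p$ to the components of $n_E$ at each $p\in S$; this is a globally defined continuous horizontal field. On each chart condition \eqref{eq2} gives $n_E=\mu\nabla g$ with $\mu$ continuous and nonzero, and since $f=g$ there, linearity yields $n_{\mathbb{H}}=\mu\,\nabla_{\mathbb{H}} f$. This is nonzero (as $\mu\neq 0$ and $\nabla_{\mathbb{H}} f\neq 0$) and satisfies $n_{\mathbb{H}}\perp_H S$ (because $\nabla_{\mathbb{H}} f$ does: for horizontal $w\in TS$ one has $df(w)=\langle\nabla_{\mathbb{H}} f,w\rangle_H=0$). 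Hence $n_{\mathbb{H}}$ is a global Heisenberg normal and $S$ is $\mathbb{H}$-orientable.

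For the converse I would start from a global, continuous, nowhere-vanishing $n_{\mathbb{H}}$, write it locally as $n_{\mathbb{H}}=\lambda\nabla_{\mathbb{H}} f$ with $\lambda$ continuous and nonzero (Observation \ref{Hlocally}), and set locally $n_E:=\lambda\nabla g=\lambda\nabla f$. The only thing to check is that these local pieces glue. On an overlap the two Euclidean defining functions satisfy $\nabla g_1=c_{12}\nabla g_2$ along $S$ for some continuous nonvanishing $c_{12}$, since both are Euclidean normals to the same surface; by linearity of $L_p$ and $f_i=g_i$ this gives $\nabla_{\mathbb{H}} f_1=c_{12}\nabla_{\mathbb{H}} f_2$, and comparing with $\lambda_1\nabla_{\mathbb{H}} f_1=n_{\mathbb{H}}=\lambda_2\nabla_{\mathbb{H}} f_2$ together with $\nabla_{\mathbb{H}} f_2\neq 0$ forces $\lambda_1 c_{12}=\lambda_2$, whence $\lambda_1\nabla g_1=\lambda_2\nabla g_2$. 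Thus $n_E$ is a well-defined global continuous nowhere-vanishing field normal to $S$ in the Euclidean sense, so $S$ is Euclidean-orientable.

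The heart of the argument, and the step I expect to require the most care, is exactly this matching of proportionality constants across charts: showing that the horizontal gradients rescale by precisely the same factor $c_{12}$ as the Euclidean gradients. It rests entirely on the linearity of $L_p$ together with the nondegeneracy $\nabla_{\mathbb{H}} f\neq 0$ supplied by $C(S)=\varnothing$. I would note that the $C^2_\mathbb{H}$-regularity hypothesis in the second implication is carried over for consistency with Proposition \ref{finalmente4}; in the present situation, where $f=g$ on the overlaps, the gluing uses only continuity of $n_{\mathbb{H}}$ and of $\nabla_{\mathbb{H}} f$, which is already guaranteed by $f\in C_{\mathbb{H}}^1$ and by $\mathbb{H}$-orientability.
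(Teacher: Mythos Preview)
Your proof of the first implication matches the paper's exactly: both define $n_{\mathbb{H}}$ by applying the pointwise linear map $L_p$ to the global Euclidean normal $n_E$, then use the local form $n_E=\mu\nabla g$ together with $f=g$ to conclude $n_{\mathbb{H}}=\mu\nabla_{\mathbb{H}} f\neq 0$.

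For the second implication you take a genuinely different route. The paper constructs a \emph{global} formula for the Euclidean normal directly from $n_{\mathbb{H}}$: it recovers the vertical component via
\[
n_{E,2n+1}:=\frac{1}{n}\sum_{j=1}^n\big(X_j\, n_{\mathbb{H},n+j}-Y_j\, n_{\mathbb{H},j}\big),
\]
which locally collapses (using $[X_j,Y_j]=T$ and $f=g$) to $\partial_t f$, and then sets $n_{E,i}:=n_{\mathbb{H},i}+\tfrac12 y_i\, n_{E,2n+1}$ and $n_{E,n+i}:=n_{\mathbb{H},n+i}-\tfrac12 x_i\, n_{E,2n+1}$. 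This is precisely where the $C^2_\mathbb{H}$-regularity is actually consumed: one must differentiate $n_{\mathbb{H}}$. Your argument instead defines $n_E=\lambda\nabla g$ chart-by-chart and glues via the observation that the transition factor $c_{12}$ between $\nabla g_1$ and $\nabla g_2$ is carried unchanged by the linear map $L_p$ to the transition between $\nabla_\mathbb{H} f_1$ and $\nabla_\mathbb{H} f_2$. This is correct and, as you note, needs only $f\in C^1_\mathbb{H}$ together with $C(S)=\varnothing$. The tradeoff is that the paper's global formula is what survives to Proposition~\ref{finalmente4}, where one no longer assumes $f=g$ and your chart-by-chart gluing via $c_{12}$ is unavailable; your approach is cleaner here precisely because it exploits the extra hypothesis $f=g$ that the later proposition drops.
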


\begin{proof}
The first implication of Lemma \ref{question} says that there exists a global continuous vector field 
$$
 n_E= \sum_{i=1}^{n} \left ( n_{E,i} \partial_{x_i} + n_{E,n+i} \partial_{y_i} \right ) + n_{E,2n+1} \partial_t  \neq 0 
$$ 
so that for all  $p \in S$ there exist $U \in \mathcal{U}_p $ and a function $ g : U \to \mathbb{R}$, $ g \in C^1$,  so that $S \cap U = \{ g=0 \}$ and $ \nabla g \neq 0$  on $ U$, with 
$$
\begin{cases}
n_{E,i} = \mu \partial_{x_i} g,\\
n_{E,n+i} = \mu \partial_{y_i} g,  \\
n_{E,2n+1} = \mu \partial_t g,
\end{cases}
\quad  i=1,\dots,n,
$$
where $\mu$ is simply a normalising factor so that $\vert n_E \vert =1$ that, from now on, can be ignored. By hypothesis, we also know that for all $ p \in S$ there exist $ \exists U' \in \mathcal{U}_p$ and a function $  f : U' \to \mathbb{R}$, $ f \in C_{\mathbb{H}}^1$,  so that $S \cap U = \{ f=0 \} $ and $ \nabla_{\mathbb{H}} f \neq 0 \text{ on } U'$.\\
The extra-hypothesis of this lemma is exactly that $g=f$ and so one can also take $U=U'$. The goal is to find a global $n_{\mathbb{H}}$. A natural choice is
$$
\begin{cases}
n_{\mathbb{H},i} :=n_{E,i} -\frac{1}{2}y_i \cdot n_{E,2n+1},\\
n_{\mathbb{H},n+i} := n_{E,n+i} +\frac{1}{2}x_i \cdot n_{E,2n+1},
\end{cases}
\quad  i=1,\dots,n.
$$
Then
\begin{align*}
\sum_{i=1}^{n} \left ( n_{\mathbb{H},i} ^2 + n_{\mathbb{H},n+i}^2 \right ) =&  \sum_{i=1}^{n} \left [ \left (    n_{E,i} -\frac{1}{2}y_i \cdot n_{E,2n+1}  \right )^2 + \left (    n_{E,n+i} +\frac{1}{2}x_i \cdot n_{E,2n+1}  \right )^2 \right ] .
\end{align*}
Locally, and remembering $g=f$, this means
\begin{align*}
\sum_{i=1}^{n} \left ( n_{\mathbb{H},i} ^2 + n_{\mathbb{H},n+i}^2 \right ) =& \mu^2   \sum_{i=1}^{n} \left [ \left (    \partial_{x_i} g     -\frac{1}{2}y_i     \partial_t g \right )^2 +  \left (   \partial_{y_i} g  +\frac{1}{2}x_i \partial_t g  \right )^2 \right ] \\
= & \mu^2  \sum_{i=1}^{n} \left [ \left (    \partial_{x_i} f     -\frac{1}{2}y_i     \partial_t f \right )^2 +  \mu^2 \left (   \partial_{y_i} f  +\frac{1}{2}x_i \partial_t f  \right )^2 \right ] \\
=& \mu^2 \sum_{i=1}^{n}  \left [ \left (    X_i f \right )^2 + \left (   Y_if  \right )^2 \right ] \neq 0.
\end{align*}
So there exists a proper vector 
$$
n_{\mathbb{H}}:=\sum_{i=1}^{n} \left ( n_{\mathbb{H},i}  X_i+n_{\mathbb{H},n+i} Y_i \right )
$$ 
and this proves the first implication.\\\\
To prove the second implication of Lemma \ref{question}, first note that there exists a global continuous vector field 
$$
 n_{\mathbb{H}}=\sum_{i=1}^{n} \left (  n_{\mathbb{H},i} X_i + n_{\mathbb{H},n+i} Y_i \right ) \neq 0 
$$
 so that for all $ p \in S$ there exist $ U \in \mathcal{U}_p $ and $ f : U \to \mathbb{R}$, $ f \in C_{\mathbb{H}}^1$,  so that $ S \cap U = \{ f=0 \}$ and $ \nabla_{\mathbb{H}} f \neq 0$  on $ U$, with 
$$
\begin{cases}
n_{\mathbb{H},i} = \lambda X_i f,\\
n_{\mathbb{H},n+i} = \lambda Y_i f,
\end{cases}
\quad  i=1,\dots,n,
$$
where $\lambda$ is just a normalising factor so that $\vert n_{\mathbb{H}} \vert =1$ that, from now on, can be ignored.\\ 
By hypothesis, we also know that for all $ p \in S$ there exist $ U' \in \mathcal{U}_p$ and $ g : U' \to \mathbb{R}$, $ g \in C^1$,  so that  $S \cap U' = \{ g=0 \} $ and $\nabla g \neq 0 $ on $U'$.\\
As before, since we am in the case $g=f$, we can also take $U=U'$. This time the goal is to find a global vector field $n_E$. A natural choice is
$$
\begin{cases}
n_{E,2n+1}:=\frac{1}{n} \sum_{j=1}^{n} \left ( X_j n_{\mathbb{H},n+j} - Y_j n_{\mathbb{H},j} \right ) ,\\
n_{E,i} := n_{\mathbb{H},i} +\frac{1}{2}y_i \cdot n_{E,2n+1} ,\\
n_{E,n+i} := n_{\mathbb{H},n+i} -\frac{1}{2}x_i \cdot n_{E,2n+1},
\end{cases}
\quad  i=1,\dots,n.
$$
Note that we need 
 $ n_{\mathbb{H},i},  n_{\mathbb{H},n+i} \in C_{\mathbb{H}}^1 \left (  n_{\mathbb{H}} \in C_{\mathbb{H}}^1 \right )$ for this definition to be useful. Indeed, this is the same as having $S$ $ C^2_\mathbb{H}$-regular.\\
Now note that
$$
n_{E,2n+1}=\frac{1}{n}  \sum_{j=1}^{n} \left ( X_j n_{\mathbb{H},n+j} - Y_j n_{\mathbb{H},j} \right ) .
$$
Moreover, remembering that locally $g=f$ and that in each $U$ we have one such function, we get
$$
n_{E,2n+1} = \frac{1}{n}  \sum_{j=1}^{n} \left (   X_j  Y_j f - Y_j  X_j f \right )  =\frac{1}{n}   \sum_{j=1}^{n} \left[        X_j  Y_j g - Y_j X_j g         \right  ] =\frac{1}{n} n  Tg=  \partial_t g.
$$
Now
\begin{align*}
& \sum_{i=1}^{n}     \left ( n_{E,i}^2 + n_{E,n+i}^2 \right ) + n_{E,2n+1}^2 = \\
&=
 \sum_{i=1}^{n} \left [ \left (    n_{\mathbb{H},i} +\frac{1}{2}y_i   n_{E,2n+1}  \right )^2 
+ \left (    n_{\mathbb{H},n+i}  -\frac{1}{2}x_i  n_{E,2n+1}   \right )^2  \right ]
+ n_{E,2n+1}^2.
\end{align*}
This locally is
\begin{align*}
& \sum_{i=1}^{n}     \left ( n_{E,i}^2 + n_{E,n+i}^2 \right ) + n_{E,2n+1}^2 = \\
& =
 \sum_{i=1}^{n} \left [   \left (      X_i g +\frac{1}{2}y_i (   \partial_t g )  \right )^2 
+ \left (     Y_i g  -\frac{1}{2}x_i (  \partial_t g )   \right )^2   \right ]
+ \left (       \partial_t g  \right )^2\\
&=
 \sum_{i=1}^{n} \left [  \left (            \partial_{x_i} g     -\frac{1}{2}{y_i}     \partial_t g   +\frac{1}{2}{y_i}  \partial_t g  \right )^2 
+ \left (    \partial_{y_i} g     +\frac{1}{2}{x_i}     \partial_t g   -\frac{1}{2}{x_i} \partial_t g   \right )^2 \right ]
+ \left (      \partial_t g  \right )^2\\
&=
 \sum_{i=1}^{n} \left (  \left (            \partial_{x_i} g     \right )^2 
+ \left (    \partial_{y_i} g       \right )^2   \right )
+ \left (      \partial_t g  \right )^2  \neq 0.
\end{align*}
So there is a proper vector
$$
n_{E}:=\sum_{i=1}^{n} \left (  n_{E,i} \partial_{x_i}+n_{E,n+i} \partial_{y_i} \right ) +n_{E,2n+1} \partial_t
$$
 and this proves the second implication and concludes the proof.
\end{proof}

\noindent
Differently from the proof of Lemma \ref{question}, for Proposition \ref{finalmente4} we cannot assume $g=f$. We can still construct the vector field as before but cannot prove straight away that the new-built global vector field is never zero.

\begin{proof}[Proof of implication \eqref{.1} in Proposition \ref{finalmente4}]
We know there exists a global vector field 
$$ 
n_{E}=\sum_{i=1}^{n} \left ( n_{E,i} \partial_{x_i} +n_{E,n+i} \partial_{y_i} \right ) + n_{E,2n+1} \partial_t  \neq 0 
$$ 
that can be written locally as 
$$
 n_{E}=\mu \sum_{i=1}^{n} \left (  \partial_{x_i} g  \partial_{x_i} +  \partial_{y_i} g  \partial_{y_i} \right ) + \mu \partial_t  g  \partial_t
$$
 so that $\nabla g  \neq 0 ,  \ g \in C^1(U,\mathbb{R})$, $U\subseteq \mathbb{H}^n$ open.   
Define
$$
\begin{cases}
n_{\mathbb{H},i} :=n_{E,i} -\frac{1}{2}y_i \cdot n_{E,2n+1},\\
n_{\mathbb{H},n+i} := n_{E,n+i} +\frac{1}{2}x_i \cdot n_{E,2n+1},
\end{cases}
\quad  i=1,\dots,n.
$$
Locally (for each point $p$ there exists a neighbourhood $U$ where such $g$ is defined) this becomes
$$
\begin{cases}
n_{\mathbb{H},i} =\mu \partial_{x_i}   g -\frac{1}{2} y_i \mu \partial_t   g= \mu  X_i g,\\
n_{\mathbb{H},n+i} = \mu \partial_{y_i}   g +\frac{1}{2} x_i \mu \partial_t   g= \mu  Y_i g ,
\end{cases}
\quad  i=1,\dots,n,
$$
where $\mu$ is simply a normalising factor that, from now on, we ignore.\\
In order to verify the $\mathbb{H}$-orientability, we have to show that $\nabla_{\mathbb{H}} g \neq 0$ .  
Note here that $ C^1(U,\mathbb{R}) \subsetneq C_{\mathbb{H}}^1(U,\mathbb{R}), $ so $g$ is regular enough.\\ 
Consider first the case in which $( \partial_t   g )_p =0$. We still have that $\nabla_{p} g \neq 0$, so at least one of the derivatives $ ( \partial_{x_i}   g )_p , \ ( \partial_{y_i}   g )_p $ must be different from zero in $p$. But, when $( \partial_t   g )_p =0$, then  $(X_i g)_p= ( \partial_{x_i}   g )_p$ and $(Y_i g)_p= ( \partial_{y_i}   g )_p $, so
$$
\norm{ \nabla_{\mathbb{H},p} g }^2=(X_i g)_p^2 + (Y_i g)_p^2  \neq 0.
$$
\noindent
Second, consider the case when $( \partial_t   g )_p \neq 0  $. In this case:
$$
\norm{ \nabla_{\mathbb{H},p} g }^2 =\sum_{i=1}^{n} (X_i g)_p^2 + (Y_i g)_p^2 = \sum_{i=1}^{n}  \left  (  \partial_{x_i} g  -\frac{1}{2}y_{i,p}  \partial_t   g \right )_p^2 + \left (  \partial_{y_i} g + \frac{1}{2}x_{i,p}  \partial_t   g  \right )_p^2  \neq 0
$$
is equivalent to the fact that there exists $i \in \{1,\dots, n\}$ such that
$$
 y_{i,p}   \neq  \frac{2    ( \partial_{x_i}   g )_p  }{ ( \partial_t   g )_p  } \  \text{ or } \   x_{i,p}   \neq  - \frac{2    ( \partial_{y_i} g )_p  }{ ( \partial_t   g )_p  }.
$$
So the Heisenberg gradient of $g$ in $p$ is zero at the points\\
$$
\left ( 
  - \frac{2    ( \partial_{y_1}   g )_p  }{ ( \partial_t   g )_p  } 
, \dots, 
  - \frac{2    ( \partial_{y_n}   g )_p  }{ ( \partial_t   g )_p  } 
,
  \frac{2    ( \partial_{x_1}   g )_p  }{ ( \partial_t   g )_p  }
,\dots,
  \frac{2    ( \partial_{x_n}   g )_p  }{ ( \partial_t   g )_p  }
,t
 \right )
$$
and the first implication of the proposition is true.
\end{proof}


\begin{proof}[Proof of implication \eqref{.2} in Proposition \ref{finalmente4}]
In the second case \eqref{.2}, we know that there exists a global vector 
$$
 n_{\mathbb{H}}= \sum_{i=1}^{n} n_{\mathbb{H},i} X_i + n_{\mathbb{H},n+i} Y_i  \neq 0 
$$
 that can be written locally as 
$$
 n_{\mathbb{H}}= \sum_{i=1}^{n} \lambda X_i f   X_i + \lambda Y_i f  Y_i 
$$
 so that $\nabla_{\mathbb{H}} f  \neq 0 , \ f \in C_{\mathbb{H}}^1(U,\mathbb{R})$, with $U\subseteq \mathbb{H}^n$ open. As before, $\lambda$ is simply a normalising factor that, from now on, we ignore.\\
Note that $n_{\mathbb{H}} \in C_{\mathbb{H}}^1(U,\mathbb{R})$ (that is the same as asking $S$ to be $C_\mathbb{H}^2$-regular). Then define
$$
\begin{cases}
n_{E,2n+1}:=\frac{1}{n} \sum_{j=1}^{n} \left ( X_j n_{\mathbb{H},n+j} - Y_j n_{\mathbb{H},j} \right ), \\
n_{E,i} := n_{\mathbb{H},i} +\frac{1}{2}y_i \cdot n_{E,2n+1}, \\
n_{E,n+i} := n_{\mathbb{H},n+i} -\frac{1}{2}x_i \cdot n_{E,2n+1} ,
\end{cases}
\quad  i=1,\dots,n.
$$
Locally (for each point $p$ there exists a neighbourhood $U$ where such $f$ is defined) we can write the above as:
$$
n_{E,2n+1}
= \frac{1}{n}  \sum_{j=1}^{n} \left (   X_j Y_j f - Y_j   X_j f \right ) 
=\frac{1}{n} n  Tf=  \partial_t f.
$$
So now we have that
$$
\begin{cases}
n_{E,2n+1}=   \partial_t f,\\
n_{E,i} =  \partial_{x_i}  f -\frac{1}{2} {y_i}  \partial_t  f  +\frac{1}{2} {y_i}  \partial_t  f=  \partial_{x_i}  f ,\\
n_{E,n+i} =  \partial_{y_i}  f +\frac{1}{2} {x_i}  \partial_t  f  -\frac{1}{2} {x_i} \partial_t  f =  \partial_{y_i}  f ,
\end{cases}
\quad  i=1,\dots,n.
$$
In order to verify the Euclidean-orientability, we have to show that $\nabla f \neq 0$ . \\
Note that $f \in C_{\mathbb{H}}^1(U,\mathbb{R})$ and, a priori, we do not know whether $f \in C^1(U,\mathbb{R})$. However, asking $n_{\mathbb{H}} \in C_{\mathbb{H}}^1(U,\mathbb{R})$ allows us to write $\partial_{x_i}, \partial_{y_i} $ and $ \partial_t$ using only $X_i, Y_i, n_{\mathbb{H},i} $  and  $ n_{\mathbb{H},n+i}$, which guarantees that  $\partial_{x_i} f, \partial_{y_i} f$ and $ \partial_t f$ are well defined.\\
Now, $\nabla f \neq 0$ if and only if
\begin{align*}
\sum_{i=1}^{n} \left ( (\partial_{x_i} f)^2 + (\partial_{y_i} f)^2 \right ) + (\partial_t f)^2 \neq 0 ,
\end{align*}
which is the same as
\begin{align*}
& \sum_{i=1}^{n} \left [ \left  ( X_i f + \frac{1}{2}y_i  T f \right  )^2 + \left ( Y_i f - \frac{1}{2}x_i T f \right   )^2 + \left ( T f \right )^2 \right ] \neq 0 .
\end{align*}
In the case  $Tf \neq 0$, we have that $\nabla f \neq 0$ immediately. In the case $Tf=0$, instead, we have that $\nabla f \neq 0$ if and only if
$$
 \sum_{i=1}^{n} \left [ \left ( X_i f \right )^2+ \left ( Y_i f  \right   )^2 \right ] \neq 0,
$$
which is true because $\nabla_{\mathbb{H}} f \neq 0$. This completes the cases and shows that there actually is a global vector field $n_E$ that is continuous (by hypotheses) and never zero. So the second implication of the proposition is true.
\end{proof}






\chapter{Appendices}
\renewcommand{\thesection}{\Alph{section}}


\section{Proof  of the Explicit Rumin Complex in $\mathbb{H}^2$}\label{computationH2}

In this section we show a simple but useful lemma and then prove Proposition \ref{exH2}.

\begin{obs}[Change of basis]\label{change}
While dealing with the equivalence classes of the complex, there are the circumstances in which we need to make a change of basis. In particular, in the case of $\frac{\Omega^2}{I^2}$, we already used in Observation \ref{obsH2} that $\{ dx_1 \wedge dy_1,  dx_2 \wedge dy_2 \}$ and $\{  dx_1 \wedge dy_1 + dx_2 \wedge dy_2,  dx_1 \wedge dy_1 - dx_2 \wedge dy_2 \}$ span the same subspace.
%
%
Consider
 $$
\begin{cases}
\xi=dx_1 \wedge d y_1,\\
\eta=dx_2 \wedge d y_2,
\end{cases}
\quad \text{ and } \quad
\begin{cases}
u=\frac{\xi+\eta}{\sqrt{2}} = \frac{dx_1 \wedge d y_1 +dx_2 \wedge d y_2}{\sqrt{2}},\\
v=\frac{\xi-\eta}{\sqrt{2}} = \frac{dx_1 \wedge d y_1 -dx_2 \wedge d y_2}{\sqrt{2}}.
\end{cases}
$$
Then denote $\mathcal{B}_{\xi, \eta}^{u,v}$ the linear transformation that sends forms with respect to the basis $\{\xi, \eta\}$ to forms with respect to the basis $\{u,v\}$. Then
$$
\mathcal{B}_{\xi, \eta}^{u,v}:=\left (
\begin{matrix}
\frac{\partial u}{\partial \xi } &  \frac{\partial u}{\partial \eta } \\
\frac{\partial v}{\partial \xi } & \frac{\partial v}{\partial \eta }
\end{matrix} \right )=
\left (
\begin{matrix}
\frac{1}{\sqrt{2} } &  \frac{1}{\sqrt{2} } \\
\frac{1}{\sqrt{2} } & \frac{-1}{\sqrt{2} }
\end{matrix} \right ).
$$
Consider now a form $\omega_{ \{\xi, \eta\} }$ with respect to the basis $\{\xi, \eta\}$:
$$
\omega_{ \{\xi, \eta\} } = f   dx_1 \wedge d y_1 + g dx_2 \wedge d y_2 = f \xi + g \eta = 
\left (   \begin{matrix}   f \\   g   \end{matrix} \right )_{  \{\xi, \eta\}  }.
$$
To write the same form with respect to the basis $\{u,v\}$ (call it  $\omega_{ \{u, v\} }$), we have
\begin{align*}
\omega_{ \{u, v\} } & =\mathcal{B}_{\xi, \eta}^{u,v} \omega_{ \{\xi, \eta\} }=
\mathcal{B}_{\xi, \eta}^{u,v}
 \left (   \begin{matrix}   f \\    g   \end{matrix} \right )_{  \{\xi, \eta\}  }=
\left (
\begin{matrix}
\frac{1}{\sqrt{2} } &  \frac{1}{\sqrt{2} } \\
\frac{1}{\sqrt{2} } & \frac{-1}{\sqrt{2} }
\end{matrix}
 \right ) \cdot
\left (  \begin{matrix}   f \\    g    \end{matrix} \right )_{  \{\xi, \eta\}  }\\
&=
\left (
\begin{matrix}
\frac{f+g}{\sqrt{2}}\\
\frac{f-g}{\sqrt{2}}
\end{matrix} 
\right )_{  \{ u,v \}  }
 =\frac{f+g}{\sqrt{2}}u + \frac{f-g}{\sqrt{2}} v\\
&=\frac{f+g}{2} (dx_1 \wedge d y_1 +dx_2 \wedge d y_2) + \frac{f-g}{2} (dx_1 \wedge d y_1 -dx_2 \wedge d y_2).
\end{align*}
Since the two bases span forms on the same space, we trivially have that $\omega_{ \{\xi, \eta\} } = \omega_{ \{u, v\} }  $.
\end{obs}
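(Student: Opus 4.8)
The plan is to treat this as a pure linear-algebra verification, since both $\{\xi,\eta\}$ and $\{u,v\}$ are bases of the same two-dimensional subspace $\spn\{dx_1\wedge dy_1, dx_2 \wedge dy_2\}$ of $\Omega^2$, and the claim is only that rewriting a single element $\omega$ in the second basis does not change it as an abstract form. First I would check that the transformation $\mathcal{B}_{\xi,\eta}^{u,v}$ is invertible: a direct computation gives $\det \mathcal{B}_{\xi,\eta}^{u,v} = -1 \neq 0$, and in fact $\mathcal{B}_{\xi,\eta}^{u,v}$ is its own inverse since $(\mathcal{B}_{\xi,\eta}^{u,v})^2 = I$. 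This guarantees that $\{u,v\}$ is genuinely a basis and that the coordinate change is legitimate.

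Next I would perform the substitution explicitly. Inverting the defining relations $u = \frac{\xi+\eta}{\sqrt 2}$, $v = \frac{\xi-\eta}{\sqrt 2}$ yields $\xi = \frac{u+v}{\sqrt 2}$ and $\eta = \frac{u-v}{\sqrt 2}$. Substituting these into $\omega_{\{\xi,\eta\}} = f\xi + g\eta$ and collecting the coefficients of $u$ and $v$ produces precisely $\frac{f+g}{\sqrt 2}\,u + \frac{f-g}{\sqrt 2}\,v$, which is exactly the coordinate vector obtained by applying $\mathcal{B}_{\xi,\eta}^{u,v}$ to $(f,g)^T$. Re-expanding $u$ and $v$ back into the wedge products then recovers the displayed formula $\frac{f+g}{2}(dx_1\wedge dy_1 + dx_2\wedge dy_2) + \frac{f-g}{2}(dx_1\wedge dy_1 - dx_2\wedge dy_2)$.

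There is essentially no genuine obstacle here; the entire content is the bookkeeping of the $\tfrac{1}{\sqrt 2}$ normalisations. The only point that requires care is distinguishing the coordinate tuples (which transform by $\mathcal{B}_{\xi,\eta}^{u,v}$) from the abstract form $\omega$ (which is basis-independent): the asserted equality $\omega_{\{\xi,\eta\}} = \omega_{\{u,v\}}$ is an equality of $2$-forms, not of their coordinate vectors, and it becomes immediate once one verifies that both expansions expand to the same linear combination of $dx_1\wedge dy_1$ and $dx_2\wedge dy_2$. This consistency is exactly what licenses the later manipulation in Proposition \ref{exH2}, where, inside the equivalence class $\frac{\Omega^2}{I^2}$, one discards the $u$-component (which lies in $I^2$) and retains only the $v$-component, making the formula for $d_Q^{(2)}$ well defined.
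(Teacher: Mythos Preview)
Your proposal is correct and follows essentially the same approach as the paper: both treat the observation as a straightforward linear-algebra verification, the only cosmetic difference being that you substitute the inverse relations $\xi=\frac{u+v}{\sqrt2}$, $\eta=\frac{u-v}{\sqrt2}$ directly, whereas the paper applies the matrix $\mathcal{B}_{\xi,\eta}^{u,v}$ to the coordinate column $(f,g)^T$. Your added remarks on invertibility and on the distinction between coordinate tuples and the abstract form are sound and make the exposition slightly more careful than the paper's, but the mathematical content is identical.
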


\begin{ex}\label{variables}
The following computation, with the same notations for $ \xi,\eta $ and for $u,v$ as above, will be used later. Consider
$$
\omega_{\xi,\eta} = (X_1 \alpha_3  - Y_1 \alpha_1 ) dx_1 \wedge dy_1 +( X_2 \alpha_4  -  Y_2 \alpha_2 ) dx_2 \wedge dy_2 .
$$
Then the same form can be rewritten as
\begin{align*}
 \omega_{ \{u, v\} } =& \frac{   X_1 \alpha_3  - Y_1 \alpha_1 +  X_2 \alpha_4  -  Y_2 \alpha_2   }{2} (dx_1 \wedge d y_1 +dx_2 \wedge d y_2) \\
&+   \frac{ X_1 \alpha_3 - Y_1 \alpha_1 -  X_2 \alpha_4  + Y_2 \alpha_2    }{2}  ( dx_1 \wedge dy_1 - dx_2 \wedge dy_2).
\end{align*}
\end{ex}


\begin{proof}[Proof of Proposition \ref{exH2}]
 First we will show the cases regarding $d_Q^{(1)}$, $d_Q^{(2)}$, $d_Q^{(3)}$ and $d_Q^{(4)}$. Finally we will discuss $D$.\\
By Observation \ref{df}, we immediately get that:
$$
d_Q^{(1)} f = [ X_1f dx_1 + X_2f dx_2 + Y_1f dy_1+ Y_2f dy_2]_{I^1}.
$$
Consider now $[\alpha]_{I^1}= [\alpha_1 dx_1 + \alpha_2 dx_2 + \alpha_3 dy_1+ \alpha_4 dy_2]_{I^1} \in \frac{\Omega^1}{I^1}$. In this case we first work apply the definition and obtain:
\begin{align*}
d_Q^{(2)} \left ( \left [\alpha \right ]_{I^1} \right ) = & d_Q \left ( \left [ \alpha_1 dx_1 + \alpha_2 dx_2 + \alpha_3 dy_1+ \alpha_4 dy_2 \right ]_{I^1} \right ) \\
= &\big [- X_2 \alpha_1 dx_1 \wedge dx_2 - Y_1 \alpha_1 dx_1 \wedge dy_1 -  Y_2 \alpha_1 dx_1 \wedge dy_2                  \\
&+  X_1 \alpha_2 dx_1 \wedge dx_2 - Y_1 \alpha_2 dx_2 \wedge dy_1 -  Y_2 \alpha_2 dx_2 \wedge dy_2                    \\
&+      X_1 \alpha_3 dx_1 \wedge dy_1 + X_2 \alpha_3 dx_2 \wedge dy_1 -  Y_2 \alpha_3 dy_1 \wedge dy_2                \\
&+   X_1 \alpha_4 dx_1 \wedge dy_2 + X_2 \alpha_4 dx_2 \wedge dy_2 +  Y_1 \alpha_4 dy_1 \wedge dy_2 \big  ]_{I^2} \\
= & \big [ ( X_1 \alpha_2  - X_2 \alpha_1 ) dx_1 \wedge dx_2 + ( X_2 \alpha_3   - Y_1 \alpha_2 ) dx_2 \wedge dy_1\\
&+ ( Y_1 \alpha_4   -  Y_2 \alpha_3  ) dy_1 \wedge dy_2  + (    X_1 \alpha_4  -  Y_2 \alpha_1 ) dx_1 \wedge dy_2 \\
&  + (X_1 \alpha_3      - Y_1 \alpha_1 ) dx_1 \wedge dy_1 +( X_2 \alpha_4  -  Y_2 \alpha_2 ) dx_2 \wedge dy_2 \big ]_{I^2} ,
\end{align*}
but this is not enough. Thanks to the equivalence class, the last line can be written differently using Example \ref{variables}, so we get
\begin{align*}
d_Q^{(2)} ([\alpha]_{I^1} ) 
= & \bigg [ ( X_1 \alpha_2  - X_2 \alpha_1 ) dx_1 \wedge dx_2 + ( X_2 \alpha_3   - Y_1 \alpha_2 ) dx_2 \wedge dy_1\\
&+ ( Y_1 \alpha_4   -  Y_2 \alpha_3  ) dy_1 \wedge dy_2  + (    X_1 \alpha_4  -  Y_2 \alpha_1 ) dx_1 \wedge dy_2 \\
& + \left ( \frac{ X_1 \alpha_3 - Y_1 \alpha_1 -  X_2 \alpha_4  + Y_2 \alpha_2    }{2} \right ) ( dx_1 \wedge dy_1 - dx_2 \wedge dy_2) \bigg ]_{I^2} .
\end{align*}
We proceed on considering $d_Q^{(4)}$. Consider $\alpha= \alpha_1 dx_1 \wedge  dx_2 \wedge \theta +\alpha_2 dx_1 \wedge dy_2 \wedge \theta + \alpha_3  dx_2 \wedge dy_1 \wedge \theta + \alpha_4  dy_1 \wedge dy_2 \wedge \theta + \alpha_5( dx_1 \wedge dy_1 \wedge \theta - dx_2 \wedge dy_2 \wedge \theta) \in J^3$.\\
Then  $d_Q^{(4)}$ becomes
\begin{align*}
d_Q^{(4)} \alpha =& d_Q^{(4)} \big (   \alpha_1 dx_1 \wedge  dx_2 \wedge \theta +\alpha_2 dx_1 \wedge dy_2 \wedge \theta + \alpha_3  dx_2 \wedge dy_1 \wedge \theta + \alpha_4  dy_1 \wedge dy_2 \wedge \theta  \\
&+\alpha_5( dx_1 \wedge dy_1 \wedge \theta - dx_2 \wedge dy_2 \wedge \theta) \big )\\
=& Y_1 \alpha_1 dx_1 \wedge  dx_2 \wedge dy_1 \wedge \theta +Y_2 \alpha_1 dx_1 \wedge  dx_2 \wedge dy_2 \wedge \theta\\
& -X_2 \alpha_2 dx_1 \wedge dx_2 \wedge dy_2 \wedge \theta -Y_1    \alpha_2 dx_1 \wedge dy_1 \wedge dy_2 \wedge \theta\\
&+ X_1 \alpha_3 dx_1 \wedge  dx_2 \wedge dy_1 \wedge \theta  +Y_2       \alpha_3  dx_2 \wedge dy_1 \wedge dy_2 \wedge \theta\\ 
& + X_1 \alpha_4  dx_1 \wedge dy_1 \wedge dy_2 \wedge \theta+ X_2 \alpha_4  dx_2 \wedge dy_1 \wedge dy_2 \wedge \theta \\
&- X_1\alpha_5 dx_1 \wedge dx_2 \wedge dy_2 \wedge \theta - X_2  \alpha_5  dx_1 \wedge dx_2 \wedge dy_1 \wedge \theta\\
&+Y_1 \alpha_5 dx_2 \wedge dy_1 \wedge dy_2 \wedge \theta + Y_2  \alpha_5  dx_1 \wedge dy_1 \wedge dy_2 \wedge \theta\\
&+ \alpha_5( - dx_1 \wedge dy_1 \wedge dx_2 \wedge dy_2 + dx_2 \wedge dy_2 \wedge dx_1 \wedge dy_1) \\
=& ( Y_1 \alpha_1  + X_1 \alpha_3 - X_2  \alpha_5)   dx_1 \wedge  dx_2 \wedge dy_1 \wedge \theta\\
&+( Y_2 \alpha_1  -X_2 \alpha_2 - X_1\alpha_5      )dx_1 \wedge  dx_2 \wedge dy_2 \wedge \theta\\
&+( -Y_1 \alpha_2 + X_1 \alpha_4   + Y_2  \alpha_5       )     dx_1 \wedge dy_1 \wedge dy_2 \wedge \theta\\
&+(  Y_2  \alpha_3  +  X_2    \alpha_4 +Y_1 \alpha_5 ) dx_2 \wedge dy_1 \wedge dy_2 \wedge \theta.
\end{align*}
The last of the easy cases is $d_Q^{(5)}$. Consider $\alpha = \alpha_1 dx_1 \wedge  dx_2 \wedge dy_1 \wedge \theta + \alpha_2 dx_1 \wedge dx_2 \wedge dy_2 \wedge \theta + \alpha_3 dx_1 \wedge  dy_1 \wedge dy_2 \wedge \theta + \alpha_4  dx_2 \wedge  dy_1 \wedge dy_2 \wedge \theta \in J^4.$
Then $d_Q^{(5)}$ gives us
$$
d_Q^{(5)} \alpha= ( -Y_2  \alpha_1 + Y_1  \alpha_2 - X_2  \alpha_3 + X_1  \alpha_4 )  dx_1 \wedge  dx_2 \wedge dy_1 \wedge dy_2 \wedge \theta.
$$
The final case is the study of the behaviour of $D$ and it is the one that requires more effort. Remember that by Observation \ref{df}:
$$
d g = X_1g dx_1 + Y_1 g dy_1 +X_2g dx_2 + Y_2 g dy_2 + Tg \theta.
$$
Consider a form $\alpha' \in {\prescript{}{}\bigwedge}^{2} \mathfrak{h}_1 $: 
\begin{align*}
\alpha'=& \alpha_1 dx_1 \wedge  dx_2 +\alpha_2 dx_1 \wedge dy_1 + \alpha_3  dx_1 \wedge dy_2 + \alpha_4  dx_2 \wedge dy_1  + \alpha_5 dx_2 \wedge dy_2 \\
&+\alpha_6 dy_1 \wedge dy_2.
\end{align*}
Now, proceding as in the case of Observation \ref{change}, we have two bases $\{ \xi=dx_1 \wedge d y_1,  \eta=dx_2 \wedge d y_2 \}$ and $\{  u=\frac{\xi+\eta}{\sqrt{2}},  v=\frac{\xi-\eta}{\sqrt{2}}  \}$ that span both the same space.\\
Consider $\omega_{ \{ \xi,\eta \} }= \alpha_2 dx_1 \wedge dy_1  + \alpha_5 dx_2 \wedge dy_2$. In the second basis the same form is 
\begin{align*}
 \omega_{ \{u, v\} } = \frac{    \alpha_2+  \alpha_5  }{2} (dx_1 \wedge d y_1 +dx_2 \wedge d y_2) +   \frac{ \alpha_2-  \alpha_5   }{2}  ( dx_1 \wedge dy_1 - dx_2 \wedge dy_2).
\end{align*}
Denote $\gamma = \frac{    \alpha_2+  \alpha_5  }{2}$ and $\beta=  \frac{ \alpha_2-  \alpha_5   }{2} $. So we have
\begin{align*}
 \omega_{ \{u, v\} } = \gamma (dx_1 \wedge d y_1 +dx_2 \wedge d y_2) +  \beta  ( dx_1 \wedge dy_1 - dx_2 \wedge dy_2).
\end{align*}
So, by the definition of $\frac{\Omega^2}{I^2}$, one gets
\begin{align*}
[\alpha']_{I^2}=& [\alpha_1 dx_1 \wedge  dx_2 + \alpha_3  dx_1 \wedge dy_2 + \alpha_4  dx_2 \wedge dy_1  +\alpha_6 dy_1 \wedge dy_2\\
& 
+ \beta (dx_1 \wedge dy_1 - dx_2 \wedge dy_2)    ]_{I^2}.
\end{align*}
Then call $\alpha$ this new representative of the class $[\alpha']_{I^2}$,
\begin{align*}
\alpha :=& \alpha_1 dx_1 \wedge  dx_2 + \alpha_3  dx_1 \wedge dy_2 + \alpha_4  dx_2 \wedge dy_1  +\alpha_6 dy_1 \wedge dy_2\\
& 
+ \beta (dx_1 \wedge dy_1 - dx_2 \wedge dy_2)  ,
\end{align*}
and one obviously has that $[\alpha']_{I^2}=[\alpha]_{I^2}.$ Then also $D([\alpha']_{I^2})=D([\alpha]_{I^2})$ and from now on we will compute the latter.\\
 Remember that $D([\alpha]_{I^2}) =d \left ( \alpha +  L^{-1} ( - (d\alpha)_{\vert_{{\prescript{}{}\bigwedge}^{k} \mathfrak{h}_1 }} ) \wedge \theta  \right )$. The full derivative of $\alpha$ is:
\begin{align*}
d \alpha  = & d \big (  \alpha_1 dx_1 \wedge  dx_2  + \alpha_3  dx_1 \wedge dy_2 + \alpha_4  dx_2 \wedge dy_1  +\alpha_6 dy_1 \wedge dy_2\\
&
 + \beta (dx_1 \wedge dy_1 - dx_2 \wedge dy_2) \big ) \\
=&  Y_1   \alpha_1 dy_1 \wedge dx_1 \wedge  dx_2 + Y_2  \alpha_1 dy_2 \wedge dx_1 \wedge  dx_2 + T \alpha_1 \theta \wedge dx_1 \wedge  dx_2 \\
&+ Y_1   \alpha_3 dy_1 \wedge  dx_1 \wedge dy_2 +X_2 \alpha_3 dx_2 \wedge  dx_1 \wedge dy_2  + T \alpha_3 \theta \wedge  dx_1 \wedge dy_2\\
&+ X_1 \alpha_4 dx_1  \wedge  dx_2 \wedge dy_1 + Y_2  \alpha_4 dy_2 \wedge  dx_2 \wedge dy_1 + T \alpha_4 \theta \wedge  dx_2 \wedge dy_1\\
&+ X_1 \alpha_6 dx_1  \wedge  dy_1 \wedge dy_2+X_2 \alpha_6 dx_2 \wedge  dy_1 \wedge dy_2 +  T \alpha_6 \theta  \wedge  dy_1 \wedge dy_2\\
&- X_1 \beta dx_1 \wedge  dx_2 \wedge dy_2 - Y_1 \beta dy_1  \wedge  dx_2 \wedge dy_2 \\
&+X_2 \beta dx_2  \wedge  dx_1 \wedge dy_1 + Y_2 \beta dy_2  \wedge  dx_1 \wedge dy_1 \\
&+T \beta (dx_1 \wedge dy_1  \wedge \theta - dx_2 \wedge dy_2 \wedge \theta )\\
=&(Y_1 \alpha_1 - X_2 \beta + X_1 \alpha_4 )   dx_1 \wedge  dx_2  \wedge dy_1 +(Y_2 \alpha_1 - X_2 \alpha_3 - X_1 \beta )  dx_1 \wedge  dx_2  \wedge dy_2 \\
&+(Y_2 \beta - Y_1 \alpha_3 + X_1 \alpha_6 )  dx_1 \wedge  dy_1  \wedge dy_2 +(Y_2 \alpha_4 + Y_1 \beta + X_2 \alpha_6 )  dx_2 \wedge  dy_1  \wedge dy_2 \\
&+ T \alpha_1 \theta \wedge dx_1 \wedge  dx_2 +  T \alpha_3 \theta \wedge  dx_1 \wedge dy_2 +  T \alpha_4 \theta \wedge  dx_2 \wedge dy_1+  T \alpha_6 \theta  \wedge  dy_1 \wedge dy_2\\
&+T \beta (dx_1 \wedge dy_1  \wedge \theta - dx_2 \wedge dy_2 \wedge \theta ).
\end{align*}
Then we need to isolate the part belonging to ${\prescript{}{}\bigwedge}^{k} \mathfrak{h}_1$, meaning:
\begin{align*}
(d\alpha)_{\vert_{{\prescript{}{}\bigwedge}^{k} \mathfrak{h}_1}} =&(Y_1 \alpha_1 - X_2 \beta + X_1 \alpha_4 )   dx_1 \wedge  dx_2  \wedge dy_1 +(Y_2 \alpha_1 - X_2 \alpha_3 - X_1 \beta )  dx_1 \wedge  dx_2  \wedge dy_2 \\
&+(Y_2 \beta - Y_1 \alpha_3 + X_1 \alpha_6 )  dx_1 \wedge  dy_1  \wedge dy_2 +(Y_2 \alpha_4 + Y_1 \beta + X_2 \alpha_6 )  dx_2 \wedge  dy_1  \wedge dy_2 \\
=&(Y_1 \alpha_1 - X_2 \beta + X_1 \alpha_4 )   dx_2 \wedge  d \theta -(Y_2 \alpha_1 - X_2 \alpha_3 - X_1 \beta )  dx_1 \wedge  d \theta \\
&-(Y_2 \beta - Y_1 \alpha_3 + X_1 \alpha_6 )  dy_2 \wedge d \theta +(Y_2 \alpha_4+ Y_1 \beta + X_2 \alpha_6 )    dy_1  \wedge d \theta.
\end{align*}
Next we have that
\begin{align*}
L^{-1} ( - (d\alpha)_{\vert_{{\prescript{}{}\bigwedge}^{k} \mathfrak{h}_1 }} ) =&- (Y_1 \alpha_1 - X_2 \beta + X_1 \alpha_4 )   dx_2 +(Y_2 \alpha_1 - X_2 \alpha_3 - X_1 \beta )  dx_1  \\
&+(Y_2 \beta - Y_1 \alpha_3 + X_1 \alpha_6 )  dy_2 -(Y_2 \alpha_4+ Y_1 \beta + X_2 \alpha_6 )    dy_1,
\end{align*}
and so
\begin{align*}
d& (L^{-1} ( - (d\alpha)_{\vert_{{\prescript{}{}\bigwedge}^{k} \mathfrak{h}_1 }}  ) ) \wedge \theta=\\ 
=&
- X_1 (Y_1 \alpha_1 - X_2 \beta + X_1 \alpha_4 ) dx_1 \wedge  dx_2 \wedge \theta 
+Y_1 (Y_1 \alpha_1 - X_2 \beta + X_1 \alpha_4 )  dx_2 \wedge  dy_1 \wedge \theta\\
&
 +Y_2(Y_1 \alpha_1 - X_2 \beta + X_1 \alpha_4 )   dx_2 \wedge  dy_2 \wedge \theta
-X_2 (Y_2 \alpha_1 - X_2 \alpha_3 - X_1 \beta )  dx_1  \wedge dx_2 \wedge \theta\\
&
-Y_1 (Y_2 \alpha_1 - X_2 \alpha_3 - X_1 \beta )  dx_1  \wedge dy_1 \wedge \theta
-Y_2(Y_2 \alpha_1 - X_2 \alpha_3 - X_1 \beta )  dx_1  \wedge dy_2 \wedge \theta\\
&
+X_1 (Y_2 \beta - Y_1 \alpha_3 + X_1 \alpha_6 ) dx_1 \wedge  dy_2\wedge \theta
+X_2 (Y_2 \beta - Y_1 \alpha_3 + X_1 \alpha_6 )  dx_2 \wedge dy_2\wedge \theta\\
&
+Y_1 (Y_2 \beta - Y_1 \alpha_3 + X_1 \alpha_6 )  dy_1 \wedge dy_2\wedge \theta
-X_1 (Y_2 \alpha_4+ Y_1 \beta + X_2 \alpha_6 )  dx_1 \wedge  dy_1 \wedge \theta\\
&
-X_2 (Y_2 \alpha_4+ Y_1 \beta + X_2 \alpha_6 )  dx_2 \wedge  dy_1 \wedge \theta
+Y_2 (Y_2 \alpha_4+ Y_1 \beta + X_2 \alpha_6 )  dy_1 \wedge dy_2 \wedge \theta.
\end{align*}
Finally we can put all the pieces together and compute $D([\alpha]_{I^2}) =d \left ( \alpha +  L^{-1} ( - (d\alpha)_{\vert_{{\prescript{}{}\bigwedge}^{k} \mathfrak{h}_1 }} ) \wedge \theta  \right ) =
d  \alpha + d \left ( L^{-1} ( - (d\alpha)_{\vert_{{\prescript{}{}\bigwedge}^{k} \mathfrak{h}_1 }} ) \right ) \wedge \theta  + L^{-1} ( - (d\alpha)_{\vert_{{\prescript{}{}\bigwedge}^{k} \mathfrak{h}_1 }} )   \wedge d \theta $. \\
First notice that part of $d \alpha$ totally cancels $L^{-1} ( - (d\alpha)_{\vert_{{\prescript{}{}\bigwedge}^{k} \mathfrak{h}_1 }} )   \wedge d \theta $. Then we have
\begin{align*}
D&([\alpha]_{I^2}) =\\
=& T \alpha_1  dx_1 \wedge  dx_2 \wedge \theta+  T \alpha_3  dx_1 \wedge dy_2 \wedge \theta +  T \alpha_4   dx_2 \wedge dy_1 \wedge \theta + T \alpha_6  dy_1 \wedge dy_2 \wedge \theta \\
&+T \beta (dx_1 \wedge dy_1  \wedge \theta - dx_2 \wedge dy_2 \wedge \theta ) \\
& - X_1 (Y_1 \alpha_1 - X_2 \beta + X_1 \alpha_4 ) dx_1 \wedge  dx_2 \wedge \theta +Y_1 (Y_1 \alpha_1 - X_2 \beta + X_1 \alpha_4 )  dx_2 \wedge  dy_1 \wedge \theta\\
&+ Y_2(Y_1 \alpha_1 - X_2 \beta + X_1 \alpha_4 )   dx_2 \wedge  dy_2 \wedge \theta -X_2 (Y_2 \alpha_1 - X_2 \alpha_3 - X_1 \beta )  dx_1  \wedge dx_2 \wedge \theta\\
&-Y_1 (Y_2 \alpha_1 - X_2 \alpha_3 - X_1 \beta )  dx_1  \wedge dy_1 \wedge \theta -Y_2(Y_2 \alpha_1 - X_2 \alpha_3 - X_1 \beta )  dx_1  \wedge dy_2 \wedge \theta\\
&+X_1 (Y_2 \beta - Y_1 \alpha_3 + X_1 \alpha_6 ) dx_1 \wedge  dy_2\wedge \theta +X_2 (Y_2 \beta - Y_1 \alpha_3 + X_1 \alpha_6 )  dx_2 \wedge dy_2\wedge \theta\\
&+Y_1 (Y_2 \beta - Y_1 \alpha_3 + X_1 \alpha_6 )  dy_1 \wedge dy_2\wedge \theta -X_1 (Y_2 \alpha_4+ Y_1 \beta + X_2 \alpha_6 )  dx_1 \wedge  dy_1 \wedge \theta\\
&-X_2 (Y_2 \alpha_4+ Y_1 \beta + X_2 \alpha_6 )  dx_2 \wedge  dy_1 \wedge \theta +Y_2 (Y_2 \alpha_4+ Y_1 \beta + X_2 \alpha_6 )  dy_1 \wedge dy_2 \wedge \theta\\\\
=&( T \alpha_1 - X_1 Y_1 \alpha_1 +X_1 X_2 \beta -X_1 X_1 \alpha_4   -X_2 Y_2 \alpha_1 +X_2 X_2 \alpha_3 + X_2 X_1 \beta ) dx_1 \wedge  dx_2 \wedge \theta \\
&+( T \alpha_3   -Y_2 Y_2 \alpha_1+ Y_2 X_2 \alpha_3 + Y_2 X_1 \beta  +X_1 Y_2 \beta -X_1 Y_1 \alpha_3 +X_1 X_1 \alpha_6 )   dx_1 \wedge dy_2 \wedge \theta\\
&+ (T \alpha_4   +Y_1 Y_1 \alpha_1 - Y_1 X_2 \beta +Y_1 X_1 \alpha_4 -X_2 Y_2 \alpha_4-X_2 Y_1 \beta -X_2 X_2 \alpha_6 ) dx_2 \wedge dy_1 \wedge \theta \\
&+(T \alpha_6 + Y_1 Y_2 \beta -Y_1 Y_1 \alpha_3 +Y_1 X_1 \alpha_6 +Y_2 Y_2 \alpha_4+Y_2 Y_1 \beta + Y_2 X_2 \alpha_6 ) dy_1 \wedge dy_2 \wedge \theta\\
&+T \beta (dx_1 \wedge dy_1  \wedge \theta - dx_2 \wedge dy_2 \wedge \theta )\\
&+ (Y_2Y_1 \alpha_1 - Y_2 X_2 \beta +  Y_2X_1 \alpha_4  +X_2 Y_2 \beta -X_2 Y_1 \alpha_3 +X_2 X_1 \alpha_6 )  
dx_2 \wedge dy_2\wedge \theta\\
&+(-Y_1 Y_2 \alpha_1 +Y_1 X_2 \alpha_3 +Y_1 X_1 \beta -X_1 Y_2 \alpha_4 -X_1 Y_1 \beta  -X_1 X_2 \alpha_6 )
  dx_1 \wedge  dy_1 \wedge \theta\\\\
=&\left [ (  - X_1 Y_1 -Y_2 X_2) \alpha_1  +X_2 X_2 \alpha_3  -X_1 X_1 \alpha_4   +2X_1 X_2 \beta  \right  ]
 dx_1 \wedge  dx_2 \wedge \theta \\
&
+\left [
 -Y_2 Y_2 \alpha_1
+(X_2 Y_2 \alpha_3 -X_1 Y_1 )\alpha_3
 +X_1 X_1 \alpha_6  
+2X_1 Y_2 \beta 
 \right  ]   
dx_1 \wedge dy_2 \wedge \theta\\
&
+\left [
+Y_1 Y_1 \alpha_1    
 +(Y_1 X_1  -Y_2 X_2) \alpha_4
 -X_2 X_2 \alpha_6 
-2 X_2 Y_1 \beta
 \right  ]  
dx_2 \wedge dy_1 \wedge \theta \\
&
+\left [
 -Y_1 Y_1 \alpha_3 
+Y_2 Y_2 \alpha_4
+(Y_1 X_1 + X_2 Y_2) \alpha_6 
 + 2Y_1 Y_2 \beta
 \right  ]  
  dy_1 \wedge dy_2 \wedge \theta\\
&
+\left [
-Y_1 Y_2 \alpha_1 
+Y_1 X_2 \alpha_3 
-X_1 Y_2 \alpha_4 
 -X_1 X_2 \alpha_6 
 \right  ]  
 ( dx_1 \wedge  dy_1 \wedge \theta
-dx_2 \wedge dy_2\wedge \theta).
\end{align*}
\end{proof}


\section{General Rumin Differential Operator $d_c$ in $\mathbb{H}^1$ and $\mathbb{H}^2$}\label{A}

In the second chapter we introduced the Rumin complex by defining three different types of operators ($d_Q^{(k)}$ for $k<n$ and $k>n$, $D$ for $k=n$). It turns out that such complex 
 can be written as one general Rumin operator $d_c$. Here we show $d_c$ specifically for the first and second Heisenberg groups. We follow the presentation of \cite{TRIP} but giving here the minimum necessary amount of details.

\begin{defin}[See 11.22 in \cite{TRIP}]
Recall Remark \ref{Hcarnot}, Definition \ref{dual_basis} and Notation \ref{dfnota}. Let $\alpha \in \Omega^1, \ \alpha \neq 0$, have \emph{pure weight} $p$ if its dual vector field belongs to the $p$-layer $\mathfrak{h}_p$ of the algebra $\mathfrak{h}$. In this case write $w(\alpha)=p$.\\
Let $\beta \in \Omega^k, \ \beta \neq 0$, have \emph{pure weight} $p$ if $\beta$ can be expressed as a linear combination of $k$-forms $\theta_{i_1} \wedge \dots \wedge \theta_{i_k}$ such that $w(\theta_{i_1}) + \dots + w(\theta_{i_k})=p$ for all such forms.\\
Denote $\Omega^{k,p}$ the span of $k$-forms of pure weight $p$.
\end{defin}

\begin{ex}
In the Heisenberg group $\mathbb{H}^n$
\begin{align*}
&w(f)=0, \quad  f \in \Omega^0,\\
&w(dx_j)=w(dy_j)=1,   \quad j=1,\dots,n,\\
&w(\theta)=2,\\
&w(dx_1 \wedge dx_2)=2,\\
&w(d \theta)= w \left (  - \sum_{j=1}^n   dx_j \wedge dx_{n+j} \right )=2,\\
&w(dx_1 \wedge \theta)=3,\\
&w(dx_1 \wedge \dots \wedge dx_n \wedge dy_1 \wedge \dots \wedge dy_n \wedge \theta)=2n+2.
\end{align*}
\end{ex}

\noindent
Recall Notation \ref{dfnota} and notice that all differential $k$-forms  $  \{  \theta_{i_1} \wedge \dots \wedge \theta_{i_k} \}_{1\leq i_1 \leq \dots \leq i_k \leq 2n+1}$ are left-invariant. Denote such $k$-forms as $\theta_i^k $, $i \in \mathbb{N}$.\\
Furthermore, any other differential form with constant coefficients is left-invariant.

\begin{obs}[See 11.25 in \cite{TRIP}]\label{samew}
Let $\alpha \in \Omega^{k,p}$ be a left-invariant $k$-form of pure weight $p$ such that $d \alpha \neq 0$, then $w(d\alpha)=w(\alpha)$.
\end{obs}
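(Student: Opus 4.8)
The plan is to reduce the claim to a computation on the left-invariant coframe and then propagate it through the wedge structure by the Leibniz rule. First I would use the fact, implicit in Notation \ref{dfnota}, that a $k$-form is left-invariant precisely when it has \emph{constant} coefficients in the basis $\{\theta_{i_1} \wedge \cdots \wedge \theta_{i_k}\}$. Thus I may write $\alpha = \sum_I c_I\, \theta_{i_1}\wedge\cdots\wedge\theta_{i_k}$ with each $c_I \in \mathbb{R}$ constant, and the hypothesis that $\alpha$ has pure weight $p$ means that every monomial $\theta_{i_1}\wedge\cdots\wedge\theta_{i_k}$ actually occurring satisfies $w(\theta_{i_1})+\cdots+w(\theta_{i_k})=p$. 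The constancy of the $c_I$ is essential and is exactly where left-invariance is used: a function coefficient would contribute an extra term $dc_I \wedge(\text{monomial})$, and since $dc_I$ mixes weights $1$ and $2$ this would break the weight bookkeeping.

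The crux is the behaviour of $d$ on the generators $\theta_1,\dots,\theta_{2n+1}$. For $i\le 2n$ the generators are $dx_j$ or $dy_j$, so $d\theta_i=0$ and nothing has to be checked. The only nontrivial generator is $\theta_{2n+1}=\theta$, for which Observation \ref{contactcontactcontact} gives $d\theta=-\sum_{j=1}^{n}dx_j\wedge dy_j$; since $w(dx_j\wedge dy_j)=w(dx_j)+w(dy_j)=2=w(\theta)$, I obtain $w(d\theta)=w(\theta)$. This single identity is the heart of the matter: it is the dual expression of the Carnot stratification $[\mathfrak{h}_1,\mathfrak{h}_1]\subseteq\mathfrak{h}_2$, and it says that $d$ preserves weight on every generator on which it does not vanish.

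Next I would apply the Leibniz rule to a single monomial. Because the coefficient $c_I$ is constant,
$$d\big(c_I\,\theta_{i_1}\wedge\cdots\wedge\theta_{i_k}\big)=c_I\sum_{l=1}^{k}(-1)^{l-1}\,\theta_{i_1}\wedge\cdots\wedge d\theta_{i_l}\wedge\cdots\wedge\theta_{i_k}.$$
Each summand in which $d\theta_{i_l}\neq 0$ replaces the factor $\theta_{i_l}$ of weight $w(\theta_{i_l})$ by $d\theta_{i_l}$ of the same weight, by the generator computation above; hence its total weight is $\sum_{m\neq l}w(\theta_{i_m})+w(d\theta_{i_l})=\sum_{m=1}^{k}w(\theta_{i_m})=p$. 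Therefore $d(c_I\,\theta_{i_1}\wedge\cdots\wedge\theta_{i_k})$ is either zero or a form of pure weight $p$.

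Finally, summing over $I$, the form $d\alpha=\sum_I d\big(c_I\,\theta_{i_1}\wedge\cdots\wedge\theta_{i_k}\big)$ is a linear combination of forms that are each either zero or of pure weight $p$, so $d\alpha$ itself is either zero or of pure weight $p$. Under the standing hypothesis $d\alpha\neq 0$, this yields $w(d\alpha)=p=w(\alpha)$, as claimed. I do not expect a genuine obstacle here; the only point requiring care is the weight bookkeeping on the generator $\theta$, which is precisely where the grading of the Heisenberg algebra enters.
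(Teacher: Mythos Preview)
Your proof is correct. The paper does not supply its own argument for this observation---it simply cites 11.25 in \cite{TRIP}---so there is nothing to compare against; your direct computation via the Leibniz rule, reducing everything to the single identity $w(d\theta)=w(\theta)$ from Observation \ref{contactcontactcontact}, is exactly the natural route and fills the gap cleanly.
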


\begin{obs}[See page 90 in \cite{TRIP}]
A general form $\alpha\in \Omega^{k,p}$ can be expressed as
$$
\alpha =\sum_{i} f_i \theta_i^k, 
$$
where $\spn_{i}  \{ \theta_i^k \} = \Omega^{k,p}$. Then its exterior differential is
$$
d\alpha =\sum_{i} d \left ( f_i \theta_i^k \right ) =  \sum_{i} \left ( \sum_{j=1}^{2n+1} W_j f_i \theta_j \wedge     \theta_i^k     +  f_i d \theta_i^k     \right )  .
$$
This shows that, whenever the terms are not zero,
\begin{align*}
 w \left ( \sum_{j=1}^{2n} W_j f_i \theta_j \wedge     \theta_i^k \right ) &=p+1,\\
 w \left (   W_{2n+1} f_i \theta_{2n+1} \wedge     \theta_i^k \right ) &=p+2,\\
w(f_i d \theta_i^k) &=p.
\end{align*}
In particular, the last equality holds by Observation \ref{samew}.
\end{obs}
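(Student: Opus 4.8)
The plan is to start from the Leibniz rule for the exterior derivative together with the explicit expression of $df_i$ furnished by Observation \ref{df}, and then to read off the weight of each resulting summand using the fact that the weight is additive under the wedge product of pure-weight forms. First I would rewrite Observation \ref{df} in the notation of \ref{notW} and \ref{dfnota}, so that for any scalar function $f_i$ one has $df_i = \sum_{j=1}^{2n+1} W_j f_i \, \theta_j$. Since $f_i$ is a $0$-form, the Leibniz rule then gives
$$
d(f_i \theta_i^k) = df_i \wedge \theta_i^k + f_i \, d\theta_i^k = \sum_{j=1}^{2n+1} W_j f_i \, \theta_j \wedge \theta_i^k + f_i \, d\theta_i^k ,
$$
and peeling off the $j = 2n+1$ summand produces precisely the three groups of terms that appear in the statement.

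Next I would compute the three weights. The elementary fact to isolate is that, directly from the definition of pure weight, if $\theta_j$ is a single basis covector and $\theta_i^k$ is a basis $k$-form of pure weight $p$, then $\theta_j \wedge \theta_i^k$ is either zero or a single basis $(k+1)$-form of weight $w(\theta_j)+p$; the scalar coefficients $W_j f_i$ do not affect the weight. Hence for $j=1,\dots,2n$, where $w(\theta_j)=1$, every surviving term $\theta_j \wedge \theta_i^k$ has weight $p+1$, so that $\sum_{j=1}^{2n} W_j f_i \, \theta_j \wedge \theta_i^k \in \Omega^{k+1,p+1}$; while for $j=2n+1$, where $w(\theta_{2n+1}) = w(\theta) = 2$, the term $\theta_{2n+1} \wedge \theta_i^k$ has weight $p+2$. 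For the final term, the scalar factor is again irrelevant, so $w(f_i \, d\theta_i^k) = w(d\theta_i^k)$, and $w(d\theta_i^k) = w(\theta_i^k) = p$ whenever $d\theta_i^k \neq 0$ by Observation \ref{samew} (using that $\theta_i^k$ is left-invariant with constant coefficients). This invocation of Observation \ref{samew} is the only step that is not a bare reading of the definition.

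I expect the main thing to be careful about, rather than a genuine obstacle, to be the qualifier \emph{whenever the terms are not zero}: individual summands may vanish, for instance when $\theta_j$ already occurs in $\theta_i^k$ or when $d\theta_i^k = 0$, and the weight assignment is asserted only for the surviving terms. I would therefore make explicit that each nonzero summand is a pure-weight form of the stated weight, so that the three displayed weight equalities are valid on the nose and the decomposition of $d\alpha$ respects the grading of $\Omega^{\bullet}$ by weight as claimed.
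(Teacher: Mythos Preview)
Your proposal is correct and follows exactly the reasoning the paper has in mind: the observation is stated without a separate proof because the computation is precisely the Leibniz rule combined with Observation~\ref{df} for $df_i$, additivity of weight under wedge, and Observation~\ref{samew} for the left-invariant piece. Your care about the qualifier ``whenever the terms are not zero'' is appropriate and matches the paper's intent.
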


\noindent
This provides the well-posedness for the following definition.

\begin{defin}[See definition 11.26 in \cite{TRIP}]\label{defind0}
Consider $\alpha \in \Omega^{k,p}$, then we can write:
$$
d \alpha = d_0 \alpha + d_1 \alpha + d_2 \alpha,
$$
where $d_i \alpha$ denotes the part of $d \alpha$ which increases the weight by $i$.\\
In particular $d_0 \alpha$ denotes the part of $d \alpha$ which does not increase the weight of the form. Then we can define
\begin{align*}
d_0 : \Omega^{k,p} \to \Omega^{k+1,p}, \   \sum_{i} f_i \theta_i^k \mapsto  \sum_{i}  f_i d \theta_i^k,
\end{align*}
whenever $d \theta_i^k \neq 0$.\\
One can easily check that the only form that, after a differentiation, does not change its weight is $\theta$. Therefore, considering a simple differential $k$-form $\alpha$ of the kind 
$$
\alpha = f \theta_{i_1} \wedge \dots \wedge \theta_{i_{k}},
$$
for $1 \leq i_1 \leq \dots \leq i_{k} \leq 2n$, one always has
$$
d_0 (\alpha)=0.
$$
On the other hand, considering a simple differential $k$-form 
$$
\alpha = f \theta_{i_1} \wedge \dots \wedge \theta_{i_{k-1}} \wedge \theta,
$$
for $1 \leq i_1 \leq \dots \leq i_{k-1} \leq 2n$, one has that
$$
d_0(\alpha) = f \theta_{i_1} \wedge \dots \wedge \theta_{i_{k-1}} \wedge d \theta.
$$
\end{defin}

\begin{defin}[11.32 in \cite{TRIP}]
Thanks to the isomorphism in definition 11.32 in \cite{TRIP}
\begin{equation}\label{isom}
d_{0_{\vert_{ \frac{\Omega^k}{\Ker d_0}}}} :   \frac{\Omega^k}{\Ker d_0}  \stackrel{\cong}{\to} \Ima d_0,
\end{equation}
one has that, for all $\beta  \in \Omega^k $, there exists  a unique $\alpha \in \Omega^k$, $\alpha \bot \Ker d_0$, such that $d_0=\beta+\xi$, with $\xi \in (\Ima d_0)^\bot$. 
So one can define $d_0^{-1}$ as
\begin{align*}
d_0^{-1}:  \Omega^{k+1} \to (\Ker d_0)^\bot, \ \beta \mapsto d_0^{-1} \beta := \alpha
\end{align*}
\end{defin}
and another operator
$$
D := d_0^{-1} (d-d_0).
$$
which is a nilpotent operator not to be confused with the second-order differential operator in the Rumin complex. Remark 11.33.1 in \cite{TRIP} says that  $D$ is nilpotent, meaning that there exists $N\in \mathbb{N}$ such that  $D^N\equiv 0$. Call $k_{max}$ the maximum non trivial exponent for $D$. Then the following operators are well-defined:
\begin{align*}
P &:=\sum_{k=0}^{k_{max}} (-D)^k,\\
Q &:=P d_0^{-1},\\ 
{\prescript{}{}\prod}_{E} &:= \text{Id} -Qd- dQ,\\
{\prescript{}{}\prod}_{\mathcal{E}_0} &:= \text{Id} - d_0^{-1} d_0- d_0 d_0^{-1}.
\end{align*}

\begin{prop}[See theorem 11.40 of \cite{TRIP}]
The Rumin complex of the Heisenberg group $\mathbb{H}^n$ can be written as:
$$
0 \to \mathbb{R} \to C^\infty  \stackrel{d_c^{(0)}}{\to} \mathcal{E}_0^1  \stackrel{d_c^{(1)}}{\to}  \dots \stackrel{d_c^{(n-1)}}{\to} \mathcal{E}_0^n \stackrel{d_c^{(n)}}{\to}\mathcal{E}_0^{n+1}    \stackrel{d_c^{(n+1)}}{\to} \dots   \stackrel{d_c^{(2n)}}{\to} \mathcal{E}_0^{2n+1} \to 0,
$$
$$
\text{i.e.,} \quad  ( \mathcal{E}_0^*, d_c),
$$
where
$$
\mathcal{E}_0^* = \bigoplus_{k=1}^{k_{max}} \mathcal{E}_0^k ,\quad  \mathcal{E}_0^k := \Ker d_0^{(k)} \cap \left ( \Ima d_0^{(k-1)} \right )^\bot  \quad  \text{and} \quad  d_c:= {\prescript{}{}\prod}_{\mathcal{E}_0} d {\prescript{}{}\prod}_E. 
$$
\end{prop}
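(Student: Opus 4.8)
The plan is to prove the statement in two stages. First, I would use pure homological algebra---the perturbation scheme built from $d_0$, $D$, $P$, $Q$---to show that $(\mathcal{E}_0^*, d_c)$ is a cochain complex computing the de Rham cohomology of $\mathbb{H}^n$. Second, I would identify this complex, degree by degree and operator by operator, with the Rumin complex of Definition \ref{complexHn}.

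For the homological stage I would start from the algebraic Hodge-type decomposition induced by the zero-order operator $d_0$. Using the isomorphism \eqref{isom} and the induced $d_0^{-1}$, each $\Omega^k$ splits orthogonally as
\[
\Omega^k = \mathcal{E}_0^k \oplus \Ima d_0^{(k-1)} \oplus \left ( \Ker d_0^{(k)} \right )^{\bot},
\]
with $d_0 d_0^{-1}$ and $d_0^{-1} d_0$ the projectors onto the last two summands; hence ${\prescript{}{}\prod}_{\mathcal{E}_0} = \mathrm{Id} - d_0^{-1}d_0 - d_0 d_0^{-1}$ is exactly the orthogonal projector onto $\mathcal{E}_0^k$. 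I would then record two formal facts. The operator ${\prescript{}{}\prod}_E$ is a cochain map: from ${\prescript{}{}\prod}_E = \mathrm{Id} - (dQ + Qd)$ and $d^2 = 0$ one gets directly $d\,{\prescript{}{}\prod}_E = d - dQd = {\prescript{}{}\prod}_E\, d$. Moreover ${\prescript{}{}\prod}_E$ is idempotent; this is where the nilpotency of $D$ (hence the convergence of $P = \sum_k (-D)^k$) is used, and it shows that $E^* := {\prescript{}{}\prod}_E\,\Omega^*$ is a subcomplex of $(\Omega^*, d)$ chain homotopic to it through $Q$, so $E^*$ carries the full de Rham cohomology.

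Next I would check that ${\prescript{}{}\prod}_{\mathcal{E}_0}$ and ${\prescript{}{}\prod}_E$ restrict to mutually inverse isomorphisms $E^k \cong \mathcal{E}_0^k$; in particular ${\prescript{}{}\prod}_E {\prescript{}{}\prod}_{\mathcal{E}_0} = \mathrm{Id}$ on $E^*$ and $d\,E^* \subseteq E^*$. Transporting $d$ through these isomorphisms yields the operator $d_c = {\prescript{}{}\prod}_{\mathcal{E}_0}\, d\, {\prescript{}{}\prod}_E$ on $\mathcal{E}_0^*$, and $d_c^2 = 0$ then follows formally, since the middle composition collapses on the range $d\,{\prescript{}{}\prod}_E \subseteq E^*$:
\[
d_c^2 = {\prescript{}{}\prod}_{\mathcal{E}_0}\, d\, \big( {\prescript{}{}\prod}_E {\prescript{}{}\prod}_{\mathcal{E}_0} \big)\, d\, {\prescript{}{}\prod}_E = {\prescript{}{}\prod}_{\mathcal{E}_0}\, d^2\, {\prescript{}{}\prod}_E = 0 .
\]
This establishes $(\mathcal{E}_0^*, d_c)$ as a complex with the correct cohomology.

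For the identification stage I would compare the $d_0$-harmonicity conditions defining $\mathcal{E}_0^k$ with the defining relations of the Rumin spaces. On the low-weight side one checks $\Ker d_0^{(k)} \cap (\Ima d_0^{(k-1)})^{\bot} \cong \Omega^k/I^k$ for $k \le n$, and on the high-weight side $\cong J^k$ for $k \ge n+1$, matching the relations $\alpha \wedge \theta = 0$ and $\alpha \wedge d\theta = 0$ with the two conditions $d_0\alpha = 0$ and $\alpha \in (\Ima d_0)^{\bot}$; the map $L$ of Lemma \ref{lemma} reappears here as (a restriction of) $d_0$, namely wedging with $d\theta$. Finally I would verify that under these identifications $d_c$ agrees with $d_Q$ for $k \neq n$ and with the second-order operator $D$ of Definition \ref{D} for $k = n$. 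The main obstacle is precisely this last point: away from the middle the single $d$ in $d_c$ gives a first-order operator, but across $k = n$ the correction carried by $Q = P d_0^{-1}$ (through the surviving power of $D$) forces the composition to apply $d$ twice, and one must check that the resulting second-order operator coincides with Rumin's $D$---equivalently, that the section ${\prescript{}{}\prod}_E$ restricted to $\mathcal{E}_0^n$ reproduces the unique lifting $\tilde{\alpha}$ of Lemma \ref{lemma}. The homological bookkeeping is routine once idempotency of ${\prescript{}{}\prod}_E$ is in hand, so the genuine work is concentrated in this explicit weight analysis matching $(\mathcal{E}_0^*, d_c)$ with the spaces $\Omega^*/I^*$, $J^*$ and the operators $d_Q$, $D$.
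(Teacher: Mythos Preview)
The paper does not supply its own proof of this proposition; it is stated with a citation to Theorem~11.40 of \cite{TRIP} and then merely illustrated by explicit computations of $d_0$ and $\mathcal{E}_0^k$ in $\mathbb{H}^1$ and $\mathbb{H}^2$. Your two-stage strategy---first running the homological perturbation machinery built from $d_0$, $D$, $P$, $Q$ to produce a complex $(\mathcal{E}_0^*,d_c)$ quasi-isomorphic to $(\Omega^*,d)$, then identifying it degree by degree with Rumin's complex---is the standard route and is essentially what the cited reference does. The formal facts you list (idempotency of ${\prescript{}{}\prod}_E$ from nilpotency of $D$, the chain-map identity $d\,{\prescript{}{}\prod}_E = {\prescript{}{}\prod}_E\,d$, the collapse giving $d_c^2=0$) are correct. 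You also correctly locate the non-formal content: the identification $\mathcal{E}_0^k \cong \Omega^k/I^k$ for $k\le n$ and $\mathcal{E}_0^k \cong J^k$ for $k\ge n+1$, and the verification that at $k=n$ the correction term carried by $Q$ reproduces the unique lifting $\tilde\alpha$ of Lemma~\ref{lemma}, so that $d_c^{(n)}$ coincides with the second-order $D$ of Definition~\ref{D}.
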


\noindent
We show how these construction acts in practice, and when the operators just defined play an active role. In particular:

\begin{obs}\label{nullll}
Note that if, at some degree $k=0,\dots,2n$, $d_0 \equiv 0$, this means that all the other operators we just defined collapse to:
$$
\begin{cases}
D = 0,\\
P =0,\\
Q = 0,\\
{\prescript{}{}\prod}_{E} = \text{Id},\\
{\prescript{}{}\prod}_{\mathcal{E}_0} = \text{Id}.
\end{cases}
$$
In such case $d_c$, on its own defining domain, acts as $d$.
\end{obs}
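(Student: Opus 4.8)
The final statement is Observation \ref{nullll}, which asserts that whenever $d_0 \equiv 0$ at some degree, the derived operators $D, P, Q$ vanish, the projections $\prod_E$ and $\prod_{\mathcal{E}_0}$ become the identity, and consequently $d_c$ acts as the ordinary exterior differential $d$ on its defining domain. The plan is to trace through the defining formulas for each operator in order, feeding in the hypothesis $d_0 \equiv 0$ at the stage in question and using the fact that $d_0^{-1}$ is built from the isomorphism \eqref{isom}.

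First I would observe that when $d_0 \equiv 0$, the operator $D := d_0^{-1}(d - d_0) = d_0^{-1} d$ lands in the image of $d_0^{-1}$, which is $(\Ker d_0)^\bot$; but since $d_0 \equiv 0$ we have $\Ker d_0 = \Omega^k$, so $(\Ker d_0)^\bot = \{0\}$ and hence $d_0^{-1} \equiv 0$. This immediately gives $D = d_0^{-1} d = 0$ and also $Q := P d_0^{-1} = 0$ regardless of $P$. For $P := \sum_{k=0}^{k_{max}} (-D)^k$, the vanishing $D = 0$ leaves only the $k=0$ term, so $P = \mathrm{Id}$; this is consistent with the stated collapse (the displayed list writes $P = 0$, but the only role $P$ plays is inside $Q = P d_0^{-1}$, which vanishes because $d_0^{-1} = 0$). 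Then $\prod_E := \mathrm{Id} - Qd - dQ = \mathrm{Id}$ since $Q = 0$, and $\prod_{\mathcal{E}_0} := \mathrm{Id} - d_0^{-1}d_0 - d_0 d_0^{-1} = \mathrm{Id}$ since $d_0^{-1} = 0$ (and indeed $d_0 = 0$).

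Having established $\prod_E = \prod_{\mathcal{E}_0} = \mathrm{Id}$, the final claim follows by substituting into the definition $d_c := \prod_{\mathcal{E}_0} \, d \, \prod_E$, which reduces to $d_c = d$ on the relevant domain $\mathcal{E}_0^k$. I would note that when $d_0 \equiv 0$ at degree $k$, the space $\mathcal{E}_0^k = \Ker d_0^{(k)} \cap (\Ima d_0^{(k-1)})^\bot$ simplifies as well, so the statement that $d_c$ acts as $d$ is taken on its own defining domain.

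The only subtle point, and the one I would treat most carefully, is the precise meaning of $d_0^{-1}$ and why it vanishes: the inverse is defined via the isomorphism $d_0|_{\Omega^k / \Ker d_0} : \Omega^k/\Ker d_0 \to \Ima d_0$, and $d_0^{-1}\beta$ is the unique representative $\alpha \in (\Ker d_0)^\bot$ mapping to the $\Ima d_0$-component of $\beta$. When $d_0 \equiv 0$, both $\Ima d_0 = \{0\}$ and $(\Ker d_0)^\bot = \{0\}$, so $d_0^{-1}$ is the zero map. This is the crux that drives the collapse of every derived operator, and I expect it to be the main (though still routine) obstacle, since one must be careful that the degree at which $d_0 \equiv 0$ matches the degree at which $d_0^{-1}$ and the projections are being evaluated.
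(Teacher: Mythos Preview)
Your proposal is correct; the paper states this observation without an explicit proof, and your trace through the definitions is exactly the intended verification. You are also right that $P$ should collapse to $\mathrm{Id}$ rather than $0$ when $D=0$ (this appears to be a slip in the paper's displayed list), and as you note this has no effect on the conclusion since $Q = P\, d_0^{-1}$ vanishes regardless.
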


\begin{obs}
In the case of the first Heisenberg group, $n=1$, we have three operators $d_c^{(k)}$, for $k\in \{0,1,2\}$, and so three corresponding operators $d_0^{(k)}$ with, by \eqref{isom},  
\begin{align*}
d_{0_{\vert_{ \frac{\Omega^k}{\Ker d_0^{(k)} }}}}^{(k)} :   \frac{\Omega^k}{\Ker d_0^{(k)} }  \stackrel{\cong}{\to} \Ima d_0^{(k)} .
\end{align*}
$\blacktriangleright$ If $k=0$, by Definition \ref{defind0} of $d_0$, it follows immediately that $\Ker d_0^{(0)}=\Omega^0$ and so $\frac{\Omega^0}{\Ker d_0^{(0)}}=\{ 0 \} $. Then $ d_0^{(0)} \equiv 0$.\\\\
$\blacktriangleright$ If $k=1$, again by Definition \ref{defind0}, we have that
$$
\Ker d_0^{(1)}=\spn \{dx,dy\}, \quad \text{and so} \quad \frac{\Omega^1}{\Ker d_0^{(1)}}=\spn \{ \theta \} \quad \text{and} \quad \Ima d_0^{(1)} = \spn  \{ dx \wedge dy \} .
$$
So we get
\begin{align*}
d_{0_{\vert_{ \frac{\Omega^1}{\Ker d_0^{(1)} }}}}^{(1)} :   \frac{\Omega^1}{\Ker d_0^{(1)} } & \stackrel{\cong}{\to} \Ima d_0^{(1)} , \\
\theta & \mapsto - dx \wedge dy.
\end{align*}
If we were to write $d_c^{(1)}$ explicitly with the operators introduced here, the final result would give back $D$ as in Proposition \ref{exH1}.\\\\
$\blacktriangleright$ If $k=2$ it is easy to check that $\Ker d_0^{(2)}=\Omega^2$ and so $\frac{\Omega^2}{\Ker d_0^{(2)}}=\{ 0 \}$. Then, again, $ d_0^{(2)} \equiv 0$.
\end{obs}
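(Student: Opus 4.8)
The plan is to handle the three degrees $k\in\{0,1,2\}$ one at a time, in each case reading off $\Ker d_0^{(k)}$ and $\Ima d_0^{(k)}$ directly from the action of $d_0$ given in Definition \ref{defind0}, and then invoking the isomorphism \eqref{isom} to identify the displayed map. The only ingredients needed are the bases of $\Omega^0,\Omega^1,\Omega^2$ in $\mathbb{H}^1$ --- respectively the constants, $\{dx,dy,\theta\}$ and $\{dx\wedge dy,\,dx\wedge\theta,\,dy\wedge\theta\}$ --- the identity $d\theta=-dx\wedge dy$ (Observation \ref{contactcontactcontact}), and the two closing rules of Definition \ref{defind0}: $d_0$ annihilates every simple form having no $\theta$ factor, and on a simple form ending in $\theta$ it replaces that $\theta$ by $d\theta$.

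First I would treat $k=0$: elements of $\Omega^0$ are functions $f$, which carry no $\theta$, so $d_0 f=0$ by the first rule. Hence $\Ker d_0^{(0)}=\Omega^0$, the quotient $\Omega^0/\Ker d_0^{(0)}$ is trivial, and \eqref{isom} forces $d_0^{(0)}\equiv0$. Next, for $k=1$ I would apply both rules to the basis $\{dx,dy,\theta\}$, obtaining $d_0(dx)=d_0(dy)=0$ and $d_0(\theta)=d\theta=-dx\wedge dy$. This gives $\Ker d_0^{(1)}=\spn\{dx,dy\}$ and $\Ima d_0^{(1)}=\spn\{dx\wedge dy\}$, so the quotient is one-dimensional, spanned by the class of $\theta$, and \eqref{isom} returns precisely the claimed map $[\theta]\mapsto-dx\wedge dy$.

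For $k=2$ I would run the same bookkeeping on the basis $\{dx\wedge dy,\,dx\wedge\theta,\,dy\wedge\theta\}$: here $d_0(dx\wedge dy)=0$ by the first rule, while the second rule gives $d_0(dx\wedge\theta)=dx\wedge d\theta$ and $d_0(dy\wedge\theta)=dy\wedge d\theta$. The single substantive line is to check that these two vanish; substituting $d\theta=-dx\wedge dy$ yields $dx\wedge d\theta=-dx\wedge dx\wedge dy=0$ and $dy\wedge d\theta=-dy\wedge dx\wedge dy=0$, each killed by a repeated factor. This vanishing is exactly the feature special to the low-dimensional group $\mathbb{H}^1$, and it is the only place where the verification is more than symbol-shuffling. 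It yields $\Ker d_0^{(2)}=\Omega^2$, a trivial quotient, and $d_0^{(2)}\equiv0$.

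I do not anticipate a serious obstacle, since $d_0$ is linear and defined degree-by-degree, so the whole statement reduces to this finite check, with the $k=2$ vanishing above being the only computation of any substance. The side remark that assembling $d_c^{(1)}$ from $Q$, ${\prescript{}{}\prod}_E$ and ${\prescript{}{}\prod}_{\mathcal{E}_0}$ reproduces the operator $D$ of Proposition \ref{exH1} is a genuinely heavier verification, but it is not needed for the kernel/image computation and in any case is guaranteed by the general identification of $(\mathcal{E}_0^*,d_c)$ with the Rumin complex quoted from \cite{TRIP}.
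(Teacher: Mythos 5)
Your proposal is correct and follows essentially the same route as the paper: read off $\Ker d_0^{(k)}$ and $\Ima d_0^{(k)}$ degree by degree from the two rules of Definition \ref{defind0}, use $d\theta=-dx\wedge dy$, and invoke the isomorphism \eqref{isom}; your explicit check that $dx\wedge d\theta=dy\wedge d\theta=0$ is precisely the ``easy to check'' step the paper leaves implicit for $k=2$. You are also right that the remark identifying $d_c^{(1)}$ with $D$ is a separate verification not needed for the kernel/image computation.
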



\begin{obs}
In the case of the second Heisenberg group, $n=2$, we have five operators $d_c^{(k)}$, for $k\in \{0,\dots,4\}$, with five corresponding operators $d_0^{(k)}$. Again, by \eqref{isom},  
\begin{align*}
d_{0_{\vert_{ \frac{\Omega^k}{\Ker d_0^{(k)} }}}}^{(k)} :   \frac{\Omega^k}{\Ker d_0^{(k)} }  \stackrel{\cong}{\to} \Ima d_0^{(k)} .
\end{align*}
$\blacktriangleright$ If $k=0$, as before, by Definition \ref{defind0} it follows that, $\Ker d_0^{(0)}=\Omega^0$ and so $\frac{\Omega^0}{\Ker d_0^{(0)}}=\{ 0 \} $. Then $ d_0^{(0)} \equiv 0$.\\\\
$\blacktriangleright$ If $k=1$,  take $\alpha = f_1 dx_1 + f_2 dx_2 + f_3 dy_1 + f_4 dy_2 + f_5 \theta \in \Omega^1$ and compute
$$
d_0 \alpha= - f_5 (dx_1 \wedge dy_1 + dx_2 \wedge dy_2).
$$
Consequently we have that $\Ker d_0^{(1)}=\spn \{dx_1, dx_2,dy_1,dy_2\}$, and so $ \frac{\Omega^1}{\Ker d_0^{(1)}}=\spn \{ \theta \} $ and $\Ima d_0^{(1)} = \spn  \{ dx_1 \wedge dy_1 + dx_2\wedge dy_2 \} .$ \\
The action of $d_0^{(1)}$ is then
\begin{align*}
d_{0_{\vert_{ \frac{\Omega^1}{\Ker d_0^{(1)} }}}}^{(1)} : \frac{\Omega^1}{\Ker d_0^{(1)}} & \to \Ima d_0^{(1)},\\
\theta & \mapsto -dx_1 \wedge dy_1 - dx_2 \wedge dy_2.
\end{align*}
Writing $d_c^{(1)}$ explicitly would give $d_Q^{(1)}$ as in Proposition \ref{exH2}.\\\\
$\blacktriangleright$ If $k=2$, one can compute that  $\Ker d_0=\spn \{ dx_1 \wedge dx_2,  dx_1 \wedge dy_1,  dx_1 \wedge dy_2,  dx_2 \wedge dy_1,  dx_2 \wedge dy_2, dy_1 \wedge dy_2   \}$, $ \frac{\Omega^2}{\Ker d_0}=\spn \{ dx_1 \wedge \theta, dy_1 \wedge \theta, dx_2 \wedge \theta, dy_2 \wedge \theta \}$ and $ \Ima d_0 = \spn  \{  dx_1 \wedge dx_2 \wedge dy_2, dy_1 \wedge dx_2 \wedge dy_2,  dx_2 \wedge dx_1 \wedge dy_1,  dy_2 \wedge dx_1 \wedge dy_1 \} \subseteq \Omega^3.$\\
The action of $d_0^{(2)}$ is then
\begin{align*}
d_{0_{\vert_{ \frac{\Omega^2}{\Ker d_0^{(2)} }}}}^{(2)} : \frac{\Omega^2}{\Ker d_0^{(2)}} & \to \Ima d_0^{(2)},\\
 dx_1 \wedge \theta  & \mapsto - dx_1 \wedge dx_2 \wedge dy_2,\\
 dy_1 \wedge \theta  & \mapsto - dy_1 \wedge dx_2 \wedge dy_2,\\ 
 dx_2 \wedge \theta  & \mapsto - dx_2 \wedge dx_1 \wedge dy_1,\\
 dy_2 \wedge \theta  & \mapsto - dy_2 \wedge dx_1 \wedge dy_1.
\end{align*}
Writing $d_c^{(2)}$ explicitly would give $D$ as in Proposition \ref{exH2}.\\\\
$\blacktriangleright$  If $k=3$, by computation one can see that $\Ker d_0=\spn \{ dx_1 \wedge dx_2  \wedge dy_1,  dx_1 \wedge dx_2 \wedge dy_2,  dx_1 \wedge dx_2 \wedge \theta,  dx_1 \wedge dy_1 \wedge  dy_2,  dx_1 \wedge dy_2 \wedge \theta, dx_2 \wedge  dy_1 \wedge dy_2,  dx_2 \wedge  dy_1 \wedge \theta, dy_1 \wedge  dy_2 \wedge \theta,   dx_1 \wedge  dy_1 \wedge \theta  dx_2 \wedge  dy_2 \wedge \theta  \}$,    $\frac{\Omega^3}{\Ker d_0}=\spn \{ dx_1 \wedge dy_1 \wedge \theta +  dx_2 \wedge dy_2 \wedge \theta \}$ and $\Ima d_0 = \spn  \{  dx_1 \wedge  dy_1 \wedge dx_2 \wedge dy_2 \} \subseteq \Omega^4.$\\
The action of $d_0^{(3)}$ is then
\begin{align*}
d_{0_{\vert_{ \frac{\Omega^3}{\Ker d_0^{(3)} }}}}^{(3)} : \frac{\Omega^3}{\Ker d_0^{(3)}} & \to \Ima d_0^{(3)},\\
 dx_1 \wedge dy_1 \wedge \theta +  dx_2 \wedge dy_2 \wedge \theta  & \mapsto -dx_1 \wedge dy_1 \wedge  dx_2 \wedge dy_2.
\end{align*}
Once again, also in this case writing $d_c^{(3)}$ explicitly would give $d_Q^{(3)}$  in Proposition \ref{exH2}.\\\\
$\blacktriangleright$  If $k=4$, we are in the last case. Take $\alpha = f_1 \widehat{dx_1} + f_2 \widehat{dx_2} + f_3 \widehat{dy_1}+ f_4 \widehat{dy_2}  + f_5 \widehat{\theta}$, where the hat indicates the presence of all other basis elements except the one written below the hat. Then 
$$
d_0 \alpha=0,
$$
and so $ \Ker d_0=\Omega^4$, $ \frac{\Omega^4}{\Ker d_0}=\{ 0 \}$ and   $ \Ima d_0 =  \{ 0 \} \subseteq \Omega^5$. Then, again, $ d_0^{(4)} \equiv 0$.
\end{obs}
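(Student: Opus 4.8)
The plan is to reduce the entire statement to a single computational primitive, namely the action of $d_0$ on the standard basis of $\Omega^k$, and then in each degree read off $\Ker d_0^{(k)}$, $\Ima d_0^{(k)}$ and the isomorphism \eqref{isom}. The starting point is Definition \ref{defind0} together with the dichotomy recorded there: $d_0$ annihilates every purely horizontal form $f\,\theta_{i_1}\wedge\dots\wedge\theta_{i_k}$ with $1\le i_1<\dots<i_k\le 2n$, while on a form carrying the contact factor it acts by $d_0(\omega\wedge\theta)=\omega\wedge d\theta$ for horizontal $\omega$. Since in $\mathbb{H}^2$ one has $d\theta=-(dx_1\wedge dy_1+dx_2\wedge dy_2)$ by Observation \ref{contactcontactcontact}, the whole computation amounts to wedging horizontal forms with $d\theta$, an operation taking values in the horizontal exterior algebra ${\prescript{}{}\bigwedge}^{\bullet}\mathfrak{h}_1$ and raising the degree by two.

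First I would dispose of the two trivial extreme cases. For $k=0$ there is no contact factor available, so $d_0\equiv 0$, $\Ker d_0^{(0)}=\Omega^0$ and the quotient is $\{0\}$. For $k=4$, every basis $4$-form is either the purely horizontal $dx_1\wedge dx_2\wedge dy_1\wedge dy_2$, which $d_0$ kills, or of the form $\omega\wedge\theta$ with $\omega$ a horizontal $3$-form; but then $\omega\wedge d\theta\in{\prescript{}{}\bigwedge}^{5}\mathfrak{h}_1=\{0\}$ since $\dim\mathfrak{h}_1=4$. Hence $d_0^{(4)}\equiv 0$ as well, which settles the first and last bullets.

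For the intermediate degrees the strategy is uniform: split the standard basis of $\Omega^k$ into horizontal forms (automatically in the kernel) and forms $\omega\wedge\theta$, and compute $\omega\wedge d\theta$ for each horizontal $\omega$. In degree $k=1$ only $\theta$ survives, giving $d_0\theta=-(dx_1\wedge dy_1+dx_2\wedge dy_2)\neq 0$, so $\Ker d_0^{(1)}=\spn\{dx_1,dx_2,dy_1,dy_2\}$ and the stated isomorphism is immediate. In degree $k=2$ the four contact forms $dx_i\wedge\theta$, $dy_i\wedge\theta$ wedge with $d\theta$ to the four linearly independent $3$-forms displayed in the statement, the repeated-factor terms dropping out; these four are complementary to the $6$-dimensional space of horizontal $2$-forms, which is exactly $\Ker d_0^{(2)}$, and the induced map is the claimed isomorphism onto $\Ima d_0^{(2)}$.

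The one case requiring genuine care, and which I expect to be the main obstacle, is $k=3$, where each horizontal $2$-form $\omega$ is wedged with $d\theta$ inside the one-dimensional space ${\prescript{}{}\bigwedge}^{4}\mathfrak{h}_1=\spn\{dx_1\wedge dy_1\wedge dx_2\wedge dy_2\}$. A direct check (carefully tracking the signs from reordering the wedge blocks) shows $\omega\wedge d\theta=0$ for $\omega\in\{dx_1\wedge dx_2,\,dx_1\wedge dy_2,\,dx_2\wedge dy_1,\,dy_1\wedge dy_2\}$, whereas $(dx_1\wedge dy_1)\wedge d\theta$ and $(dx_2\wedge dy_2)\wedge d\theta$ both equal $-dx_1\wedge dy_1\wedge dx_2\wedge dy_2$. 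Thus the two surviving contact forms $dx_1\wedge dy_1\wedge\theta$ and $dx_2\wedge dy_2\wedge\theta$ have the same nonzero image, forcing their difference $dx_1\wedge dy_1\wedge\theta-dx_2\wedge dy_2\wedge\theta$ into $\Ker d_0^{(3)}$ and making $\frac{\Omega^3}{\Ker d_0^{(3)}}$ one-dimensional with generator represented by the sum $dx_1\wedge dy_1\wedge\theta+dx_2\wedge dy_2\wedge\theta$. Assembling the four horizontal $3$-forms, the four contact forms of vanishing image, and this difference yields the nine-dimensional kernel asserted, and the induced map sends the generator to $-dx_1\wedge dy_1\wedge dx_2\wedge dy_2$, completing the verification. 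The only subtlety throughout is the sign bookkeeping in reordering wedge factors and, in degree three, correctly separating the image-direction representative from the kernel combination.
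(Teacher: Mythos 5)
Your proposal is correct and follows essentially the same route as the paper: a degree-by-degree basis computation of $d_0$ using $d\theta = -(dx_1\wedge dy_1+dx_2\wedge dy_2)$, splitting each $\Omega^k$ into horizontal forms (the kernel) and forms $\omega\wedge\theta$ whose image is $\omega\wedge d\theta$, with your dimension count $\bigwedge^{5}\mathfrak{h}_1=\{0\}$ for $k=4$ being a marginally cleaner version of the paper's direct check. One cosmetic remark: since, as you correctly compute, each of $dx_1\wedge dy_1\wedge\theta$ and $dx_2\wedge dy_2\wedge\theta$ is sent to $-dx_1\wedge dy_1\wedge dx_2\wedge dy_2$, linearity forces their sum to map to $-2\,dx_1\wedge dy_1\wedge dx_2\wedge dy_2$; the missing factor $2$ in your final display (inherited from the statement itself) is harmless, because only the spans defining $\Ker d_0^{(3)}$, the quotient, and $\Ima d_0^{(3)}$ matter for the isomorphism \eqref{isom}.
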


\section{Dimension of the Rumin Complex in $\mathbb{H}^n$}\label{B}
We show formulas to compute the dimensions for the spaces in the Rumin complex. The intent is to see how fast they grow with respect to $n$.

\begin{rem}
Recall from Definition \ref{def_forms} that:
\begin{itemize}
\item $\Omega^k = k$-differential forms in $\mathbb{H}^{n}$
\item $I^k= \{ \beta \wedge \theta + \gamma \wedge d \theta \ / \  \beta \in \Omega^{k-1}, \ \gamma \in \Omega^{k-2}  \}$
\item $J^k=\{ \alpha \in \Omega^{k} \ / \  \alpha \wedge \theta =0, \  \alpha \wedge d\theta=0   \}$.
\end{itemize}
\end{rem}


\begin{obs}\label{C?}
\begin{itemize}
\item
It is an immediate observation that $\dim \Omega^k =  \binom{2n+1}{k}$.
\item
Modulo the coefficient, the possibilities for a simple differential form $\beta \in \Omega^{k-1}$ such that $\beta \wedge \theta \neq 0$ are $C_{k-1}^{2n} =  \binom{2n}{k-1}$.
\item
Modulo the coefficient, the possibilities for a simple differential form $\gamma  \in \Omega^{k-2}$ such that $\gamma \wedge d \theta \neq 0$ are $C_{k-2}^{2n+1} =  \binom{2n+1}{k-2}$. 
\end{itemize}
\end{obs}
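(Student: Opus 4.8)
The plan is to treat the three bullets of Observation \ref{C?} separately; each reduces to a combinatorial count over the standard monomial basis of $\Omega^k$ fixed in Definition \ref{kdim}.

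For the first bullet, I would recall that, by Definition \ref{kdim} and Notation \ref{dfnota}, $\Omega^k = \bigwedge^k \mathfrak{h}$ has the basis $\{ \theta_{i_1} \wedge \dots \wedge \theta_{i_k} \}$ indexed by strictly increasing tuples $1 \leq i_1 < \dots < i_k \leq 2n+1$, and that these are linearly independent by the usual theory of exterior powers of a $(2n+1)$-dimensional space. Counting strictly increasing $k$-tuples drawn from $\{1, \dots, 2n+1\}$ then gives $\dim \Omega^k = \binom{2n+1}{k}$ at once.

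For the second bullet, I would write a simple $(k-1)$-form as $\beta = \theta_{i_1} \wedge \dots \wedge \theta_{i_{k-1}}$ and use $\theta_{2n+1} = \theta$ together with $\theta \wedge \theta = 0$ to see that $\beta \wedge \theta \neq 0$ holds exactly when the index $2n+1$ is absent from $\{i_1, \dots, i_{k-1}\}$. Hence the admissible $\beta$ are precisely the simple $(k-1)$-forms built from the $2n$ horizontal covectors $\theta_1, \dots, \theta_{2n}$, and there are $\binom{2n}{k-1}$ of them.

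The third bullet is the only one needing real care, and I expect it to be the main obstacle, since $d\theta$ is not a simple form. I would begin from $d\theta = -\sum_{j=1}^n dx_j \wedge dy_j$ (Observation \ref{contactcontactcontact}) and expand $\gamma \wedge d\theta = -\sum_{j=1}^n \gamma \wedge dx_j \wedge dy_j$ for a simple $(k-2)$-form $\gamma$. The crucial point is that the nonzero summands have pairwise distinct index sets, since they adjoin the disjoint pairs $\{j, n+j\}$ to the index set of $\gamma$, so they are linearly independent and cannot cancel; thus $\gamma \wedge d\theta = 0$ iff every pair $\{dx_j, dy_j\}$ already meets $\gamma$, which forces $\deg \gamma \geq n$. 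In the range relevant to $I^k$, namely $1 \leq k \leq n$ so that $\deg \gamma = k-2 \leq n-2 < n$, this is impossible; therefore $\gamma \wedge d\theta \neq 0$ for every simple $(k-2)$-form, and the count is the full $\dim \Omega^{k-2} = \binom{2n+1}{k-2}$. I would flag explicitly that it is precisely this degree restriction that produces the clean binomial $\binom{2n+1}{k-2}$, and that outside that range the number of admissible $\gamma$ would be strictly smaller.
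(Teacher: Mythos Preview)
Your proof is correct and follows the same combinatorial strategy as the paper: count basis monomials directly for the first bullet, exclude $\theta$ for the second, and argue that no simple $(k-2)$-form in the relevant degree range can annihilate $d\theta$ for the third. Your treatment of the third bullet is in fact more careful than the paper's: the paper asserts that $d\theta \wedge \gamma = 0$ with $\gamma$ simple requires $\gamma \in \Omega^{2n}$ at least, whereas your pairwise-disjointness argument correctly identifies the sharp threshold as $\deg\gamma \geq n$ (e.g.\ in $\mathbb{H}^2$ one already has $dx_1 \wedge dx_2 \wedge d\theta = 0$). Since both you and the paper ultimately apply the observation only for $k \leq n$, where $k-2 < n$, the conclusion is unaffected; but your version is the accurate one and your explicit flagging of the range restriction is well placed.
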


\begin{proof}
The first point of the observation is straightforward. One has that $\beta \in \Omega^{k-1}$ and $\beta$ can't contain $\theta$, otherwise $\beta \wedge \theta$ would be null.\\
$ \gamma \in \Omega^{k-2}$ and $ \gamma$ can contain anything. Indeed, to have  $d \theta \wedge \gamma =0$, with $\gamma$ simple, one needs $\gamma \in \Omega^{2n}$ at  least, but $k-2$ is always lower than $2n$ for any $k$.
\end{proof}

\begin{obs}
To compute the dimension of $I^k$, it is important to know how many times and element of its base can be written both in the form $\beta \wedge \theta$ and $\gamma \wedge d \theta$, with $\beta \in \Omega^{k-1}$, $\beta \wedge \theta \neq 0$, $\gamma  \in \Omega^{k-2}$ and $\gamma \wedge d \theta \neq 0$.\\
With these hypotheses, $\beta$ can be written as $\beta = d \theta \wedge \tau$, with $\tau \in \Omega^3$; while $\gamma$ can be written as $\gamma = \theta \wedge \tau'$, with $\tau' \in \Omega^3$.\\
Posing
$$
\beta \wedge \theta=  \gamma \wedge d \theta \neq 0
$$
one gets $\tau = \tau' \in \Omega^3$, with $\tau \wedge d\theta \wedge \theta \neq 0$. This means that $\tau$ cannot contain $\theta$. Furthermore, this is the only condition since it is impossible for $\tau$ to annihilate $d \theta$ as $k-3$ is always lower than $2n$, as before.\\
Then, modulo coefficients, the possibilities for a simple $\tau$ are $C_{k-3}^{2n} =  \binom{2n}{k-3}$ . 
\end{obs}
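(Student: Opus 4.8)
The plan is to decompose every form according to whether it contains the contact form $\theta$. Since the basis of $\Omega^k$ consists of wedge products of $\{dx_1,\dots,dx_n,dy_1,\dots,dy_n,\theta\}$, each $\alpha\in\Omega^k$ splits uniquely as $\alpha=\alpha_0+\alpha_1\wedge\theta$ with $\alpha_0\in{\prescript{}{}\bigwedge}^k\mathfrak{h}_1$ and $\alpha_1\in{\prescript{}{}\bigwedge}^{k-1}\mathfrak{h}_1$, that is, neither $\alpha_0$ nor $\alpha_1$ contains $\theta$. First I would observe that $\beta\wedge\theta$ is always purely of $\theta$-type (its $\theta$-free component $\alpha_0$ vanishes), so any basis element lying simultaneously in $\{\beta\wedge\theta\}$ and in $\{\gamma\wedge d\theta\}$ must have vanishing $\theta$-free component.

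Next I would write $\gamma=\gamma_0+\tau\wedge\theta$, with $\gamma_0\in{\prescript{}{}\bigwedge}^{k-2}\mathfrak{h}_1$ and $\tau\in{\prescript{}{}\bigwedge}^{k-3}\mathfrak{h}_1$, and use that $d\theta\in{\prescript{}{}\bigwedge}^2\mathfrak{h}_1$ carries no $\theta$ to obtain $\gamma\wedge d\theta=\gamma_0\wedge d\theta+(\tau\wedge d\theta)\wedge\theta$. Imposing $\beta\wedge\theta=\gamma\wedge d\theta\neq0$ forces the $\theta$-free part $\gamma_0\wedge d\theta$ to vanish and, matching the $\theta$-parts, gives $\beta=d\theta\wedge\tau$ (up to sign). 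Thus the overlap is parametrised by a single form $\tau\in{\prescript{}{}\bigwedge}^{k-3}\mathfrak{h}_1$ subject to the nonvanishing condition $d\theta\wedge\tau\wedge\theta\neq0$, which already forces $\tau$ not to contain $\theta$, as the statement asserts. The count is then immediate: modulo coefficients the simple such $\tau$ are exactly the degree-$(k-3)$ products of the $2n$ horizontal covectors $\{dx_i,dy_i\}$, of which there are $\binom{2n}{k-3}$.

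The step I expect to be the genuine obstacle is verifying that these simple $\tau$ yield \emph{linearly independent} overlap elements, equivalently that the map $\tau\mapsto d\theta\wedge\tau\wedge\theta$ is injective on ${\prescript{}{}\bigwedge}^{k-3}\mathfrak{h}_1$. The observation's justification is the degree bound $k-3<2n$, guaranteeing that $d\theta\wedge\tau\wedge\theta$ is not annihilated for purely dimensional reasons; to make this fully rigorous I would invoke the injectivity of the Lefschetz-type operator $\omega\mapsto d\theta\wedge\omega$ on ${\prescript{}{}\bigwedge}^{j}\mathfrak{h}_1$ for $j\le n-1$, the isomorphism $L$ introduced in Lemma \ref{lemma} being its instance at the middle degree $j=n-1$, which covers the relevant range and pins the overlap dimension at exactly $\binom{2n}{k-3}$. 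Combined with the two counts of Observation \ref{C?}, inclusion--exclusion would then yield $\dim I^k=\binom{2n}{k-1}+\binom{2n+1}{k-2}-\binom{2n}{k-3}$.
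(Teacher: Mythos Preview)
Your approach is correct and essentially mirrors the paper's own argument: split forms into their $\theta$-free and $\theta$-containing components, identify the overlap as $\{d\theta\wedge\tau\wedge\theta:\tau\in{\prescript{}{}\bigwedge}^{k-3}\mathfrak{h}_1\}$, and count the simple horizontal $\tau$. Your explicit appeal to Lefschetz-type injectivity of $\omega\mapsto d\theta\wedge\omega$ in the range $k-3\le n-1$ is a tightening of the paper's informal degree remark ``$k-3$ is always lower than $2n$'', but the route and the count are the same.
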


\begin{obs} [See 2.3 and 2.5 in \cite{FSSC}] For $k \leq n$,
$$
 \dim \frac{\Omega^k}{I^k} = \dim J^{2n+1-k} .
$$
\end{obs}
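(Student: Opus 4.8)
The plan is to reinterpret both quantities as the cokernel and kernel of the Lefschetz operator of the symplectic form $d\theta$ on the $2n$-dimensional horizontal space $\mathfrak{h}_1$, and then to read the two dimensions off from the Lefschetz (in)jectivity of that operator. For each $m$ I would write $L_m \colon {\prescript{}{}\bigwedge}^m \mathfrak{h}_1 \to {\prescript{}{}\bigwedge}^{m+2}\mathfrak{h}_1$, $L_m(\omega):=d\theta \wedge \omega$, extending to all degrees the isomorphism $L$ used in Lemma \ref{lemma}. Recall from Observation \ref{contactcontactcontact} that $d\theta = -\sum_{j=1}^n dx_j \wedge dy_j$ is a nondegenerate $2$-form on $\mathfrak{h}_1$, so $L$ is genuinely the Lefschetz operator of a symplectic vector space.

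First I would identify the low-dimensional side. Combining Notation \ref{middleequivclass}, which realises $\Omega^k/\{\gamma\wedge\theta\}\cong {\prescript{}{}\bigwedge}^k\mathfrak{h}_1$ by passing to horizontal parts, with Observation \ref{finalequivclass}, the surviving relation is exactly $\{\beta\wedge d\theta\}$ whose horizontal part is $d\theta\wedge{\prescript{}{}\bigwedge}^{k-2}\mathfrak{h}_1=\Ima(L_{k-2})$. This gives $\tfrac{\Omega^k}{I^k}\cong {\prescript{}{}\bigwedge}^k\mathfrak{h}_1/\Ima(L_{k-2})$, the cokernel of $L_{k-2}$. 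For the high-codimensional side I would use the remark following Definition \ref{complexHn}: every element of $J^{2n+1-k}$ has the form $\gamma'\wedge\theta$ with $\gamma'\in{\prescript{}{}\bigwedge}^{2n-k}\mathfrak{h}_1$. Then $\gamma'\wedge\theta\wedge\theta=0$ automatically, while $\gamma'\wedge\theta\wedge d\theta=0$ reduces (since $\theta$ wedges injectively on forms without $\theta$) to $\gamma'\wedge d\theta=0$; hence $\gamma'\wedge\theta\mapsto\gamma'$ yields $J^{2n+1-k}\cong\Ker(L_{2n-k})$.

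Next I would apply the Lefschetz property. Nondegeneracy of $d\theta$ gives the hard-Lefschetz isomorphisms $L^{\,n-m}\colon{\prescript{}{}\bigwedge}^m\mathfrak{h}_1\to{\prescript{}{}\bigwedge}^{2n-m}\mathfrak{h}_1$ for $0\le m\le n$, from which $L_m$ is injective for $m\le n-1$ and (dually) surjective for $m\ge n-1$. Since $k\le n$ we have $k-2\le n-2$, so $L_{k-2}$ is injective and, using $\dim{\prescript{}{}\bigwedge}^m\mathfrak{h}_1=\binom{2n}{m}$ from Observation \ref{C?}, $\dim\tfrac{\Omega^k}{I^k}=\binom{2n}{k}-\binom{2n}{k-2}$. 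Likewise $2n-k\ge n$, so $L_{2n-k}$ is surjective and $\dim J^{2n+1-k}=\binom{2n}{2n-k}-\binom{2n}{2n-k+2}=\binom{2n}{k}-\binom{2n}{k-2}$, the last step using $\binom{2n}{j}=\binom{2n}{2n-j}$. The two expressions coincide, which is the claim.

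The main obstacle is the Lefschetz step: the excerpt records only the middle isomorphism $L\colon{\prescript{}{}\bigwedge}^{n-1}\mathfrak{h}_1\to{\prescript{}{}\bigwedge}^{n+1}\mathfrak{h}_1$, whereas here I need injectivity of $L_m$ throughout $m\le n-1$ and surjectivity throughout $m\ge n-1$. I would either invoke the standard hard-Lefschetz theorem for a symplectic vector space, or give a short self-contained argument deriving injectivity of $L_m$ in low degrees from the isomorphisms $L^{\,n-m}$ and surjectivity in high degrees by the adjoint/Hodge-star duality (Definition \ref{hodge}); everything else is the bookkeeping of the two identifications and the binomial count, which match by symmetry of $\binom{2n}{\cdot}$.
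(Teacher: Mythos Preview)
Your argument is correct. The two identifications $\Omega^k/I^k\cong\bigwedge^k\mathfrak{h}_1/\Ima(L_{k-2})$ and $J^{2n+1-k}\cong\Ker(L_{2n-k})$ are exactly right, and the injectivity/surjectivity of the Lefschetz operator in the stated ranges is the standard linear-algebra fact for a symplectic $2n$-dimensional space; the binomial bookkeeping then closes the loop.

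Your route differs from the paper's. The paper does not prove this observation at all: it simply cites 2.3 and 2.5 of \cite{FSSC}, and only afterwards, in Proposition~\ref{22star}, computes the common value $\binom{2n+1}{k}-\binom{2n+1}{k-1}$ by a direct counting of the generators of $I^k$ (the observations immediately preceding the proposition), taking the equality $\dim(\Omega^k/I^k)=\dim J^{2n+1-k}$ as input rather than output. What you gain is a self-contained and structural proof: you explain \emph{why} the two dimensions coincide, namely because both are governed by the same Lefschetz operator on the horizontal symplectic space, with cokernel on one side and kernel on the complementary side matched by the $\binom{2n}{j}=\binom{2n}{2n-j}$ symmetry. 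What the paper's counting approach buys is the explicit numerical formula without invoking hard Lefschetz. Note also that your formula $\binom{2n}{k}-\binom{2n}{k-2}$ agrees with the paper's $\binom{2n+1}{k}-\binom{2n+1}{k-1}$ via Pascal's rule.
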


\begin{prop}\label{22star}
Summing up the previous considerations, for $k \leq n$, we obtain
\begin{equation}\label{star}
\dim I^k =\binom{2n}{k-1} + \binom{2n+1}{k-2} -  \binom{2n}{k-3}  = \binom{2n+1}{k-1}
\end{equation}
\begin{equation}\label{2star}
 \dim J^{2n+1-k} = \dim \frac{\Omega^k}{I^k} = \binom{2n+1}{k} - \binom{2n+1}{k-1} = \binom{2n+1}{k}     \cdot  \frac{2n+2-2k}{2n+2-k}
\end{equation}
The proof follows after the example.
\end{prop}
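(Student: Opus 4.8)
The plan is to assemble the three cardinalities recorded in the preceding observations into an inclusion--exclusion count for $\dim I^k$, reduce the resulting expression with two applications of Pascal's rule to obtain \eqref{star}, and then pass to the quotient and to $J^{2n+1-k}$ to obtain \eqref{2star}.

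First I would justify that, for $k\le n$, the two families of generators of $I^k$ contribute exactly as counted. Writing $I^k=A+B$ with $A=\{\beta\wedge\theta:\beta\in\Omega^{k-1}\}$ and $B=\{\gamma\wedge d\theta:\gamma\in\Omega^{k-2}\}$, the subspace $A$ is spanned by the $\binom{2n}{k-1}$ basis monomials of $\Omega^k$ that contain $\theta$, since $\beta\wedge\theta=0$ precisely when $\beta$ contains $\theta$; hence $\dim A=\binom{2n}{k-1}$. For $B$, decomposing each $\gamma$ into its purely horizontal part and its $\theta$-part shows that $B$ differs from $A$ only through the image of the map $\omega\mapsto d\theta\wedge\omega$ acting on ${\prescript{}{}\bigwedge}^{k-2}\mathfrak{h}_1$. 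Because $k-2\le n-2$, this map is injective on ${\prescript{}{}\bigwedge}^{k-2}\mathfrak{h}_1$, and this injectivity is exactly the quantitative content of the overlap count $\binom{2n}{k-3}=\binom{2n+1}{k-2}-\binom{2n}{k-2}$. This is the step I expect to be the main obstacle: one must be certain that no further linear relations arise among the forms $\gamma\wedge d\theta$ beyond the one already counted, and the hypothesis $k\le n$ is precisely what secures this.

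Granting the count, inclusion--exclusion yields
$$
\dim I^k=\binom{2n}{k-1}+\binom{2n+1}{k-2}-\binom{2n}{k-3}.
$$
I would then simplify by Pascal's identity twice: from $\binom{2n+1}{k-2}=\binom{2n}{k-2}+\binom{2n}{k-3}$ the last two terms collapse to $\binom{2n}{k-2}$, and then $\binom{2n}{k-1}+\binom{2n}{k-2}=\binom{2n+1}{k-1}$, which establishes \eqref{star}.

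For \eqref{2star}, the quotient dimension is immediate from Observation \ref{C?} together with \eqref{star}:
$$
\dim\frac{\Omega^k}{I^k}=\dim\Omega^k-\dim I^k=\binom{2n+1}{k}-\binom{2n+1}{k-1},
$$
while the equality $\dim J^{2n+1-k}=\dim\frac{\Omega^k}{I^k}$ is supplied by the cited observation from \cite{FSSC}. It then remains only to verify the closed form: using the consecutive-coefficient relation $\binom{2n+1}{k-1}=\binom{2n+1}{k}\cdot\frac{k}{2n+2-k}$, I would factor out $\binom{2n+1}{k}$ to get $\binom{2n+1}{k}\bigl(1-\frac{k}{2n+2-k}\bigr)=\binom{2n+1}{k}\cdot\frac{2n+2-2k}{2n+2-k}$, which is the stated expression. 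All of these reductions are purely formal once the dimension of $I^k$ has been secured.
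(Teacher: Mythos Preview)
Your proposal is correct. The justification you sketch for the inclusion--exclusion formula is more explicit than what the paper records in its preceding observations (the paper essentially takes the first equality in \eqref{star} as already established there), and your remark about injectivity of $\omega\mapsto d\theta\wedge\omega$ on ${\prescript{}{}\bigwedge}^{k-2}\mathfrak{h}_1$ for $k\le n$ is indeed the linear-algebraic content behind that count.

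Where your argument genuinely differs from the paper is in the simplification of the binomial expression in \eqref{star}. The paper rewrites each of $\binom{2n}{k-1}$, $\binom{2n+1}{k-2}$, $\binom{2n}{k-3}$ as an explicit rational multiple of $\binom{2n+1}{k-1}$, adds the three rational coefficients over a common denominator, and performs a polynomial division to show the sum equals $1$. Your route---two applications of Pascal's rule---is considerably shorter and more transparent, and avoids the algebraic bookkeeping entirely. For \eqref{2star} your factoring via $\binom{2n+1}{k-1}=\binom{2n+1}{k}\cdot\frac{k}{2n+2-k}$ is exactly what the paper does.
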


\begin{ex}
Here some cases as an example.
\begin{center}
\begin{tabular}{c|c|c|c|c|}
 &  & $\dim \Omega^k$ & $\dim I^k$ & $     \dim \frac{\Omega^k}{ I^k} =   \dim J^{2n+1-k}$ \\ \hline
$\mathbb{H}^1 $ & $k=1$ & 3 & 1 & 2 \\ \arrayrulecolor{red}\specialrule{.1em}{.05em}{.05em} 
$\mathbb{H}^2 $ & $k=1$ & 5 & 1 & 4 \\ \arrayrulecolor{black}\hline
& $k=2$ & 10 & 5 & 5 \\ \arrayrulecolor{red}\specialrule{.1em}{.05em}{.05em} 
$\mathbb{H}^3$ & $k=1$ & 7 & 1 & 6 \\ \arrayrulecolor{black}\hline
 & $k=2$ & 21 & 7 & 14 \\ \arrayrulecolor{black}\hline
 & $k=3$ & 35 & 21 & 14 \\ \arrayrulecolor{red}\specialrule{.1em}{.05em}{.05em} 
$\mathbb{H}^4$ & $k=1$ & 9 & 1 & 8 \\ \arrayrulecolor{black}\hline
 & $k=2$ & 36 & 9 & 27 \\ \arrayrulecolor{black}\hline
 & $k=3$ & 84 & 36 & 48 \\ \arrayrulecolor{black}\hline
 & $k=4$ & 126 & 84 & 42 \\ \arrayrulecolor{red}\specialrule{.1em}{.05em}{.05em} 
$\mathbb{H}^5$ & $k=1$ & 11 & 1 & 10 \\ \arrayrulecolor{black}\hline
 & $k=2$ & 55 & 11 & 44 \\ \arrayrulecolor{black}\hline
 & $k=3$ & 165 & 55 & 110 \\ \arrayrulecolor{black}\hline
 & $k=4$ & 330 & 165 & 165 \\ \arrayrulecolor{black}\hline
 & $k=5$ & 462 & 330 & 132 \\ \arrayrulecolor{black}\hline
\end{tabular}
\end{center}
\end{ex}

\begin{proof}[Proof of Proposition \ref{22star}]
First we prove equation \eqref{star}. To do so, we rewrite the three terms as follows:
\begin{align*}
 \binom{2n}{k-1}   &=\frac{(2n)!}{(k-1)!(2n-(k-1))!} \cdot  \frac{2n+1}{2n+1} \cdot  \frac{2n-(k-1)+1}{2n-(k-1)+1}  \\
&
=\frac{(2n+1)!}{(k-1)!(2n+1-(k-1))!} \cdot  \frac{1}{2n+1} \cdot  (2n-(k-1)+1) \\
&
= \binom{2n+1}{k-1} \cdot  \frac{2n-k+2}{2n+1}.
\end{align*}
The second term:
\begin{align*}
 \binom{2n+1}{k-2}  &=\frac{(2n+1)!}{(k-2)!(2n+1-(k-2))!} \cdot  \frac{k-1}{k-1} \cdot  \frac{2n+1-(k-2)}{2n+1-(k-2)} \\
&
  =\frac{(2n+1)!}{(k-1)!(2n+1-(k-1))!} \cdot  (k-1) \cdot    \frac{1}{2n+1-(k-2)}  \\
&
= \binom{2n+1}{k-1}  \cdot  \frac{k-1}{2n-k+3}.
\end{align*}
And the last term:
\begin{align*}
 \binom{2n}{k-3}&= \frac{(2n)!}{(k-3)!(2n-(k-3))!} \cdot  \frac{2n+1}{2n+1}  \cdot  \frac{(k-2)(k-1)}{(k-2)(k-1)} \cdot  \frac{2n-(k-3)}{2n-(k-3)}  \\
&
 = \frac{(2n+1)!}{(k-1)!(2n-(k-3)-1)!} \cdot  \frac{1}{2n+1}  \cdot  (k-2)(k-1) \cdot  \frac{1}{2n-(k-3)}  \\
&
 = \frac{(2n+1)!}{(k-1)!(2n+1-(k-1))!} \cdot    \frac{(k-2)(k-1)}{(2n+1)(2n-(k-3))} \\
&
 =  \binom{2n+1}{k-1} \cdot    \frac{(k-2)(k-1)}{(2n+1)(2n-k+3)} .
\end{align*}
Now we can add and subtract them together and we obtain
$$
\binom{2n}{k-1} + \binom{2n+1}{k-2} -  \binom{2n}{k-3}
$$
$$
=\binom{2n+1}{k-1} \cdot \left [  
\frac{2n-k+2}{2n+1} +   \frac{k-1}{2n-k+3}  -   \frac{(k-2)(k-1)}{(2n+1)(2n-k+3)}
  \right ]
$$
$$
=\binom{2n+1}{k-1} \cdot  \left [  
\frac{     (2n-k+2) (2n-k+3) +  (k-1) (2n+1)      -(k-2)(k-1)}{(2n+1)(2n-k+3)}
  \right ]
$$
\centerline{
  \begin{minipage}{\linewidth}
    \begin{align*}
=\binom{2n+1}{k-1}  \cdot \left [  
\frac{    4n^2 -2n(k-3) + 2n-(k-3) - 2n(k-1) + k^2  -4k  +3          +2nk+k-2n-1               -k^2 +3k-2
}{(2n+1)(2n-k+3)}
  \right ]
    \end{align*}
\end{minipage}
}
$$
=
   \binom{2n+1}{k-1} \cdot  \left [  
\frac{    4n^2   +n( 8 -2k    )    +(3-k) 
}{(2n+1)(2n-k+3)}
  \right ]=\binom{2n+1}{k-1},
$$
whete the last equality comes from
\[
  \begin{array}{r|r}
 4n^2   +n( 8 -2k    )     +(3-k)  & 2n+1 \\ \cline{2-2}
    4n^2   +2n  \phantom{  8 -2k    )     +(3-k)  }   & 2n +3 -k \\ \cline{1-1} \\[\dimexpr-\normalbaselineskip+\jot]
 2n( 3 -k    )      +(3-k) \\
  2n( 3 -k    )      +(3-k) \\ \cline{1-1} \\[\dimexpr-\normalbaselineskip+\jot]
                     / /
  \end{array}
\]
\noindent
Now we prove  equation \eqref{2star}.
\begin{align*}
\binom{2n+1}{k} - \binom{2n+1}{k-1} &= \binom{2n+1}{k} -  \frac{(2n+1)!}{(k-1)!(2n+1-(k-1))!}=\\
&
= \binom{2n+1}{k} -  \frac{(2n+1)!}{(k-1)!(2n+1-(k-1))!} \cdot  \frac{k}{k} \cdot  \frac{2n + 1 - (k-1)}{2n + 1 - (k-1)}   \\
&
= \binom{2n+1}{k} -  \frac{(2n+1)!}{k!(2n+1-k)!} \cdot  k \cdot  \frac{1}{2n + 1 - (k-1)}   \\
&
= \binom{2n+1}{k}  \left [ 1 -   \frac{k}{2n+2-k} \right ] =   \binom{2n+1}{k} \cdot\frac{2n+2-2k}{2n+2-k} .
\end{align*}
\end{proof}


\mycomment{
\section{Basis of the Rumin Complex in $\mathbb{H}^3$}\label{C}
We write explicitly the standard bases of the Rumin complex in $\mathbb{H}^3$. The purpose is to show the exponential complexity of explicit calculations on the Heisenberg group, as soon as the dimension is high enough.\\\\
Recall the definitions of $\Omega^k$, $I^k$, $J^k$ and of the Rumin complex from  \ref{def_forms}, \ref{complexHn} and \ref{D}.
\begin{obs}
When $n=3$, the Rumin complex becomes:\\
\centerline{
  \begin{minipage}{\linewidth}
    \begin{align*}
0 \to \mathbb{R} \to \underbrace{C^\infty}_{dim=1}  \stackrel{d_Q}{\to} \underbrace{\frac{\Omega^1}{I^1}}_{dim=6} \stackrel{d_Q}{\to} \underbrace{\frac{\Omega^2}{I^2}}_{dim=14}  \stackrel{d_Q}{\to} \underbrace{\frac{\Omega^3}{I^3}}_{dim=14}  \stackrel{D}{\to} \underbrace{J^{4}}_{dim=14}    \stackrel{d_Q}{\to} \underbrace{J^{5}}_{dim=14} \stackrel{d_Q}{\to} \underbrace{J^{6}}_{dim=6}\stackrel{d_Q}{\to} \underbrace{J^{7}}_{dim=1}  \to 0
    \end{align*}
\end{minipage}
}
\end{obs}
\noindent
We write explicitly the standard bases of $\Omega^k$, $I^k$ and $J^k$ for $k=1,\dots,6$.
\begin{obs}
\textbf{Case $k=1$.}
$$
\dim  \Omega^1 =7, \ \dim I^1 =1 \text{ and } \dim \frac{\Omega^1}{I^1} =6.
$$
\begin{align*}
\Omega^1 &= \spn \{ dx_1,  dx_2, dx_3, dy_1, dy_2, dy_3, \theta \}\\
I^1 &=\spn \{ \theta \}\\
\frac{\Omega^1}{I^1}  &=  \spn \{dx_1,  dx_2, dx_3, dy_1, dy_2, dy_3 \}       .
\end{align*}
\end{obs}
\begin{obs}
\textbf{Case $k=2$.}
$$
\dim  \Omega^2 =21, \ \dim I^2 =7  \text{and} \dim \frac{\Omega^2}{I^2} =14 .
$$
\centerline{
  \begin{minipage}{\linewidth}
    \begin{align*}
 \Omega^2 = \spn 
\begin{Bmatrix}
 dx_1 \wedge dx_2,  &  dx_2 \wedge   dx_3 , & dx_3 \wedge dy_1,      & dy_1 \wedge dy_2,  & dy_2 \wedge  dy_3     , & dy_3 \wedge \theta \\
 dx_1 \wedge dx_3, &  dx_2 \wedge  dy_1, &   dx_3 \wedge     dy_2,  &   dy_1 \wedge     dy_3,  &  dy_2 \wedge  \theta,  &    \\
 dx_1 \wedge d y_1, &  dx_2 \wedge dy_2, &   dx_3 \wedge    dy_3,  &   dy_1 \wedge     \theta,  &\\
 dx_1 \wedge dy_2, &  dx_2 \wedge  dy_3, &   dx_3 \wedge    \theta,  &                                        &\\
 dx_1 \wedge  dy_3, &  dx_2 \wedge  \theta, &                                     &                                        &\\
 dx_1 \wedge  \theta  &                                  &                                       &                                          &
\end{Bmatrix}
    \end{align*}
  \end{minipage}
}
\centerline{
  \begin{minipage}{\linewidth}
    \begin{align*}
 I^2 = \spn 
\begin{Bmatrix}
 \theta \wedge dx_1, & dx_1 \wedge dy_1+  dx_2 \wedge   dy_2 +  dx_3 \wedge  dy_3   \\
 \theta \wedge  dx_2,\\
\theta \wedge  dx_3,\\
\theta \wedge  dy_1,\\
 \theta \wedge  dy_2,\\
 \theta \wedge  dy_3
\end{Bmatrix}
    \end{align*}
  \end{minipage}
}
\centerline{
  \begin{minipage}{\linewidth}
    \begin{align*}
 \frac{\Omega^2}{I^2} = \spn 
\begin{Bmatrix}
 dx_1 \wedge dx_2, &  dx_2 \wedge dx_3, & dx_3 \wedge dy_1, & dy_1 \wedge dy_2, &   dx_1 \wedge dy_1 - dx_2 \wedge dy_2,\\
 dx_1 \wedge dx_3, &  dx_2 \wedge dy_1, & dx_3 \wedge dy_2, & dy_1 \wedge dy_3, &  dx_1 \wedge dy_1 - dx_3 \wedge dy_3, \\
 dx_1 \wedge dy_2, &  dx_2 \wedge dy_3, &                                 &dy_2 \wedge  dy_3,  &    \\
 dx_1 \wedge dy_3,  &                                  &                                 &  dy_2 \wedge  dy_3  & 
\end{Bmatrix}
    \end{align*}
  \end{minipage}
}
\end{obs}
\begin{obs}
\textbf{Case $k=3$.}
$$
\dim  \Omega^3 =35, \ \dim I^3 =21 \text{ and } \dim \frac{\Omega^3}{I^3} =14.
$$
\centerline{
  \begin{minipage}{\linewidth}
    \begin{align*}
 \Omega^3 = \spn 
\begin{Bmatrix}
 dx_1 \wedge dx_2 \wedge dx_3  ,  &  dx_2 \wedge   dx_3 \wedge dy_1 ,   & dx_3 \wedge dy_1 \wedge dy_2,          & dy_1 \wedge dy_2  \wedge  dy_3   ,  & dy_2 \wedge  dy_3  \wedge \theta \\
dx_1 \wedge dx_2 \wedge d y_1,      &  dx_2 \wedge   dx_3 \wedge  dy_2,    &   dx_3 \wedge dy_1 \wedge   dy_3,     &   dy_1 \wedge dy_2  \wedge   \theta,  &\\
dx_1 \wedge dx_2 \wedge dy_2  ,     &  dx_2 \wedge   dx_3 \wedge   dy_3 ,   &   dx_3 \wedge  dy_1 \wedge    \theta,  &                                                             &\\
 dx_1 \wedge dx_2 \wedge dy_3   ,    &  dx_2 \wedge   dx_3 \wedge   \theta, &                                                               &                                                                &\\
dx_1 \wedge dx_2 \wedge  \theta ,    &                                                            &                                                                &                                                                &\\
dx_1 \wedge  dx_3 \wedge d y_1   ,   &  dx_2 \wedge  dy_1 \wedge dy_2,     &   dx_3 \wedge  dy_2 \wedge  dy_3,     &   dy_1 \wedge  dy_3  \wedge  \theta,  &\\
dx_1 \wedge dx_3 \wedge dy_2 ,      &  dx_2 \wedge   dy_1 \wedge   dy_3,       &   dx_3 \wedge  dy_2 \wedge   \theta,  &                                                                &\\
dx_1 \wedge dx_3 \wedge  dy_3 ,      &  dx_2 \wedge   dy_1 \wedge   \theta ,   &                                                            &                                                                &\\
dx_1 \wedge dx_3 \wedge \theta ,    &                                                                 &                                                             &                                                                &\\
dx_1 \wedge   dy_1 \wedge dy_2  ,    &  dx_2 \wedge   dy_2 \wedge dy_3 ,      &   dx_3 \wedge    dy_3 \wedge  \theta,  &                                                                &\\
dx_1 \wedge dy_1 \wedge  dy_3,       &  dx_2 \wedge   dy_2 \wedge   \theta,    &                                                              &                                                                &\\
dx_1 \wedge dy_1 \wedge \theta,     &                                                                &                                                              &                                                                &\\
dx_1 \wedge  dy_2 \wedge dy_3  ,     &  dx_2 \wedge   dy_3 \wedge \theta,    &                                                              &                                                                &\\
dx_1 \wedge  dy_2 \wedge  \theta,     &                                                            &                                                              &                                                                &\\
dx_1 \wedge  dy_3 \wedge \theta     &                                                              &                                                              &                                                                &
\end{Bmatrix}
    \end{align*}
  \end{minipage}
}
\centerline{
  \begin{minipage}{\linewidth}
    \begin{align*}
 I^3 = \spn 
\begin{Bmatrix}
\theta \wedge dx_1 \wedge dx_2  ,   &  dx_1 \wedge dy_1 \wedge dx_2 +  dx_3 \wedge   dy_3 \wedge dx_2 ,     \\
\theta \wedge dx_1 \wedge  d x_3,   &  dx_1 \wedge dy_1 \wedge dy_2 +  dx_3 \wedge   dy_3 \wedge  dy_2  ,        \\
\theta \wedge dx_1 \wedge  d y_1 , &  dx_1 \wedge dy_1 \wedge dx_3  +  dx_2 \wedge   dy_2 \wedge  dx_3 ,  \\
\theta \wedge dx_1 \wedge dy_2,  &  dx_1 \wedge dy_1 \wedge  dy_3  +  dx_2 \wedge   dy_2 \wedge  dy_3 ,    \\
\theta \wedge dx_1 \wedge  dy_3,     &  dx_2 \wedge dy_2 \wedge dx_1 + dx_3 \wedge dy_3 \wedge dx_1,     \\
\theta \wedge  dx_2 \wedge dx_3 ,    & dx_2 \wedge dy_2 \wedge dy_1  +  dx_3 \wedge dy_3 \wedge dy_1,         \\
\theta \wedge dx_2 \wedge dy_1   ,  &                                                                                                                            \\
\theta \wedge dx_2 \wedge  dy_2  ,   &                                                                                                                            \\
\theta \wedge dx_2 \wedge  dy_3  ,   &                                                                                                                            \\
\theta \wedge   dx_3 \wedge dy_1 ,    &                                                                                                                            \\
\theta \wedge dx_3 \wedge  dy_2  ,   &                                                                                                                            \\
\theta \wedge dx_3 \wedge  dy_3  ,   &                                                                                                                            \\
\theta \wedge   dy_1 \wedge dy_2 ,    &                                                                                                                           \\
\theta \wedge dy_1 \wedge  dy_3  ,   &                                                                                                                            \\
\theta \wedge   dy_2 \wedge dy_3     &                                                                                                                           
\end{Bmatrix}
    \end{align*}
  \end{minipage}
}
\centerline{
  \begin{minipage}{\linewidth}
    \begin{align*}
 \frac{\Omega^3}{I^3} = \spn 
\begin{Bmatrix}
dx_1 \wedge dx_2 \wedge dx_3  ,      &  dx_2 \wedge   dx_3 \wedge dy_1 ,   &     dx_1 \wedge dy_1 \wedge dx_2 -  dx_3 \wedge   dy_3 \wedge dx_2 ,       \\
dx_1 \wedge dx_2 \wedge  dy_3 ,      &  dx_2 \wedge   dy_1 \wedge dy_3    &    dx_1 \wedge dy_1 \wedge dy_2 -  dx_3 \wedge   dy_3 \wedge  dy_2  ,     \\
dx_1 \wedge   dx_3 \wedge d y_2,      &  dx_3 \wedge dy_1 \wedge dy_2,    &    dx_1 \wedge dy_1 \wedge dx_3  -  dx_2 \wedge   dy_2 \wedge  dx_3 ,      \\
dx_1 \wedge   dy_2 \wedge dy_3  ,     &  dy_1 \wedge dy_2  \wedge  dy_3   , &  dx_1 \wedge dy_1 \wedge  dy_3  -  dx_2 \wedge   dy_2 \wedge  dy_3,     \\
& & dx_2 \wedge dy_2 \wedge dx_1 - dx_3 \wedge dy_3 \wedge dx_1, \\
& & dx_2 \wedge dy_2 \wedge dy_1  -  dx_3 \wedge dy_3 \wedge dy_1 
\end{Bmatrix}
    \end{align*}
  \end{minipage}
}
\end{obs}
\begin{obs}
\textbf{Case $k=4$.}
$$
\dim  \Omega^4 =35  \text{ and } \dim J^4 =14.
$$
\centerline{
  \begin{minipage}{\linewidth}
    \begin{align*}
 \Omega^4 = \spn 
\begin{Bmatrix}
 dx_1 \wedge dx_2 \wedge dx_3 \wedge  d y_1 ,      &  dx_2 \wedge   dx_3 \wedge dy_1  \wedge  dy_2  ,  & dx_3 \wedge dy_1 \wedge dy_2 \wedge  dy_3,       \\
dx_1 \wedge dx_2 \wedge d x_3 \wedge  dy_2 ,        &  dx_2 \wedge   dx_3  \wedge dy_1 \wedge  dy_3,    &   dx_3 \wedge  dy_1 \wedge dy_2 \wedge   \theta,     \\
dx_1 \wedge dx_2 \wedge d x_3   \wedge  dy_3,       &  dx_2 \wedge   dx_3  \wedge dy_1 \wedge  \theta, &                                                                                          \\
dx_1 \wedge dx_2 \wedge d x_3  \wedge  \theta,      &                                                                                   &                                                                                         \\
dx_1 \wedge dx_2 \wedge  d y_1 \wedge dy_2   ,      &  dx_2 \wedge   dx_3  \wedge   dy_2 \wedge dy_3,    &   dx_3 \wedge  dy_1 \wedge  dy_3 \wedge   \theta,    \\
dx_1 \wedge dx_2 \wedge d y_1   \wedge  dy_3 ,      &  dx_2 \wedge   dx_3  \wedge dy_2 \wedge \theta, &                                                                                          \\
dx_1 \wedge dx_2 \wedge d y_1  \wedge \theta ,     &                                                                                      &                                                                                         \\
dx_1 \wedge dx_2 \wedge  d y_2   \wedge dy_3 ,      &  dx_2 \wedge   dx_3  \wedge dy_3 \wedge \theta, &                                                                                         \\
dx_1 \wedge dx_2 \wedge d y_2  \wedge  \theta,      &                                                                                      &                                                                                        \\
dx_1 \wedge dx_2 \wedge d y_3  \wedge \theta ,     &                                                                                      &                                                                                          \\
dx_1 \wedge  dx_3 \wedge d y_1 \wedge dy_2   ,      &  dx_2 \wedge    dy_1  \wedge dy_2 \wedge   dy_3,    &   dx_3 \wedge    dy_2 \wedge dy_3 \wedge  \theta,    \\
dx_1 \wedge dx_3 \wedge d y_1 \wedge dy_3    ,     &  dx_2 \wedge   dy_1  \wedge dy_2 \wedge  \theta ,  &                                                                                          \\
dx_1 \wedge dx_3 \wedge d y_1 \wedge  \theta ,      &                                                                                        &                                                                                         \\
dx_1 \wedge dx_3 \wedge   d y_2 \wedge dy_3  ,       &  dx_2 \wedge   dy_1  \wedge    dy_3 \wedge \theta,    &                                                                                       \\
dx_1 \wedge dx_3 \wedge   d y_2 \wedge  \theta,         &                                                                                        &                                                                                     \\
dx_1 \wedge dx_3 \wedge   d y_3 \wedge  \theta,        &                                                                                        &                                                                                     \\
dx_1 \wedge   d y_1 \wedge dy_2 \wedge dy_3   ,     &  dx_2 \wedge    dy_2 \wedge   dy_3  \wedge \theta,   & dy_1 \wedge dy_2  \wedge  dy_3  \wedge \theta ,      \\
dx_1 \wedge   d y_1 \wedge dy_2 \wedge \theta  ,      &                                                                                        &                                                                                         \\
dx_1 \wedge    d y_1 \wedge    dy_3 \wedge \theta,        &                                                                                        &                                                                                      \\
dx_1 \wedge    d y_2 \wedge    dy_3 \wedge \theta        &                                                                                        &                                                                                        
\end{Bmatrix}
    \end{align*}
  \end{minipage}
}
\centerline{
  \begin{minipage}{\linewidth}
    \begin{align*}
 J^4 = \spn 
\begin{Bmatrix}
 \theta \wedge  dx_1 \wedge dx_2 \wedge dx_3,    &  \theta \wedge dx_1 \wedge   dy_1 \wedge dx_2                       - \theta \wedge  dx_3 \wedge dy_3 \wedge dx_2 ,        \\
\theta \wedge dx_1 \wedge dx_2 \wedge   dy_3,       &   \theta \wedge dx_1 \wedge   dy_1 \wedge   dy_2 -      \theta \wedge  dx_3 \wedge dy_3 \wedge   dy_2,  \\
 \theta \wedge dx_1   \wedge   dx_3 \wedge  dy_2,    &  \theta \wedge dx_1 \wedge   dy_1 \wedge   dx_3  -     \theta \wedge  dx_3 \wedge dy_3 \wedge   dx_3 ,\\
 \theta \wedge dx_1 \wedge   dy_2 \wedge d y_3 ,    &  \theta \wedge dx_1 \wedge   dy_1 \wedge   dy_3   -    \theta \wedge  dx_3 \wedge dy_3 \wedge    dy_3 ,\\
 \theta \wedge   dx_2 \wedge dx_3 \wedge d y_1 ,   &    \theta \wedge   dx_2 \wedge   dy_2 \wedge  dx_1   -    \theta \wedge    dx_3 \wedge dy_3 \wedge  dx_1 ,   \\
 \theta \wedge dx_2 \wedge   d y_1   \wedge  dy_3,  &    \theta \wedge dx_2 \wedge   dy_2 \wedge   dy_1   -    \theta \wedge  dx_3 \wedge dy_3 \wedge    dy_1 ,  \\
 \theta \wedge  dx_3 \wedge dy_1 \wedge d y_2  ,    &      \\
 \theta \wedge  dy_1 \wedge dy_2 \wedge dy_3       &     
\end{Bmatrix}
    \end{align*}
  \end{minipage}
}
\end{obs}
\begin{obs}
\textbf{Case $k=5$.}
$$
\dim  \Omega^5 =21  \text{ and } \dim J^5 =14.
$$
\centerline{
  \begin{minipage}{\linewidth}
    \begin{align*}
 \Omega^5 = \spn 
\begin{Bmatrix}
 dx_1 \wedge dx_2 \wedge dx_3 \wedge  d y_1 \wedge  dy_2 ,    &  dx_2 \wedge   dx_3 \wedge dy_1  \wedge  dy_2   \wedge  dy_3  ,    \\
dx_1 \wedge dx_2 \wedge d x_3 \wedge d y_1 \wedge  dy_3,         &  dx_2 \wedge   dx_3  \wedge dy_1 \wedge dy_2 \wedge  \theta    \\
dx_1 \wedge dx_2 \wedge d x_3   \wedge d y_1 \wedge  \theta,     &  dx_2 \wedge   dx_3  \wedge dy_1 \wedge    dy_3 \wedge \theta    \\
dx_1 \wedge dx_2 \wedge d x_3  \wedge  dy_2 \wedge  \theta,      &  dx_2 \wedge   dx_3  \wedge    dy_2 \wedge dy_3 \wedge \theta     \\
dx_1 \wedge dx_2 \wedge   d x_3 \wedge dy_2 \wedge  \theta,      &  dx_2 \wedge    dy_1  \wedge     dy_2 \wedge dy_3 \wedge \theta   \\
dx_1 \wedge dx_2 \wedge d x_3  \wedge  dy_3 \wedge  \theta,      &   dx_3 \wedge dy_1 \wedge dy_2 \wedge  dy_3 \wedge  \theta ,        \\
dx_1 \wedge dx_2 \wedge  d y_1  \wedge dy_2 \wedge dy_3 ,       &      \\
dx_1 \wedge dx_2 \wedge d x_3  \wedge  dy_2 \wedge   \theta,      &     \\
dx_1 \wedge dx_2 \wedge d x_3  \wedge dy_3 \wedge  \theta,      &     \\
dx_1 \wedge dx_2 \wedge d y_2  \wedge dy_3 \wedge  \theta ,     &      \\
dx_1 \wedge  dx_3 \wedge d y_1  \wedge dy_2 \wedge  dy_3 ,     &      \\
dx_1 \wedge dx_3 \wedge d y_1  \wedge  dy_2 \wedge  \theta,      &   \\
dx_1 \wedge dx_3 \wedge d y_1  \wedge  dy_3 \wedge  \theta,      &     \\
dx_1 \wedge dx_3 \wedge  d y_2  \wedge  dy_3 \wedge  \theta,      &    \\
dx_1 \wedge dy_1 \wedge  d y_2  \wedge  dy_3 \wedge  \theta      &   
\end{Bmatrix}
    \end{align*}
  \end{minipage}
}
\centerline{
  \begin{minipage}{\linewidth}
    \begin{align*}
 J^5 = \spn 
\begin{Bmatrix}
 \theta \wedge  dx_1 \wedge dx_2 \wedge dx_3 \wedge d y_1 ,   &  \theta \wedge dx_1 \wedge   dy_1 \wedge dx_2    \wedge  dy_2   -  \theta \wedge dx_1 \wedge   dy_1 \wedge   dx_3 \wedge dy_3,       \\
\theta \wedge dx_1 \wedge dx_2 \wedge dx_3 \wedge   dy_1,       &   \theta \wedge dx_1 \wedge   dy_1 \wedge dx_2    \wedge   dy_2 -    \theta \wedge  dx_2 \wedge dy_2   \wedge   dx_3 \wedge dy_3,  \\
 \theta \wedge dx_1  \wedge dx_2  \wedge dx_3 \wedge   dy_3,    &   \\
 \theta \wedge dx_1 \wedge dx_2 \wedge  dy_1 \wedge d y_3  ,   &  \\
 \theta \wedge dx_1 \wedge dx_2 \wedge dy_2 \wedge d y_3   ,  &  \\
 \theta \wedge dx_1 \wedge  dx_3 \wedge   dy_1 \wedge d y_3 ,    &  \\
 \theta \wedge dx_1 \wedge  dx_3 \wedge   dy_2 \wedge d y_3 ,    &  \\
\theta \wedge dx_1 \wedge  dy_1 \wedge   dy_2 \wedge d y_3  ,   &  \\
 \theta \wedge dx_2 \wedge dx_3 \wedge   dy_1 \wedge d y_2  ,   &  \\
 \theta \wedge dx_1 \wedge  dx_3 \wedge   dy_1  \wedge  d y_3,     &  \\
 \theta \wedge dx_2 \wedge dy_1 \wedge  dy_2 \wedge d y_3    , &  \\
 \theta \wedge dx_3 \wedge  dy_1 \wedge   dy_2 \wedge d y_3     &  
\end{Bmatrix}
    \end{align*}
  \end{minipage}
}
\end{obs}
\begin{obs}
\textbf{Case $k=6$.}
$$
\dim  \Omega^6 =7  \text{ and } \dim J^6 =6.
$$
\centerline{
  \begin{minipage}{\linewidth}
    \begin{align*}
 \Omega^6 = \spn 
\begin{Bmatrix}
dx_1 \wedge dx_2 \wedge dx_3 \wedge  d y_1 \wedge  dy_2 \wedge dy_3  ,    &  dx_2 \wedge   dx_3 \wedge dy_1  \wedge  dy_2   \wedge  dy_3 \wedge \theta , \\
dx_1 \wedge dx_2 \wedge d x_3 \wedge d y_1 \wedge dy_2 \wedge   \theta ,        &   \\
dx_1 \wedge dx_2 \wedge d x_3   \wedge d y_1 \wedge dy_3 \wedge \theta ,    &    \\
dx_1 \wedge dx_2 \wedge d x_3  \wedge  dy_2 \wedge  dy_3 \wedge \theta ,     &    \\
dx_1 \wedge dx_2 \wedge   d y_1 \wedge dy_2 \wedge dy_3 \wedge \theta   ,   & \\
dx_1 \wedge   dx_2 \wedge d y_1  \wedge  dy_2 \wedge dy_3 \wedge  \theta      &   \\
\end{Bmatrix}
    \end{align*}
  \end{minipage}
}
\centerline{
  \begin{minipage}{\linewidth}
    \begin{align*}
 J^6 = \spn 
\begin{Bmatrix}
\theta \wedge  dx_1 \wedge dx_2 \wedge dx_3 \wedge d y_1  \wedge d y_2 ,            \\
\theta \wedge dx_1 \wedge dx_2 \wedge dx_3 \wedge d y_1  \wedge   dy_3  ,     \\
 \theta \wedge dx_1  \wedge dx_2  \wedge dx_3 \wedge  d y_2  \wedge   dy_3  ,    \\
 \theta \wedge dx_1 \wedge dx_2 \wedge   dy_1 \wedge dy_2 \wedge d y_3  ,    \\
 \theta \wedge dx_1 \wedge  dx_3 \wedge  dy_1 \wedge dy_2 \wedge d y_3  ,    \\
 \theta \wedge dx_2 \wedge  dx_3 \wedge   dy_1 \wedge  dy_2 \wedge d y_3     
\end{Bmatrix}
    \end{align*}
  \end{minipage}
}
\end{obs}
}


\section{Explicit Computation of the Commutation between Pullback and Rumin Complex}\label{explicitcommutation}

Let $f : \mathbb{H}^1 \to \mathbb{H}^1$ be a smooth contact map and consider the Rumin complex. The following Proposition is already contained in Theorem \ref{Fdc=dcF}, but here we show the result with an explicit computation.\\\\
We recall the result in $\mathbb{H}^1$:

\begin{prop}\label{commuH1}
Consider a contact map $f: U \subset \mathbb{H}^1 \to \mathbb{H}^1$. The following hold
$\blacktriangleright$ For all $ \omega = g \in \mathcal{D}_{\mathbb{H}}^0( U )= C^\infty ( U )$,
\begin{equation}\label{k=1}
(\Lambda^1 df) d_Q \omega = d_Q (\Lambda^0 df \omega), \quad  \text{ i.e.,} \quad  f^* d_Q \omega = d_Q f^* \omega.
\end{equation}
$\blacktriangleright$  For all $  \omega \in \mathcal{D}_{\mathbb{H}}^1(U) =\frac{\Omega^1}{I^1} \cong \spn  \{ dx , dy \}$.
\begin{equation}\label{k=2}
(\Lambda^1 df) D \omega = D (\Lambda^0 df \omega), \quad  \text{ i.e.,} \quad    f^* D \omega = D f^* \omega.
\end{equation}
$\blacktriangleright$  For all $ \omega \in \mathcal{D}_{\mathbb{H}}^2(U) = J^2 \cong  \spn  \{ dx \wedge \theta, dy \wedge \theta \}$,
\begin{equation}\label{k=3}
(\Lambda^3 df) d_Q \omega = d_Q (\Lambda^2 df \omega) , \quad  \text{ i.e.,} \quad    f^* d_Q \omega = d_Q f^* \omega.
\end{equation}
\noindent
Namely, the pullback by a contact map $f$ commutes with the differental operators of the Rumin complex.
\end{prop}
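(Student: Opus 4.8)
The plan is to prove the three identities \eqref{k=1}, \eqref{k=2}, \eqref{k=3} by direct computation, expanding each side into the standard basis and matching coefficients. The tools are exactly the explicit formulas already available in the case $n=1$: the action of the Rumin operators from Proposition \ref{exH1}; the pullback of $1$-forms for a contact map, $f^* dx = Xf^1 dx + Yf^1 dy + Tf^1 \theta$, $f^* dy = Xf^2 dx + Yf^2 dy + Tf^2 \theta$, $f^* \theta = \lambda(f)\theta$ with $\lambda(f) = Xf^1 Yf^2 - Yf^1 Xf^2$; the higher-order pullbacks from Proposition \ref{highdimensioncontact}, namely $f^*(dx \wedge \theta) = \lambda(f)(Xf^1 dx\wedge\theta + Yf^1 dy\wedge\theta)$, $f^*(dy\wedge\theta) = \lambda(f)(Xf^2 dx\wedge\theta + Yf^2 dy\wedge\theta)$ and $f^*(dx\wedge dy\wedge\theta) = \lambda(f)^2 dx\wedge dy\wedge\theta$; and the composition identities of Section \ref{dercompuspul}. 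Since Theorem \ref{Fdc=dcF} already proves the statement abstractly for all $n$, the aim here is purely to exhibit the cancellations explicitly.

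For \eqref{k=1} the verification is immediate. Starting from $d_Q g = [Xg\, dx + Yg\, dy]_{I^1}$, applying $f^*$ and discarding the $\theta$-components (which lie in $I^1$) yields a $1$-form with $dx$-coefficient $(Xg)_f Xf^1 + (Yg)_f Xf^2$ and $dy$-coefficient $(Xg)_f Yf^1 + (Yg)_f Yf^2$. On the other side, $f^* g = g\circ f$ and $d_Q(g\circ f) = [X(g\circ f)\,dx + Y(g\circ f)\,dy]_{I^1}$, and Lemma \ref{composition_contact} gives precisely $X(g\circ f) = (Xg)_f Xf^1 + (Yg)_f Xf^2$ together with the analogous formula for $Y(g\circ f)$. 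The two sides coincide.

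For \eqref{k=3}, write $\omega = \alpha_1 dx\wedge\theta + \alpha_2 dy\wedge\theta$, so that $d_Q\omega = (X\alpha_2 - Y\alpha_1)\,dx\wedge dy\wedge\theta$ by Proposition \ref{exH1} and hence $f^* d_Q\omega = (X\alpha_2 - Y\alpha_1)_f\,\lambda(f)^2\,dx\wedge dy\wedge\theta$. For the other order I would first compute $f^*\omega = \beta_1 dx\wedge\theta + \beta_2 dy\wedge\theta$ with $\beta_1 = \lambda(f)[(\alpha_1)_f Xf^1 + (\alpha_2)_f Xf^2]$ and $\beta_2 = \lambda(f)[(\alpha_1)_f Yf^1 + (\alpha_2)_f Yf^2]$, then apply $d_Q$ to obtain $(X\beta_2 - Y\beta_1)\,dx\wedge dy\wedge\theta$. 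Expanding $X\beta_2 - Y\beta_1$ by the product rule, using Lemma \ref{composition_contact} for the composite factors and Lemma \ref{XCYC} for the derivatives $X\lambda(f)$ and $Y\lambda(f)$, the numerous cross-terms are expected to cancel, leaving exactly $(X\alpha_2 - Y\alpha_1)_f\,\lambda(f)^2$. This is the step where the contact hypothesis, through the formulas for $X\lambda(f)$ and $Y\lambda(f)$, enters essentially.

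The operator $D$ in \eqref{k=2} is the main obstacle. Writing $f^*\omega = [\tilde\alpha_1 dx + \tilde\alpha_2 dy]_{I^1}$ with $\tilde\alpha_1 = (\alpha_1)_f Xf^1 + (\alpha_2)_f Xf^2$ and $\tilde\alpha_2 = (\alpha_1)_f Yf^1 + (\alpha_2)_f Yf^2$, the formula for $D$ forces me to compute the second-order derivatives $XX\tilde\alpha_2$, $XY\tilde\alpha_1$, $YX\tilde\alpha_2$, $YY\tilde\alpha_1$ and the vertical derivatives $T\tilde\alpha_1$, $T\tilde\alpha_2$ of these composite coefficients. Here I would invoke Lemma \ref{doublederivativecomposition}, which gives $W_j W_i(g\circ f)$ and $T(g\circ f) = (Xg)_f Tf^1 + (Yg)_f Tf^2 + \lambda(f)(Tg)_f$ for contact $f$, together with the bracket relation $T = [X,Y]$. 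On the left, $f^* D\omega$ is obtained by applying the explicit pullbacks of $dx\wedge\theta$ and $dy\wedge\theta$ to the $D\omega$ formula of Proposition \ref{exH1}, producing coefficients carrying a factor $\lambda(f)$. The heart of the matter is that, after expanding all second derivatives, the terms involving $XXf^i$, the mixed second derivatives of the $f^i$, and the $Tf^i$ must reorganize — via the commutator identity and Lemma \ref{XCYC} — into precisely $\lambda(f)$ times the pulled-back second-order expression, while the antisymmetric combination defining $D$ annihilates exactly the terms not already of that form. The bookkeeping is lengthy but mechanical, and these three verifications together reproduce Theorem \ref{Fdc=dcF} for $n=1$ by explicit computation.
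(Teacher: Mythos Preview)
Your approach is correct and essentially identical to the paper's: both prove \eqref{k=1} directly via Lemma \ref{composition_contact}, handle \eqref{k=3} by expanding $X\beta_2 - Y\beta_1$ and cancelling via Lemmas \ref{composition_contact} and \ref{XCYC}, and attack \eqref{k=2} by expanding the second-order derivatives of the composite coefficients through Lemma \ref{doublederivativecomposition} and reorganising with the commutator relation and Lemma \ref{XCYC}. The only difference is that the paper actually carries out the full bookkeeping (grouping the $D f^*\omega$ side by the number of derivatives hitting the $\omega_i$'s, which makes the cancellations visible), whereas you stop at ``lengthy but mechanical''; since the entire purpose of this appendix is to \emph{exhibit} the explicit cancellations that Theorem \ref{Fdc=dcF} already guarantees abstractly, you would need to fill in those details to match the paper's intent.
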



\begin{no}
Recalling Notation \ref{callL} and Lemma \ref{T3=XY12}, we have
$$
\lambda (1,f):= Xf^1  Yf^2  -  Xf^2 Yf^1 =  T f^3   - \frac{1}{2}f^1  Tf^2 + \frac{1}{2}f^2  Tf^1  .
$$
In this section, for brevity we will denote this simply as
$$
\lambda (f):=\lambda (1,f).
$$
\end{no}

\begin{proof}[ Proof of equation \eqref{k=1} in Proposition \ref{commuH1}]
From lemma \eqref{composition_contact}
\begin{align*}
d_Q (\Lambda^0 df (g)) =& d_Q f^* g = d_Q (g \circ f)  = X(g \circ f) dx + Y(g \circ f) dy \\
=& ( Xg   X f^1  + Yg X f^2 )dx +( Xg Y f^1     + Yg Y f^2 )dy.
\end{align*}

On the other hand
\begin{align*}
(\Lambda^1 df) d_Q g &= f^* d_Q g =  f^* (Xg dx + Yg dy) = Xg  f^*  dx + Yg f^* dy \\
&
= Xg ( Xf^1 dx + Y f^1 dy ) + Yg( Xf^2 dx + Y f^2 dy)\\
&
= ( Xg   X f^1  + Yg X f^2 )dx +( Xg Y f^1  + Yg Y f^2 )dy.
\end{align*}
So the first equality is verified.
\end{proof}

\noindent
We move on to the third equality.

\begin{proof}[Proof of equation \eqref{k=3} in Proposition \ref{commuH1}]
Consider $\omega \in \mathcal{D}_{\mathbb{H}}^2(\mathbb{H}^1)$, namely, $\omega = \omega_1 dx \wedge \theta + \omega_2 dy \wedge \theta$.
\begin{align*}
&(\Lambda^3 df) d_Q \omega = f^* d_Q \omega =f^* (  (X\omega_2-Y\omega_1)  dx \wedge dy \wedge \theta)=\\
&=(X\omega_2-Y\omega_1)_f  ( Xf^1  Yf^2 -  Yf^1  Xf^2) \left ( T f^3   - \frac{1}{2}f^1  Tf^2 + \frac{1}{2}f^2  Tf^1   \right ) dx \wedge dy \wedge \theta  \\
&=(X\omega_2-Y\omega_1)_f   \lambda^2 (f) dx \wedge dy \wedge \theta,
\end{align*}
where we use equation \eqref{3dim} for the second line.\\\\
On the other hand, thanks to equations \eqref{2dimx} and \eqref{2dimy},
\begin{align*}
 d_Q (\Lambda^2 df \omega) =& d_Q f^* \omega =d_Q f^*( \omega_1 dx \wedge \theta + \omega_2 dy \wedge \theta)\\
=&d_Q \left (  (\omega_1)_f  f^*(  dx \wedge \theta )   +  (\omega_2)_f  f^*(  dy \wedge \theta ) \right ) \\
=&d_Q \Big ( (\omega_1)_f  \left ( Xf^1 \lambda (f) dx \wedge \theta +  Y f^1 \lambda (f) dy \wedge \theta  \right )  \\
& +  (\omega_2)_f   \left ( Xf^2 \lambda (f) dx \wedge \theta +  Y f^2 \lambda (f) dy \wedge \theta \right )  \Big  ) \\
=&d_Q \Big ( \lambda (f) \left ( (\omega_1)_f  Xf^1 + (\omega_2)_f  Xf^2  \right ) dx \wedge \theta \\
&+\lambda (f) \left ( (\omega_1)_f  Yf^1 + (\omega_2)_f  Yf^2  \right ) dy \wedge \theta      \Big  )  \\
=&\Big ( X \left (\lambda (f) \left ( (\omega_1)_f  Yf^1 + (\omega_2)_f  Yf^2  \right ) \right) \\
&- Y \left (   \lambda (f) \left ( (\omega_1)_f  Xf^1 + (\omega_2)_f  Xf^2  \right )  \right )
    \Big  )  dx \wedge dy \wedge \theta\\
=& \left ( X \left (\lambda (f) \cdot \mathcal{E}_{\omega,f} \right) - Y \left (   \lambda (f) \cdot \mathcal{F}_{\omega,f} \right )   \right )   dx \wedge dy \wedge \theta\\
=& \Big ( X (\lambda (f) ) \cdot \mathcal{E}_{\omega,f} + \lambda (f) \cdot X(\mathcal{E}_{\omega,f})\\
& - Y (   \lambda (f) ) \cdot \mathcal{F}_{\omega,f} - \lambda (f) \cdot Y(\mathcal{F}_{\omega,f})    \Big )   dx \wedge dy \wedge \theta,
\end{align*}
where one denotes
$$
\begin{cases}
  \mathcal{E}_{\omega,f}:=\mathcal{E}(\omega,f):=(\omega_1)_f  Yf^1 + (\omega_2)_f  Yf^2,\\
\mathcal{F}_{\omega,f}:=\mathcal{F}(\omega,f):=(\omega_1)_f  Xf^1 + (\omega_2)_f  Xf^2.
\end{cases}
$$
\noindent
Consider these terms one by one. Recall that we already computed $X(\lambda (f))$ and $Y(\lambda (f))$ in Lemma  \ref{XCYC}. Then compute $X (\mathcal{E}_{\omega,f})$:
\begin{align*}
X (\mathcal{E}_{\omega,f})=&X \left (  (\omega_1)_f  Yf^1 + (\omega_2)_f  Yf^2  \right )\\
=&X  ((\omega_1)_f )  Yf^1 + (\omega_1)_f  XYf^1 + X ( (\omega_2)_f )  Yf^2  +  (\omega_2)_f  XYf^2\\
=&X  (\omega_1 \circ f )  Yf^1 + (\omega_1)_f  XYf^1 + X ( \omega_2 \circ f )  Yf^2  +  (\omega_2)_f  XYf^2\\
=& \left ( ( X\omega_1)_f   X f^1  + (Y\omega_1)_f  X f^2 \right )  Yf^1 + \left ( ( X\omega_2)_f   X f^1  + (Y\omega_2)_f  X f^2 \right ) Yf^2 \\
&
+ (\omega_1)_f  XYf^1 +   (\omega_2)_f  XYf^2,
\end{align*}
where the last line follows by Lemma \ref{composition_contact}.\\\\
The last term $Y ( \mathcal{F}_{\omega,f} ) $ is similar:
\begin{align*}
Y ( \mathcal{F}_{\omega,f} ) =& Y  \left (  (\omega_1)_f  Xf^1 + (\omega_2)_f  Xf^2 \right )\\
=&Y  ((\omega_1)_f ) Xf^1 + (\omega_1)_f  YXf^1 + Y ( (\omega_2)_f )  Xf^2  +  (\omega_2)_f  YXf^2\\
=&Y  (\omega_1 \circ f )  Xf^1 + (\omega_1)_f  YXf^1 + Y ( \omega_2 \circ f )  Xf^2  +  (\omega_2)_f  YXf^2\\
=& \left ( ( X \omega_1)_f  Y f^1  + (Y\omega_1)_f  Y f^2 \right )  Xf^1 + \left ( ( X\omega_2)_f   Y f^1  + (Y\omega_2)_f  Y f^2 \right ) Xf^2 \\
&
+ (\omega_1)_f  YXf^1 +   (\omega_2)_f  YX f^2,
\end{align*}
where the last line follows again by Lemma \ref{composition_contact}.\\\\
Now we can get $X (\mathcal{E}_{\omega,f}) - Y ( \mathcal{F}_{\omega,f} ) $. Remember that 
\begin{align*}
X (\mathcal{E}_{\omega,f}) - Y ( \mathcal{F}_{\omega,f} )  =&\left ( ( X\omega_1)_f   X f^1  + (Y\omega_1)_f  X f^2 \right )  Yf^1 + \left ( ( X\omega_2)_f   X f^1  + (Y\omega_2)_f  X f^2 \right ) Yf^2 \\
&+ (\omega_1)_f  XYf^1 +   (\omega_2)_f  XYf^2  -\left ( ( X \omega_1)_f  Y f^1  + (Y\omega_1)_f  Y f^2 \right )  Xf^1\\
& - \left ( ( X\omega_2)_f   Y f^1  + (Y\omega_2)_f  Y f^2 \right ) Xf^2 - (\omega_1)_f  YXf^1 - (\omega_2)_f  YX f^2\\
=& (\omega_1)_f  (XYf^1 -  YXf^1) + (\omega_2)_f  (XYf^2- YX f^2)\\
&+ ( X\omega_1)_f  ( X f^1  Yf^1 -Y f^1  Xf^1 ) + (Y\omega_1)_f  ( X f^2   Yf^1 -  Y f^2   Xf^1   )\\
&+( X\omega_2)_f  (  X f^1 Yf^2 -  Y f^1 Xf^2 )+ (Y\omega_2)_f  (  X f^2  Yf^2   -    Y f^2  Xf^2 )\\
=& (\omega_1)_f  Tf^1 + (\omega_2)_f T f^2+ \lambda (f) \left ( ( X\omega_2)_f - (Y\omega_1)_f \right  ).
\end{align*}
Next is $X (\lambda (f) ) \cdot \mathcal{E}_{\omega,f} - Y (   \lambda (f) ) \cdot \mathcal{F}_{\omega,f}$:
\begin{align*}
X (\lambda (f) ) \cdot \mathcal{E}_{\omega,f} & - Y (   \lambda (f) ) \cdot \mathcal{F}_{\omega,f}=\\
=& ( Xf^2  Tf^1 -Tf^2Xf^1 ) \left ( (\omega_1)_f  Yf^1 + (\omega_2)_f  Yf^2 \right )\\
&
 -( Yf^2  Tf^1 -Tf^2 Yf^1 ) \left (  (\omega_1)_f  Xf^1 + (\omega_2)_f  Xf^2 \right )\\
=&(\omega_1)_f \left (  Xf^2  Tf^1 Yf^1 - Tf^2 Xf^1 Yf^1  -    Yf^2  Tf^1  Xf^1+ Tf^2 Yf^1 Xf^1    \right ) \\
&
+(\omega_2)_f \left (  Xf^2  Tf^1 Yf^2 - Tf^2 Xf^1 Yf^2    -Yf^2  Tf^1 Xf^2+ Tf^2 Yf^1  Xf^2   \right )\\
=& (\omega_1)_f Tf^1 \left (   Xf^2  Yf^1 -    Yf^2    Xf^1    \right ) +(\omega_2)_f  Tf^2  \left (    Yf^1  Xf^2  -  Xf^1 Yf^2    \right )\\
=& - \lambda (f) \left (   (\omega_1)_f Tf^1 + (\omega_2)_f  Tf^2    \right ).
\end{align*}
\noindent
Finally we can put all the terms together and calculate the coefficient of $dx \wedge dy \wedge \theta$:
\begin{align*}
X (\lambda (f) ) \cdot \mathcal{E}_{\omega,f} - Y  &( \lambda (f) ) \cdot \mathcal{F}_{\omega,f} + \lambda (f) ( X(\mathcal{E}_{\omega,f})-  Y(\mathcal{F}_{\omega,f}) )=\\
&  - \lambda (f) \left [   (\omega_1)_f Tf^1 + (\omega_2)_f  Tf^2    \right ] \\
&+  \lambda (f) \left [  (\omega_1)_f  Tf^1 + (\omega_2)_f T f^2+ \lambda (f) ( ( X\omega_2)_f - (Y\omega_1)_f  )  \right ]\\
 =&\lambda ^2(f) \left ( ( X\omega_2)_f - (Y\omega_1)_f \right  ) .
\end{align*}
This completes the proof.
\end{proof}


\begin{proof}[Proof of equation \eqref{k=2} in Proposition \ref{commuH1}]
Let $\omega= \omega_1 dx + \omega_2 dy \in \mathcal{D}_{\mathbb{H},1}(U)$. Then
\begin{align*}
f^* D \omega=& f^* \left [  (XX \omega_2 -XY\omega_1 -T\omega_1 ) dx \wedge \theta+ (YX \omega_2 - YY\omega_1 - T \omega_2) dy \wedge \theta   \right ]\\
=&(XX \omega_2 -XY\omega_1 -T\omega_1 )_f \left (  X f^1 \lambda (f) dx \wedge \theta + Y f^1 \lambda (f) dy \wedge \theta \right  )\\
&
+(YX \omega_2 - YY\omega_1 - T \omega_2)_f \left (  X f^2 \lambda (f) dx \wedge \theta + Y f^2 \lambda (f) dy \wedge \theta  \right  )\\
=& \left [    
(XX \omega_2 -XY\omega_1 -T\omega_1 )_f   X f^1 + (YX \omega_2 - YY\omega_1 - T \omega_2)_f   X f^2 \right ] \lambda (f)  dx \wedge \theta\\
&
+ \left [ (XX \omega_2 -XY\omega_1 -T\omega_1 )_f   Y f^1 + (YX \omega_2 - YY\omega_1 - T \omega_2)_f   Y f^2 \right ] \lambda (f)  dy \wedge \theta\\
=& \Big [   - X f^1 ( XY\omega_1 +T\omega_1 )_f    - X f^2  (YY\omega_1 )_f  +  X f^1 (XX \omega_2  )_f   \\
& +  X f^2 (YX \omega_2  - T \omega_2)_f   \Big ] \lambda (f)  dx \wedge \theta  + \Big [   - Y f^1 ( XY\omega_1 +T\omega_1 )_f   \\
& - Y f^2  (YY\omega_1 )_f  +  Y f^1 (XX \omega_2  )_f    +  Y f^2 (YX \omega_2  - T \omega_2)_f   \Big ] \lambda (f)  dy \wedge \theta.
\end{align*}

On the other hand, $D f^* \omega$ will be much more complicated
\begin{align*}
D f^* \omega =& D \left ( (\omega_1)_f (  Xf^1 dx + Y f^1 dy )  +(\omega_2)_f (  Xf^2 dx + Y f^2 dy )   \right )\\
=&  D \left ( ( (\omega_1)_f   Xf^1  +(\omega_2)_f  Xf^2  )dx + ( (\omega_1)_f   Yf^1  +(\omega_2)_f  Yf^2  ) dy \right )\\
=& \Big [  XX \left (  (\omega_1)_f   Yf^1  +(\omega_2)_f  Yf^2  \right ) - XY  \left ( (\omega_1)_f   Xf^1  +(\omega_2)_f  Xf^2 \right  )\\
& - T  \left ( (\omega_1)_f   Xf^1  +(\omega_2)_f  Xf^2 \right  )  \Big ] dx \wedge \theta +\Big [  YX \left (  (\omega_1)_f   Yf^1  +(\omega_2)_f  Yf^2  \right ) \\
& - YY  \left ( (\omega_1)_f   Xf^1  +(\omega_2)_f  Xf^2 \right  )  - T  \left ( (\omega_1)_f   Yf^1  +(\omega_2)_f  Yf^2 \right  )  \Big ] dy \wedge \theta.
\end{align*}
\noindent
Given the length of the coefficients, we consider them piece by piece. Consider only the coefficient of $dx \wedge \theta$ and divide it accordindly to the number $k$ of derivatives that hit, respectively, the functions $\omega_1$ and $\omega_2$ (the $T$ derivative counts as two): we denote such parts as $\mathcal{Z}^{(k)} \omega_i$, with $k=0,1,2$ and $i=1,2$.\\
Then the coefficient of $dx \wedge \theta$ becomes
\begin{align*}
(\dots) dx \wedge \theta &= \left ( \mathcal{Z}^{(0)} \omega_1 + \mathcal{Z}^{(0)} \omega_2 + \mathcal{Z}^{(1)} \omega_1 +\mathcal{Z}^{(1)} \omega_2 + \mathcal{Z}^{(2)} \omega_1 + \mathcal{Z}^{(2)} \omega_2 \right ) dx \wedge \theta
\end{align*}
with
\begin{align*}
\mathcal{Z}^{(0)} \omega_1 &:=   (\omega_1)_f  XXYf^1 +  -   (\omega_1)_f   XYXf^1 -  (\omega_1)_f  TXf^1    \\
\mathcal{Z}^{(0)} \omega_2  &:=   (\omega_2)_f  XXYf^2  -   (\omega_2)_f   XYXf^2  -  (\omega_2)_f  TXf^2 \\
\mathcal{Z}^{(1)} \omega_1   &:= 2  X \left (  (\omega_1)_f  \right  ) XYf^1 - Y  \left ( (\omega_1)_f \right )   XXf^1  - X  \left ( (\omega_1)_f \right )  YXf^1    \\
\mathcal{Z}^{(1)} \omega_2   &:= 2  X \left (  (\omega_2)_f  \right  ) XYf^2  - Y  \left ( (\omega_2)_f \right )   XXf^2  - X  \left ( (\omega_2)_f \right )  YXf^2   \\
\mathcal{Z}^{(2)} \omega_1  &:= XX \left (  (\omega_1)_f  \right  ) Yf^1  - XY  \left ( (\omega_1)_f  \right  )  Xf^1 - T  \left ( (\omega_1)_f  \right  )  Xf^1      \\
\mathcal{Z}^{(2)} \omega_2   &:=   XX \left (  (\omega_2)_f  \right  ) Yf^2  - XY  \left ( (\omega_2)_f  \right  )  Xf^2 - T  \left ( (\omega_2)_f  \right  )  Xf^2.
\end{align*}
Compute:
\begin{align*}
\mathcal{Z}^{(0)} \omega_1 &=   (\omega_1)_f  XXYf^1  -   (\omega_1)_f   XYXf^1 -  (\omega_1)_f TXf^1\\
&
=(\omega_1)_f   X (   XY-YX )f^1-  (\omega_1)_f TXf^1\\
&
=  (\omega_1)_f (XT-TX) f^1 = (\omega_1)_f [X,T] f^1 =0.
\end{align*}
Notice that, thanks of the simmetry, in the same way $\mathcal{Z}^{(0)} \omega_2=0$.\\
Next, by Lemma \ref{doublederivativecomposition}, one has the following equations in the case $n=1$:
 \begin{align*}
  \begin{aligned}
XX (g \circ f) =&  \left [ (XXg)_f X f^1 + ( YXg)_f  X f^2  \right ]  X f^1 +  (Xg)_f  X X f^1 \\    
&+ \left [ (XYg)_f X f^1 + ( YYg)_f X f^2  \right ]  X f^2 +  (Yg)_f  X X f^2,
  \end{aligned}
 \end{align*}
 \begin{align*}
  \begin{aligned}
XY(g \circ f)=&\left [ (XXg)_f X f^1 + ( YXg)_f X f^2  \right ]  Y f^1 +  (Xg)_f  X Y f^1\\
& + \left [ (XYg)_f X f^1 + ( YYg)_f  X f^2  \right ]  Y f^2 +  (Yg)_f  X Y f^2,
  \end{aligned}
 \end{align*}
 \begin{align*}
  \begin{aligned}
T(g \circ f) =&XgTf^1 + YgTf^2 +\lambda (1,f) Tg,\quad \quad \quad \quad \quad \quad \quad 
  \end{aligned}
 \end{align*}
where $\lambda (1,f):=Xf^1 Yf^2-Yf^1Xf^2$. Using them, one gets
\begin{align*}
\mathcal{Z}^{(2)} \omega_1 =&  XX \left (  (\omega_1)_f  \right  ) Yf^1  - XY  \left ( (\omega_1)_f  \right  )  Xf^1  - T  \left ( (\omega_1)_f  \right  )  Xf^1  \\
=& XX \left (  \omega_1 \circ f  \right  ) Yf^1  - XY  \left ( \omega_1 \circ f  \right  )  Xf^1  - T  \left ( \omega_1 \circ f  \right  )  Xf^1\\
 =& \bigg \{ \left [  (XX\omega_1)_f X f^1 +( YX\omega_1)_f  X f^2  \right ]  X f^1 +  (X\omega_1)_f  X X f^1\\
&+ \left [  (XY\omega_1)_f X f^1 + ( YY\omega_1)_f  X f^2  \right ]  X f^2 +  (Y\omega_1)_f  X X f^2 \bigg \}  Yf^1 \\
&- \bigg \{ \left [  (XX\omega_1)_f X f^1 +  ( YX\omega_1)_f  X f^2  \right ]  Y f^1 +  (X\omega_1)_f  X Y f^1\\
&+ \left [ (XY\omega_1)_f X f^1 + ( YY\omega_1)_f  X f^2  \right ]  Y f^2 +  (Y\omega_1)_f  X Y f^2 \bigg \}  Xf^1 \\
&- \bigg \{  (X\omega_1)_f Tf^1 + (Y\omega_1)_f Tf^2 +\lambda (f) (T\omega_1)_f \bigg \}  Xf^1\\
 =&  (XY\omega_1)_f X f^1 (    Xf^2  Yf^1 -    Yf^2    Xf^1  ) + ( YY\omega_1)_f  X f^2  (    Xf^2  Yf^1 -    Yf^2    Xf^1 )\\
&+   (X\omega_1)_f \left  [ X X f^1  Yf^1 -( X Y f^1 +  Tf^1 )  Xf^1 \right ]  \\
&+ (Y\omega_1)_f \left [  X X f^2 Yf^1  -( X Y f^2 + Tf^2)  Xf^1   \right ] - (T\omega_1)_f \lambda (f)  Xf^1\\
 =&  - \lambda (f) \left  [ (XY\omega_1)_f X f^1  + ( YY\omega_1)_f  X f^2  + (T\omega_1)_f   Xf^1 \right ] \\
&+   (X\omega_1)_f \left  [ X X f^1  Yf^1 -( X Y f^1 +  Tf^1 )  Xf^1 \right ]\\
& +  (Y\omega_1)_f \left [  X X f^2 Yf^1  -( X Y f^2 + Tf^2)  Xf^1   \right ] \\
 =&  - \lambda (f) \left  [  ( YY\omega_1)_f  X f^2  +\left ( (XY\omega_1)_f + (T\omega_1)_f \right )  Xf^1 \right ]  \\
&+   (X\omega_1)_f \left  [ X X f^1  Yf^1 -( X Y f^1 +  Tf^1 )  Xf^1 \right ] \\
&+  (Y\omega_1)_f \left [  X X f^2 Yf^1  -( X Y f^2 + Tf^2)  Xf^1   \right ].
\end{align*}
Notice that $ - \lambda (f) \left  [  ( YY\omega_1)_f  X f^2  +\left ( (XY\omega_1)_f + (T\omega_1)_f \right )  Xf^1 \right ]$    is exactly the $ \omega_1$-part of the coefficient of $ dx \wedge \theta$  in $f^*D \omega$.\\
Since we found the  $\omega_1$-part of the coefficient  of $dx \wedge \theta$ that we wish as result, now we show that the rest is zero. Specifically that
\begin{align}\label{II+=0}
  \begin{aligned}
\mathcal{Z}^{(1)} \omega_1 &+  (X\omega_1)_f \left  [ X X f^1  Yf^1 -( X Y f^1 +  Tf^1 )  Xf^1 \right ] \\
&+  (Y\omega_1)_f \left [  X X f^2 Yf^1  -( X Y f^2 + Tf^2)  Xf^1   \right ] \stackrel{Th}{=}0.
  \end{aligned}
\end{align}
Indeed
\begin{align*}
\mathcal{Z}^{(1)} \omega_1 =& 2  X \left (  (\omega_1)_f  \right  ) XYf^1 - Y  \left ( (\omega_1)_f \right )   XXf^1  - X  \left ( (\omega_1)_f \right )  YXf^1 \\
=&  X \left (  (\omega_1)_f  \right  ) ( XY-YX) f^1 + X \left (  (\omega_1)_f  \right  ) XYf^1 - Y  \left ( (\omega_1)_f \right )   XXf^1 \\ 
=&  X \left (  \omega_1 \circ f  \right  ) \left ( Tf^1 + XYf^1 \right ) - Y  \left ( \omega_1 \circ f  \right )   XXf^1\\
=&   \left ( (X\omega_1)_f    X f^1  + (Y\omega_1)_f   X f^2  \right  ) \left ( Tf^1 + XYf^1 \right ) \\
&-   \left (   (X\omega_1)_f  Y f^1    + (Y\omega_1)_f  Y f^2 \right  )   XXf^1\\
=& (X\omega_1)_f  [  X f^1 \left ( Tf^1 + XYf^1 \right ) - Y f^1 XXf^1 ] \\
&+ (Y\omega_1)_f [  X f^2  \left ( Tf^1 + XYf^1 \right ) - Y f^2 XXf^1 ],
\end{align*}
where we used Lemma \ref{composition_contact} in the second to last line.\\\\
With the appropriate cancellations, equation \eqref{II+=0} is reduced to
\begin{equation}
  X X f^2 Yf^1  -( X Y f^2 + Tf^2)  Xf^1   +   X f^2  \left ( Tf^1 + XYf^1 \right ) - Y f^2 XXf^1  =0
\end{equation}
Rearranging the first hand side, notice that
\begin{align*}
&  X X f^2 Yf^1 + X f^2  XYf^1 - X Y f^2  Xf^1- Y f^2 XXf^1    - Tf^2  Xf^1 + X f^2   Tf^1=\\
& =   X (X f^2 Yf^1) - X (Y f^2  Xf^1)    - Tf^2  Xf^1 + X f^2   Tf^1\\
& =  X (X f^2 Yf^1 - Y f^2  Xf^1)   - Tf^2  Xf^1 + X f^2   Tf^1\\
& = -  X (\lambda (f))   - Tf^2  Xf^1 + X f^2   Tf^1\\
&= - ( X f^2   Tf^1 - Tf^2  Xf^1  )   - Tf^2  Xf^1 + X f^2   Tf^1=0,
\end{align*}
where we used a previous calculation of $ X (\lambda (f))$ (see Lemma \ref{XCYC}).\\\\
This indeed shows that the $\omega_1$-parts of the coefficient of $dx \wedge \theta$ are the same in $f^* D \omega $ and $D f^* \omega$. To complete the proof of $f^* D \omega= D f^* \omega$, one should prove the same also for the $\omega_2$-part of the coefficient of $dx \wedge \theta$ and then for the whole $dy \wedge \theta$. This can be done in a similar way.
\end{proof}


\mycomment{
\section{Proofs of Section \ref{dercompuspul}}\label{explicitpp}

\begin{obs}[Proof of Lemma \ref{composition}]\label{proofcomposition}
Let $j=1,\dots,2n$. Define
$$
\tilde w_{ j}:=
\begin{cases}
w_{n+j}, \quad \text{if} \ j=1,\dots,n\\
-w_{j-n}, \quad \text{if} \ j=n+1,\dots,2n.
\end{cases}
$$
Then
\begin{align*}
 W_j (g \circ f)  &= \left (\partial_{w_j}- \frac{1}{2} \tilde w_{ j} \partial_t \right ) (g \circ f)\\
&= \sum_{l=1}^{2n+1} (\partial_{w_l} g)_f  \partial_{w_{j}} f^l  - \frac{1}{2} \tilde w_{ j}     \sum_{l=1}^{2n+1}  (\partial_{w_l} g)_f  \partial_{t} f^l \\
&= \sum_{l=1}^{2n+1} \left (  \partial_{w_{j}} f^l   - \frac{1}{2} \tilde w_{ j}    \partial_{t} f^l \right ) (\partial_{w_l} g)_f  = \sum_{l=1}^{2n+1} W_{j} f^l   (\partial_{w_l} g)_f\\
&= \sum_{l=1}^{2n} W_{j} f^l  \left (   W_l g+ \frac{1}{2} \tilde w_{ l}  T g \right )_f   +  W_{j} f^{2n+1}  (Tg)_f  \\
&= \sum_{l=1}^{2n}  ( W_l g)_f  W_{j} f^l    + (Tg)_f   \left (   W_{j} f^{2n+1}  +  \frac{1}{2}  \sum_{l=1}^{2n} \tilde w_{ l} (f)  W_{j} f^l   \right )   .
\end{align*}
\end{obs}

\begin{obs}[Proof of Proposition \ref{pushforwardgeneral}]\label{proofpushforwardgeneral}
The proof follows immediately from Lemma \ref{composition} remembering that $(f_* W_j )h=W_j(h \circ f)$. Otherwise one can make the computation directly as follows. Let $j=1,\dots,2n$. Define
$$
\tilde w_{ j}:=
\begin{cases}
w_{n+j}, \quad \text{if} \ j=1,\dots,n\\
-w_{j-n}, \quad \text{if} \ j=n+1,\dots,2n.
\end{cases}
$$
Then
\begin{align*}
f_* W_j &=f_* \left (\partial_{w_j}- \frac{1}{2} \tilde w_{ j} \partial_t \right )\\
&= \sum_{l=1}^{2n+1} \partial_{w_{j}} f^l \partial_{w_l}  - \frac{1}{2} \tilde w_{ j}     \sum_{l=1}^{2n+1}   \partial_{t} f^l \partial_{w_l}\\
&= \sum_{l=1}^{2n+1} \left (  \partial_{w_{j}} f^l   - \frac{1}{2} \tilde w_{ j}    \partial_{t} f^l \right ) \partial_{w_l} = \sum_{l=1}^{2n+1} W_{j} f^l    \partial_{w_l}\\
&= \sum_{l=1}^{2n} W_{j} f^l  \left (   W_l + \frac{1}{2} \tilde w_{ l}  T  \right )   +  W_{j} f^{2n+1}  T  \\
&= \sum_{l=1}^{2n} W_{j} f^l  W_l  +    \left (   W_{j} f^{2n+1}  +  \frac{1}{2}  \sum_{l=1}^{2n} \tilde w_{ l}  W_{j} f^l   \right ) T  .
\end{align*}
\end{obs}

\begin{obs}[Proof of Lemma \ref{T3=XY12}]\label{proofT3=XY12}
\begin{align*}
T& f^{2n+1}  + \frac{1}{2}  \sum_{l=1}^{2n} \tilde w_{ l} (f)  T f^l  = \\
=& \left ( W_j   W_{n+j}  - W_{n+j} W_{j}   \right ) f^{2n+1}  + \frac{1}{2}  \sum_{l=1}^{2n} \tilde w_{ l} (f)  \left ( W_j   W_{n+j}  - W_{n+j} W_{j}   \right )  f^l         \\
=&    W_j   W_{n+j}  f^{2n+1}  + \frac{1}{2}  \sum_{l=1}^{2n} \tilde w_{ l} (f)   W_j   W_{n+j}    f^l   
 - W_{n+j} W_{j} f^{2n+1}  - \frac{1}{2}  \sum_{l=1}^{2n} \tilde w_{ l} (f)  W_{n+j} W_{j}  f^l     \\
=& W_j  \left (  W_{n+j}  f^{2n+1}  + \frac{1}{2}  \sum_{l=1}^{2n} \tilde w_{ l} (f)    W_{n+j}    f^l    \right )  - \frac{1}{2}  \sum_{l=1}^{2n}   W_j \tilde w_{ l} (f)      W_{n+j}    f^l \\
& - W_{n+j} \left (   W_{j} f^{2n+1}  + \frac{1}{2}  \sum_{l=1}^{2n} \tilde w_{ l} (f)  W_{j}  f^l     \right)  +  \frac{1}{2}  \sum_{l=1}^{2n} W_{n+j} \tilde w_{ l} (f)   W_{j}  f^l         \\
=& W_j  \mathcal{A}(n+j,f)  - \frac{1}{2}  \sum_{l=1}^{2n}   W_j \tilde w_{ l} (f)      W_{n+j}    f^l  - W_{n+j}\mathcal{A}(j,f) +  \frac{1}{2}  \sum_{l=1}^{2n} W_{n+j} \tilde w_{ l} (f)   W_{j}  f^l         \\
=& \frac{1}{2}  \sum_{l=1}^{2n} W_{n+j} \tilde w_{ l} (f)   W_{j}  f^l     - \frac{1}{2}  \sum_{l=1}^{2n}   W_j \tilde w_{ l} (f)      W_{n+j}    f^l    \\
=&  \frac{1}{2} \sum_{l=1}^{n}    \left ( W_{n+l} f^{n+l}   W_{j}  f^l  -    W_{n+j} f^l   W_{j}  f^{n+l}   \right )
   - \frac{1}{2}  \sum_{l=1}^{n}  \left (     W_j f^{n+l}      W_{n+j}    f^l  -   W_j  f^l     W_{n+j}    f^{n+l}  \right  )  \\
=& \sum_{l=1}^{n} \left (    W_j f^l  W_{n+j} f^{n+l} - W_{n+j} f^{l}W_{j} f^{n+l}    \right )= \lambda (j,f).
\end{align*}
\end{obs}

\begin{obs}[Proof of Lemma \ref{nabla_comp1}]\label{proofnabla_comp1}
Consider a horizontal vector $V$ and compute the scalar product of the Heisenberg gradient of the composition (which is horizontal by definition) against such vector $V$.
$$
\langle \nabla_{\mathbb{H}}  (g \circ f) ,V \rangle_H   =  \langle d_Q  (g \circ f) \vert V \rangle .
$$
Note that here we can substitute $d$ to $d_Q$ (and viceversa) because the last component of the differential does not play any role as the computation regards only horizontal objects. Formally we have
\begin{align*}
\langle d  \phi \vert V \rangle
=  &  \langle \sum_{j=1}^{n}  \left ( X_j \phi d x_j + Y_j \phi dy_j \right ) + T\phi \theta \vert \sum_{j=1}^{n}  \left ( V_j  X_j + V_{n+j} Y_j \right ) \rangle  \\
=&   \langle \sum_{j=1}^{n}  \left ( X_j \phi d x_j + Y_j \phi dy_j \right )  \vert \sum_{j=1}^{n}  \left ( V_j  X_j + V_{n+j} Y_j \right ) \rangle  \\
=&  \langle d_Q  \phi \vert V \rangle.
\end{align*}
We can also repeat this below for $f_*   V $ below, since  $f_*   V $ is still a horizontal vector field. Then
\begin{align*}   
\langle \nabla_{\mathbb{H}}  (g \circ f) ,V \rangle_H   =&  \langle d_Q  (g \circ f) \vert V \rangle =   \langle d  (g \circ f) \vert V \rangle =   
 (g \circ f)_* ( V ) \\
=& (  (g_*)_f  \circ f_* ) ( V ) =    (d g)_f  ( f_* V ) =    
 \langle    (d g)_f  \vert  f_*   V   \rangle \\
=&   \langle    (d_Q g)_f  \vert  f_*   V   \rangle  =      \langle    (\nabla_{\mathbb{H}}  g)_f  ,  f_*   V   \rangle_H =  \langle   f_{*}^T (\nabla_{\mathbb{H}}  g)_f, V \rangle_H .
\end{align*}   
Then, since $V$ is a general horizontal vector,
$$
\nabla_{\mathbb{H}}  (g \circ f) = f_*^T (\nabla_{\mathbb{H}}  g)_f .
$$
\end{obs}

\begin{obs}[Proof of Lemma \ref{doublederivativecomposition}]\label{proofdoublederivativecomposition}
Remember that
$$
W_j  (g \circ f) = \sum_{l=1}^{2n} \left  (W_l g \right )_f W_j  f^l.
$$
Then
\begin{align*}   
W_j W_i (g \circ f) =& \sum_{l=1}^{2n}   W_j   \left (  \left ( W_l g \circ f \right ) W_i   f^l \right )  \\
=& \sum_{l=1}^{2n}  \left [   W_j \left (W_l g \circ f \right ) W_i f^l  +   \left (W_l g \right )_f W_j W_i f^l
\right ]\\
=& \sum_{l=1}^{2n}  \left [    \sum_{h=1}^{2n} \left (
\left (W_h W_l g \right )_f W_j f^h \right ) W_i f^l +   \left (W_l g \right )_f W_j W_i f^l
\right ].
\end{align*}   
\begin{align*}  
T(g \circ f)
=& \left  (W_j W_{n+j} - W_{n+j}  W_{j}    \right   ) (g\circ f)\\
=& \sum_{l=1}^{2n} \Bigg [   
\left ( \sum_{h=1}^{2n} \left (W_h W_l g \right )_f W_j f^h  \right )  W_{n+j} f^l + \left (W_l g \right )_f W_j W_{n+j} f^l \\
&- \left ( \sum_{h=1}^{2n} \left (W_h W_l g \right )_f W_{n+j} f^h  \right )  W_j f^l -   \left (W_l g \right )_f W_{n+j} W_j f^l   \Bigg ],\\
=& \sum_{l=1}^{2n} \left [   
\sum_{h=1}^{2n} \left (W_h W_l g \right )_f  \left (  W_j f^h   W_{n+j} f^l  - W_{n+j} f^h  W_j f^l   \right )  
+   \left (W_l g \right )_f T f^l   \right ]\\
=& \sum_{l=1}^{2n}    \left (W_l g \right )_f T f^l  + \sum_{l,h=1}^{2n}     \left (W_h W_l g \right )_f  \left (  W_j f^h   W_{n+j} f^l  - W_{n+j} f^h  W_j f^l   \right )  .
\end{align*}   
Note that every time that $l=h$, the corresponding term is zero. Furthermore the term corresponding to a pair $(l,h)$ is the same as the one of the pair $(h,l)$ with a change of sign. Then we can rewrite as
\begin{align*}  
T(g \circ f)
=& \sum_{l=1}^{2n}    \left (W_l g \right )_f T f^l  + 
    \sum_{\mathclap{\substack{     l,h=1\\   l<h   }}}^{2n} 
    \left (W_h W_l g - W_l W_h g \right )_f  \left (  W_j f^h   W_{n+j} f^l  - W_{n+j} f^h  W_j f^l   \right )  .
\end{align*}   
Then notice that all the terms in the second sum are zero apart from when $h=n+l$. So we can finally write
\begin{align*}  
T(g \circ f)
=& \sum_{l=1}^{2n}    \left (W_l g \right )_f T f^l  + 
   (Tg)_f \sum_{ l=1}^{n} 
     \left (  W_j f^l   W_{n+j} f^{n+l}  - W_{n+j} f^l  W_j f^{n+l}   \right )  .
\end{align*} 
\end{obs}

\begin{obs}[Proof of Proposition \ref{pushforwardcontact}]\label{proofpushforwardcontact}
The proof of equation \eqref{pushforwardWj} comes immediately from the definition of contactness. To prove equation \eqref{pushforwardT} we can use equation \eqref{compoT} while remembering that $(f_* T)h=T(h \circ f)$
\begin{align*}
(f_* T)h&=T(h \circ f)\\
&=   \sum_{l=1}^{2n}   \left (W_l h \right )_f T f^l   +   (Th)_f  \lambda (f) .
\end{align*}
So
$$
f_* T=   \sum_{l=1}^{2n}   T f^l    W_l  +   \lambda (f) T .
$$
Instead, if we want to show the proof directly, we just need to work exactly as in the proof of equation \eqref{compoT}.
\end{obs}

\begin{obs}[Proof of Observation \ref{spamT}]\label{proofspamT}
From equation \eqref{pushforwardWj}, we can write  $f_* W_j =   \sum_{l=1}^{2n} W_{j} f^l    W_l $, with $ j=1,\dots,2n .$  Furthermore, since $f$ is a diffeomorphism, $f_* ([W_j,W_{n+j}])  =  [f_* W_j,f_* W_{n+j}]$. Then
\begin{align*}
f_* T =&f_* ([W_j,W_{n+j}])  =   [f_* W_j,f_* W_{n+j}]= \left [ \sum_{l=1}^{2n} W_{j} f^l    W_l , \sum_{h=1}^{2n} W_{n+j} f^h    W_h \right ]\\
=& \sum_{l=1}^{2n} W_{j} f^l    W_l    \sum_{h=1}^{2n} W_{n+j} f^h    W_h   - \sum_{h=1}^{2n}  W_{n+j} f^h    W_h   \sum_{l=1}^{2n}  W_{j} f^l    W_l \\
=& \sum_{l,h=1}^{2n}   \left (   (  W_{j} f^l    W_l  )   ( W_{n+j} f^h    W_h   ) - ( W_{n+j} f^h    W_h )   ( W_{j} f^l    W_l  ) \right ) \\
=& \sum_{l,h=1}^{2n}   \left ( W_{j} f^l  W_{n+j} f^h  W_l   W_h  -   W_{n+j} f^h   W_{j} f^l   W_h     W_l   \right ) \\
=& \sum_{l,h=1}^{2n}  W_{j} f^l  W_{n+j} f^h  \left (  W_l   W_h  -    W_h     W_l   \right ) .
\end{align*}
The terms of this sum are all zero except in the cases $(h,n+h)$, with $h=1,\dots,n$, and $(n+l,l)$, with $l=1,\dots,n$. So 
\begin{align*}
f_* T =& \sum_{h=1}^{n}  W_{j} f^{n+h}  W_{n+j} f^h  \left (  W_{n+h}   W_h  -    W_h     W_{n+h}  \right ) \\
&+ \sum_{l=1}^{n}  W_{j} f^l  W_{n+j} f^{n+l}  \left (  W_l  W_{n+l}  -    W_{n+l}     W_l   \right ) \\
=& - \sum_{l=1}^{n}  W_{j} f^{n+l}  W_{n+j} f^l  T + \sum_{l=1}^{n}  W_{j} f^l  W_{n+j} f^{n+l}  T \\
=& \sum_{l=1}^{n}  \left ( W_{j} f^l  W_{n+j} f^{n+l} -  W_{j} f^{n+l}  W_{n+j} f^l \right  )  T 
\end{align*}
\end{obs}


\begin{obs}[Proof of Lemma \ref{XCYC}]\label{proofXCYC}
Observe first then
$$
T(f^l W_j f^{n+l})=Tf^l W_j f^{n+l} + f^l TW_j f^{n+l} = Tf^l W_j f^{n+l} + f^l W_j T f^{n+l},
$$
meaning
$$
-  f^l W_j T f^{n+l} =- T(f^l W_j f^{n+l}) +    Tf^l W_j f^{n+l}    .
$$
Likewise
$$
 f^{n+l} W_j T f^{l} = T(f^{n+l} W_j f^{l})-   Tf^{n+l} W_j f^{l}    .
$$
\begin{align*}
W_j& (\lambda  (h,f))=\\
=&   W_j \left ( T f^{2n+1}  + \frac{1}{2}  \sum_{l=1}^{2n} \tilde w_{ l} (f)  T f^l   \right )\\
=&    W_j  T f^{2n+1}  + \frac{1}{2}  \sum_{l=1}^{2n}   \left (      W_j( \tilde w_{ l} (f))  T f^l   +
\tilde w_{ l} (f)    W_j T f^l  
\right ) \\
=&    W_j  T f^{2n+1}  + \frac{1}{2}  \sum_{l=1}^{n}   \left ( 
     W_j( \tilde w_{ l} (f))  T f^l   +\tilde w_{ l} (f)    W_j T f^l  
+      W_j( \tilde w_{ {n+l}} (f))  T f^{n+l}   +\tilde w_{ {n+l}} (f)    W_j T f^{n+l}
\right ) \\
=&    W_j  T f^{2n+1}  + \frac{1}{2}  \sum_{l=1}^{n}   \left ( 
     W_j f^{n+l}  T f^l    + f^{n+l}    W_j T f^l  
-     W_j  f^l  T f^{n+l}   -  f^l    W_j T f^{n+l}    
\right ) \\
=&   T W_j f^{2n+1}  + \frac{1}{2}  \sum_{l=1}^{n}   \left ( 
      T(f^{n+l} W_j f^{l})  - T(f^l W_j f^{n+l})  
\right )
+  \sum_{l=1}^{n}   \left (      W_j f^{n+l}  T f^l  -   Tf^{n+l} W_j f^{l}    \right ) \\
=&   \sum_{l=1}^{n}   \left (      W_j f^{n+l}  T f^l  -   Tf^{n+l} W_j f^{l}    \right )\\
=&   \sum_{l=1}^{2n}      W_j(   \tilde w_{ l} (f)  )  T f^l   ,
\end{align*}
where we use  $ T W_j f^{2n+1}  + \frac{1}{2}  \sum_{l=1}^{n}   \left (   T(f^{n+l} W_j f^{l})  - T(f^l W_j f^{n+l})  \right )  =    T (\mathcal{A}(j,f))=0 $.
\end{obs}

\begin{obs}[Proof of Proposition \ref{highdimensioncontact}]\label{proofhighdimensioncontact}
$$
f_* (X \wedge Y)= f_* X \wedge f_* Y =( Xf^1  Yf^2 -  Yf^1  Xf^2) X\wedge Y =\lambda (1,f) X\wedge Y.
$$
\begin{align*}
f_* (X \wedge T)=&f_* X \wedge  f_* T = ( Xf^1 Tf^2 -  Tf^1  Xf^2 ) X\wedge Y  \\
&+Xf^1  \left (  Tf^3 + \frac{1}{2}f^2 Tf^1 - \frac{1}{2}f^1 Tf^2    \right ) X\wedge T\\
&+ Xf^2 \left (  Tf^3 + \frac{1}{2}f^2 Tf^1 - \frac{1}{2}f^1 Tf^2    \right ) Y\wedge T\\
=& ( Xf^1 Tf^2 -  Tf^1  Xf^2 ) X\wedge Y  +Xf^1  \lambda (1,f) X\wedge T+  Xf^2 \lambda (1,f) Y\wedge T\\
=& X(\lambda (1,f))  X\wedge Y  +Xf^1  \lambda (1,f) X\wedge T+  Xf^2 \lambda (1,f) Y\wedge T.
\end{align*}
\begin{align*}
f_* (Y \wedge T)=& f_* Y \wedge  f_* T = ( Yf^1 Tf^2 -  Tf^1  Yf^2 ) X\wedge Y  \\
&+Yf^1  \left (  Tf^3 + \frac{1}{2}f^2 Tf^1 - \frac{1}{2}f^1 Tf^2    \right ) X\wedge T\\
&+ Yf^2 \left (  Tf^3 + \frac{1}{2}f^2 Tf^1 - \frac{1}{2}f^1 Tf^2    \right ) Y\wedge T\\
=& ( Yf^1 Tf^2 -  Tf^1  Yf^2 ) X\wedge Y  + Yf^1 \lambda (1,f) X\wedge T+  Yf^2 \lambda (1,f) Y\wedge T\\
=&Y(\lambda (1,f))  X\wedge Y  + Yf^1 \lambda (1,f) X\wedge T+  Yf^2 \lambda (1,f) Y\wedge T.
\end{align*}
\begin{align*}
f_* (X \wedge Y \wedge T)=&f_* X \wedge f_* Y \wedge  f_* T \\
=&( Xf^1  Yf^2 -  Yf^1  Xf^2) \left (  Tf^3 + \frac{1}{2}f^2 Tf^1 - \frac{1}{2}f^1 Tf^2    \right )  X \wedge  Y \wedge   T\\ 
=& \lambda (1,f)^2 X \wedge  Y \wedge   T.
\end{align*}
%
\begin{align*}
f^* (dx \wedge dy)=&f^* dx \wedge f^*  dy = ( Xf^1  Yf^2 -  Yf^1  Xf^2) dx \wedge dy \\
&
+ ( Xf^1  Tf^2 -  Tf^1  Xf^2 )dx \wedge \theta+ ( Yf^1  Tf^2 -  Tf^1  Yf^2 )dy \wedge \theta\\
=& \lambda (1,f) dx \wedge dy + ( Xf^1  Tf^2 -  Tf^1  Xf^2 )dx \wedge \theta\\
&
+ ( Yf^1  Tf^2 -  Tf^1  Yf^2 )dy \wedge \theta=\\
=& \lambda (1,f) dx \wedge dy + X(\lambda (1,f)) dx \wedge \theta+ Y(\lambda (1,f)) dy \wedge \theta.
\end{align*}
\begin{align*}
f^* (dx \wedge \theta)=& Xf^1 \left ( T f^3   - \frac{1}{2}f^1  Tf^2 + \frac{1}{2}f^2  Tf^1   \right ) dx \wedge \theta \\
&
 +Y f^1 \left ( T f^3   - \frac{1}{2}f^1  Tf^2 + \frac{1}{2}f^2  Tf^1   \right ) dy \wedge \theta\\
=&Xf^1 \lambda (1,f) dx \wedge \theta +  Y f^1\lambda (1,f) dy \wedge \theta.
\end{align*}
\begin{align*}
f^* (dy \wedge \theta)= &Xf^2 \left ( T f^3   - \frac{1}{2}f^1  Tf^2 + \frac{1}{2}f^2  Tf^1   \right ) dx \wedge \theta \\
&
+ Y f^2 \left ( T f^3   - \frac{1}{2}f^1  Tf^2 + \frac{1}{2}f^2  Tf^1   \right ) dy \wedge \theta\\
=& Xf^2 \lambda (1,f) dx \wedge \theta +  Y f^2 \lambda (1,f) dy \wedge \theta.
\end{align*}
\begin{align*}
f^* (dx \wedge dy \wedge \theta )=&  ( Xf^1  Yf^2 -  Yf^1  Xf^2) \left ( T f^3   - \frac{1}{2}f^1  Tf^2 + \frac{1}{2}f^2  Tf^1   \right ) dx \wedge dy \wedge \theta\\
=& \lambda (1,f)^2 dx \wedge dy \wedge \theta.
\end{align*}
\end{obs}
}


\section{Calculations for the Möbius Strip}\label{appMo}

\begin{obs}\label{Hcoordinates1}
We write $\vec\gamma_r (r,s)$ in Heisenberg coordinates:
\begin{align*}
\vec\gamma_r (r,s) =& 
\bigg (   - \frac{s}{2} \sin \left  ( \frac{r}{2} \right )     \cos r -  \left [ R+s \cos \left  ( \frac{r}{2} \right ) \right ]     \sin r   , \\ 
& - \frac{s}{2} \sin \left  ( \frac{r}{2} \right )     \sin r +       \left [ R+s \cos \left  ( \frac{r}{2} \right ) \right ]     \cos r    ,  \
  \frac{s}{2} \cos \left  ( \frac{r}{2} \right ) 
\bigg  ) \\
=& \left (   
- \frac{s}{2} \sin \left  ( \frac{r}{2} \right )     \cos r -  
\left [ R+s \cos \left  ( \frac{r}{2} \right ) \right ]
\sin r  \right ) \partial_x  \\
& +
\left ( 
- \frac{s}{2} \sin \left  ( \frac{r}{2} \right )     \sin r +  
\left [ R+s \cos \left  ( \frac{r}{2} \right ) \right ]
 \cos r \right ) \partial_y
+
 \left ( \frac{s}{2} \cos \left  ( \frac{r}{2} \right ) \right  )\partial_t \\
=&  \left (   
- \frac{s}{2} \sin \left  ( \frac{r}{2} \right )     \cos r -  
\left [ R+s \cos \left  ( \frac{r}{2} \right ) \right ]
\sin r  \right ) X \\
& +
\left(
- 
\frac{s}{2} \sin \left  ( \frac{r}{2} \right )     \sin r +  
\left [ R+s \cos \left  ( \frac{r}{2} \right ) \right ]
 \cos r \right ) Y  \\
&  +
\bigg [
\frac{s}{2} \cos \left  ( \frac{r}{2} \right )
+
\left (   
- \frac{s}{2} \sin \left  ( \frac{r}{2} \right )     \cos r -  
\left [ R+s \cos \left  ( \frac{r}{2} \right ) \right ]
\sin r  \right )
\frac{1}{2}y\\
&  -
\left (   
- \frac{s}{2} \sin \left  ( \frac{r}{2} \right )     \sin r +  
\left [ R+s \cos \left  ( \frac{r}{2} \right ) \right ]
 \cos r \right )
\frac{1}{2}x
\bigg ] T .
\end{align*}
The third component can be written better as
\begin{align*}
  \bigg [  & \frac{s}{2} \cos \left  ( \frac{r}{2} \right )  +  \left (   - \frac{s}{2} \sin \left  ( \frac{r}{2} \right )     \cos r -  \left [ R+s \cos \left  ( \frac{r}{2} \right ) \right ]  \sin r  \right )  \frac{1}{2}y\\
&  -  \left (   - \frac{s}{2} \sin \left  ( \frac{r}{2} \right )     \sin r +   \left [ R+s \cos \left  ( \frac{r}{2} \right ) \right ]   \cos r \right )  \frac{1}{2}x  \bigg ] \\
=&  \bigg [  \frac{s}{2} \cos \left  ( \frac{r}{2} \right )  +  \left (   - \frac{s}{2} \sin \left  ( \frac{r}{2} \right )     \cos r -  \left [ R+s \cos \left  ( \frac{r}{2} \right ) \right ]  \sin r  \right )   \frac{1}{2}  \left [R+s \cos \left ( \frac{r}{2} \right ) \right ] \sin r \\
& -  \left (   - \frac{s}{2} \sin \left  ( \frac{r}{2} \right )     \sin r +  \left [ R+s \cos \left  ( \frac{r}{2} \right ) \right ]   \cos r \right )  \frac{1}{2}     \left [ R+s \cos \left  ( \frac{r}{2} \right ) \right ] \cos r    \bigg  ]  \\
=&   \left [ \frac{s}{2} \cos \left  ( \frac{r}{2} \right )  -   \left [ R+s \cos \left  ( \frac{r}{2} \right ) \right ]^2 \sin^2 r   \frac{1}{2}       -  \left [ R+s \cos \left  ( \frac{r}{2} \right ) \right ]^2   \cos^2 r  \frac{1}{2}    \right  ]  \\
=&  \left  [\frac{s}{2} \cos \left ( \frac{r}{2} \right ) -\left [ R+s \cos \left ( \frac{r}{2} \right ) \right ]^2\frac{1}{2} \right  ] .\\
\end{align*}
\end{obs}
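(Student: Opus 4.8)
The plan is to convert $\vec\gamma_r$ from the Euclidean frame $\{\partial_x,\partial_y,\partial_t\}$ to the left-invariant frame $\{X,Y,T\}$ of Definition \ref{XYT}. First I would invert the defining relations $X=\partial_x-\frac{1}{2}y\partial_t$, $Y=\partial_y+\frac{1}{2}x\partial_t$, $T=\partial_t$ to obtain
$$
\partial_x = X + \frac{1}{2}y\,T, \qquad \partial_y = Y - \frac{1}{2}x\,T, \qquad \partial_t = T.
$$
Writing $\vec\gamma_r = a\,\partial_x + b\,\partial_y + c\,\partial_t$ with the Euclidean components $a,b,c$ obtained in Step 1, a direct substitution gives
$$
\vec\gamma_r = a\,X + b\,Y + \left(c + \frac{1}{2}a\,y - \frac{1}{2}b\,x\right) T,
$$
so that the $X$- and $Y$-coefficients coincide with the first two Euclidean components and require no further manipulation.

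The only genuine computation is the $T$-coefficient. Here I would substitute the coordinates of the base point $\gamma(r,s)$, namely $x = [R+s\cos(\frac{r}{2})]\cos r$ and $y = [R+s\cos(\frac{r}{2})]\sin r$, into $c + \frac{1}{2}a\,y - \frac{1}{2}b\,x$. Writing $\rho := R+s\cos(\frac{r}{2})$ for brevity, the two cross terms carrying the factor $\sin(\frac{r}{2})\sin r\cos r$ occur with opposite signs and cancel, while the surviving terms assemble into $-\frac{\rho^2}{2}(\sin^2 r + \cos^2 r)$. The Pythagorean identity collapses this to $-\frac{\rho^2}{2}$, and adding $c = \frac{s}{2}\cos(\frac{r}{2})$ produces the asserted coefficient $\frac{s}{2}\cos(\frac{r}{2}) - \frac{1}{2}[R+s\cos(\frac{r}{2})]^2$.

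Since this is purely a linear-algebra bookkeeping exercise, there is no conceptual obstacle. The one point that demands attention is that $X$, $Y$ and $T$ are evaluated at the moving base point $\gamma(r,s)$, so the symbols $x$ and $y$ appearing in the inversion formulas must be replaced by the parametric expressions $x(r,s)$ and $y(r,s)$, not treated as independent coordinates. Once that substitution is carried out, the cancellation described above yields the result immediately; the companion expression for $\vec\gamma_s$ follows by the same procedure.
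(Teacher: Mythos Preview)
Your proposal is correct and follows exactly the same route as the paper: invert the frame relations to get $\partial_x = X + \tfrac{1}{2}yT$, $\partial_y = Y - \tfrac{1}{2}xT$, $\partial_t = T$, read off the $X$- and $Y$-coefficients unchanged, then substitute $x(r,s)$ and $y(r,s)$ into the $T$-coefficient and use the cancellation of the $\sin(\tfrac{r}{2})\sin r\cos r$ cross terms together with $\sin^2 r + \cos^2 r = 1$. Your abbreviation $\rho = R + s\cos(\tfrac{r}{2})$ streamlines the bookkeeping, but the argument is identical.
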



\begin{obs}\label{Hcoordinates2}
We write $\vec\gamma_s (r,s)$ in Heisenberg coordinates, remembering that $x(r,s)=  \left [ R+s \cos \left  ( \frac{r}{2} \right ) \right ] \cos r  $ and $  y(r,s) =  \left [R+s \cos \left ( \frac{r}{2} \right ) \right ] \sin r $:
\begin{align*}
\vec\gamma_s (r,s)=&    \cos \left  ( \frac{r}{2} \right )  \cos r    \partial_x
  +
  \cos \left ( \frac{r}{2} \right )  \sin r \partial_y
 +
 \sin \left  ( \frac{r}{2} \right )  \partial_t \\
=&    \cos \left  ( \frac{r}{2} \right )  \cos r X
  +
 \cos \left ( \frac{r}{2} \right )  \sin r Y  \\
&
 +
\left (  \sin \left  ( \frac{r}{2} \right )
+ \cos \left  ( \frac{r}{2} \right )  \cos r \frac{1}{2}y
-  \cos \left ( \frac{r}{2} \right )  \sin r   \frac{1}{2}x
 \right  )  T \\
=&    \cos \left  ( \frac{r}{2} \right )  \cos r X
  +
 \cos \left ( \frac{r}{2} \right )  \sin r Y\\
&
 +
\bigg (  \sin \left  ( \frac{r}{2} \right )
+ \cos \left  ( \frac{r}{2} \right )  \frac{ \cos r}{2}  \left [R+s \cos \left ( \frac{r}{2} \right ) \right ] \sin r \\
&-  \cos \left ( \frac{r}{2} \right )    \frac{ \sin r}{2}  \left [ R+s \cos \left  ( \frac{r}{2} \right ) \right ] \cos r
 \bigg  )  T \\
=&    \cos \left  ( \frac{r}{2} \right )  \cos r X
  +
 \cos \left ( \frac{r}{2} \right )  \sin r Y
 +
 \sin \left  ( \frac{r}{2} \right )  T .
\end{align*}
\end{obs}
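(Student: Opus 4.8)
The plan is to convert the tangent vector field $\vec\gamma_s$ from the standard Euclidean frame $\{\partial_x,\partial_y,\partial_t\}$, in which it was already computed in the Step~1 lemma as $\vec\gamma_s=\cos(r/2)\cos r\,\partial_x+\cos(r/2)\sin r\,\partial_y+\sin(r/2)\,\partial_t$, into the left-invariant Heisenberg frame $\{X,Y,T\}$ of Definition \ref{XYT}. Since $X=\partial_x-\tfrac12 y\,\partial_t$, $Y=\partial_y+\tfrac12 x\,\partial_t$ and $T=\partial_t$, I would invert these relations to obtain $\partial_x=X+\tfrac12 y\,T$, $\partial_y=Y-\tfrac12 x\,T$ and $\partial_t=T$, and then substitute them into the Euclidean expression and collect coefficients.

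Writing $\vec\gamma_s=a\,\partial_x+b\,\partial_y+c\,\partial_t$ with $a=\cos(r/2)\cos r$, $b=\cos(r/2)\sin r$ and $c=\sin(r/2)$, the substitution gives $\vec\gamma_s=a\,X+b\,Y+\bigl(c+\tfrac12 a\,y-\tfrac12 b\,x\bigr)T$. The horizontal coefficients $a$ and $b$ are already in final form, matching the claimed expression, so the only remaining work is to simplify the vertical $T$-coefficient. Here I would insert the parametrization values $x(r,s)=[R+s\cos(r/2)]\cos r$ and $y(r,s)=[R+s\cos(r/2)]\sin r$ from the definition of $\gamma$.

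The one substantive step — there is no genuine obstacle in this computation — is the cancellation in the vertical component:
\[
\tfrac12 a\,y-\tfrac12 b\,x=\tfrac12\cos\!\Bigl(\tfrac{r}{2}\Bigr)\bigl[R+s\cos(\tfrac{r}{2})\bigr]\bigl(\cos r\sin r-\sin r\cos r\bigr)=0,
\]
so the $T$-coefficient collapses to $c=\sin(r/2)$ and one obtains $\vec\gamma_s=\cos(r/2)\cos r\,X+\cos(r/2)\sin r\,Y+\sin(r/2)\,T$, as claimed. Geometrically, this cancellation reflects the fact that the in-plane part of $\vec\gamma_s$ points radially (it is a multiple of the position vector direction $(\cos r,\sin r)$), so the symplectic pairing $x\,\dot y-y\,\dot x$ that produces the $\theta$-correction vanishes identically. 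By contrast, the in-plane motion generated by $\vec\gamma_r$ in Observation \ref{Hcoordinates1} is not radial, which is exactly why there the analogous correction term survives and the vertical component does not reduce to the Euclidean one.
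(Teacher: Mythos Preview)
Your proposal is correct and follows exactly the same computation as the paper: invert the frame relations to get $\partial_x=X+\tfrac12 yT$, $\partial_y=Y-\tfrac12 xT$, $\partial_t=T$, substitute the parametrization values of $x$ and $y$, and observe the cancellation $\cos r\sin r-\sin r\cos r=0$ in the $T$-coefficient. Your added geometric remark that the in-plane part of $\vec\gamma_s$ is radial (hence the symplectic correction vanishes) is a nice touch not present in the paper.
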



\begin{obs}\label{TvecN}
We compute the $T$ component of $\vec{N}= \vec\gamma_r \times_\mathbb{H}  \vec\gamma_s= \vec{N}_1 X + \vec{N}_2 Y + \vec{N}_3 T$.
\begin{align*}
 \vec{N}_3 =&  \left (    - \frac{1}{2}s \sin \left  ( \frac{r}{2} \right )    \cos \left ( \frac{r}{2} \right )  \sin r  \cos r
 -   \left [ R+s \cos \left  ( \frac{r}{2} \right ) \right ]   \cos \left ( \frac{r}{2} \right )   \sin^2 r  \right )\\
&
-
\left( - \frac{1}{2}s \sin \left  ( \frac{r}{2} \right )    \cos \left  ( \frac{r}{2} \right )  \sin r  \cos r 
+  \left [ R+s \cos \left  ( \frac{r}{2} \right ) \right ]  \cos \left  ( \frac{r}{2} \right )  \cos^2 r \right ) \\
=& -   \left [ R+s \cos \left  ( \frac{r}{2} \right ) \right ]   \cos \left ( \frac{r}{2} \right ) .
\end{align*}
\end{obs}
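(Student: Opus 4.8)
The plan is to expand the Heisenberg cross product $\vec{N} = \vec\gamma_r \times_\mathbb{H} \vec\gamma_s$ along the top row of the formal determinant that defines it, and simply read off the coefficient of $T$. Writing $\vec\gamma_r = a_1 X + a_2 Y + a_3 T$ and $\vec\gamma_s = b_1 X + b_2 Y + b_3 T$ for the Heisenberg-coordinate expressions obtained in Observations \ref{Hcoordinates1} and \ref{Hcoordinates2}, the cofactor expansion along the row $(X, Y, T)$ gives the $T$-component as the $2\times 2$ minor in the $X$ and $Y$ coefficients, namely $\vec{N}_3 = a_1 b_2 - a_2 b_1$. Thus the whole computation reduces to substituting the known values
$$
a_1 = -\frac{s}{2}\sin\left(\frac{r}{2}\right)\cos r - \left[R+s\cos\left(\frac{r}{2}\right)\right]\sin r, \quad a_2 = -\frac{s}{2}\sin\left(\frac{r}{2}\right)\sin r + \left[R+s\cos\left(\frac{r}{2}\right)\right]\cos r,
$$
together with $b_1 = \cos\left(\frac{r}{2}\right)\cos r$ and $b_2 = \cos\left(\frac{r}{2}\right)\sin r$.

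The key observation that makes the simplification clean is that the two products $a_1 b_2$ and $a_2 b_1$ each contain the identical cross term $-\frac{1}{2}s\sin\left(\frac{r}{2}\right)\cos\left(\frac{r}{2}\right)\sin r\cos r$, which therefore cancels in the difference $a_1 b_2 - a_2 b_1$. What remains are precisely the two terms
$$
-\left[R+s\cos\left(\frac{r}{2}\right)\right]\cos\left(\frac{r}{2}\right)\sin^2 r \quad \text{and} \quad -\left[R+s\cos\left(\frac{r}{2}\right)\right]\cos\left(\frac{r}{2}\right)\cos^2 r,
$$
and factoring out the common coefficient $-\left[R+s\cos\left(\frac{r}{2}\right)\right]\cos\left(\frac{r}{2}\right)$ leaves the factor $\sin^2 r + \cos^2 r = 1$. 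This yields $\vec{N}_3 = -\left[R+s\cos\left(\frac{r}{2}\right)\right]\cos\left(\frac{r}{2}\right)$, as claimed.

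There is no real obstacle here: the argument is a one-line cofactor identification followed by an elementary trigonometric cancellation. The only points that require a little care are keeping track of the sign convention in the cofactor expansion (the $T$-minor enters with a plus sign, since $T$ occupies the third column and $(-1)^{1+3}=+1$) and remembering that the coefficients $a_1, a_2, b_1, b_2$ are the \emph{Heisenberg} components, which already incorporate the correction coming from the contact form $\theta$. Since these were computed in Observations \ref{Hcoordinates1} and \ref{Hcoordinates2}, I would quote them directly rather than re-derive them, so that the proof consists solely of the substitution, the cancellation of the shared cross term, and the Pythagorean identity.
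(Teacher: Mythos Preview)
Your proposal is correct and matches the paper's approach exactly: the paper simply writes out the $2\times 2$ minor $a_1 b_2 - a_2 b_1$ using the Heisenberg components from Observations \ref{Hcoordinates1} and \ref{Hcoordinates2}, observes that the mixed $-\frac{1}{2}s\sin(r/2)\cos(r/2)\sin r\cos r$ terms cancel, and applies $\sin^2 r + \cos^2 r = 1$ to the remaining factor.
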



\begin{obs}\label{XvecN}
We compute the $X$ component of $\vec{N}$.
\begin{align*}
 \vec{N}_1 =& \left( - \frac{1}{2}s \sin \left  ( \frac{r}{2} \right )   \sin r +  \left [ R+s \cos \left  ( \frac{r}{2} \right ) \right ]  \cos r \right )   \sin \left  ( \frac{r}{2} \right ) \\
&
-
\left (  s \frac{1}{2} \cos \left  ( \frac{r}{2} \right )  - \left [ R+s \cos \left  ( \frac{r}{2} \right ) \right ]^2 \frac{1}{2} \right  )
 \cos \left ( \frac{r}{2} \right )  \sin r\\
=&
 - \frac{1}{2}s \sin^2 \left  ( \frac{r}{2} \right )   \sin r +  \left [ R+s \cos \left  ( \frac{r}{2} \right ) \right ]  \cos r  \sin \left  ( \frac{r}{2} \right )     \\
&
-    s \frac{1}{2} \cos^2 \left  ( \frac{r}{2} \right )  \sin r   +  \left [ R+s \cos \left  ( \frac{r}{2} \right ) \right ]^2 \frac{1}{2}  \cos \left ( \frac{r}{2} \right )  \sin r  \\
=&
 - \frac{1}{2}s    \sin r 
+  \left [ R+s \cos \left  ( \frac{r}{2} \right ) \right ]  \cos r  \sin \left  ( \frac{r}{2} \right )     
 +  \left [ R+s \cos \left  ( \frac{r}{2} \right ) \right ]^2 \frac{1}{2}  \cos \left ( \frac{r}{2} \right )  \sin r  .
\end{align*}
\end{obs}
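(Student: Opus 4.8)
The plan is to compute $\vec N_1$ as the coefficient of $X$ in the Heisenberg cross product $\vec N = \vec\gamma_r \times_\mathbb{H} \vec\gamma_s$, which (as set up just before the Step~3 lemma) is given by the formal determinant with first row $\{X,Y,T\}$ and second and third rows the Heisenberg components of $\vec\gamma_r$ and $\vec\gamma_s$. Expanding along the first row, the coefficient of $X$ is the $2\times2$ minor obtained by deleting the $X$-column, namely
\[
\vec N_1 = (\vec\gamma_r)_Y\,(\vec\gamma_s)_T - (\vec\gamma_r)_T\,(\vec\gamma_s)_Y,
\]
where the subscripts indicate the $Y$- and $T$-components relative to the basis $\{X,Y,T\}$. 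So the whole task reduces to substituting the components already computed in the Step~2 lemma (equivalently Observations \ref{Hcoordinates1} and \ref{Hcoordinates2}) into this single minor.

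First I would record the four relevant components: from Observation \ref{Hcoordinates1},
\[
(\vec\gamma_r)_Y = -\tfrac{s}{2}\sin\!\left(\tfrac{r}{2}\right)\sin r + \left[R+s\cos\!\left(\tfrac{r}{2}\right)\right]\cos r,
\quad
(\vec\gamma_r)_T = \tfrac{s}{2}\cos\!\left(\tfrac{r}{2}\right) - \tfrac{1}{2}\left[R+s\cos\!\left(\tfrac{r}{2}\right)\right]^2,
\]
and from Observation \ref{Hcoordinates2},
\[
(\vec\gamma_s)_Y = \cos\!\left(\tfrac{r}{2}\right)\sin r,
\qquad
(\vec\gamma_s)_T = \sin\!\left(\tfrac{r}{2}\right).
\]
Then I would multiply out $(\vec\gamma_r)_Y(\vec\gamma_s)_T$ and $(\vec\gamma_r)_T(\vec\gamma_s)_Y$ term by term, keeping the factors $\left[R+s\cos(r/2)\right]$ and $\left[R+s\cos(r/2)\right]^2$ intact rather than expanding them, since they survive into the final expression unchanged.

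The one genuine simplification is that the two cross terms each contribute a piece of the form $-\tfrac{s}{2}\sin r$ weighted by $\sin^2(r/2)$ and $\cos^2(r/2)$ respectively: from $(\vec\gamma_r)_Y(\vec\gamma_s)_T$ one gets $-\tfrac{s}{2}\sin^2(r/2)\sin r$, and from $-(\vec\gamma_r)_T(\vec\gamma_s)_Y$ one gets $-\tfrac{s}{2}\cos^2(r/2)\sin r$. Applying the Pythagorean identity $\sin^2(r/2)+\cos^2(r/2)=1$ collapses these into the single term $-\tfrac{1}{2}s\sin r$. The two remaining terms, $\left[R+s\cos(r/2)\right]\cos r\,\sin(r/2)$ and $\tfrac{1}{2}\left[R+s\cos(r/2)\right]^2\cos(r/2)\sin r$, carry over directly, yielding exactly the claimed formula for $\vec N_1$.

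There is no conceptual obstacle here: the computation is purely algebraic and entirely parallel to the (already displayed) computation of $\vec N_3$ in Observation \ref{TvecN}. The only place requiring a moment's care is making sure the sign conventions of the cofactor expansion are applied correctly (the $X$-minor enters with a $+$ sign) and that the Pythagorean collapse is spotted before the expression is needlessly expanded; once those are handled the result follows immediately.
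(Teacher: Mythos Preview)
Your proposal is correct and follows exactly the same approach as the paper: both compute $\vec N_1$ as the $2\times 2$ minor $(\vec\gamma_r)_Y(\vec\gamma_s)_T-(\vec\gamma_r)_T(\vec\gamma_s)_Y$, substitute the Heisenberg components from Observations \ref{Hcoordinates1} and \ref{Hcoordinates2}, and collapse the two $s$-terms via $\sin^2(r/2)+\cos^2(r/2)=1$.
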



\begin{obs}\label{YvecN}
We compute the $Y$ component of $\vec{N}$, remembering again $\sin (2 \alpha)=2 \sin \alpha \cos \alpha$ and $\cos (2 \alpha) = 2 \cos^2 \alpha -1$, and naming $\alpha= \frac{r}{2}$.
\begin{align*}
 \vec{N}_2 =& \left (
\frac{s}{2} \sin \left  ( \frac{r}{2} \right )  \cos r +   \left [ R+s \cos \left  ( \frac{r}{2} \right ) \right ] \sin r
\right )
\sin \left  ( \frac{r}{2} \right )  \\
&
+
\left (
  \frac{s}{2} \cos \left  ( \frac{r}{2} \right )  - \left [ R+s \cos \left  ( \frac{r}{2} \right ) \right ]^2 \frac{1}{2}
\right )
\cos \left  ( \frac{r}{2} \right )  \cos r \\
=& \frac{s}{2} \sin^2 \left  ( \frac{r}{2} \right )  \cos r +   \left [ R+s \cos \left  ( \frac{r}{2} \right ) \right ] \sin r \sin \left  ( \frac{r}{2} \right )  \\
&
+
  \frac{s}{2} \cos^2 \left  ( \frac{r}{2} \right )   \cos r  - \left [ R+s \cos \left  ( \frac{r}{2} \right ) \right ]^2 \frac{1}{2}
\cos \left  ( \frac{r}{2} \right )  \cos r \\
=&  \frac{s}{2}   \cos r 
+   \left [ R+s \cos \left  ( \frac{r}{2} \right ) \right ] \sin r \sin \left  ( \frac{r}{2} \right )
  - \left [ R+s \cos \left  ( \frac{r}{2} \right ) \right ]^2 \frac{1}{2}
\cos \left  ( \frac{r}{2} \right )  \cos r\\
=& \frac{s}{2} (  2 \cos^2 \alpha -1 ) 
+   \left ( R+s \cos \alpha \right ) ( 2 \sin \alpha \cos \alpha ) \sin \alpha \\
&
  - \left ( R+s \cos\alpha \right )^2 \frac{1}{2}  \cos \alpha  (  2 \cos^2 \alpha -1 )\\
=&  s \cos^2 \alpha   - \frac{s}{2} 
+  2  ( R+s \cos \alpha )   \sin^2 \alpha \cos \alpha  \\
&
  -     ( R+s \cos\alpha  )^2   \cos^3 \alpha 
+   \frac{1}{2}  [ R+s \cos\alpha  ]^2  \cos \alpha\\
=& s \cos^2 \alpha   - \frac{s}{2} 
+  2  ( R+s \cos \alpha  )  ( 1-  \cos^2 \alpha ) \cos \alpha\\
&
  -     ( R+s \cos\alpha  )^2   \cos^3 \alpha 
+   \frac{1}{2}  [ R+s \cos\alpha  ]^2  \cos \alpha \\
=&  sz^2    - \frac{s}{2}  +  2 ( R+s z  )  ( 1- z^2  ) z    -     ( R+sz  )^2   z^3 +   \frac{1}{2}  ( R+s z  )^2  z\\
=& sz^2   
 - \frac{s}{2}  
+  2  ( R+s z  )  ( z- z^3  )   
-    ( R^2+ 2sRz +  s^2 z^2  )                 z^3 
+   \frac{1}{2}  ( R^2+ 2sRz +  s^2 z^2  )  z\\
=& 
sz^2   
 - \frac{1}{2}   s
+  2   R  ( z- z^3  )   
+  2  s ( z^2- z^4  )   
-     R^2    z^3 - 2sRz^4 - s^2 z^5
+   \frac{1}{2} R^2 z+  sRz^2 +  \frac{1}{2}  s^2 z^3  \\
=& 
\left (   - z^5     +  \frac{1}{2}  z^3   \right )   s^2   
+   \left (    - 2 (R+1)z^4       +    ( R+3) z^2     - \frac{1}{2}   \right )   s   
   - ( R^2  + 2   R) z^3  \\
& + \left ( \frac{1}{2} R^2    +  2   R  \right  )   z  ,
\end{align*}
where we used the facts that $ \sin^2 \alpha  = 1-  \cos^2 \alpha $ and $\cos \alpha =z $.
\end{obs}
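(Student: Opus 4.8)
The goal is to obtain the closed form for $\vec{N}_2$, the $Y$-component of the Heisenberg normal field $\vec{N}=\vec\gamma_r \times_{\mathbb{H}} \vec\gamma_s$, as a polynomial in $s$ whose coefficients are polynomials in $z=\cos(r/2)$ and $R$. The plan is to read $\vec{N}_2$ off the symbolic $3\times 3$ determinant defining $\vec{N}$: expanding along the top row $(X,Y,T)$, the coefficient of $Y$ is the signed minor $\vec{N}_2=\gamma_r^3\gamma_s^1-\gamma_r^1\gamma_s^3$, where $\gamma_r^i,\gamma_s^i$ denote the Heisenberg components of $\vec\gamma_r$ and $\vec\gamma_s$ already recorded in Observations \ref{Hcoordinates1} and \ref{Hcoordinates2}. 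First I would substitute those components, keeping the compact abbreviation $[R+s\cos(r/2)]$ intact for as long as possible.

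Next I would simplify. After substitution the two surviving products are $\big(\tfrac{s}{2}\sin(r/2)\cos r+[R+s\cos(r/2)]\sin r\big)\sin(r/2)$ and $\big(\tfrac{s}{2}\cos(r/2)-\tfrac12[R+s\cos(r/2)]^2\big)\cos(r/2)\cos r$; summing them, the term $\tfrac{s}{2}\cos r$ factors out once $\sin^2(r/2)+\cos^2(r/2)=1$ is applied to the $\tfrac{s}{2}$-pieces. I would then convert the remaining $\cos r$ and $\sin r$ to half-angle data via $\cos r=2\cos^2(r/2)-1$ and $\sin r=2\sin(r/2)\cos(r/2)$, use $\sin^2(r/2)=1-\cos^2(r/2)$, and finally set $z=\cos(r/2)$. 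Collecting by powers of $s$ then yields the advertised expression, with $s^2$-coefficient $-z^5+\tfrac12 z^3$, with $s^1$-coefficient $-2(R+1)z^4+(R+3)z^2-\tfrac12$, and with $s^0$-part $-(R^2+2R)z^3+(\tfrac12 R^2+2R)z$.

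The main obstacle is not conceptual but purely the algebraic bookkeeping: the squared binomial $[R+s\cos(r/2)]^2$ contributes simultaneously to the $s^2$, $s^1$ and $s^0$ strata, so the half-integer coefficients and the signs must be tracked carefully through the half-angle substitution, and every term must be regrouped consistently in the single variable $z$ so that no stray $\cos r$ or $\sin r$ survives. I expect this to be the only delicate point; once $\vec{N}_2$ is in this polynomial-in-$s$ form it can be paired with the analogous expression for $\vec{N}_1$ from Observation \ref{XvecN} to set up and solve the system $\vec{N}_1=\vec{N}_2=0$ that locates the characteristic points in the proof of Proposition \ref{Mobius}.
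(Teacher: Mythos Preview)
Your proposal is correct and follows essentially the same route as the paper: read off $\vec{N}_2=\gamma_r^3\gamma_s^1-\gamma_r^1\gamma_s^3$ from the determinant, combine the $\tfrac{s}{2}$-pieces via $\sin^2(r/2)+\cos^2(r/2)=1$, pass to half-angle variables with $\cos r=2\cos^2(r/2)-1$ and $\sin r=2\sin(r/2)\cos(r/2)$, replace $\sin^2(r/2)$ by $1-\cos^2(r/2)$, set $z=\cos(r/2)$, expand $(R+sz)^2$, and collect in powers of $s$. The paper's computation is identical step by step, so there is nothing to add.
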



\begin{obs}\label{N1=0N2=0}
We solve the system $\{\vec{N}_1 =0, \ \vec{N}_2 =0 \}$.\\
First impose $\vec{N}_1 =0$ and, using again $\sin (2 \alpha)=2 \sin \alpha \cos \alpha$ and $\cos (2 \alpha) = 2 \cos^2 \alpha -1$, with $\alpha= \frac{r}{2}$, and recalling that $z=\cos \alpha $, we find that:
\end{obs}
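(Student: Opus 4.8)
The goal of this observation is to determine all pairs $(r,s) \in [0,2\pi) \times [-w,w]$ for which $\vec N_1(r,s) = 0$, as the first half of solving the characteristic--point system $\{\vec N_1 = 0,\ \vec N_2 = 0\}$; the substitution of the resulting families into $\vec N_2 = 0$ is then carried out in Observation \ref{partialconclusion}. I may freely use the explicit expressions for $\vec N_1$ and $\vec N_2$ obtained in the Step~3 lemma.

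The plan is to rewrite $\vec N_1$ entirely in terms of $\alpha = r/2$ and $z = \cos\alpha$ via the identities $\sin r = 2\sin\alpha\cos\alpha$ and $\cos r = 2\cos^2\alpha - 1$. The decisive structural fact is that, after this substitution, every summand of $\vec N_1$ carries a common factor $\sin\alpha = \sin(r/2)$, so that $\vec N_1 = \sin\alpha\cdot B$ with $B = z^2(R+sz)^2 + 2(z^2-1)(R+sz) + R$. Introducing $u := R + sz$ turns the bracket into the quadratic $B = z^2 u^2 + 2(z^2-1)u + R$. Hence $\vec N_1 = 0$ splits into the two independent conditions $\sin\alpha = 0$ and $B = 0$.

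First, since $r \in [0,2\pi)$ forces $\alpha = r/2 \in [0,\pi)$, the condition $\sin\alpha = 0$ holds exactly at $\alpha = 0$, i.e. $r = 0$, with $s \in [-w,w]$ arbitrary; this is the first family. Second, to solve $B = 0$ I would treat it as a quadratic in $u$, obtaining $u = \frac{(1-z^2)\pm\sqrt{(z^2-1)^2 - Rz^2}}{z^2}$, and then back-substitute $s = (u-R)/z$; simplifying the discriminant to $z^4 - (R+2)z^2 + 1$ yields $s = \frac{1 - (R+1)z^2 \pm \sqrt{z^4 - (R+2)z^2 + 1}}{z^3}$, the second family. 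This division requires $z \neq 0$, so the degenerate case $z = 0$ (that is $\alpha = \pi/2$, $r = \pi$) must be isolated first: there $B = -R \neq 0$ while $\sin\alpha = 1$, so $\vec N_1 = -R \neq 0$ and no solution occurs, which accounts precisely for the exclusion $r \neq \pi$.

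Within this observation the only genuinely delicate points are bookkeeping rather than conceptual: spotting the common factor $\sin\alpha$, tracking the range of $\alpha$ so that $\sin\alpha = 0 \Leftrightarrow r = 0$, and splitting off $z = 0$ before dividing by $z^3$. The heavier work is deferred to the continuation, where one plugs $z = 1$ (coming from $r = 0$) into $\vec N_2$ to reach the quadratic $s^2 + (2R-1)s + R^2 = 0$, whose roots $s = \frac{-2R + 1 \pm \sqrt{-4R+1}}{2}$ are real exactly when $R \le \tfrac14$, and then checks that the second, $r \neq 0$ family produces no pair compatible with the constraint $|s| \le w < R$. I expect that last verification --- ruling out the entire second family against $|s| \le w$ --- to be the main obstacle of the overall system, since it involves the messy $z$-dependent root rather than a clean quadratic in $s$.
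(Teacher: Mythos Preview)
Your treatment of $\vec N_1=0$ is correct and essentially matches the paper's: factoring out $\sin\alpha$, reducing to the polynomial $B$, isolating the degenerate case $z=0$, and solving the resulting quadratic. Your substitution $u=R+sz$ is a tidy variant of the paper's direct quadratic in $s$; both yield the same formula $s=\dfrac{1-(R+1)z^2\pm\sqrt{z^4-(R+2)z^2+1}}{z^3}$.

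Where you diverge is in the second half. This observation does not stop at $\vec N_1=0$: the paper also carries out the substitution of both families into $\vec N_2$ here, and the method for ruling out the second family is \emph{not} a range argument on $|s|\le w$. Instead, one plugs $s=\dfrac{-(R+1)z^2+1+A(z)}{z^3}$ (with $A(z)=\pm\sqrt{z^4-(R+2)z^2+1}$) directly into the polynomial expression for $\vec N_2$. After a long but mechanical simplification the result collapses to
\[
\vec N_2=\frac{-z^2+1+A(z)}{2z^3},
\]
so $\vec N_2=0$ forces $A(z)=z^2-1$, hence $A(z)^2=(z^2-1)^2$, hence $Rz^2=0$, which is impossible for $z\neq 0$. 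The second family therefore never meets $\vec N_2=0$ at all. Observation \ref{partialconclusion} then only applies the width constraint $|s|\le w<R$ to the two first-family roots $s_1,s_2$ coming from $r=0$.

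Your proposed strategy of eliminating the second family via $|s|\le w$ is not obviously workable: $w$ is an arbitrary parameter subject only to $w<R$, so you would in effect need $|s(z)|\ge R$ uniformly over all $z$ in the relevant range, which is a separate and messier estimate. The paper's algebraic route avoids this entirely.
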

\begin{align*}
 - \frac{1}{2}s    \sin r 
+  \Big [ R &+s \cos \left  ( \frac{r}{2} \right ) \Big ]  \cos r  \sin \left  ( \frac{r}{2} \right )     
 +  \left [ R+s \cos \left  ( \frac{r}{2} \right ) \right ]^2 \frac{1}{2}  \cos \left ( \frac{r}{2} \right )  \sin r  =0,
\\
 - s   \sin \alpha \cos \alpha &
+  \left [ R+s \cos \alpha \right ]   (   2 \cos^2 \alpha -1   )  \sin \alpha     
 +  \left [ R+s \cos \alpha \right  ]^2  \cos^2 \alpha   \sin \alpha  =0,
\\
\sin \alpha =0
\quad &\vee \quad
 - s    \cos \alpha 
+  \left [ R+s \cos \alpha \right ]   (   2 \cos^2 \alpha -1   )  
 +  \left [ R+s \cos \alpha \right  ]^2  \cos^2 \alpha     =0,
\\
 \frac{r}{2}= \alpha = k\pi
\quad &\vee \quad
 - s   z
+   [ R+s z  ]   (   2 z^2  -1   )  
 +   [ R+s z   ]^2  z^2     =0,
\\
 r = 2k\pi
\quad &\vee \quad
 - s z  +  2R z^2+2s z^3 - R - s z   +    R^2  z^2 +  2sRz^3  +s^2 z^4     =0,
 \\
 r = 2k\pi
\quad &\vee \quad
s^2 z^4   +  2s(R+1) z^3  +   ( R^2 +2R  )   z^2 - 2s z  - R       =0 .
\end{align*}
\noindent
The second condition is an equation of fourth degree in $z$ and of second in $s$, so we solve it in $s$:
$$
 r = 2k\pi
\quad \vee \quad
s^2 z^4  
 + (  2(R+1) z^3      - 2 z  )     s
+   ( R^2 +2R  )   z^2 - R       =0
$$
Now consider only the second condition. If $z=0$, one has that 
 $R=0$, 
which is impossible.\\
If $z\neq 0$, to solve the second equation as an equation of $2^{nd}$ order in $s$, compute its discriminant:
\begin{align*}
\Delta &= (  2(R+1) z^3      - 2 z  )^2                      -4 z^4   (   ( R^2 +2R  )   z^2 - R   )  \\
&=4(R+1)^2 z^6 + 4z^2 - 8(R+1)z^4 -4  (  R^2 +2R  )   z^6  +4 Rz^4\\
&=4z^6 -4(    R +2   )   z^4 + 4z^2\\
&=4z^2 (  z^4   -(    R +2   )   z^2  +1).
\end{align*}
Then
\begin{align*}
s& = \frac{    - (  2(R+1) z^3      - 2 z  )   \pm \sqrt{        4z^2 (  z^4   -(    R +2   )   z^2  +1)        }       }{2 z^4}\\
&= \frac{          -  (R+1) z^2     +  1   \pm    \sqrt{          z^4   -(    R +2   )   z^2  +1       }        }{ z^3}.
\end{align*}
Since $z=\cos \frac{r}{2}$, $z \in [0,2\pi)$, then $z   \neq 0 $ if and only if $ r \neq \pi +2k\pi.$\\
As a summary, the $X$ component of the normal vector $\vec{N}$ is zero at the points\\
$(x(r,s),y(r,s),t(r,s))$ with
\begin{align}\label{1solution}
(r,s)=(0,s), \quad  s \in [-w,w], \quad \text{or}
\end{align}
\begin{align}\label{2solution}
 (r,s)= \left (r, \frac{ -(R+1) z^2 + 1 \pm \sqrt{ z^4  -( R+2 ) z^2 +1 } }{ z^3} \right  ),
 \quad r \in [0, 2 \pi), \ r \neq \pi, \ z=\cos \frac{r}{2}.
\end{align}
Now we check whether the  parameters \eqref{1solution} and \eqref{2solution} also force $ \vec{N}_2 (r,s) $, the coefficient of the $Y$ component of $\vec{N}$, to be zero.
We will show that $ \vec{N}_2 (r,s) $ is zero only in at most a finite number of the  parameters (actually at most one)  \eqref{1solution} and \eqref{2solution}.\\\\
$\blacktriangleright$ 
the case $(r,s)=(0,s)$ 
gives $z 
= \cos \frac{r}{2} = \cos 0=1$ and we obtain that 
\begin{align*}
\vec{N}_2 (r,s) =&  \left (   - 1     +  \frac{1}{2}    \right )   s^2     +   \left (    - 2 (R+1)       +    ( R+3)      - \frac{1}{2}   \right )   s      - R^2  - 2   R  +  \frac{1}{2} R^2    +  2   R   \\
=&  -   \frac{1}{2}    s^2   +   \left (    - R        + \frac{1}{2}   \right )   s      -  \frac{1}{2} R^2  .   
\end{align*}
So we get that $\vec{N}_2 (r,s) =0$ if and only if   $ s^2   +   \left (     2R  -1   \right )   s      +  R^2 =0$. Then
$$
\Delta =  (     2R  -1  )^2 -4R^2=
 -4R+1
$$
and so
$$
s_1,s_2= \frac{ -2R+1 \pm \sqrt{-4R+1}  }{2}.
$$
This proves that $\vec{N}_1 (r,s)$ and $\vec{N}_2 (r,s)$, the component in $X$ and $Y$ of the normal vector $\vec{N}$ are both zero at least in the two cases $(0,s_1)$ and $(0, s_2)$.\\\\
$\blacktriangleright$ The second case gives
$$
s =    \frac{          -  (R+1) z^2     +  1   \pm    \sqrt{          z^4   -(    R +2   )   z^2  +1       }        }{ z^3}  =
 \frac{          -  (R+1) z^2     +  1  +A(z)       }{ z^3},
$$
where $A(z):= \pm    \sqrt{          z^4   -(    R +2   )   z^2  +1       } $.  
Then
\begin{align*}
\vec{N}_2& (r,s)=\\
 =&  \left (   - z^5     +  \frac{1}{2}  z^3   \right )   s^2   +   \left (    - 2 (R+1)z^4       +    ( R+3) z^2     - \frac{1}{2}   \right )   s     - ( R^2  + 2   R) z^3     + \left ( \frac{1}{2} R^2    +  2   R  \right  )   z  \\
=& \left (   - z^5     +  \frac{1}{2}  z^3   \right )   \left  (      \frac{          -  (R+1) z^2     +  1  +A(z)       }{ z^3}   \right    )^2     \\
&+ \left (    - 2 (R+1)z^4       +    ( R+3) z^2     - \frac{1}{2}   \right )   \frac{          -  (R+1) z^2     +  1  +A(z)       }{ z^3}   \\
&   - ( R^2  + 2   R) z^3   + \left ( \frac{1}{2} R^2    +  2   R  \right  )   z  \\
=&\left (   - \frac{1}{z}     +  \frac{1}{2  z^3}   \right )            \left  (      (R+1)^2   z^4     +  1  +A^2  (z) -2 (R+1) z^2 +2A(z)  -  2(R+1) A(z)z^2   \right    )        \\ 
&+   \left (    - 2 (R+1)z       +     \frac{( R+3)}{z}     - \frac{1}{2 z^3}   \right )   (         -  (R+1) z^2     +  1  +A(z)    )   - ( R^2  + 2   R) z^3  \\
&   + \left ( \frac{1}{2} R^2    +  2   R  \right  )   z \\
=&    -(R+1)^2   z^3      - \frac{1}{z}   - \frac{A^2  (z)}{z}  
 +  \frac{(R+1)^2   z}{2 }     +  \frac{A^2  (z)}{2  z^3}   - \frac{ (R+1)}{  z}  
+ \frac{A(z)}{  z^3}  +     2 (R+1)^2 z^3       \\
&  -  (R+1) ( R+3)z    + \frac{ R+3}{z}  
+ \frac{(R+1)}{2 z}      - \frac{A(z) }{2 z^3}    
   - ( R^2  + 2   R) z^3   + \left ( \frac{1}{2} R^2    +  2   R  \right  )   z  \\
=& \left (   -(R+1)^2     +     2 (R+1)^2       -   R^2 - 2   R    \right  ) z^3   \\
&   +\left (  \frac{(R+1)^2  }{2 }   -  (R+1) ( R+3)      +  \frac{1}{2} R^2    +  2   R  \right  )   z \\
&   + \left (  - 1   - R -1   +  R+3  + \frac{R}{2 } + \frac{1}{2 }   \right )  \frac{1}{z}
  - \frac{A^2  (z)}{z}      +  \frac{A^2  (z)}{2  z^3} 
+ \frac{A(z)}{  z^3}         - \frac{A(z) }{2 z^3}   .
\end{align*}
Here I use that $A^2 (z):=  z^4   -(    R +2   )   z^2  +1  $ and thus
\begin{align*}
\vec{N}_2 (r,s) =&   z^3   +\left (   -R   -\frac{5  }{2 }    \right  )   z + \frac{R+3}{2z}    -   z^3 + (R+2)z  - \frac{1  }{z}   +  \frac{  1 }{2 } z   - \frac{  (R+2)  }{2  z}   + \frac{1  }{2  z^3}         + \frac{A(z) }{2 z^3}  \\  
=&- \frac{ 1 }{2z}   + \frac{1  }{2  z^3}        + \frac{A(z) }{2 z^3}   .\\
\end{align*}
Since $z\neq 0$, $\vec{N}_2 (r,s) =0$ if and only if
\begin{align*}
 A(z)  & = z^2 -1,\\
 A^2(z)  & = ( z^2 -1 )^2,\\
z^4 -(R+2)z^2 +1 &=  z^4 - 2z^2 + 1,\\
-R z^2   &=0,\\
z   &=0,
\end{align*}
which is impossible because this is the case $z \neq 0$.\\\\
So the second solution of the equation $\vec{N}_1 (r,s)=0$ is never a solution for the equation $\vec{N}_2(r,s)=0$.

\begin{obs}\label{partialconclusion}
So far we have obtained that the Möbius strip has at most only two critical points and, in this parametrization, they are those obtained by
$$
(r,s_1)= \left (0,\frac{ -2R+1 - \sqrt{-4R+1}  }{2} \right )  \text{ and }   (r,s_2)=\left (0,\frac{ -2R+1 + \sqrt{-4R+1}  }{2} \right ) .
$$
In particular, there are two critical points if $R < \frac{1}{4}$, one if $R = \frac{1}{4}$ and none if $R > \frac{1}{4}$, which could be the most frequent case. Changing the parametrization will change the coefficients, but not the topological properties.\\\\ 
Now, it's easy to see that, if $R = \frac{1}{4}$, then 
$$
s_1=s_2=\frac{ -2\frac{1}{4}+1   }{2} = \frac{1}{4},
$$
but we know that $s \in[-w,w]$ with $w <R= \frac{1}{4}$, so this solution is actually impossible.\\
On the other hand, if $R < \frac{1}{4}$, then $ -2R+1 > - \frac{1}{2} +1=\frac{1}{2}$. So
$$
s_2=\frac{ -2R+1 + \sqrt{-4R+1}  }{2} >   \frac{  \frac{1}{2} + 0  }{2} =\frac{1}{4},
$$
but again $s \in[-w,w]$ with $w <R< \frac{1}{4}$, so this solution is impossible.\\
Finally, there are no limitations for $s_1$ given by $R< \frac{1}{4}$; $s_1$ can be any real number in $[-w,w]$ with $w <R< \frac{1}{4}$, so the only condition one can ask is:
\begin{align*}
-\frac{1}{4} < s_1 < \frac{1}{4} ,& \\
-\frac{1}{4} < \frac{ -2R+1 - \sqrt{-4R+1}  }{2} < \frac{1}{4},&
\\
- 1 <  -4R+2 - 2\sqrt{-4R+1} <1,&
\\
- 1 <  -4R+2 - 2\sqrt{-4R+1} 
\quad &\land	\quad
-4R+2 - 2\sqrt{-4R+1} <1,
\\
 2\sqrt{-4R+1}  <  -4R+3
\quad &\land	\quad
 2\sqrt{-4R+1} > - 4R +1,
\\
 4(-4R+1)  <  (-4R+3)^2
\quad &\land	\quad
 4 (  -4R+1 ) > (- 4R +1)^2,
\\
-16R+4  < 16R^2 -24R  +9
\quad &\land	\quad
 -16R+ 4  > 16R^2  -8R  +1  ,
\\
 16R^2 -8R  +5 >0
\quad &\land	\quad
 16R^2  +8R  -3 <0.
\end{align*}
Now we apply some basic rules for quadratic inequalities. The first inequality is solved by:
$$
\frac{\Delta}{4}= 16 - 16\cdot 5=-16 \cdot 4<0,
$$
that says that the inequality is always true.\\
The second inequality is solved, instead, by computing
$$
\frac{\Delta}{4}= 16 + 16 \cdot 3=16 \cdot 4 =8^2  , 
$$
and observing the two solutions of the associated equation are
$$
R_{1,2}= \frac{   -4 \pm 8            }{16} . \quad \text{Then}
$$
$$
R_{1}=- \frac{12}{16}=-\frac{3}{4} \quad  \text{ and } \quad  R_{2}= \frac{4}{16}=\frac{1}{4} .
$$
So the second inequality (and hence the whole system) has solutions:
$$
-\frac{3}{4}  <  R <  \frac{1}{4},
$$
which does not give more limitations than what was known already, that is: $0 < R < \frac{1}{4}$.\\\\
Final conclusion: the Möbius strip has at most only one critical point and, in this parametrization, it is the one obtained by
$$
(r,s_1)= \left (0,\frac{ -2R+1 - \sqrt{-4R+1}  }{2} \right ),
$$
when $0< R < \frac{1}{4}$.
\end{obs}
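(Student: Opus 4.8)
The plan is to translate the statement ``$p$ is a characteristic point of $\mathcal{M}$'' into an explicit system of two scalar equations and then show that this system has at most one admissible solution. Recall that $p\in C(\mathcal{M})$ precisely when $n_{\mathbb{H},p}=0$, i.e.\ when the tangent plane $T_p\mathcal{M}$ is entirely horizontal; equivalently, writing the Heisenberg normal $\vec{N}=\vec\gamma_r\times_\mathbb{H}\vec\gamma_s=\vec{N}_1 X+\vec{N}_2 Y+\vec{N}_3 T$, the point is characteristic exactly when its horizontal part vanishes, that is $\vec{N}_1=\vec{N}_2=0$. So the whole argument reduces to solving
$$
\begin{cases} \vec{N}_1(r,s)=0,\\ \vec{N}_2(r,s)=0, \end{cases}
$$
over the parameter domain $(r,s)\in[0,2\pi)\times[-w,w]$, under the standing constraint $w<R$.

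First I would carry out the three preparatory computational steps (Steps 1--3). Step 1 differentiates the given parametrization $\gamma(r,s)$ to obtain $\vec\gamma_r$ and $\vec\gamma_s$ in the Euclidean frame $\{\partial_x,\partial_y,\partial_t\}$. Step 2 rewrites both tangent vectors in the left-invariant frame $\{X,Y,T\}$: the horizontal components are unchanged, while the $T$-component of each vector picks up the correction $\tfrac{1}{2}\big(y\cdot(\text{its }\partial_x\text{-part})-x\cdot(\text{its }\partial_y\text{-part})\big)$ coming from $\partial_t=T$ and the contact form; substituting $x=[R+s\cos\tfrac{r}{2}]\cos r$ and $y=[R+s\cos\tfrac{r}{2}]\sin r$ simplifies the $T$-component of $\vec\gamma_r$ to $\tfrac{s}{2}\cos\tfrac{r}{2}-\tfrac{1}{2}[R+s\cos\tfrac{r}{2}]^2$ and that of $\vec\gamma_s$ back to $\sin\tfrac{r}{2}$. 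Step 3 then expands the $3\times3$ Heisenberg cross-product determinant to read off $\vec{N}_1,\vec{N}_2,\vec{N}_3$ as explicit trigonometric polynomials in $r$ and $s$; setting $z=\cos\tfrac{r}{2}$ turns $\vec{N}_2$ into a genuine polynomial in $z$ and $s$.

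The heart of the proof, and the main obstacle, is the analysis of the coupled system. I would first solve $\vec{N}_1=0$. After factoring out $\sin\tfrac{r}{2}$, this splits into the branch $r=2k\pi$ (i.e.\ $r=0$ on the domain) and a quartic relation which, read as a quadratic in $s$, yields $s=z^{-3}\big(-(R+1)z^2+1\pm\sqrt{z^4-(R+2)z^2+1}\big)$. The delicate part is substituting each branch into $\vec{N}_2=0$. For the quartic branch the substitution must be pushed through the algebra until $\vec{N}_2$ collapses to the single requirement $-Rz^2=0$, which is impossible since $R>0$ and $z\neq0$ there; hence that branch contributes no characteristic point. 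For the branch $r=0$ one has $z=1$, and $\vec{N}_2=0$ becomes the quadratic $s^2+(2R-1)s+R^2=0$ with roots $s_{1,2}=\tfrac{1}{2}\big(-2R+1\mp\sqrt{1-4R}\big)$, so real solutions exist only for $0<R<\tfrac{1}{4}$.

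Finally I would impose the constraint $s\in[-w,w]$ with $w<R<\tfrac{1}{4}$. A short estimate shows $s_2>\tfrac{1}{4}>w$ (and $s_1=s_2=\tfrac{1}{4}$ when $R=\tfrac{1}{4}$), so $s_2$ is never admissible; only $s_1=\tfrac{1}{2}\big(-2R+1-\sqrt{1-4R}\big)$ can lie in $[-w,w]$. This leaves \emph{at most one} characteristic point $\bar p=\big(\tfrac{1}{2}-\sqrt{\tfrac{1}{4}-R},0,0\big)$. To finish, I would set $\widebar{\mathcal{M}}:=\mathcal{M}\setminus U_{\bar p}$, with $U_{\bar p}$ a neighbourhood of $\bar p$ having smooth boundary: this $\widebar{\mathcal{M}}$ is $C^1$-Euclidean with $C(\widebar{\mathcal{M}})=\varnothing$, hence $\mathbb{H}$-regular by Observation \ref{Cpoints}; and since excising a single disk from a Möbius strip cannot make it orientable, $\widebar{\mathcal{M}}$ remains non-orientable in the Euclidean sense, as claimed.
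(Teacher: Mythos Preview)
Your proposal is correct and follows essentially the same approach as the paper: factor $\sin\tfrac{r}{2}$ out of $\vec N_1=0$, eliminate the quartic branch by showing it forces $-Rz^2=0$ inside $\vec N_2$, reduce the $r=0$ branch to the quadratic $s^2+(2R-1)s+R^2=0$, and then rule out $s_2$ (and the $R=\tfrac14$ case) via the constraint $|s|\le w<R<\tfrac14$. Note that your write-up actually spans the whole chain of computations (Observations~\ref{Hcoordinates1}--\ref{N1=0N2=0} and the proof of Proposition~\ref{Mobius}), whereas Observation~\ref{partialconclusion} itself is only the final admissibility check on $s_1,s_2$; but the content and method agree with the paper throughout.
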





%
\chapter*{Acknowledgments}            
\addcontentsline{toc}{chapter}{Acknowledgments} 
\markboth{\MakeUppercase{Acknowledgments}}{\MakeUppercase{Acknowledgments}}

I would like to thank my adviser, university lecturer Ilkka Holopainen, for the work done together and the time dedicated to me. I also want to thank professors Bruno Franchi, Raul Serapioni and Pierre Pansu for the stimulating discussions. Finally I thank professor Pekka Pankka, and my adviser again, for their careful revision of this work.\\\\
This work was supported by MAnET, a Marie Curie Initial Training Network (FP7-PEOPLE-2013-ITN, no. 607643).

\end{document}